\newcommand{\scp}[2]{\langle #1, #2\rangle}
\newcommand{\bta}[1]{\beta^{[#1]}}
\newcommand{\Z}{\mathbb{Z}}
\newcommand{\R}{\mathbb{R}}
\newcommand{\C}{\mathbb{C}}
\newcommand{\res}{\operatornamewithlimits{Res}}
\newcommand{\OO}{\mathcal{O}}
\newcommand{\Lvec}{{\vec L}}
\newcommand{\coneg}{\mathrm{Cone}_G}
\renewcommand{\L}{\mathcal{L}}
\newcommand{\kk}{\widehat{k}}
\newcommand{\lala}{\widehat{\lambda}}
\newcommand{\fw}{\mathcal{F}_\Phi}
\newcommand{\p}{\prime}
\newcommand{\pp}{\prime\prime}
\newcommand{\rr}{{\Phi}}
\newcommand{\HH}{\mathcal{H}}
\newcommand{\op}[1]{\mathop{\mathchoice{\mbox{\rm #1}}{\mbox{\rm #1}}
{\mbox{\rm \scriptsize #1}}{\mbox{\rm \tiny #1}}}\nolimits}
\newcommand{\B}{{\op{B}}}
\newcommand{\Bases}{\mathcal{B}}
\newcommand{\bb}{\mathbf{B}}
\newcommand{\flag}{\mathrm{Fl}}
\newcommand{\DD}{\mathcal{D}}
\newcommand{\iber}{\operatornamewithlimits{iBer}}
\newcommand{\ver}{\mathrm{Ver}}
\newcommand{\tree}{\mathrm{Tree}}
\newcommand{\affweyl}{\widetilde{\Sigma}[{k}]}
\newcommand{\Ver}{\mathrm{Ver}}
\newcommand{\lfquot}{\mathrm{LFQuot}}
\newcommand{\alphalink}{\beta_{\mathrm{link}}}
\newcommand{\Univ}{UQ}
\newcommand{\bes}{\begin{eqnarray*}}
	\newcommand{\ees}{\end{eqnarray*}}
\newcommand{\beq}{\begin{eqnarray}}
	\newcommand{\eeq}{\end{eqnarray}}
 \theoremstyle{plain}
 \newtheorem{theorem}{Theorem}[section]
\newtheorem{proposition}[theorem]{Proposition}
 \newtheorem{lemma}[theorem]{Lemma}
\newtheorem{corollary}[theorem]{Corollary}
\theoremstyle{definition}
 \newtheorem{definition}[theorem]{Definition}
\newtheorem{example}{Example}
\newtheorem{assumptions}[theorem]{Assumptions}
 \newtheorem{remark}[theorem]{Remark}
\title[The parabolic Verlinde formula]{The parabolic Verlinde formula: iterated 
residues and wall-crossings}
\author{Andras Szenes}
\address{Section de mathématiques, Université de Genève}
\email{Andras.Szenes@unige.ch}
\author{Olga Trapeznikova}
\address{Section de mathématiques, Université de Genève}
\email{Olga.Trapeznikova@unige.ch}
\begin{document}	
\begin{abstract} 
We give a new proof for the parabolic Verlinde formula in all ranks based on a comparison of 
wall-crossings in Geometric Invariant Theory and certain iterated residue 
functionals. On the way, we develop a tautological variant of Hecke 
correspondences, calculate the Hilbert polynomials of the moduli spaces, and 
present a new, transparent, local approach to the $\rho$-shift problem of the 
theory.
\end{abstract}

\maketitle

\begin{section}{Introduction}

	\subsection{The Verlinde formula}
	\label{sec:Verlinde}
	The Verlinde formula  is a strikingly beautiful 
	statement 
	in Enumerative Geometry motivated by quantum physics 
	\cite{Ver}. Our focus in this paper will be the more difficult, parabolic 
	variant, which we briefly describe 
	below.
	
	Let $ C $ be a smooth, complex projective curve of genus $ 
	g\ge1 $, and fix an auxiliary point $ p\in C $. We will call a 
	vector $ c=(c_1>c_2>\dots>c_r)\in\R^r $ satisfying $ 
	\sum 	c_i=0 $ and $c_1-c_r<1 $ \textit{regular} if no nontrivial 
	subset of its coordinates sums to an integer. For such a $ c\in\R^r $,
	there exists a smooth projective moduli space $P_0(c)$ 
	(\cite{Seshadri, MehtaSeshadri, Bhosle}), whose points are in one-to-one correspondence 
	with the equivalence classes of pairs $ 
	(W,F_*) $, where $ W \to C$ is a vector bundle  of rank $ r $ 
	on $ C $ with trivial determinant, $ F_* $ is a full flag 
	of the fiber $ W_p $, and the pair satisfies a certain  
	parabolic stability condition depending on $ c $ (cf. \S\ref{S2.1}).  
	This condition roughly states that 
	for a proper subbundle $ W'\subset W $, the degree $ 
	\deg(W') $ is strictly smaller than the sum of a subset of 
	the coordinates of $ c $ depending on the position of $ 
	W^{\p}_p $ with respect to $ F_* $.
	
	There is a natural way to associate to a positive integer $ k $ and an 
	integer vector $ \lambda \in \Z^r$ satisfying 
	$ \lambda_1+\dots+\lambda_r=0$ 
	a line bundle $ \L({k;\lambda}) $ on $ P_0(c)	$, in
	such a way that if $ c =\lambda/k $, then $ \L({k;\lambda}) $ is 
	ample. The \textit{parabolic Verlinde formula} is the 
	following 	expression for the Euler characteristic of the ample line 
	bundle 	$\L({k;\lambda}) $: assume $ c =\lambda/k $ is regular; then
	\begin{equation}\label{eq:ver0}
		\chi(P_0(c),\L({k;\lambda}))  = N_{r,k}\cdot\sum
		\frac{(-i)^{r \choose 2}\exp(2\pi i\, \lala\cdot x)}{\prod_{i<j} 
			\left( 
			2\sin\pi(x_i-x_j)\right) ^{2g-1}}
	\end{equation}
	where $ N_{r,k}=r(r(k+r)^{r-1})^{g-1} $, 	
	$ \lala=\lambda+ 
	\frac12 (r-1,r-3,	\dots,1-r)$,
	and the sum is taken over the finite set of those points  
	in the interior of the parallelopiped
	$$ \{x=(x_1,x_2,\dots,x_r=0)|\; 
	0<x_i-x_{i+1}<1\text{ for }i=1,\dots,r-1\}
	$$
	which satisfy the conditions
	\begin{itemize}
		\item $ (k+r)x\in\Z^r $
		\item $ x_i-x_j\notin\Z $ for $ 1\le i<j< r $.
	\end{itemize} 
	
	We note that this finite set is a set of lattice points in the interior of  
	$(r-1)! $ identical simplices (cf. the rhombus on Figure 
	\ref{fig:lattice}).	
	
	\begin{figure}[H]
		\centering
		\begin{tikzpicture}[scale=0.5]
			\draw (0,0) -- ({sqrt(12)},6);
			\draw (0,0) -- ({-sqrt(12)},6);
			\draw ({-sqrt(12)},6) -- ({sqrt(12)},6);
			\draw [red] (0,0) -- (0,6);
			\draw [teal,fill] (0,0) circle [radius=0.08];
			\draw [teal,fill] ({sqrt(3)},1) circle [radius=0.08];
			\node [below] at ({sqrt(3)},1) {\tiny (0,1,-1)};
			\draw [teal,fill] ({-sqrt(3)},1) circle [radius=0.08];
			\node [below] at ({-sqrt(3)},1) {\tiny (1,-1,0)};
			\draw [teal,fill] ({sqrt(12)},2) circle [radius=0.08];
			\draw [teal,fill] ({-sqrt(12)},2) circle [radius=0.08];
			\draw [teal,fill] (0,2) circle [radius=0.08];
			\draw [teal,fill] ({sqrt(3)},3) circle [radius=0.08];
			\draw [teal,fill] ({-sqrt(3)},3) circle [radius=0.08];
			\draw [teal,fill] (0,4) circle [radius=0.08];
			\draw [teal,fill] ({sqrt(3)},5) circle [radius=0.08];
			\draw [teal] ({sqrt(3)},5) circle [radius=0.3];
			\draw [teal,fill] ({-sqrt(3)},5) circle [radius=0.08];
			\draw [teal] ({-sqrt(3)},5) circle [radius=0.3];
			\draw [teal,fill] ({sqrt(12)},4) circle [radius=0.08];
			\draw [teal,fill] ({-sqrt(12)},4) circle [radius=0.08];
			\draw [teal,fill] ({sqrt(12)},6) circle [radius=0.08];
			\draw [teal,fill] ({-sqrt(12)},6) circle [radius=0.08];
			\draw [teal,fill] (0,6) circle [radius=0.08];
			\node [below] at (0,0) {\tiny (0,0,0)};
			\node [right] at ({sqrt(12)},6) {\tiny (2,2,-4)};
			\node [left] at ({-sqrt(12)},6) {\tiny (4,-2,-2)};
		\end{tikzpicture}\hspace{1cm}
		\begin{tikzpicture}[scale=0.2]
			\draw (0,0) -- ({sqrt(27)},9);
			\draw (0,0) -- ({-sqrt(27)},9);
			\draw ({-sqrt(27)},9) -- ({sqrt(27)},9);
			\draw ({sqrt(27)},9) -- (0,18);
			\draw ({-sqrt(27)},9) -- (0,18);
			\draw [red] (0,0) -- (0,18);
			\draw [fill] (0,0) circle [radius=0.06];
			\draw [orange,fill] (0,2) circle [radius=0.06];
			\draw [orange,fill] (0,4) circle [radius=0.06];
			\draw [orange,fill] (0,6) circle [radius=0.06];
			\draw [fill] ({sqrt(3)},3) circle [radius=0.06];
			\draw [fill] ({-sqrt(3)},3) circle [radius=0.06];
			\draw [orange,fill] (0,4) circle [radius=0.06];
			\draw [orange,fill] (0,8) circle [radius=0.06];
			\draw [orange,fill] (0,10) circle [radius=0.06];
			\draw [orange,fill] (0,12) circle [radius=0.06];
			\draw [orange,fill] (0,14) circle [radius=0.06];
			\draw [orange,fill] (0,16) circle [radius=0.06];
			\draw [fill] (0,18) circle [radius=0.06];
			\draw [fill] ({sqrt(12)},6) circle [radius=0.06];
			\draw [fill] ({-sqrt(12)},6) circle [radius=0.06];
			\draw [orange,fill] ({sqrt(12)},8) circle [radius=0.06];
			\draw [orange,fill] ({-sqrt(12)},8) circle [radius=0.06];
			\draw [orange,fill] ({sqrt(3)},5) circle [radius=0.06];
			\draw [orange,fill] ({-sqrt(3)},5) circle [radius=0.06];
			\draw [orange,fill] ({sqrt(3)},7) circle [radius=0.06];
			\draw [orange,fill] ({-sqrt(3)},7) circle [radius=0.06];
			\draw [fill] ({sqrt(3)},9) circle [radius=0.06];
			\draw [fill] ({-sqrt(3)},9) circle [radius=0.06];
			\draw [fill] ({sqrt(1/3)},1) circle [radius=0.06];
			\draw [orange,fill] ({sqrt(1/3)},3) circle [radius=0.06];
			\draw [orange,fill] ({sqrt(1/3)},5) circle [radius=0.06];
			\draw [orange,fill] ({sqrt(1/3)},7) circle [radius=0.06];
			\draw [fill] ({sqrt(1/3)},9) circle [radius=0.06];
			\draw [orange,fill] ({sqrt(1/3)},11) circle [radius=0.06];
			\draw [orange,fill] ({sqrt(1/3)},13) circle [radius=0.06];
			\draw [orange,fill] ({sqrt(1/3)},15) circle [radius=0.06];
			\draw [fill] ({sqrt(1/3)},17) circle [radius=0.06];
			\draw [orange,fill] ({-sqrt(1/3)},11) circle [radius=0.06];
			\draw [orange,fill] ({-sqrt(1/3)},13) circle [radius=0.06];
			\draw [orange,fill] ({-sqrt(1/3)},15) circle [radius=0.06];
			\draw [fill] ({-sqrt(1/3)},17) circle [radius=0.06];
			\draw [fill] ({sqrt(4/3)},2) circle [radius=0.06];
			\draw [orange,fill] ({sqrt(4/3)},4) circle [radius=0.06];
			\draw [orange,fill] ({sqrt(4/3)},6) circle [radius=0.06];
			\draw [orange,fill] ({sqrt(4/3)},8) circle [radius=0.06];
			\draw [orange,fill] ({sqrt(4/3)},10) circle [radius=0.06];
			\draw [orange,fill] ({sqrt(4/3)},12) circle [radius=0.06];
			\draw [orange,fill] ({sqrt(4/3)},14) circle [radius=0.06];
			\draw [fill] ({sqrt(4/3)},16) circle [radius=0.06];
			\draw [orange,fill] ({-sqrt(4/3)},10) circle [radius=0.06];
			\draw [orange,fill] ({-sqrt(4/3)},12) circle [radius=0.06];
			\draw [orange,fill] ({-sqrt(4/3)},14) circle [radius=0.06];
			\draw [fill] ({-sqrt(4/3)},16) circle [radius=0.06];
			\draw [fill] ({sqrt(3)},15) circle [radius=0.06];
			\draw [fill] ({-sqrt(3)},15) circle [radius=0.06];
			\draw [orange,fill] ({sqrt(3)},13) circle [radius=0.06];
			\draw [orange,fill] ({-sqrt(3)},13) circle [radius=0.06];
			\draw [orange,fill] ({sqrt(3)},11) circle [radius=0.06];
			\draw [orange,fill] ({-sqrt(3)},11) circle [radius=0.06];
			\draw [orange,fill] ({sqrt(12)},10) circle [radius=0.06];
			\draw [orange,fill] ({-sqrt(12)},10) circle [radius=0.06];
			\draw [fill] ({sqrt(12)},12) circle [radius=0.06];
			\draw [fill] ({-sqrt(12)},12) circle [radius=0.06];
			\draw [orange,fill] ({-sqrt(4/3)},8) circle [radius=0.06];
			\draw [fill] ({-sqrt(1/3)},1) circle [radius=0.06];
			\draw [fill] ({-sqrt(4/3)},2) circle [radius=0.06];
			\draw [orange,fill] ({-sqrt(1/3)},3) circle [radius=0.06];
			\draw [orange,fill] ({-sqrt(1/3)},7) circle [radius=0.06];
			\draw [fill] ({-sqrt(1/3)},9) circle [radius=0.06];
			\draw [orange,fill] ({-sqrt(4/3)},4) circle [radius=0.06];
			\draw [orange,fill] ({-sqrt(1/3)},5) circle [radius=0.06];
			\draw [orange,fill] ({-sqrt(4/3)},6) circle [radius=0.06];
			\draw [fill] ({sqrt(16/3)},4) circle [radius=0.06];
			\draw [orange,fill] ({sqrt(16/3)},6) circle [radius=0.06];
			\draw [orange,fill] (-{sqrt(16/3)},8) circle [radius=0.06];
			\draw [fill] ({sqrt(16/3)},14) circle [radius=0.06];
			\draw [orange,fill] ({sqrt(16/3)},8) circle [radius=0.06];
			\draw [fill] (-{sqrt(16/3)},14) circle [radius=0.06];
			\draw [orange,fill] ({sqrt(16/3)},10) circle [radius=0.06];
			\draw [orange,fill] ({sqrt(16/3)},12) circle [radius=0.06];
			\draw [orange,fill] (-{sqrt(16/3)},10) circle [radius=0.06];
			\draw [orange,fill] (-{sqrt(16/3)},12) circle [radius=0.06];
			\draw [fill] ({-sqrt(16/3)},4) circle [radius=0.06];
			\draw [orange,fill] ({-sqrt(16/3)},6) circle [radius=0.06];
			\draw [fill] ({sqrt(25/3)},5) circle [radius=0.06];
			\draw [fill] ({-sqrt(25/3)},5) circle [radius=0.06];
			\draw [orange,fill] ({sqrt(25/3)},7) circle [radius=0.06];
			\draw [orange,fill] ({-sqrt(25/3)},7) circle [radius=0.06];
			\draw [fill] ({sqrt(25/3)},9) circle [radius=0.06];
			\draw [fill] ({-sqrt(25/3)},9) circle [radius=0.06];
			\draw [orange,fill] ({sqrt(25/3)},11) circle [radius=0.06];
			\draw [orange,fill] ({-sqrt(25/3)},11) circle [radius=0.06];
			\draw [fill] ({sqrt(25/3)},13) circle [radius=0.06];
			\draw [fill] ({-sqrt(25/3)},13) circle [radius=0.06];
			\draw [fill] ({-sqrt(27)},9) circle [radius=0.06];
			\draw [fill] ({sqrt(27)},9) circle [radius=0.06];
			\draw [fill] ({sqrt(49/3)},7) circle [radius=0.06];
			\draw [fill] ({sqrt(49/3)},9) circle [radius=0.06];
			\draw [fill] ({-sqrt(49/3)},7) circle [radius=0.06];
			\draw [fill] ({-sqrt(49/3)},9) circle [radius=0.06];
			\draw [fill] ({sqrt(49/3)},11) circle [radius=0.06];
			\draw [fill] ({-sqrt(49/3)},11) circle [radius=0.06];
			\draw [fill] ({sqrt(64/3)},10) circle [radius=0.06];
			\draw [fill] ({-sqrt(64/3)},10) circle [radius=0.06];
			\draw [fill] ({sqrt(64/3)},8) circle [radius=0.06];
			\draw [fill] ({-sqrt(64/3)},8) circle [radius=0.06];
			\node [below] at (0,0) {\tiny (0,0,0)};
			\node [left] at ({-sqrt(27)},9) {\tiny (0,1,1)};
			\node [right] at ({sqrt(27)},9) {\tiny (1,1,0)};
			\node [above] at (0,18) {\tiny (1,2,1)};
		\end{tikzpicture}
		\setlength{\belowcaptionskip}{-8pt}\caption{The set of $\lambda$s (left), and the finite set from 
		\eqref{eq:ver0} (right) for $ k=6 $, $ r=3 $.}\label{fig:lattice}
	\end{figure}
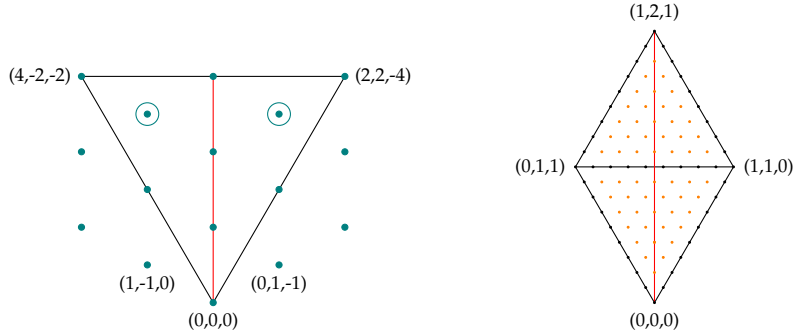

\begin{remark}
Equality \eqref{eq:ver0} remains valid in greater generality, for certain cases 
when $\lambda/k$ is non-regular. 
This slightly more technical 
statement will be given in Theorems \ref{residuethm} and \ref{main}.
\end{remark}
\noindent\textbf{Notation:} We denote the discrete sum in \eqref{eq:ver0} 
depending on 
$ k,\lambda,r$ and $g $ by $ \ver(k,\lambda) $. In what follows, the shift 
$\frac12 (r-1,r-3,	\dots,1-r)$ will be denoted by $ \rho $, and thus we have $ 
\lala =\lambda+\rho $.
\vspace{1mm}

Equality \eqref{eq:ver0}, the \textit{parabolic Verlinde formula} has attracted 
a lot of attention over 	the 
	years, and there is a number of different proofs. There is a 
	generalization of this formula  associated to a simply connected compact 
	Lie group, and the form presented here corresponds to the case of the
	group $ SU(r) $.
	
	In this article, we give a novel proof of this result, 
	which stands out with 	its technical simplicity. We believe the methods 
	and ideas described in the paper will have other  applications in Geometric 
	Invariant Theory and the study of moduli spaces.
	
	Below, we give a quick sketch of the 
	strategy of the proof, treating the example of 
	the case of rank 3 in \S1.2-4. Next, we give a short guide to the contents 
	of the paper in \S1.5.
	
	Our work has 
	close relationship with several earlier approaches, and we 
	describe these links in \S\ref{sub:history}

	\textbf{Acknowledgments}. The authors gratefully acknowledge the help and 
	insights of 
	Michael Thaddeus at several stages of this work, as well as the advice and 
	encouragement of Tam\'as Hausel and Michèle Vergne. We had useful 
	discussions with Eckhard Meinrenken, G\'abor Tardos and Chris Woodward.	
	This research was supported by SNF grant 175799, and the NCCR SwissMAP.

	\subsection{The residue formula} \label{sec:theres}
		The proof is based on 3 ideas. We will follow the 
	arguments below for the case $ r=3 $. We fix thus an integer 
	$ k>1 $ and an integer vector 
$\lambda=(\lambda_1> \lambda_2> \lambda_3)$, 
	such that $ \lambda_1+\lambda_2+\lambda_3=0$ and $\lambda_1-\lambda_r<k$.
		
	We start with the study of the right hand side of 
	\eqref{eq:ver0}, which, for $ r=3 $, may be written in 
	the following somewhat simplified form 
	\[ \mathrm{Ver}(k,\lambda)=N_{3,k}
	\sum_{0<n_2<n_1<k+3}
	\frac{2i\sin2\pi\frac{(\lambda_1+1)n_1+\lambda_2n_2}{k+3}
	}
	{\left(8\sin\pi\frac{n_1-n_2}{k+3}
		\sin\pi\frac{n_2}{k+3}\sin\pi\frac{n_1}{k+3}\right)^{2g-1}},
	\]
	where $ n_1,n_2 $ are integers.
	Using Theorem \ref{residuethm} and Remark \ref{hamform} one can show  that
	\[ \mathrm{Ver}(k,\lambda)=\begin{cases}
		p_+(k;\lambda),\; \lambda_2>0, \\
		p_-(k;\lambda),\; \lambda_2<0,
	\end{cases} \]
	where $ p_+ $ and $ p_- $ are two polynomials, given by  
	the right hand sides of the expressions of Example 
	\ref{ex2poly} on page \pageref{ex2poly}. We note two properties of $ 
	p_\pm(k,\lambda) $:
	\begin{enumerate}
		\item[A.] The wall-crossing difference $ p_--p_+ $ has a 
		relatively 
		simple form (cf. Example \ref{exdiff} with $\lambda_1+\lambda_3$ replaced by $-\lambda_2$):
		\begin{equation*}
			p_-(k;\lambda)-p_+(k;\lambda)	= 
			\res_{y=0}\res_{x=0}  
			\frac{(-3(k+3)^2)^g\cdot e^{(\lambda_1+1)x-\lambda_2y}}
			{(1-e^{x(k+3)})w_\Phi(x,y)^{2g-1}}dx dy,
		\end{equation*}
		where  
		$w_\Phi(x,y)=2\mathrm{sinh}(\frac{x}{2})\cdot 2\mathrm{sinh}
		(\frac{y}{2})\cdot2\mathrm{sinh}(\frac{x+y}{2})$.
		
		\item[B.] An easy calculation via substitution shows that
		for any permutation on 3 elements $ \sigma\in\Sigma_3 $,
		our polynomials have the following symmetries:
		\begin{equation}\label{eq:symplus}
			p_+(k;\sigma\cdot\lambda+\theta_1[k])=
			(-1)^\sigma p_+(k;\lambda+\theta_1[k])
		\end{equation}
		and
		\begin{equation}\label{eq:symminus}
			p_-(k;\sigma\cdot\lambda+\theta_{-1}[k])=
			(-1)^\sigma p_-(k;\lambda+\theta_{-1}[k]),
		\end{equation}
		where
		$$ \theta_1[k]=\frac{k}{3}(1,1,-2)+(0,1,-1) 
		\quad \theta_{-1}[k]=
		\frac{k}{3}(2,-1,-1)+(1,-1,0). 
		$$
	\end{enumerate}
	
	\subsection{Wall-crossings in moduli spaces}
	Now consider the left hand side of \eqref{eq:ver0}. It is 
	easy to check that the set of isomorphism classes of 
	parabolic bundles in $ P_0(c) $ remains unchanged as long as 
	$ c_2 $ does not change sign. Hence, effectively, we have 
	two moduli spaces $ P_0(>) $ and $ P_0(<) $, corresponding to 
	the two chambers separated by the red ($ c_2=0 $) line in 
	Figure \ref{fig:lattice}. Introduce the notation
	\[ q_{+}(k;\lambda)=\chi(P_0(>),\L({k;\lambda})), \quad 
	q_{-}(k;\lambda)=\chi(P_0(<),\L({k;\lambda}))  \]
	for the generalized Hilbert polynomials of these two spaces.
	
	In \S\ref{sec:wcmaster}, we derive a simple formula 
	\eqref{wallcrosseq} for the wall-crossing difference in 
	Geometric Invariant Theory.  The 
	formula has the form of a residue of an equivariant 
	integral, taken with respect to the equivariant parameter. 
	In our case, the space on which we integrate is the space 
	of rank-3 parabolic bundles which split into a direct sum 
	of a rank-2 and a rank-1 bundle. This equivariant integral 
	may be evaluated using induction on the rank (cf. the 
	detailed calculation in Example \ref{exint} on page \pageref{exint}), and 
	the 
	result is 
	\begin{equation}\label{qr3eq}
		q_{-}(k;\lambda)-q_{+}(k;\lambda)= 
		\res_{u=0}\res_{z=0}
		\frac{(-3(k+3)^2)^g\cdot e^{\lambda_1z+\lambda_2u+z}}
		{-w_\Phi(z,-u)^{2g-1}(1-e^{(k+3)z})}dzdu,
	\end{equation}
	where $ u $ plays the role of the equivariant parameter, 
	the generator of $ H^*_{\C^*}(\mathrm{pt}) $.
	This iterated residue coincides with the expression above after changing 
	$(z,u)$ to $(x,-y)$, 
	and thus we have
	\begin{equation}\label{eqwcr}
		p_+-p_- = q_+-q_-.
	\end{equation}

	\subsection
	{Hecke correspondences, Serre duality and the 
		symmetry argument}
	Hecke correspondences between moduli spaces of bundles of 
	different degrees were introduced by Narasimhan and Ramanan in \cite{NRHecke}.
	 In \S\ref{sec:hecke} of our paper, we describe a 
	"tautological" variant of this construction, which 
	identifies the same space with several moduli spaces of parabolic 
	bundles with different degrees and weights. Using this 
	construction we can fiber our two moduli spaces, $ P_0(>) $ 
	and $ P_0(<) $ over the moduli spaces of stable bundles (without parabolic 
	structure) of 
	degrees 1 and $ -1 $:
	\[ \mathrm{Flag}_3\to P_0(<) \to N_{-1},\quad
	N_{1}\leftarrow P_0(>) \leftarrow \mathrm{Flag}_3,\]  
	where the fibers are full flags of  3-dimensional vector 
	spaces.  Serre duality applied to a $\mathrm{Flag}_3  $-bundle implies a 
	$ \Sigma_3 $-antisymmetry of the Euler characteristics of line bundles on 
	this space, and after careful identification of these bundles, we derive 
	the same symmetry properties for the functions $ 
	q_\pm $ as we did for the polynomials $ p_\pm $: $ 
	q_+(k;\lambda) $ satisfies \eqref{eq:symplus}, while $ 
	q_-(k;\lambda) $ satisfies \eqref{eq:symminus}.

	The final argument is elegant: we can rearrange equation \eqref{eqwcr} 
	describing the equality of wall-crossings as 
	\[ p_+-q_+=p_--q_-, \]
	and we introduce the notation $ \Theta(k;\lambda) $ for this polynomial.
	Then $ \Theta $ satisfies both 
	\eqref{eq:symplus} and \eqref{eq:symminus}, and thus it is 
	anti-invariant with respect to an affine Weyl group action 
	in the plane for each fixed $ k $. This implies that 
	$ \Theta(k;\lambda) $ vanishes and this completes the proof.
	
	\subsection{Contents of the paper}
	There are a number of complications which arise when $ r>3 $. We will 
	highlight 
	these in this section, and also give a brief guide to the contents of the 
	paper.
	
	We start with a quick introduction into the theory of parabolic bundles in 
	\S\ref{sec:parabolic}. Here we describe the line bundles we are considering, 
	as 	well as the chamber structure of the space of parabolic weights induced by 
	the stability 	condition. The combinatorics of the iterated residue formulas mentioned 
	in \S\ref{sec:theres} above is considerably more complicated in the higher 
	rank 	case, and is best treated using the notion of \textit{diagonal bases} of hyperplane arrangements
	introduced in \cite{Szimrn}; we review this construction in the special case of the $ A_r $ root arrangement in \S\ref{sec:wcverlinde}. 
	
	Using this notion, in \S\ref{sec:residuemain}, we present a residue 
	formula for the Verlinde sums on the right hand side of \eqref{eq:ver0} 
	obtained in \cite{Szduke} (Theorems \ref{diaginv} and \ref{residuethm}). It 
	turns out that because of a standard $ \rho $-shift type effect in the 
	theory, this residue formula does not have a manifestly polynomial form on our chambers, 
	and thus, we formulate our main result, Theorem \ref{main} in two parts: in 
	part I. we state the equality of the Euler characteristics of line bundles 
	with a modified residue formula, which is manifestly polynomial on our 
	chambers, and 	
	in 	part II. we state the equality of the modified formula with the 
	original residue 
	formula from \cite{Szduke}.  Part II. is proved in \S\ref{S10}, while the 
	the proof of part I. takes up the rest of the paper.
	
	At the end of \S\ref{sec:residuemain}, we present our wallcrossing formula 
	for 
	Verlinde sums in Proposition \ref{wcrnice}, which uses in an essential 
	manner 
	the yoga of diagonal bases (cf. property A. above for the case of $ r=3 $).
	
	The geometric part of our work starts in \S\ref{sec:wcmaster}, where we 
	derive 
	a simple general result, formula \eqref{wallcrosseq}, for wallcrossings in 
	GIT.
	We apply this result to parabolic moduli spaces in \S\ref{sec:wallcrpara}, 
	and, 
	using induction on the rank, obtain Theorem \ref{wallcrossint}, the higher 
	rank 
	version of formula \eqref{qr3eq} above.
	
	It is downhill from here: in \S\ref{sec:hecke} we describe the tautological 
	Hecke correspondences we need in several places in the paper, and 
	in 
	\S\ref{sec:weylsymm} we derive the Weyl-symmetries of the polynomials $ 
	q_{\pm}$, and finish the proof along the lines sketched above.
	
	We are essentially done, but we hit a snag when checking the beginning of 
	our 
	induction on the rank: our argument does not work for $ r=2 $. Roughly, the 
	reason for this is that we need our simplex of parabolic weights to have at least 2 
	regular 
	vertices, and for $ r=2 $, we have only 1. The way out is to consider the 
	moduli space with two punctures and then all the pieces fall in place. This 
	argument is carried out in \S\ref{sec:2points}.
	
	\subsection{Historical remarks}\label{sub:history}
	There is a long list of proofs of the Verlinde 
	formula, and we cannot do justice to all the approaches in this short 
	introduction. We will thus focus on the historical lineage of our paper, 
	and 
	the works that are closest in spirit to what we do (cf. \cite{Sorger} for a 
	more comprehensive overview). 
	
	The proofs of the Verlinde formula fall in two categories: proofs of the 
	fusion 
	rules and proofs that find some interpretation of the "Fourier transformed" 
	discrete sum on the right hand side of \eqref{eq:ver0}; our work belongs to 
	this second group. Another line of division concerns the model which one 
	uses 
	for the moduli spaces: via the Narasimhan-Seshadri correspondence, the 
	moduli 
	spaces of vector bundles may equally be presented as symplectic manifolds 
	of 
	certain types of flat connections on punctured Riemann surfaces, and this 
	opens 
	the way of using the methods of symplectic geometry. While these symplectic 
	approaches lead to results equivalent to the ones coming out of the  
	algebro-geometric setup, the fields of 
	applications of the two approaches seem to be very different. 
	
	The idea of proving the Verlinde formula via wall crossings appeared in the 
	seminal paper of Michael Thaddeus \cite{ThaddeusVer}. He used a geometric 
	approach and managed to prove the Verlinde formula in rank 2 by crossing 
	walls 
	in the moduli of stable pairs.  The \textit{master space 
		construction}, which plays a central role in our paper, first appeared 
		in his 
	works as well \cite{ThaddeusFlip}. In a sense, our paper 
	may be thought of as the completion of his program.
	
	A paper closely related to our work is that of Jeffrey and Kirwan 
	\cite{JeffreyK}, who also use the residue calculus introduced in 
	\cite{Szimrn,Szduke}. This paper approaches the problem from a 
	symplectic/cohomological point of view, and has a somewhat different angle 
	form 
	ours. The use of iterated residues is not quite as consistent in 
	\cite{JeffreyK} as in our work, and the parabolic case was not resolved 
	from 
	this point of view \cite{Jeffrey}. The geometric model used to represent 
	the 
	moduli spaces as quotients is rather complicated.
	
	In a comprehensive paper \cite{BismutLabourie} covering the case of all 
	compact 
	groups, Bismut, Laborie used a differential–geometric approach to find the 
	generating function for the parabolic Verlinde formula. This work was the 
	motivation for the residue formula in \cite{Szduke}, which is also used in 
	the 
	present paper.
	
	In a remarkable series of papers of Alekseev, Meinrenken and Woodward 
	\cite{AlexeevMW}, again, 
	approaching the subject from the symplectic point of view, gave a direct 
	proof of \eqref{eq:ver0}, using reduction in infinite dimensions. A general approach related to twisted K-theory was introduced by Meinrenken in \cite{Meinrenken}.
	We should also mention recent work by Loizides and Meinrenken in
	\cite{Loizides}, which employs the residue techniques of  \cite{Szduke}.

	Finally, we drew motivation from the paper of Teleman and Woodward 
	\cite{TelemanW}, where the 
	Verlinde formula is put in the framework 
	of localization in K-theory of stacks. This very impressive work is 
	probably accessible to a small number of experts only. In the present 
	article, we demonstrate, in particular, that the sophisticated tools 
	employed in \cite{TelemanW}, 
	at least in this instance, may be replaced by a simple combinatorial device.
	
	In summary, the virtues of this article are: 
	\begin{itemize}
		\item A proof of the parabolic Verlinde formula which needs as 
		background only 
		the basics of GIT.
		\item The discrete sum, and the generating function giving the 
		coefficients of 
		the Hilbert polynomial are treated at the same time, and the $ \rho 
		$-shift is 
		dealt with explicitly.
		\item A few technical innovations such that an efficient wall-crossing 
		formula 
		in GIT (Theorem \ref{ThaddTh}) and the tautological Hecke 
		correspondences keep 
		the 
		arguments simple, and the technical difficulties related to infinite 
		dimensional quotients or singularities, in our approach, are absorbed 
		by a combinatorial device: the theory of iterated residues.
	\end{itemize}

\end{section}

\begin{section}{Parabolic bundles}
	\label{sec:parabolic}
	
\subsection{Definitions}\label{S2.1}
Let $C$ be a smooth complex projective curve of genus $g\geq2$, and fix a point $p\in{C}$. 
\begin{itemize}
\item  A \textit{parabolic bundle} on $C$ is a vector bundle $W$ of rank $r$ 
with a full flag $ F_* $ in the fiber over $p$:
$$W_p=F_r\supsetneq...\supsetneq F_1\supsetneq F_0=0$$ and \textit{parabolic 
weights} $c=(c_1,...,c_r)$ assigned to $F_r, F_{r-1},..., F_1$,
 satisfying the conditions
$$c_1 > c_{2}> . . . > c_r \text{ and } c_1-c_r<1.$$

\item The \textit{parabolic degree}\footnote{For technical reasons, we have chosen a sign convention opposite to that in the majority of treatments in the literature.} and the \textit{parabolic slope} of $W$ are 
defined as
$$\textit{pardeg}(W)=\mathrm{deg}(W)-\sum_{i=1}^{r}c_i ; \, \, \, \, \, \textit{parslope}(W)=\frac{\textit{pardeg}(W)}{\mathrm{rank}(W)}.$$ 
\item A \textit{morphism} $f: W\to W'$ of parabolic bundles is a morphism of 
vector bundles satisfying $f_p(F_i)\subset{F'_{j-1}}$ if $c_{r-i+1}< 
c'_{r-j+1}$. In particular, an \textit{endomorphism} of a parabolic bundle $W$ 
is a vector bundle endomorphism  preserving the flag $F_*$.
\item Denote by $\mathrm{ParHom}(W, W')$ the sheaf of parabolic morphisms from $W$ to $W'$. Then there is a short exact sequence of sheaves
\begin{equation}\label{SESparhom}
0\to\mathrm{ParHom}(W,W')\to\mathrm{Hom}(W,W')\to T_p\to{0},
\end{equation}
where $T_p$ is a torsion sheaf supported at $p$. The rank of $T_p$ is the 
number of pairs $(i,j)$, s.t. $c_i<c^\p_j$  (cf. \cite{BodenH}).
\end{itemize}
If $W'\subset{W}$ is a subbundle of $W$, then both $W'$ and the quotient $W/W^\p$ inherit a parabolic structure from $W$ in a natural way (cf. \cite{MehtaSeshadri}, definition 1.7).
\begin{itemize}
\item A parabolic bundle $W$ is \textit{stable of weight $ c $}, if any proper 
subbundle $W'\subset{W}$ satisfies 
$\textit{parslope}(W')<\textit{parslope}(W)$; and $W$ is \textit{semistable of weight $ c $}, if the inequality is not strict.
\end{itemize}
\begin{remark}\label{rem:projweights}
Note that the parabolic stability condition depends on the parabolic weights only up to adding the same constant to all weights $c_i$. 
\end{remark}

\subsection{Construction of the moduli spaces}\label{S2.2}
We start with a quick review of the construction of Mehta and Seshadri 
\cite{MehtaSeshadri} 
of the moduli space of 
stable parabolic bundles. 
It follows from Remark \ref{rem:projweights} that, without loss of generality, we can assume that the parabolic weights of a rank-$ r $  degree-$ d $ bundle belong to the simplex 
$$\Delta_{d}= \left\{(c_1,c_{2}, ..., c_r) \, | \, c_1>{c_{2}}> ... >{c_r}, \, c_1-c_r<1, \, \sum_ic_i=d\right\}.$$
\begin{definition} \label{reg}
We will call a vector $ c=(c_1,\dots,c_r)\in\R^r $ such that $ \sum_ic_i\in\Z $ 
\textit{regular} if for any nontrivial subset $ \Psi\subset\{1,2,\dots,r\} $, 
we have $ \sum_{i\in\Psi}c_i\notin\Z $.
\end{definition}
Now choose an integer $d\gg0$ such that $H^1(W)=0$ and $W$ is generated by global sections for any rank-$r$ degree-$d$ 
semistable parabolic bundle $W$ of parabolic degree $0$. Put $\chi=r(1-g)+d$ and 
consider the
\begin{itemize}
\item 	Groethendieck quot scheme $Quot(\chi,r)$ (\cite{Grothendieck}) parametrizing quotients 
	 $\mathcal{O}^\chi \twoheadrightarrow W$, where $W$ is a coherent sheaf of 
	degree 	$d$ and rank $r$.
	\item This space is endowed with a universal bundle $ \Univ $, and a 
	genericly free 	action of the group $G=PSL(\chi)$, which does not, 
	however, lift to $ \Univ$.
	\item Let $\lfquot\subset Quot(\chi,r)$ be the open subscheme 
	consisting of  locally free quotients $W$, such that the induced map 
	$H^0(\mathcal{O}^\chi)\to H^0(W)$ is an isomorphism.
	\item Denote by $XQ$  the total space of the flag bundle  
	$\mathrm{Flag}({\Univ}_p)$ on $\lfquot\times{p}$. This space is endowed with the 
	flag of vector bundles $ Fl_1\subset\dots\subset Fl_{r-1}\subset 
	Fl_r=\Univ_p $.
	\item Let $k\in\mathbb{Z}$ and $(\lambda_1,...,\lambda_r)\in\mathbb{Z}^r$, 
	such that 
	$\sum_{i=1}^r\lambda_i=kd$, and consider the line bundle 
	$$L(k;\lambda)=\mathrm{det}({\Univ}_p)^{k(1-g)}\otimes 
	det(\pi_*\Univ)^{-k}\otimes
	({Fl}_r/{Fl}_{r-1})^{\lambda_1}\otimes...\otimes({Fl}_1)^{\lambda_r}$$
	on $XQ$, which does carry a $G$-linearization (lift of the $ G $-action 
	from $ XQ $).
	\item Finally, assume  $ c\in\Delta_d $ is regular (cf. Definition 
	\ref{reg} 
	above) and define $\widetilde{P}_d(c)$, 
	the moduli space  
	of stable parabolic weight-$ c $ vector bundles on $ C $ as the GIT 
	quotient ${XQ\sslash^c G}$ of $XQ$ with 
	respect to any linearization $L(k;\lambda)$, such that  $\lambda/k=c$. 
\end{itemize}

\begin{theorem}[\cite{Seshadri}]  Assume that $c\in \Delta_d$ is a regular 
weight vector. Then the moduli space 
$\widetilde{P}_d(c)$ is a smooth projective variety of dimension $ 
r^2(g-1)+\binom r2+1 $, whose points are in one-to-one correspondence with the 
set of isomorphism classes of stable parabolic bundles of weight $ c $ (cf. 
\S\ref{S2.1}). 
\end{theorem}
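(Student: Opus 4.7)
The plan is the standard GIT construction: establish that the GIT (semi)stable locus in $XQ$ under the $G$-linearization $L(k;\lambda)$ coincides with the locus of points parametrizing stable parabolic bundles of weight $c=\lambda/k$, and then invoke Mumford's machinery to conclude that the quotient $\widetilde{P}_d(c)=XQ\sslash^c G$ is a smooth projective variety of the claimed dimension, representing isomorphism classes of stable parabolic bundles.

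The heart of the argument is the Hilbert--Mumford numerical criterion. One-parameter subgroups of $\mathrm{PSL}(\chi)$ correspond, up to an overall shift, to weighted filtrations $0=V_0\subsetneq V_1\subsetneq\cdots\subsetneq V_s=\C^\chi$. For a point $(q\colon\OO^\chi\twoheadrightarrow W,\,F_*)\in XQ$, the images $q(V_i)$ saturate to subbundles $W_i\subset W$, and the numerical weight of $L(k;\lambda)$ at $(q,F_*)$ decomposes into three contributions: one from $\det(\Univ_p)^{k(1-g)}\otimes\det(\pi_*\Univ)^{-k}$, involving $\deg(W_i)$ and $\mathrm{rank}(W_i)$; one from the flag factors $(Fl_r/Fl_{r-1})^{\lambda_1}\otimes\cdots\otimes Fl_1^{\lambda_r}$, encoding the relative position of the induced flag of $(W_i)_p$ against $F_*$; and a cohomological correction term coming from $\chi(W)=\chi$. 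After careful bookkeeping, the Hilbert--Mumford inequality reduces precisely to $\mathrm{parslope}(W')<\mathrm{parslope}(W)$ for every proper subbundle $W'\subset W$, with the parabolic weights distributed according to the relative position of $W'_p$ inside $F_*$; this is exactly parabolic $c$-stability in the sense of \S\ref{S2.1}.

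The regularity assumption on $c$ is then used to upgrade semistability to stability: any strictly semistable $(W,F_*)$ would force equality of parabolic slopes for some proper subbundle $W'$, hence $\sum_{i\in\Psi}c_i\in\Z$ for a nontrivial subset $\Psi\subset\{1,\dots,r\}$, contradicting Definition \ref{reg}. Thus $XQ^{\mathrm{ss}}=XQ^{\mathrm{s}}$. Because we quotient by $\mathrm{PSL}(\chi)$ rather than $\mathrm{GL}(\chi)$, the scalar automorphisms of a stable parabolic bundle have been killed, so $G$ acts freely on $XQ^{\mathrm{s}}$. Smoothness of $\widetilde{P}_d(c)$ now follows from smoothness of $XQ$ at locally free quotients --- guaranteed by the choice $d\gg 0$, which ensures $\mathrm{Ext}^1(K,W)=0$ for $K=\ker q$ and turns $\lfquot$ into a smooth scheme --- together with the principal bundle structure $XQ^{\mathrm{s}}\to XQ^{\mathrm{s}}/G$ and the fact that a full flag variety of a rank-$r$ vector space is smooth of dimension $\binom{r}{2}$.

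For the dimension, at a locally free quotient the Quot scheme is smooth of dimension $\chi(K^\vee\otimes W)=(\chi-r)r(1-g)+\chi d$, so $\dim XQ=(\chi-r)r(1-g)+\chi d+\binom{r}{2}$. Subtracting $\dim\mathrm{PSL}(\chi)=\chi^2-1$ and substituting $\chi=r(1-g)+d$, the terms quadratic in $d$ and in $\chi$ cancel, leaving $r^2(g-1)+\binom{r}{2}+1$, as asserted. The main technical obstacle is the Hilbert--Mumford bookkeeping in the second step: one must keep track of the weight contributions of the flag factors $(Fl_{r-i+1}/Fl_{r-i})^{\lambda_i}$ so that they combine with the degree and rank terms into precisely $\sum c_i$ weighted by the flag position --- this is the step where the specific form of $L(k;\lambda)$, and in particular the shift by $\det(\Univ_p)^{k(1-g)}$, is essential.
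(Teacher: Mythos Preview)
The paper does not give its own proof of this theorem: it is stated with the attribution \texttt{[\textbackslash cite\{Seshadri\}]} and immediately followed by a remark, with no argument in between. This is a foundational result quoted from the literature (Seshadri, Mehta--Seshadri, Bhosle), and the paper treats it as background input to the constructions that follow.

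Your outline is a faithful sketch of the standard argument in those references: identify GIT (semi)stability on $XQ$ with parabolic (semi)stability via the Hilbert--Mumford criterion, use regularity of $c$ to rule out strictly semistable points, deduce freeness of the $\mathrm{PSL}(\chi)$-action on the stable locus, and read off smoothness and the dimension from the deformation theory of the Quot scheme plus the flag fiber. Your dimension count is correct. So there is no discrepancy to report --- you have reconstructed the classical proof that the paper is citing, not a proof the paper itself supplies.
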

\begin{remark}
Via the determinant map, the moduli space $\widetilde{P}_d(c)$ fibers over the 
Jacobian of degree-$ d $ 
line bundles with isomorphic fibers, and in this paper, we will focus on the 
moduli space
\[ P_d(c) =  \{W\in\widetilde{P}_d(c)|\; \det W \simeq\mathcal{O}(d{p}) \},\]
which is smooth, projective and has dimension $ 
(r^2-1)(g-1)+\binom r2 $.
\end{remark}
\begin{remark}\label{tensoriso}
	Note that tensoring with  the line bundle $\mathcal{O}(mp)$ induces an 
	isomorphism: $\otimes\mathcal{O}(mp): P_{d}(c)\to P_{d+rm}(c)$, so the 
	moduli spaces $P_d(c)$, essentially, depend only on $d$ modulo $r$. 
\end{remark}

\subsection{The Picard group of  $P_d(c)$}\label{S2.3}

For a regular  $c\in \Delta_d$,
there exist universal bundles $U$ over $P_d(c)\times{C}$ endowed with a flag  
$ \mathcal{F}_{1}\subset\dots\subset\mathcal{F}_{r-1}\subset\mathcal{F}_{r}=U_p 
$, and satisfying the obvious tautological properties.
In general, such universal bundles $U$, and hence the flag line bundles 
$\mathcal{F}_{i+1}/\mathcal{F}_i$ are unique only up to tensoring by the 
pull-back of a line bundle from $P_d(c)$. Nevertheless, we have the following 
statement, which is easy to verify.
\begin{lemma}\label{parpic}
For $k\in\mathbb{Z}$ and $\lambda=(\lambda_1,...,\lambda_r)\in\mathbb{Z}^r$, such that $\sum_{i=1}^r\lambda_i=kd$, the line bundle 
\begin{multline*}
\mathcal{L}_d(k; {\lambda})=\mathrm{det}(U_p)^{k(1-g)}\otimes\mathrm{det}(\pi_*U)^{-k}  \otimes(\mathcal{F}_r/\mathcal{F}_{r-1})^{\lambda_1}\otimes...\otimes(\mathcal{F}_1)^{\lambda_r}
\end{multline*} 
on  $P_d(c)$ is independent of the choice of the 
universal bundle $ U $. 
\end{lemma}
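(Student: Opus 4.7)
The plan is to verify the invariance of $\mathcal{L}_d(k;\lambda)$ under the only source of ambiguity in the choice of universal bundle. Over the moduli space of stable objects the automorphism group of each object is $\mathbb{C}^\times$, so any two universal bundles $U, U'$ on $P_d(c)\times C$ differ by a twist $U' = U\otimes\pi^*M$ for some line bundle $M$ on $P_d(c)$, where $\pi:P_d(c)\times C\to P_d(c)$ is the first projection. The flag is likewise determined up to simultaneous twisting: $\mathcal{F}'_i=\mathcal{F}_i\otimes M$. I would then check that the four factors in $\mathcal{L}_d(k;\lambda)$ collectively pick up the trivial line bundle.

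Concretely I would compute, one factor at a time, the exponent of $M$ appearing in the transformation:
\begin{itemize}
\item $\det(U'_p)^{k(1-g)}=\det(U_p)^{k(1-g)}\otimes M^{rk(1-g)}$, since $U_p$ has rank $r$, contributing exponent $rk(1-g)$.
\item By the projection formula, $R\pi_*(U\otimes\pi^*M)=R\pi_*U\otimes M$, and the multiplicativity of the determinant of cohomology gives $\det(\pi_*U')^{-k}=\det(\pi_*U)^{-k}\otimes M^{-k\chi}$ with $\chi=d+r(1-g)$ the fiberwise Euler characteristic (for $d\gg 0$ we have $R^1\pi_*U=0$, so $\pi_*U$ is locally free of rank $\chi$). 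Contribution: $-k\chi$.
\item The flag factor $(\mathcal{F}'_{i+1}/\mathcal{F}'_i)^{\lambda_{r-i}}=(\mathcal{F}_{i+1}/\mathcal{F}_i)^{\lambda_{r-i}}\otimes M^{\lambda_{r-i}}$ for each $i$, and $(\mathcal{F}'_1)^{\lambda_r}=\mathcal{F}_1^{\lambda_r}\otimes M^{\lambda_r}$, so the total contribution from the flag part is $M^{\lambda_1+\cdots+\lambda_r}=M^{kd}$.
\end{itemize}
Summing these exponents gives $rk(1-g)-k\bigl(d+r(1-g)\bigr)+kd=0$, so $\mathcal{L}_d(k;\lambda)$ is independent of the choice of $U$.

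There is essentially no obstacle: the computation is a bookkeeping exercise. The only conceptual point worth flagging is the use of the projection formula together with the determinant-of-cohomology formalism, which explains \emph{why} the factors $\det(U_p)^{k(1-g)}$ and $\det(\pi_*U)^{-k}$ appear with precisely these exponents in the definition of $\mathcal{L}_d(k;\lambda)$: they are tuned exactly so that the unavoidable twist by $M$ cancels against the twist coming from the flag factors under the constraint $\sum_i\lambda_i=kd$. This also matches the analogous construction of the equivariant line bundle $L(k;\lambda)$ on $XQ$ in \S\ref{S2.2}, where the same cancellation is what allows that bundle to carry a $G=PSL(\chi)$-linearization.
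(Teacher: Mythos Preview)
Your argument is correct and is exactly the standard verification the authors have in mind; the paper omits the proof entirely, stating only that the lemma ``is easy to verify.'' One small refinement: since $P_d(c)$ is defined for all $d$ (via Remark~\ref{tensoriso}), in general $R^1\pi_*U$ need not vanish, so $\det(\pi_*U)$ should be read as the determinant of cohomology $\det(R\pi_*U)$; your invocation of the projection formula then gives $\det(R\pi_*U')=\det(R\pi_*U)\otimes M^{\chi}$ with $\chi=d+r(1-g)$ regardless, and the rest of your cancellation goes through unchanged.
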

\begin{remark}
	The line bundle $L(k;\lambda)$ defined in \S\ref{S2.2} descends to the line 
	bundle $\L_d(k;\lambda)$ on the GIT quotient $P_d(c)$.
\end{remark}
\noindent \textbf{Notation:} We will say that  $U$ is \textit{normalized} if 
the line subbundle 
$\mathcal{F}_1\subset U_p$ is trivial. The parameter $k$ is often called the 
\textit{level}.

Let $\omega\in H^2(C)$ be the fundamental class of our curve $C$,  and $e_1, 
..., e_{2g} $ a basis of $H^1(C)$, such that $e_ie_{i+g}=\omega$ for $1\leq 
i\leq g$, and all other intersection numbers $e_ie_j$ equal 0. For a class 
$\delta\in H^*(P\times C)$ of a product, we introduce the following notation 
for its K\"unneth components (cf. 
\cite{Wittenrevisited}):
\begin{equation}\label{Kunneth}
\delta=\delta_{(0)}\otimes1+\sum_i\delta_{(e_i)}\otimes 
e_i+\delta_{(2)}\otimes\omega\in \bigoplus_{i=0}^2H^{*-i}(P)\otimes H^i(C).
\end{equation}
Later, we will need the following formula, which can be proved by a 
straightforward calculation.
\begin{lemma}\label{devisiblelevel} 
$2c_1(\mathcal{L}_d(r; d,...,d)) = c_2(\mathrm{End}_0(U_d))_{(2)},$ where $\mathrm{End}_0$ stands for traceless endomorphisms. 
\end{lemma}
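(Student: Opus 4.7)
The plan is to reduce both sides to a common expression in the Chern classes of $U_d$ on $P_d(c) \times C$, exploiting the fact that $\det U_d$ is essentially pulled back from $P_d(c)$ since we fix the determinant.

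First, since all flag weights equal $d$, the flag product collapses via $\mathcal{F}_r/\mathcal{F}_{r-1} \otimes \cdots \otimes \mathcal{F}_1 = \det U_p$, giving
$\mathcal{L}_d(r;d,\dots,d) = \det(U_p)^{r(1-g)+d} \otimes \det(\pi_*U_d)^{-r}$.
Using $c_1(U_p) = c_1(U_d)_{(0)}$, the LHS becomes $2(r(1-g)+d)\,c_1(U_d)_{(0)} - 2r\,c_1(\pi_*U_d)$. I would then apply Grothendieck--Riemann--Roch to the projection $\pi: P_d(c) \times C \to P_d(c)$: with $\mathrm{Td}(T_\pi) = 1 + (1-g)\omega$ and $R^1\pi_*U_d = 0$ (by the choice of $d$ in \S\ref{S2.2}), and using $\mathrm{ch}_2 = (c_1^2 - 2c_2)/2$, extracting the degree-$2$ part of $\mathrm{ch}(\pi_*U_d)$ yields
$c_1(\pi_*U_d) = \tfrac{1}{2}\,c_1(U_d)^2_{(2)} - c_2(U_d)_{(2)} + (1-g)\,c_1(U_d)_{(0)}$.

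The crucial simplification is that $P_d(c)$ is the fixed-determinant locus $\{\det W \simeq \mathcal{O}(dp)\}$. By the see-saw principle, $\det U_d = \pi_P^*M \otimes \pi_C^*\mathcal{O}(dp)$ for some line bundle $M$ on $P_d(c)$, so $c_1(U_d) = c_1(M)\otimes 1 + d\otimes\omega$ has vanishing mixed Künneth components $c_1(U_d)_{(e_i)} = 0$, and hence $c_1(U_d)^2_{(2)} = 2d\,c_1(U_d)_{(0)}$. Together with the standard identity
$c_2(\mathrm{End}_0 U_d) = 2r\,c_2(U_d) - (r-1)\,c_1(U_d)^2$
(obtained by applying the splitting principle to $U_d \otimes U_d^\vee$), the $(2)$-component of the RHS becomes $2r\,c_2(U_d)_{(2)} - 2(r-1)d\,c_1(U_d)_{(0)}$.

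Substituting the GRR formula and the simplification into the LHS, the $c_1(U_d)_{(0)}$ coefficient works out to $2d - 2rd$ and the $c_2(U_d)_{(2)}$ coefficient to $2r$, matching the RHS. The main obstacle is the consistent bookkeeping of Künneth components and GRR signs; the essential geometric point is the vanishing of the mixed Künneth components $c_1(U_d)_{(e_i)}$ forced by the fixed-determinant condition, which prevents the $H^1(P_d(c))\cdot H^1(P_d(c))$-valued "theta-like" contributions from appearing on either side.
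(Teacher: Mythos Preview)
Your argument is correct and is exactly the kind of straightforward Chern-class computation the paper alludes to (the paper omits the proof entirely, simply asserting it ``can be proved by a straightforward calculation''). One small remark: the assumption $R^1\pi_*U_d=0$ is not actually needed. The symbol $\det(\pi_*U)$ should be read as the determinant of cohomology $\det R\pi_*U$, and GRR computes $c_1(\det R\pi_*U)$ directly as the degree-2 piece of $\pi_*(\ch(U)\,\mathrm{Td}(T_\pi))$ regardless of whether $R^1$ vanishes; so your formula for $c_1(\pi_*U_d)$ holds without that hypothesis.
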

  
\begin{subsection}{Walls and chambers}\label{S2.4}
The central question we address in this paper is how the moduli space of 
stable parabolic bundles depends on the choice of parabolic weights. Let $W$ be 
a vector bundle of degree $ d $ with a fixed full flag $ \mathrm{F}_* $ 
of the 
fiber $W_p$, and 
let us try to determine the structure of the set of parabolic weights 
$c\in\Delta_d$ for which $ W $ is stable. Clearly, for this we need to study 
the set of parabolic weights $ c=(c_1,c_2,\dots c_r) $ for which one can find a 
proper subbundle $ W'\subset W $ such that
\begin{equation}\label{parzero}
\textit{parslope}(W')=\textit{parslope}(W)=0.
\end{equation}
A subbundle  $ W^\p\subset W $ determines a  short exact sequence of parabolic 
bundles $$0\to W^\p\to W\to W^{\pp}\to 0$$  and the position of $ W^\p_p $ with 
respect to $ \mathrm{F}_*$ gives rise to a 
nontrivial partition of the 
set $ \{1,2,\dots,r\}$ into two sets, $\Pi^{\p}$ and  $\Pi^{\pp}$ (cf. \cite{MehtaSeshadri}, definition 1.7); the 
parabolic weights of $W^\p$ and $W^{\pp}$ are then $c^\p=(c_i)_{i\in\Pi^{\p}}$ 
and  $c^{\prime\prime}=(c_i)_{i\in\Pi^{\pp}}$, correspondingly. 
The slope condition \eqref{parzero} translates into a pair of equivalent 
equalities:
\begin{equation}\label{wall}
	d^{\p}=\sum_{i\in\Pi^{\pp}}{c_i}, \quad d^{\pp}=\sum_{i\in\Pi^{\pp}}{c_i},
\end{equation}
where $d^\p$, $d^{\pp}=d-d^\p$  are the degrees  of $W^\p$ and 
$W^{\pp}$, respectively. This means that the critical values of 
$ c\in\Delta_d $ for which \eqref{parzero} is possible 
lie on the union of affine hyperplanes (or \textit{walls}) defined by the 
equations
\[    \sum_{i\in\Pi^{\p}}{c_i}=l,\;\text{where }l\in\Z\text{, and 
}\Pi'\subset\{1,2,\dots,r\}\text{ 
nontrivial.}
 \]

As only finitely many of these walls intersect the simplex $ \Delta_d $, their 
complement is a finite union of open polyhedral \textit{chambers}. It is easy 
to 
verify that as we vary $ c $ inside one of these chambers, the stability 
condition, and thus the moduli space $ 
P_d(c) $ does not change. 

\begin{example}\label{exwall}
Consider the case of rank-3 degree-0 stable parabolic bundles with parabolic 
weights $c=(c_1,c_2,c_3)\in\Delta_0$. The set $\Delta_0$ is an open triangle 
with vertices $(0,0,0), (\frac{2}{3},-\frac{1}{3},-\frac{1}{3})$ and 
$(\frac{1}{3},\frac{1}{3},-\frac{2}{3})$ (cf. Figure \ref{fig:3small}), and 
there exist only two essentially different stability conditions. The wall 
separating the two regimes is given by the condition $c_2=0$. We write $ P_0(>) 
$ for the moduli space
$P_0(c_1,c_2,c_3)$  with $c_2>0$, and $ P_0(<) $ for
$P_0(c_1,c_2,c_3)$ with  $c_2<0$.
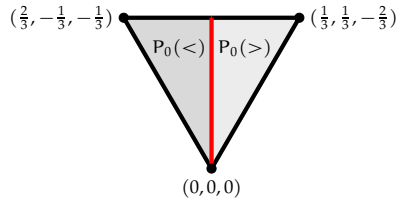
\begin{figure}[H]
\centering
\begin{tikzpicture}[scale=2]
\draw [ultra thick , fill=lightgray!30] (0,0) -- ({1/sqrt(3)},1) -- (0,1);
\draw [ultra thick , fill=gray!30] (0,0) -- ({-1/sqrt(3)},1) -- (0,1);
\draw  [red, ultra thick] (0,0) -- (0,1);
\node [below] at (0,0) {\tiny $(0,0,0)$};
\node [left] at ({-1/sqrt(3)},1) {\tiny $(\frac{2}{3},-\frac{1}{3},-\frac{1}{3})$};
\node [right] at ({1/sqrt(3)},1) {\tiny $(\frac{1}{3},\frac{1}{3},-\frac{2}{3})$};
\draw [fill] (0,0) circle [radius=0.03];
\draw [fill] ({-1/sqrt(3)},1) circle [radius=0.03];
\draw [fill] ({1/sqrt(3)},1) circle [radius=0.03];
\node [above] at ({sqrt(3)/8},{2/3}) {\tiny $P_0(>)$};
\node [above] at ({-sqrt(3)/8},{2/3}) {\tiny $P_0(<)$};
\end{tikzpicture}
\setlength{\belowcaptionskip}{-8pt}\caption{The space of admissible parabolic weights for rank $r=3$.}\label{fig:3small}
\end{figure}
\end{example} 
\end{subsection}
\end{section}

\begin{section}{Wall-crossing in the Verlinde formula}
	\label{sec:wcverlinde}

A key component of our approach is the notion of
{\em diagonal basis}  and the associated
generalized Bernoulli polynomials introduced for general 
hyperplane 
arrangements in \cite{Szimrn}. Using this formalism, we will 
be able to formulate our main result, Theorem 
\ref{main}.

\subsection{Notation}\label{sec:notation}
We begin by setting up some extra notation for the space of
parabolic weights introduced in \S\ref{S2.1}. 

\begin{itemize}
\item Let $ V=\R^r/\R(1,1,\dots,1) $ be the $ r-1 
$-dimensional 
vector space, obtained as the quotient of $ \R^r  $. 
The dual space $ V^* $ is then 
naturally represented as
\[ V^*=\{a=(a_1,\dots,a_r)\in\R^r|\; a_1+\dots+a_r=0\}. \]
Let $ x_1,x_2,\dots,x_r $ be the coordinates on $ \R^r $; 
given $ a\in V^* $, we will write $\scp ax  $ for the 
linear function $ \sum_ia_ix_i $ on $ V $. We will 
sometimes identify this linear function with the vector $ a 
$ itself.
\item The vector space $ V^*$ is endowed with a lattice $ 
\Lambda $ of full rank:
\[ \Lambda=\{\lambda=(\lambda_1,\dots,\lambda_r)\in\Z^r|\; 
\lambda_1+\dots+\lambda_r=0\}. \]
In particular, for  $ 1\le i\neq j\le r $, we can define 
the 
element $ \alpha^{ij} =x_i-x_j $ in $ \Lambda $.
\item 
Our arrangement is the set of  hyperplanes  
$\{x_i=x_j\}\subset V $, $ 1\le i< j\le r $.
It will be convenient for us to think about this set as the set of roots
of the $A_{r-1} $ root system with the opposite roots identified:
\[ \rr = \{\ \pm\alpha^{ij}  |\,1\le i< j\le r\}.  \]
Note that $ V^*$ carries a natural action of the permutation group $ \Sigma_r 
$, permuting the coordinates $ x_j,\,j=1,\dots,r $, \label{gact} and this action restricts 
to an action on $ \Phi $ as well.
\item The basic object of the theory is an \textit{ordered} 
linear basis $ \mathbf{B} $  of $ V^* $ consisting of the 
elements of $ \Phi$. Let us denote the set of these objects 
by $ 
\Bases $:
\[ \Bases= 
\left\{\mathbf{B}=\left(\bta1,\dots,\bta{r-1}\right)
\in\Phi^{r-1}|\;
 \bb 
\text{ -- basis of }  V^*
\right\} \]
\item For  $ \bb\in\Bases $, we will write $ 
\flag(\mathbf{B}) $ for the full flag $$ \left[ V^*=\langle 
\bta1,\bta2,\dots, \bta{r-1}\rangle_{\mathrm{lin}}, \dots,\langle 
\bta{r-1},\bta{r-2} 
	\rangle_{\mathrm{lin}},\langle 
	\bta{r-1}
	\rangle_{\mathrm{lin}} 
\right] ,$$
where $ \langle\cdot\rangle_{\mathrm{lin}} $ stands for 
linear span.
\end{itemize}

\subsection{Diagonal bases}
\begin{definition}
	\begin{itemize}
		\item For $ \tau\in\Sigma_{r-1} $ and  
		$\bb\in\Bases 
		$, we will write $ \bb\circlearrowleft \tau$ for the permuted 
		sequence $ 
		(\bta{\tau(1)},\bta{\tau(2)},\dots,\bta{\tau(r-1)}) 
		$.
		\item For two elements $\bb,\mathbf{C}\in\Bases $ 
		we will write $ \bb\dashv\mathbf{C} $ if for any $ 
		\tau\in\Sigma_{r-1} $, we have $ 
		\flag(\bb\circlearrowleft \tau)\ne\flag(\mathbf{C}) $.
		\item A subset $ \DD\subset\Bases $ of $ (r-1)! $ 
		elements is called a \textit{diagonal basis} if for 
		any two different elements $ \bb,\mathbf{C}\in\DD 
		$, 
		we have  $ \bb\dashv\mathbf{C} $.
	\end{itemize} 
\end{definition}

\begin{remark}
	This definition is motivated by a construction \cite{Szimrn}, which 
	associates to each diagonal basis $ \DD $  a pair of 
	dual 
	bases of the middle homology and the cohomology of the 
	complexified hyperplane arrangement on $ 
	V\otimes_{\R}\C $ defined by $ \Phi $. The 
	dimension of these (co)homology spaces is $ (r-1)! $.
\end{remark}

\subsection{Combinatorial interpretation}
\label{sec:combinat}
	This notion has the following purely combinatorial 
	form.\begin{itemize}
		\item  We can think of $ \Phi $ as the edges of 
		the 
		complete graph on $ r $ vertices. 
		\item Then the set $ \Bases $ may be thought of as 
		the set 
	of spanning trees of this graph with edges 
	enumerated from $ 1 $ to $ r-1 $. We will introduce the 
	notation \label{tree}
\[ \bb \mapsto \tree(\bb)\]
for this ordered tree.
\item In this language, the flag $\flag(\mathbf{B}) $ 
corresponds to a sequence of $ r $ nested 
	partitions of the vertices (starting with the total 
	partition into 
	1-element sets and ending with the trivial 
	partition) associated to $ \tree(\bb) $, the $ j $th 
	partition being the one 
	induced by the first $ j-1 $ edges. For 
	example, the 
	ordered tree $ [(2,4)(1,3),(1,2)] $ induces the same 
	sequence of partitions as $ [(1,4),(2,3),(1,2)] $ (see Figure \ref{fig:hpa})
	\item A diagonal 
	basis $ \DD $ is then a set of $ (r-1)! $ ordered 
	trees such that the $ (r-1)! $ partition sequences 
	obtained by 
	reordering the edges of any one of the ordered trees 
	are different from $ (r-1)!-1 $ sequences of partitions 
	obtained from the remaining elements of $ \DD $. 
	\end{itemize}
	
\begin{figure}[H]\label{fig:hpa}
\centering
\begin{tikzpicture}[scale=1.2]
\draw (0,0)-- (1,1) node [midway,  above, sloped, red] {\tiny 1};
\draw (0,0) -- (1,0) node [midway, below, sloped, red] {\tiny 2};
\draw (0,1) -- (1,1) node [midway, above, sloped, red] {\tiny 3};
\node [below] at (0,0) {\tiny 1};
\draw [fill] (0,0) circle [radius=0.06];
\node [below] at (1,0) {\tiny 2};
\draw [fill] (1,0) circle [radius=0.06];
\node [above] at (0,1) {\tiny 4};
\draw [fill] (0,1) circle [radius=0.06];
\node [above] at (1,1) {\tiny 3};
\draw [fill] (1,1) circle [radius=0.06];
\end{tikzpicture}\hspace{1.5cm}
\begin{tikzpicture}[scale=0.8]
\node [below] at (0,2) {\tiny 1};
\draw [fill] (0,2) circle [radius=0.04];
\draw (0,1.9) circle [radius=0.35];
\node [below] at (1,2) {\tiny 2};
\draw [fill] (1,2) circle [radius=0.04];
\draw (1,1.9) circle [radius=0.35];
\node [below] at (2,2) {\tiny 3};
\draw [fill] (2,2) circle [radius=0.04];
\draw (2,1.9) circle [radius=0.35];
\node [below] at (3,2) {\tiny 4};
\draw [fill] (3,2) circle [radius=0.04];
\draw (3,1.9) circle [radius=0.35];
\node [below] at (0,1) {\tiny 1};
\draw [fill] (0,1) circle [radius=0.04];
\draw (0,0.9) circle [radius=0.35];
\node [below] at (1,1) {\tiny 2};
\draw [fill] (1,1) circle [radius=0.04];
\draw (1,0.9) circle [radius=0.35];
\node [below] at (2,1) {\tiny 3};
\draw [fill] (2,1) circle [radius=0.04];
\node [below] at (3,1) {\tiny 4};
\draw [fill] (3,1) circle [radius=0.04];
\draw  (2.5,0.9) ellipse (0.85 and 0.4);
\node [below] at (0,0) {\tiny 1};
\draw [fill] (0,0) circle [radius=0.04];
\node [below] at (1,0) {\tiny 2};
\draw [fill] (1,0) circle [radius=0.04];
\node [below] at (2,0) {\tiny 3};
\draw [fill] (2,0) circle [radius=0.04];
\node [below] at (3,0) {\tiny 4};
\draw [fill] (3,0) circle [radius=0.04];
\draw  (2.5,-0.1) ellipse (0.85 and 0.4);
\draw  (0.5,-0.1) ellipse (0.85 and 0.4);
\node [below] at (0,-1) {\tiny 1};
\draw [fill] (0,-1) circle [radius=0.04];
\node [below] at (1,-1) {\tiny 2};
\draw [fill] (1,-1) circle [radius=0.04];
\node [below] at (2,-1) {\tiny 3};
\draw [fill] (2,-1) circle [radius=0.04];
\node [below] at (3,-1) {\tiny 4};
\draw [fill] (3,-1) circle [radius=0.04];
\draw  (1.5,-1.15) ellipse (1.8 and 0.55);
\end{tikzpicture}
\vskip 0.2cm
\setlength{\belowcaptionskip}{-8pt}\caption{$\bb=(\alpha^{1,3},\alpha^{1,2},\alpha^{3,4})$}
\end{figure}
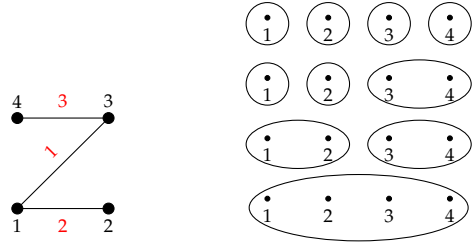

\subsection{Examples}
There are essentially 2 known constructions of diagonal 
bases 
\cite{Szimrn}.

 \textbf{ The Hamiltonian basis.} For each permutation $ 
\sigma\in \Sigma_r $, we can define 
\begin{equation}\label{eq:defsigmaB}
\sigma(\bb) = 
(\alpha^{\sigma(r-1),\sigma(r)},\alpha^{\sigma(r-2),\sigma(r-1)},\dots,
\alpha^{\sigma(1),\sigma(2)})
\in\Bases. 
\end{equation}
The set $ \HH_m
=\{\sigma(\bb)|\;\sigma\in\Sigma_r,\,\sigma(1)=m\} $
is then a diagonal basis. In the combinatorial description, 
this diagonal basis corresponds to the set of Hamiltonian 
paths starting at vertex $ m $, and endowed with the reversed natural 
ordering of edges.
\begin{example}\label{exsymmbasis}\mbox{}
Here are some examples of Hamiltonian bases:  \begin{itemize}
\item for $r=3$:    $\HH_1 = \{(\alpha^{2,3},\alpha^{1,2}), 
(\alpha^{3,2},\alpha^{1,3})\}$,
\item  and for $r=4$: \begin{multline*}
\HH_1 = \{(\alpha^{3,4},\alpha^{2,3},\alpha^{1,2}), (\alpha^{2,4},\alpha^{3,2},\alpha^{1,3}), (\alpha^{4,3},\alpha^{2,4},\alpha^{1,2}), \\ (\alpha^{3,2},\alpha^{4,3},\alpha^{1,4}), (\alpha^{4,2},\alpha^{3,4},\alpha^{1,3}), (\alpha^{2,3},\alpha^{4,2},\alpha^{1,4})
\}
\end{multline*}
\end{itemize}
\end{example}
II. \textbf{ The no-broken-circuit bases.} Let $ 
\upsilon:\{1,\dots,r(r-1)/2\}\to\Phi$ be a total 
ordering, which we will represent as 
an order relation $ \overset{\upsilon}{<} $ on $ \Phi
$. To 
this ordering, one can associate the following, so called 
\textit{noncommutative no-broken-circuit diagonal basis} 
\cite{Szimrn}:
\begin{multline*}
\DD[\upsilon] = 
\left\{\left(\bta1,\dots,\bta{r-1}\right)\in\Bases\left| 
\quad
\bta1\overset{\upsilon}{<}\dots\overset{\upsilon}{<}\bta{r-1},\right.
 \,\text{and}
\right. \\ 
\left. \alpha^{ij}\overset{\upsilon}{<}\bta m\Rightarrow 
(\alpha^{ij},\bta m,\dots,\bta{r-1})\text{ linearly 
independent} \right\}.
\end{multline*}
\begin{example}\label{exnbcbasis}  Let $\alpha^{1,3}\overset{\upsilon}{<}\alpha^{1,4}\overset{\upsilon}{<}\alpha^{2,3}\overset{\upsilon}{<}\alpha^{2,4}\overset{\upsilon}{<}\alpha^{1,2}\overset{\upsilon}{<}\alpha^{3,4}$ be the ordering of the positive roots for rank $r=4$. Then 
\begin{multline*}
\DD[\upsilon] = \{(\alpha^{1,3},\alpha^{1,2},\alpha^{3,4}), (\alpha^{1,3},\alpha^{1,4},\alpha^{2,3}), (\alpha^{1,3},\alpha^{1,4},\alpha^{2,4}), \\ (\alpha^{1,3},\alpha^{1,4},\alpha^{1,2}), (\alpha^{1,3},\alpha^{2,3},\alpha^{3,4}), (\alpha^{1,3},\alpha^{2,3},\alpha^{2,4})
\}
\end{multline*}
is the corresponding no-broken-circuit diagonal basis.
\end{example}
\begin{remark}\label{rem:symmdiag} The hyperplane 
arrangement induced by $ \Phi $ is invariant under the 
natural action of $\Sigma_r $ on the vector space $V$.		
It follows easily from the definition that if $ \DD $ is a 
diagonal basis and $ \sigma\in\Sigma_r $ is a permutation, 
then $ \sigma(\DD) $ is also a diagonal basis.
\end{remark}
\end{section}

\begin{section}{The residue formula and the main result}
	\label{sec:residuemain}

In this section, we recall the residue formula from \cite{Szimrn} for $ 
Ver(k,\lambda) $, the 
discrete Verlinde sum on the right hand side of \eqref{eq:ver0}. The key 
feature of this formula is that it exposes the 
piecewise polynomial nature of $ Ver(k,\lambda) $, which is 
key for our wall-crossing analysis. While the objects are relatively simple, the formalism is heavy with notation, so we begin by describing the 
1-dimensional case. 

\subsection{The residue formula in dimension 1} 
The story begins with the Fourier series
\begin{equation}\label{eq:Bernoulli}
 \frac{1}{(2\pi 
i)^m}\sum_{n\in\Z\setminus{0}}\frac{\exp(2\pi i a n)}{n^m}
\end{equation}
for $m\geq 2$, which is a periodic, piecewise polynomial function given by 
the 
formula
\[ 
\res_{x=0} 
\frac{\exp(\{a\}x)}{1-\exp(x)}\frac{dx}{x^m}, \]
where $ \{a\} $ is the fractional part of the real number $ 
a $. The polynomial functions thus obtained on the 
interval 
$ [0,1] $ are called \textit{Bernoulli polynomials}. The 
polynomial on the interval containing the real number
 $ c\in\R\setminus\Z $  is given by
\[  
\res_{x=0} 
\frac{\exp((a-[c])x)}{1-\exp(x)}\frac{dx}{x^m},  \]
where $ [c] $ is the integer part of $ c $.

Now we pass to a trigonometric version of this formula,
calculating finite  sums of values of rational 
trigonometric functions 
over rational points with denominators equal to an integer 
$ k $. 

We replace thus the rational function $ x^{-m} $ by the 
(hyperbolic) trigonometric function $ f(x)=(2\sinh(x/2))^{-2m} 
$, and introduce an integer parameter $ \lambda $ related 
to $ a $ via $ ka=\lambda $.
We consider the sum of values of the function $ f $ over a finite set of rational points in analogy with \eqref{eq:Bernoulli}: 
\[ \sum_{n=1}^{k-1} 
\frac{exp(2\pi i
\lambda n/k)}{(2\sin(\pi n/k))^{2m}},
 \]
 where $ \lambda,k\in\Z $. 
This sum is again periodic in $ \lambda \mod k$, and for $m\geq 2$ we 
can evaluate 
it via the residue theorem as
\begin{multline*}
(-1)^m\res_{z=1} 
\frac{z^{k\{\lambda/k\}}}{(z^{1/2}-z^{-1/2})^{2m}}
\cdot\frac{k\,dz}{z(1-z^k)}
\overset{z=\exp(x/k)}=\\
(-1)^m\res_{x=0}\frac{\exp(\{\lambda/k\}\cdot 
x)}{1-\exp(x)}\cdot
f(x/k) \,dx.
\end{multline*}
Again, this is a piecewise polynomial function in the pair 
$ (k,\lambda) $, which is polynomial in the cones bounded 
by the lines $ \lambda=qk $, $ q\in\Z $.

Note that in these calculations, a key role is played by 
the Bernoulli operator:
\begin{equation}\label{berop}
 f \mapsto \mathrm{Ber}[f](a) = \frac{f(x)\exp(ax)\, 
 dx}{1-\exp(x)},
\end{equation} 
which transforms meromorphic functions in the variable $ x 
$ into polynomials in $ a $, and  plays the role of a 
generalized Fourier operator.

\begin{subsection}{The multidimensional case}
Now we return to the setup of \S\ref{sec:wcverlinde} with the vector space $ V$ 
endowed with the hyperplane arrangement $ \Phi	 $.   We introduce 
the notation $ \fw $ \textit{for 
the space of meromorphic functions defined in a neighborhood of $ 0 
$ in $ V\otimes_\R \C $ with poles on the union of hyperplanes} 
\[  \bigcup_{1\le i<j\le r}\{x|\;\scp{\alpha^{ij}}x=0\}.  \]
In particular, the function 
\[ w_\Phi=\prod_{i<j} \left( 2\mathrm{sinh}(\pi(x_i-x_j))\right)\]
is an element of $ \fw $.

To write down our residue formula, we need a 
multidimensional generalization of the notions of integer 
and fractional parts. 
Given a basis $ \bb=(\bta1,\dots,\bta{r-1})\in\Bases$ of $ 
V^*$, and an element $ a\in V^* $, we define  $[a]_{\bb} $ 
and  $\{a\}_{\bb} 
$ to be the unique elements of $ V^*$  satisfying 
\begin{itemize}
	\item 	$ [a]_{\bb}=a-\{a\}_{\bb}\in 
	\Lambda$, 
	and
	\item $ \{a\}_{\bb}\in\sum_{j=1}^{r-1}[0,1)\bta j. 
	$
\end{itemize}
This notion naturally induces a chamber structure on $ 
V^* $: we will call $ a\in V^* $ 
\textit{regular}  if $ a $ is a point of continuity for the 
functions $ a\mapsto [a]_{\bb}, \{a\}_{\bb}$ for all $ 
\bb\in\Bases $, i.e. when
$ \{a\}_{\bb}\in\sum_{j=1}^{r-1}(0,1)\bta j$. Now, for 
regular $ 
a $ and $ b $ we  define the equivalence relation 
 \begin{equation}\label{equivalence}
 	 a\sim b 
 \text{ when }[a]_{\bb} = [b]_{\bb}\quad \forall 
 \bb\in\Bases.
 \end{equation} 
The equivalence classes for this relation form a $ 
\Lambda$-periodic system of \textit{chambers} in $ V^* $.

\noindent \textbf{Convention}: We will think of  a  partition $ \Pi $ of $ 
\{1,2,\dots, r\} $ into two nonempty sets as an ordered partition $ 
\Pi=(\Pi^{\p},\Pi^{\pp})$  such that $ r\in\Pi^{\pp} $, and we will call these 
objects \textit{nontrivial partitions} for short.

\begin{lemma}\label{lemmaBwall}
	The equivalence classes of the relation $ \sim $ are precisely the chambers  
	in $ V^* $ created by the walls parameterized by a nontrivial
	 partition $ \Pi=(\Pi^{\p},\Pi^{\pp})$ of the 
	first $ r $ positive integers, and an integer $l$:
\begin{equation}\label{Swall}
		 S_{\Pi,l} =\{c\in V^*|\;\sum_{j\in\Pi^{\p}}c_j=l\}
\end{equation}
\end{lemma}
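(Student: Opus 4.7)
The strategy is to identify, for each basis $\bb\in\Bases$, the discontinuity locus of the map $a\mapsto [a]_\bb$ as an explicit union of affine hyperplanes, and then to check that the union of these loci over all $\bb\in\Bases$ coincides with the family $\{S_{\Pi,l}\}$. Granted this, the equivalence classes of $\sim$ are exactly the connected components of $V^*\setminus\bigcup_{\Pi,l}S_{\Pi,l}$, which is the chamber structure asserted by the lemma.

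The first step is that every $\bb=(\bta1,\dots,\bta{r-1})\in\Bases$ is in fact a $\Z$-basis of the root lattice $\Lambda$: combinatorially, $\bb$ corresponds to a spanning tree $\tree(\bb)$ of the complete graph on $r$ vertices (cf.\ \S\ref{sec:combinat}), and any root $\alpha^{pq}$ telescopes along the unique $pq$-path in this tree into an integer combination of the $\bta j$. Expanding $a=\sum_j t_j(a)\,\bta j$ in real coefficients thus yields
\[
[a]_\bb=\sum_j \lfloor t_j(a)\rfloor\,\bta j,\qquad \{a\}_\bb=\sum_j \{t_j(a)\}\,\bta j,
\]
so $a\mapsto[a]_\bb$ is discontinuous precisely on $\bigcup_{j}\{a\in V^*: t_j(a)\in\Z\}$.

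The central step is to identify the linear functional $t_j\in V^{**}=V$. Removing the edge of $\tree(\bb)$ labelled $\bta j$ disconnects the tree into two components with vertex sets $\Pi^\p_j$ and $\Pi^\pp_j$; the class $[\mathbf{1}_{\Pi^\p_j}]\in V=\R^r/\R(1,\dots,1)$ then pairs to $\pm 1$ with $\bta j$ and to $0$ with every other $\bta i$, since each other tree edge lies entirely inside one of the two components. Hence $t_j(a)=\pm\sum_{i\in\Pi^\p_j}a_i$, so $\{t_j=l\}$ is precisely the wall $S_{(\Pi^\p_j,\Pi^\pp_j),l}$. The reverse containment is elementary: any nontrivial partition $\Pi=(\Pi^\p,\Pi^\pp)$ arises from some $\bb$ in this way, by joining an arbitrary spanning tree on $\Pi^\p$ to an arbitrary spanning tree on $\Pi^\pp$ by a single edge. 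The only point requiring care is this dual-pairing identification: one must track the sign choices arising from the quotient representation of $V$ and from the convention $r\in\Pi^\pp$, but the correspondence is purely bookkeeping and presents no substantive obstacle.
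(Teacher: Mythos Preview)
Your argument is correct and is precisely the natural way to establish the lemma. The paper itself states Lemma~\ref{lemmaBwall} without proof, evidently regarding it as routine; your write-up supplies the details. The key identification of the dual basis vector to $\bta j$ with the indicator class $[\mathbf{1}_{\Pi^\p_j}]\in V$ is exactly the content that the paper later invokes informally in Lemma~\ref{lemmaBBwall} (where the same tree-cutting picture reappears and is again declared ``straightforward''). One small point you leave implicit: to conclude that equivalence classes coincide with connected components, you need not only that each $a\mapsto[a]_\bb$ is locally constant off the walls, but also that adjacent chambers are genuinely inequivalent; this follows immediately since crossing $S_{\Pi,l}$ forces $\lfloor t_j\rfloor$ to jump for any $\bb$ realizing $\Pi$ as a tree cut.
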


\begin{remark}\label{rem:2walls}
Note that the walls given in \eqref{Swall} are precisely 
the same as the ones given in \eqref{wall} for the case 
$ d=0 $, where 
they play the role of walls separating the chambers of 
parabolic weights $ c $ in which the parabolic moduli 
spaces $ P_0(c) $ are naturally the same. This "coincidence" is precisely 
what we need for our comparative wall-crossing strategy.
There is a small terminological issue here: the "chambers" 
in \S\ref{S2.4} are the intersections of the 
equivalence classes of $ \sim $ defined above with the open
simplex $ 
\Delta_0\overset{\mathrm{def}}=\Delta$, where the parabolic weights live (cf. Figures \ref{fig:3small} and \ref{fig:3big}). We will use 
the term 
"chamber" in both cases if this causes no confusion.
\end{remark}
\begin{figure}[H]
\centering
\begin{tikzpicture}[scale=2]
\draw  (0,1) -- (0,2) node [midway, above, sloped] {\tiny $S_{(\{2\}, \{1,3\}),0}$};
\draw  [red] (0,0) -- (0,2);
\draw [ultra thick , fill=lightgray!30] (0,0) -- ({1/sqrt(3)},1) -- (0,1);
\draw [ultra thick , fill=gray!30] (0,0) -- ({-1/sqrt(3)},1) -- (0,1);
\draw  [red, ultra thick] (0,0) -- (0,1);
\draw  (0,2) -- ({sqrt(3)},1) node [midway, above, sloped] {\tiny $S_{(\{1,2\}, \{3\}),1}$};
\draw (0,2) -- ({-sqrt(3)},1) node [midway, above, sloped] {\tiny $S_{(\{1\}, \{2,3\}),1}$};
\draw  [red] (0,2) -- ({sqrt(3)},1);
\draw [red] (0,2) -- ({-sqrt(3)},1);
\node [below] at (0,0) {\tiny (0,0,0)};
\node [right] at ({sqrt(3)},1) {\tiny (0,1,-1)};
\node [left] at ({-sqrt(3)},1) {\tiny (1,-1,0)};
\node [above] at (0,2) {\tiny (1,0,-1)};
\node [above] at ({sqrt(3)/8},{2/3}) {\tiny $P_0(>)$};
\node [above] at ({-sqrt(3)/8},{2/3}) {\tiny $P_0(<)$};
\draw (0,0) -- ({sqrt(3)},1) node [midway, below, sloped] {\tiny $S_{(\{1\}, \{2,3\}),0}$};
\draw (0,0) -- ({-sqrt(3)},1) node [midway, below, sloped] {\tiny $S_{(\{1,2\}, \{3\}),0}$};
\draw [red] (0,0) -- ({sqrt(3)},1);
\draw [red] (0,0) -- ({-sqrt(3)},1);
\end{tikzpicture}
\setlength{\belowcaptionskip}{-8pt}\caption{Chambers for rank $r=3$.}\label{fig:3big}
\end{figure}
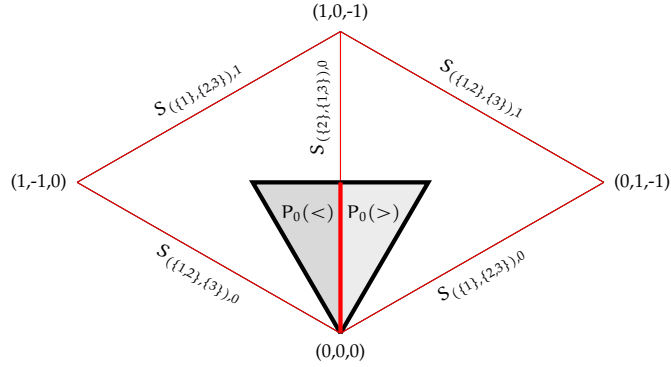

Each element $ 
\mathbf{B}=(\beta^{[1]},\dots,\beta^{[r-1]})\in\Bases $ 
defines an \textit{iterated} version of the Bernoulli 
operator
\eqref{berop} on the space of functions $\fw$:
 interpreting the elements $ a  , \bta j \in V^*$ as linear 
 functions on $ V $, we define

  \begin{equation}\label{defiber}
	\iber_{\mathbf{B}}   \left[ f(x) \right](a)=
\frac1{(2\pi i)^{r-1}} \int\displaylimits_{Z(\bb)} \frac{ 
	f(x)\exp \scp {a}x\;d\scp{\beta^{[1]}}x
	\wedge 
	\dots\wedge d\scp{\beta^{[r-1]}}x\;}{(1-exp(\scp{\beta^{[1]}}x))\;
	\dots(1-exp(\scp{\beta^{[r-1]}}x))\;},
\end{equation}
where the naturally oriented cycle $ Z_{\bb} $
is defined by
\[ Z_{\bb} = \{v\in 
V\otimes_{\R}\C:\,|\scp{\beta^{[j]}}x|\;=\varepsilon_j,\, 
j=\,\dots,r-1 \}\subset 
V\otimes_{\R}\C\setminus\{w_\Phi(x)=0\},\]
with real constants $ \varepsilon_j $ satisfying
$ 0\le\varepsilon_{r-1}\ll\dots\ll \varepsilon_{1} $. Thus 
again,  $ \iber_\bb $ 
is a linear operator associating to a function in $ \fw $ 
a polynomial on $ V^* $.

\begin{remark}\label{rem:iterated}
Let us make a small remark about the computational aspects 
of the operator $ \iber_\textbf{B} $. Denoting the coordinate $ 
\scp{\beta^{[j]}}x $ by $ y_j $, $ j=1,\dots,r-1 $, and 
writing $ f $ and $ a $ in these coordinates: $ f(x)=\hat 
f(y) $, $ \scp ax=\scp {\hat a}y $, we can rewrite 
\eqref{defiber} 
as 
\[  \iber_{\mathbf{B}}   \left[ f(x) \right] (a) =
\res_{y_1=0}\dots\res_{y_{r-1}=0}\frac{ 
	\hat f(y)\exp \scp {\hat a}y\;dy_1
	\wedge\dots\wedge dy_{r-1}}{(1-exp(y_1))\;
	\dots(1-exp(y_{r-1}))\;},
\]
where \textit{iterating} the residues here means that we keep the 
variables with lower indices as unknown constants, and then use 
geometric series expansions of the type
\[ \frac{1}{1-exp(y_1-y_2)} = \frac{y_1-y_2}{1-exp(y_1-y_2)}\cdot\frac1{y_1-y_2}=\frac{y_1-y_2}{1-exp(y_1-y_2)}\cdot \sum_{n=0}^\infty 
\frac{y_2^n}{y_1^{n+1}}. \]
\end{remark}
\end{subsection}
\subsection{Invariance of diagonal bases and the main 
results}\label{S4.3}

Diagonal bases have the following key invariance property.
\begin{theorem}[\cite{Szimrn}]
\label{diaginv}
Let $ f\in\fw $, and $ c\in V^* $ be regular; let $ \DD $ 
be a diagonal basis  of $ \Phi $. Then the functional (cf. 
\eqref{defiber} above)
\[ f\mapsto \sum_{\bb\in\DD}
\iber_{\bb}[f(x)](a-[c]_{\bb})  \]
transforming a meromorphic function $ f\in\fw  $ into a 
polynomial in the variable $ a\in V^*$ is independent of 
the choice of the 
diagonal basis $ \DD$. In particular, for regular $a\in V^*$, the functional
\begin{equation}\label{eq:defbertrafo}
	f\mapsto 
\sum_{\bb\in\DD}
\iber_{\bb}[f(x)](\{a\}_{\bb}) 
\end{equation}
transforms $ f $ into a well-defined piecewise polynomial 
function on $ V^* $, which is polynomial in each chamber.
\end{theorem}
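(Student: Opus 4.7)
The plan is to prove invariance by realizing $\iber_\bb$ as a pairing with a small torus cycle in the complement of the arrangement $\Phi$ in $V \otimes_\R \C$, and then identifying the signed sum of such cycles over a diagonal basis with a canonical homology class that is intrinsically associated with the arrangement and the chamber of $c$.

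First, I would unpack the definition via Remark \ref{rem:iterated}: in the coordinates $y_j = \scp{\beta^{[j]}}x$, the operator $\iber_\bb[f](a-[c]_\bb)$ is a nested iterated residue at the origin, corresponding to integration over the torus $Z_\bb = \{|y_j| = \varepsilon_j\}$ with scales $\varepsilon_{r-1} \ll \dots \ll \varepsilon_1$ encoding the flag $\flag(\bb)$. Geometrically, this probes the hyperplanes $\{\bta j = 0\}$ in the order dictated by $\bb$. The shift by $[c]_\bb$ translates the integrand by a lattice vector in $\Lambda$, selecting the fundamental domain of $2\pi i \Lambda$ that contains $c$; regularity of $c$ ensures this choice is unambiguous across all $\bb \in \Bases$.

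Next, I would invoke the construction from \cite{Szimrn} attaching to each diagonal basis $\DD$ a set of $(r-1)!$ middle-homology classes forming a basis of $H_{r-1}$ of the complexified arrangement complement, dual to an explicit cohomology basis. The condition $\bb \dashv \mathbf{C}$ forces the flags $\flag(\bb\circlearrowleft\tau)$ for $\bb\in\DD$ and $\tau\in\Sigma_{r-1}$ to exhaust, with no repetition, all $(r-1)!^2$ ordered flags compatible with $\Phi$ at the origin. Consequently, the signed sum $\sum_{\bb\in\DD}[Z_\bb]$, with orientations fixed by the ordering of the $\bta j$, may be identified with a symmetrization over $\Sigma_{r-1}$ that only records the unordered flag data — a structure intrinsic to $\Phi$ and to the chamber containing $c$, hence independent of the choice of $\DD$.

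The main obstacle will be rigorously establishing the cycle sum identity. One direct route is to compare two diagonal bases through a chain of local exchanges (swapping one ordered spanning tree for another that shares a flag under reordering) and verify invariance at each step via a partial-fraction identity for rational differential forms on the arrangement complement; this reduces the claim to a finite combinatorial check. A more structural alternative is to exploit the $\Sigma_r$-equivariance noted in Remark \ref{rem:symmdiag}, reducing to the canonical Hamiltonian diagonal basis $\HH_m$ of Example \ref{exsymmbasis}, where the symmetrization can be written down explicitly. The most delicate bookkeeping will be tracking how the shifts $[c]_\bb$ — distinct elements of $\Lambda$ for distinct $\bb$ — patch together coherently across $\DD$ to produce a single piecewise polynomial on chambers, as needed for the derivative statement \eqref{eq:defbertrafo}.
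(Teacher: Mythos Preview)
The paper does not contain a proof of this theorem: it is quoted as a result from \cite{Szimrn}, and the text moves on immediately to Lemma~\ref{lem:symres}. So there is no ``paper's own proof'' to compare your proposal against; the only in-paper content bearing on the argument is the remark after the definition of diagonal bases, which says that to each diagonal basis $\DD$ one associates a pair of dual bases of the middle homology and cohomology of the complexified arrangement complement.

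Your sketch is in the spirit of that remark and of the construction in \cite{Szimrn}: interpret $\iber_\bb$ as pairing with the torus cycle $Z_\bb$, and argue that the sum of these cycles over a diagonal basis represents a canonical class independent of $\DD$. That is the right framework. However, as written your proposal is a plan rather than a proof: the crucial step, that $\sum_{\bb\in\DD}[Z_\bb]$ (with the shifts $[c]_\bb$) is independent of $\DD$, is stated but not established, and you yourself flag it as ``the main obstacle.'' A couple of specific cautions: your claim that the condition $\bb\dashv\mathbf{C}$ forces the reordered flags to ``exhaust, with no repetition, all $(r-1)!^2$ ordered flags'' overstates what the diagonal basis condition gives you --- it only prevents collisions between distinct elements of $\DD$, not exhaustion of anything. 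And the $\Sigma_r$-equivariance of Remark~\ref{rem:symmdiag} tells you that $\sigma(\DD)$ is again a diagonal basis, but does not by itself let you connect two arbitrary diagonal bases by group elements, so your ``reduce to $\HH_m$'' route needs more. The partial-fractions/local-exchange idea is closer to how the argument actually goes in \cite{Szimrn}, but you would need to carry it out.
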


As this functional is invariantly defined, it is not 
surprising that it is equivariant with respect to the 
symmetries of our hyperplane arrangement. For $ \sigma\in\Sigma_r $, we define, as usual
\begin{equation}\label{eq:sigmaaction}
\sigma\cdot f(x)= f(\sigma^{-1}x).
\end{equation}
This convention is consistent with \eqref{eq:defsigmaB}.
\begin{lemma}\label{lem:symres}
Let $ f\in\fw $, and $ \sigma\in\Sigma_r $, and pick any diagonal basis $ \DD $.
Then
\[ \sum_{\bb\in\DD}
\iber_{\bb}[f(x)](\sigma\cdot a-[\sigma\cdot c]_{\bb}) =\sum_{\bb\in\DD}
\iber_{\bb}[\sigma^{-1}\cdot f(x)](a-[c]_{\bb}) \]
\end{lemma}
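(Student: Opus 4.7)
The plan is to exploit two equivariance properties -- one for the operator $\iber_{\bb}$ and one for the lattice-reduction map $c\mapsto [c]_{\bb}$ -- together with the invariance of the Bernoulli functional under change of diagonal basis (Theorem \ref{diaginv}) and the fact that $\sigma(\DD)$ is again a diagonal basis whenever $\DD$ is (Remark \ref{rem:symmdiag}).

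First I would establish the basic equivariance
\[
 \iber_{\sigma\bb}[\sigma\cdot f](\sigma\cdot a) \;=\; \iber_{\bb}[f](a),
\]
which should fall out of the integral definition \eqref{defiber} after the linear change of variables $x=\sigma y$. Under this substitution, $f(x)=(\sigma^{-1}\cdot f)(y)$, each pairing $\langle\beta^{[j]},x\rangle$ becomes $\langle\sigma^{-1}\beta^{[j]},y\rangle$ since $\sigma$ is orthogonal, the cycle $Z(\bb)$ maps to $Z(\sigma^{-1}\bb)$ with the natural orientation, and both the measure and the denominators transform covariantly. Care must be taken to verify that the cycle's natural orientation is preserved (no sign from $\det\sigma$ appears because the contour orientation is determined coordinate-by-coordinate by the basis vectors themselves). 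Rewriting the resulting identity gives the equivariance displayed above.

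Next, I would observe the parallel equivariance of the lattice-reduction map: the defining relation $c-[c]_{\bb}\in\sum_j[0,1)\beta^{[j]}$, after applying $\sigma$, reads
\[
 \sigma c-\sigma[c]_{\bb}\in\sum_j[0,1)\sigma\beta^{[j]},
\]
and since $\sigma$ preserves the lattice $\Lambda$, uniqueness of the reduction forces $[\sigma c]_{\sigma\bb}=\sigma[c]_{\bb}$. Combining these two equivariances,
\[
 \iber_{\sigma\bb}[f]\bigl(\sigma a-[\sigma c]_{\sigma\bb}\bigr)
 =\iber_{\sigma\bb}[f]\bigl(\sigma(a-[c]_{\bb})\bigr)
 =\iber_{\bb}[\sigma^{-1}\cdot f](a-[c]_{\bb}).
\]

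Finally, I would apply Theorem \ref{diaginv} to the left-hand side to replace the diagonal basis $\DD$ by $\sigma(\DD)$, which is diagonal by Remark \ref{rem:symmdiag}:
\[
 \sum_{\bb\in\DD}\iber_{\bb}[f]\bigl(\sigma\cdot a-[\sigma\cdot c]_{\bb}\bigr)
 =\sum_{\bb'\in\sigma(\DD)}\iber_{\bb'}[f]\bigl(\sigma\cdot a-[\sigma\cdot c]_{\bb'}\bigr),
\]
and then reindex via $\bb'=\sigma\bb$ and apply the combined equivariance above term-by-term to recover $\sum_{\bb\in\DD}\iber_{\bb}[\sigma^{-1}\cdot f](a-[c]_{\bb})$, which is the right-hand side. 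The main technical obstacle I anticipate is bookkeeping: pinning down the correct left/right convention on the action of $\Sigma_r$ (compare \eqref{eq:defsigmaB} and \eqref{eq:sigmaaction}) and checking that the small-circle orientation of $Z(\bb)$ transforms as advertised; once these conventions are fixed, each of the three ingredients is a one-line verification.
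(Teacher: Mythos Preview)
Your proposal is correct and follows essentially the same approach as the paper: a linear substitution $x=\sigma(y)$ to establish the equivariance of $\iber_{\bb}$, the lattice-reduction equivariance $[\sigma c]_{\sigma\bb}=\sigma[c]_{\bb}$, and then the diagonal-basis invariance of Theorem~\ref{diaginv} combined with Remark~\ref{rem:symmdiag}. The only cosmetic difference is the order of operations: the paper performs the substitution first (arriving at a sum over $\sigma^{-1}(\DD)$) and then invokes Theorem~\ref{diaginv} to replace $\sigma^{-1}(\DD)$ by $\DD$, whereas you invoke Theorem~\ref{diaginv} first to pass from $\DD$ to $\sigma(\DD)$ and then reindex; your version has the modest advantage of making the two equivariance identities explicit rather than leaving them inside a one-line substitution.
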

\begin{proof}
Indeed, it is sufficient to note that  
\[ \scp{\sigma\cdot a-[\sigma\cdot c]_{\bb}}x=  \scp{\sigma\cdot (a-[c]_{\bb})}x
=  \scp{a-[c]_{\bb}}{\sigma^{-1}(x)}, \]
perform the linear substitution $ x=\sigma(y) $, and 
conclude that 
\[ \sum_{\bb\in\DD}
\iber_{\bb}[f(x)](\sigma\cdot a-[\sigma\cdot c]_{\bb})
=\sum_{\bb\in\sigma^{-1}(\DD)}
\iber_{\bb}[\sigma^{-1}\cdot f(x)](a-[c]_{\bb}).  \] 
Now the statement follows from the fact that $ 
\sigma\in\Sigma $ takes a diagonal basis to  another 
diagonal basis (cf. Remark \ref{rem:symmdiag}). 
\end{proof}

\begin{remark}\label{hamform}
By picking the Hamiltonian diagonal basis $ \HH_1=\{\sigma\cdot\bb_0|\;\sigma\in\mathrm{Stab}(1,\Sigma_r)\} $,  we can turn the argument in the proof above around, and obtain the following formula:
\begin{multline*}\label{hamformula}
\sum_{\bb\in\HH_1}
\iber_{\bb}[f(x)](a-[c]_{\bb})=
\sum_{\sigma\in\mathrm{Stab}(1,\Sigma_r)}
\iber_{\bb_0}[\sigma\cdot f(x)](\sigma\cdot a-[\sigma\cdot 
c]_{\bb})=
\\
\res_{y_1=0}\dots\res_{y_{r-1}=0}
\sum_{\sigma\in\mathrm{Stab}(1,\Sigma_r)}
\frac{ 
	\sigma\cdot f(y)\exp \scp {\sigma\cdot a-[\sigma\cdot 
	c]_{\bb} }y\;dy_1
	\wedge\dots\wedge dy_{r-1}}{(1-exp(y_1))\;
	\dots(1-exp(y_{r-1}))\;},
\end{multline*}
where
\[ {\bf{B_0}} = 
(y_{1}=x_{r-1}-x_{r},\dots,y_{r-2}=x_2-x_3,y_{r-1}=x_1-x_2)\in\Bases.
 \]

\end{remark}
Now we are ready to write down the residue formula for the 
Verlinde sums proved in \cite[Theorem 4.2]{Szduke}. Recall that we denoted by 
$ \Ver(k,\lambda) $ the finite sum on the right hand side of \eqref{eq:ver0}.

\begin{theorem}\label{residuethm}
Let $g\geq 1$, $ k\in\Z^{>0} $, $ 
\lambda
\in \Lambda$, and let $ \DD $
 be any diagonal basis of $ \Phi$. Introducing the 
notation $ \kk=k+r $, and $ \lala=\lambda+\rho $, we have  
\begin{equation}\label{eq:residueformula}	 
\Ver(k,\lambda)
= \tilde{N}_{r,k} \sum_{\bb\in\DD}
\iber_{\bb}  
\left[w_\Phi^{1-2g}(x/\kk)\right] \left( 
\lala/\kk-[\, \widehat{\k c} \,]_{\bb}\right),
\end{equation}
where  $\tilde{N}_{r,k}=(-1)^{{r \choose 2}(g-1)} N_{r,k}$ (cf. \eqref{eq:ver0}) and $\widehat{\k c}\in V^*$ is a regular point in a chamber that contains $\lala/\kk$ in its closure.
\end{theorem}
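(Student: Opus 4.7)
My plan is to exploit the diagonal basis invariance of Theorem \ref{diaginv}, which allows the choice of a convenient $\DD$, namely the Hamiltonian basis $\HH_1$. By Remark \ref{hamform}, the corresponding sum of iterated Bernoulli operators collapses into a single iterated residue in coordinates $y_j=x_{r-j}-x_{r-j+1}$, symmetrized over $\mathrm{Stab}(1,\Sigma_r)$. The goal is then to recognize the discrete Verlinde sum $\Ver(k,\lambda)$ as precisely this iterated contour integral.

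The bridge is the one-variable trigonometric residue identity recalled in \S\ref{sec:residuemain}: after the substitution $z_i=\exp(x_i/\kk)$, a sum of the form $\sum_{n=1}^{\kk-1}\exp(2\pi i\mu n/\kk)/(2\sin(\pi n/\kk))^{2m}$ rewrites as a residue at $x_i=0$ with the Bernoulli kernel $1/(1-\exp(x_i))$ multiplied by $\exp(\{\mu/\kk\}x_i)$. I would apply this identity coordinate by coordinate to the $(r-1)$-fold sum $\Ver(k,\lambda)$. The sine factors in the denominator of \eqref{eq:ver0} assemble into the hyperbolic product $w_\rr(x/\kk)^{2g-1}$, the $(-i)^{\binom{r}{2}}$ prefactor is absorbed by converting sines to sinhes, and the Jacobian of the substitution combines with $N_{r,k}$ to give $\tilde N_{r,k}$. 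The output is exactly the integrand of $\iber_{\mathbf{B}_0}[w_\rr^{1-2g}(x/\kk)]$ evaluated at an appropriate translate of $\lala/\kk$.

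The $\rho$-shift $\lala=\lambda+\rho$ emerges naturally: the antisymmetrization built into the numerator of \eqref{eq:ver0} matches, under Remark \ref{hamform}, the $\mathrm{Stab}(1,\Sigma_r)$-symmetrization applied to $\lambda+\rho$ rather than $\lambda$. The regularization point $\widehat{\k c}$ is needed because the iterated residue uses a polydisk cycle with nested radii $\varepsilon_{r-1}\ll\dots\ll\varepsilon_1$, and this nesting corresponds combinatorially, via Remark \ref{rem:iterated} and Lemma \ref{lemmaBwall}, to a specific chamber selecting the fractional parts $\{\cdot\}_\bb$. Any regular $\widehat{\k c}$ lying in the closure of the same chamber as $\lala/\kk$ yields the same answer by the chamber-constancy guaranteed by Theorem \ref{diaginv}.

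The main obstacle is the bookkeeping of the integer shifts $[\widehat{\k c}]_\bb$ needed to match the geometric series expansions implicit in the iterated residue with the correct chamber of the piecewise polynomial answer. When $\lala/\kk$ itself lies on a wall from \eqref{Swall}, one must verify that the limiting value from any chamber containing $\lala/\kk$ in its closure coincides with $\Ver(k,\lambda)$; showing that the formula extends continuously from the interior of each adjacent chamber to points of the wall will be the most delicate step, and is ultimately what justifies invoking an auxiliary regular point $\widehat{\k c}$ at all.
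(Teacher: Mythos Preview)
The paper does not prove this theorem at all: it is quoted verbatim from \cite[Theorem~4.2]{Szduke}, and the present article uses it as a black box. So there is no ``paper's own proof'' to compare against; any argument you supply would be filling in a genuine external reference.

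Your outline captures the right \emph{shape} of the argument in \cite{Szduke}, but the phrase ``apply this identity coordinate by coordinate'' hides the only real difficulty, and as written it would not go through. The one-variable identity you recall works because the only poles of the integrand in the annulus are at the $\kk$-th roots of unity. In the multivariable situation the function $w_\Phi^{1-2g}(x/\kk)$ has poles along \emph{all} the hyperplanes $\{x_i=x_j\}$, not just along the coordinate hyperplanes $\{y_j=0\}$ of your chosen basis. When you attempt the residue in $y_{r-1}$ with the other $y_j$ held fixed, you therefore pick up contributions from these diagonal poles in addition to the Bernoulli-kernel pole, and the resulting expression is no longer of the same form; the next residue cannot be taken ``in the same way''. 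This is precisely why the iterated residue $\iber_\bb$ is defined with the nested radii $\varepsilon_{r-1}\ll\cdots\ll\varepsilon_1$: the ordering dictates which diagonal poles lie inside the contour at each stage, and the answer genuinely depends on this ordering (hence on $\bb$, hence on the chamber). The content of \cite{Szduke} is exactly the contour-deformation argument that tracks these extra contributions and shows that, once summed over a diagonal basis, they organize into the Verlinde sum. Your proposal asserts the conclusion of that argument without supplying the mechanism.

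A secondary point: you say the $\rho$-shift arises from an ``antisymmetrization built into the numerator of \eqref{eq:ver0}'', but the numerator there is simply $\exp(2\pi i\,\lala\cdot x)$ with $\lala=\lambda+\rho$ already inserted; there is no antisymmetrization to unpack. The $\rho$ is present in the definition of $\Ver$, not produced by the residue manipulation.
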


Now, if we look at our main goal \eqref{eq:ver0}: 
proving the equality
\begin{equation}\label{mainequality}
	 \Ver(k,\lambda)  = 
\chi(P_0(\lambda/k),\L_0({k;\lambda})),
\end{equation} 
then we discover a rather embarrassing mismatch. Both sides 
are piecewise polynomial functions, however,
\begin{itemize}
\item according to the HRR theorem, $ 
\chi(P(\lambda/k),\L_0({k;\lambda})) $ is  polynomial on the 
cones over the the equivalence classes (cf. \eqref{equivalence}) of $ 
\lambda/k $, while 
\item  according to \eqref{eq:residueformula}, $ 
\Ver(k,\lambda) $ is  polynomial on the cones over 
the equivalence classes  of $ \lala/\kk $,
\end{itemize}
and these conic partitions of $ \{(k,\lambda)|\;\lambda/k\in\Delta\}$ could clearly be different (cf. Figure \ref{orangegreen} for a sketch of this problem).

\begin{figure}[H]\label{fig4}
\begin{tikzpicture}[scale=0.6]
\path[fill=orange!20] (4.5,6) -- (6,6) -- (3,3) ;
\draw  [lightgray, ->] (0,0) -- (9,0);
\node [right] at (9,0) {\tiny $\lambda$};
\draw  [red, ->] (0,-3) -- (0,6);
\node [above] at (0,6) {\tiny $k$};
\draw [red] (0,0) -- (3,6);
\draw [red] (0,0) -- (6,6);
\draw  [gray] (0,1) -- (9,1);
\node [left] at (0,1) {\tiny $1$};
\draw [ultra thick, teal] (0,1) -- (0.5,1);
\draw [ultra thick, orange] (0.5,1) -- (1,1);
\draw  [lightgray] (0,-3) -- (9,-3);
\node [left] at (0,-3) {\tiny $-r$};
\node [left] at (0,-2) {\tiny $-r+1$};
\node [left] at (0,0) {\tiny $(0,0)$};
\draw [purple] (0,-3) -- (4.5,6);
\draw [purple] (0,-3) -- (9,6);
\draw  [dashed,purple] (0,-3) -- (0,6);
\draw  [gray] (0,-2) -- (9,-2);
\draw [ultra thick, teal] (0,-2) -- (0.5,-2);
\draw [ultra thick, orange] (0.5,-2) -- (1,-2);
\draw  [dashed, lightgray] (0,5) -- (9,5);
\node [above] at (3.05,5) {\tiny $\lambda$};
\node [above] at (3.6,5) {\tiny $\widehat{\lambda}$};
\node [above] at (0.6,1) {\tiny $\frac{\lambda}{k}$};
\draw [fill] (0.425,-2) circle [radius=0.035];
\node [above] at (0.42,-2) {\tiny $\frac{\widehat{\lambda}}{\widehat{k}}$};
\draw [ultra thick, teal] (0,5) -- (2.5,5);
\draw [ultra thick, orange] (2.5,5) -- (5,5);
\draw [ultra thick, teal] (0,4.85) -- (3.93,4.85);
\draw [ultra thick, orange] (3.93,4.85) -- (7.85,4.85);
\draw [dashed, lightgray] (3.1,5) -- (0,0);
\draw [dashed, lightgray] (3.4,5) -- (0,-3);
\draw [fill] (3.1,5) circle [radius=0.035];
\draw [fill] (3.4,5) circle [radius=0.035];
\draw [fill] (3.1,5) circle [radius=0.035];
\draw [fill] (3.4,5) circle [radius=0.035];
\draw [fill] (0.62,1) circle [radius=0.035];
\end{tikzpicture}
\setlength{\belowcaptionskip}{-8pt}\caption{$\lambda/k$ is in the orange chamber, while ${\widehat{\lambda}}/{\widehat{k}}$ is in the green chamber.}\label{orangegreen}
\end{figure}
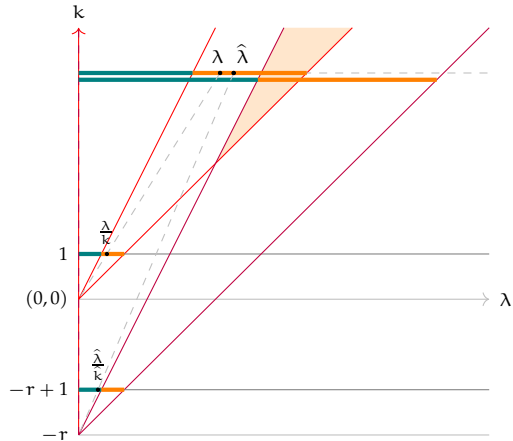

Thus for  \eqref{eq:ver0} to be true, some miracle needs to occur, and these 
miracles are well-known in the area of "quantization commutes with reduction" 
\cite{MSj, Vergne, SzV}. We will return to this problem in \S\ref{S10}, but for 
now, we will be satisfied to use \eqref{eq:residueformula} to write down a 
(conjectural for the moment) formula for $ 
\chi(P_0(\lambda/k),\L_0({k;\lambda})) $, which is manifestly polynomial on the 
cones where $ \lambda/k $ is in a fixed equivalence class.

Let us fix a regular $ c\in\Delta $ marking a particular chamber in $ \Delta $.
The two cones $ \{(k;\lambda)|\,\lambda/k\sim c\} $ and  $ 
\{(k;\lambda)|\,\lala/\kk\sim c\} $ intersect along an open cone (this cone is 
shaded in orange on Figure \ref{orangegreen}),
 and on this intersection, the expression
\begin{equation}\label{goodexpr}
	 \sum_{\bb\in\DD}
\iber_{\bb}  
\left[w_\Phi^{1-2g}(x/\kk)\right] \left( 
\lala/\kk-[\lambda/k]_{\bb}\right) 
\end{equation} 
coincides with the right hand side of \eqref{eq:residueformula}. As  
\eqref{goodexpr} is manifestly polynomial on each cone where $ \lambda/k $ is 
in a particular chamber in $ \Delta $, this expression will be then our main 
candidate for $ \chi(P_0(\lambda/k),\L_0({k;\lambda})) $.

Our plan is thus to split the proof of \eqref{mainequality} into three parts: 
the first is equality \eqref{eq:residueformula}, and the other two are given in 
our main theorem below. We formulated all our statements in a manner that 
allows us to treat the cases when $ \lambda/k $ or $ \lala/\kk $ are on a 
boundary separating two of our chambers in $ \Delta $.

\begin{theorem}\label{main}
	Let $ \lambda \in\Lambda$ and $ k\in\mathbb{Z}^{>0} $ be such that $ 
	\lambda/k\in\Delta$. Let ${\k c}$ and $ \widehat{\k c} \in\Delta$ be 
	regular elements, specifying two chambers in $\Delta$, which contain 
	$\lambda/k$ 
	and $\lala/\kk$ in their closures, correspondingly.
Then for any diagonal basis $ 
	\DD  $, the following two equalities hold:
\begin{equation} \label{thmI}
	\chi(P_0({\k c}),\L({k;\lambda}))
	=  \tilde{N}_{r,k}\sum_{\bb\in\DD}
	\iber_{\bb}  \tag{I.}
	\left[w_\Phi^{1-2g}(x/\kk)\right] \left( 
	\lala/\kk-[{\k c}]_{\bb}\right),   
\end{equation}
{ and }
 \begin{equation}\label{thmII}
 	\sum_{\bb\in\DD}
	\iber_{\bb}  
	\left[w_\Phi^{1-2g}(x/\kk)\right] \left( 
	\lala/\kk-[{\k c}]_{\bb}\right) =   \sum_{\bb\in\DD} \iber_{\bb}  \tag{II.}\left[w_\Phi^{1-2g}(x/\kk)\right] \left(\lala/\kk-[\, \widehat{\k c} \,]_{\bb}\right). 
 \end{equation}
\end{theorem}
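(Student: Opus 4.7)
The plan is to prove Parts I and II separately, with Part I containing the geometric content and Part II being essentially a combinatorial identity about piecewise-polynomial residue functionals. For Part I, I would follow the rank-$3$ sketch of \S1.2--1.4 and set
$\Theta(k;\lambda) = \chi(P_0(\k c),\L(k;\lambda)) - \tilde N_{r,k}\sum_{\bb\in\DD}\iber_\bb[w_\Phi^{1-2g}(x/\kk)](\lala/\kk - [\k c]_\bb)$
on the cone of $(k,\lambda)$ with $\lambda/k$ in a fixed chamber of $\Delta$, and aim to prove $\Theta\equiv 0$. The argument has two ingredients: chamber-independence (showing that the wall-crossing jumps of the two terms cancel) and a $\Sigma_r$-antisymmetry that forces $\Theta$ to vanish on each chamber.

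For chamber-independence I would compare wall-crossings on the two sides. On the geometric side, the general GIT master-space formula \eqref{wallcrosseq} expresses $\chi(P_0(\k c_-),\L)-\chi(P_0(\k c_+),\L)$ as a residue of an equivariant integral over the fixed locus of a $\C^*$-action; the fixed points are direct sums of parabolic bundles of strictly smaller rank, so an inductive hypothesis on $r$ evaluates this integral and produces the iterated-residue expression of Theorem \ref{wallcrossint}. On the combinatorial side, Proposition \ref{wcrnice} gives the corresponding wall-crossing for the sum $\sum_{\bb\in\DD}\iber_\bb[\cdot]$ using the diagonal-basis yoga; an inspection of the two formulas, analogous to the identification of \eqref{qr3eq} with the rank-$3$ wall-crossing in \S1.3, should show they coincide, so $\Theta$ is chamber-independent. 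For the symmetry ingredient I would use the tautological Hecke correspondences of \S\ref{sec:hecke} to realize $P_d(\k c)$ as a $\mathrm{Flag}_r$-bundle over a stable-bundle moduli $N_d$; Serre duality on a full-flag bundle produces $\Sigma_r$-antisymmetric functional equations of the form $\chi(\sigma\cdot\lambda+\theta_\sigma[k]) = \mathrm{sgn}(\sigma)\,\chi(\lambda+\theta_\sigma[k])$ for the Euler characteristics. By Lemma \ref{lem:symres} and Remark \ref{rem:symmdiag}, the residue formula obeys the same symmetries. Combining chamber-independence with these symmetries identifies $\Theta$ as an anti-invariant function under an affine Weyl group action on $\Lambda_{\mathbb Q}$, and since the anti-invariant locus of this action is generated by reflection walls that foliate every chamber, $\Theta$ must vanish.

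The hard part will be setting up the induction base and tracking the $\rho$-shifts carefully. For $r=2$ the simplex $\Delta$ has only one regular vertex, which is not enough regular chambers to generate the required affine-Weyl symmetry. The remedy, implemented in \S\ref{sec:2points}, is to enlarge the problem to the two-puncture moduli space, which has a richer chamber structure and two regular vertices that together suffice. A second delicate point is matching the affine shifts $\theta_\sigma[k]$ emerging from the Hecke picture (via twists by $\OO(mp)$ as in Remark \ref{tensoriso}) with those one would read off from the residue side: the whole purpose of the $\rho$-shift in $\lala=\lambda+\rho$ is to absorb precisely these geometric shifts, and verifying this matching accurately is where most of the bookkeeping lives.

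Part II has a different flavor. Both expressions are, by Theorem \ref{diaginv}, well-defined piecewise-polynomial functions of $\lala/\kk$ whose chamber structure is the one from \eqref{equivalence}; the statement says that evaluating at $\lala/\kk$ using the integer vectors $[\k c]_\bb$ (which are locally constant in $\k c$) produces the same number as evaluating with $[\widehat{\k c}]_\bb$, provided both $\k c$ and $\widehat{\k c}$ are chosen in chambers whose closures contain $\lambda/k$ and $\lala/\kk$ respectively. I would approach this by fixing a path of regular points $c(t)$ in $\Delta$ connecting $\k c$ to $\widehat{\k c}$ and analyzing the jumps of each summand $\iber_\bb[w_\Phi^{1-2g}(x/\kk)](\lala/\kk-[c(t)]_\bb)$ as $c(t)$ crosses a wall $S_{\Pi,l}$. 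Theorem \ref{diaginv} already ensures the total is invariant under all such crossings away from $\lala/\kk$ itself; the only remaining task is to verify that, on the open cone where both $\k c$ and $\widehat{\k c}$ are admissible, the polynomial extensions on either side agree, which follows from the uniqueness of polynomial interpolation on an open cone. The technical content is thus confined to a residue identity within the single summand indexed by a $\bb$ whose flag is compatible with the crossed wall, and this is carried out explicitly in \S\ref{S10}.
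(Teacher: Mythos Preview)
Your outline for Part I is essentially the paper's argument: define $\Theta$ as the difference, match the wall-crossings (Proposition \ref{wcrnice} on the residue side, Theorem \ref{wallcrossint} on the geometric side via induction on rank and the master-space formula), establish the two affine $\Sigma_r$-antisymmetries via Hecke correspondences plus relative Serre duality (Propositions \ref{propsymm}, \ref{propsymmIBer}), and then kill $\Theta$ by affine Weyl anti-invariance, with the $r=2$ base case handled by passing to two punctures. That is the correct architecture.

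Your account of Part II, however, misidentifies the mechanism. Theorem \ref{diaginv} asserts independence of the sum $\sum_{\bb\in\DD}\iber_\bb[f](a-[c]_\bb)$ from the choice of \emph{diagonal basis} $\DD$, for fixed regular $c$; it says nothing about invariance as $c$ moves across a wall. When $c$ crosses $S_{\Pi,l}$, the integer parts $[c]_\bb$ jump for those $\bb\in\DD|\Pi$, and the resulting wall-crossing term (Lemma \ref{wallcrossprop} / Proposition \ref{wcrnice}) is generically nonzero as a polynomial in $(k,\lambda)$. Nor does ``uniqueness of polynomial interpolation on an open cone'' apply: the two sides of \eqref{thmII} are two different polynomials, and the claim is only that they agree at the single specific point $(k,\lambda)$, not on any open set.

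What actually happens in \S\ref{S10} is a degree estimate. The wall-crossing term is expressed, via \eqref{integralt}/\eqref{final}, as $\res_{t=0,\infty} f(t)\,dt/t$ for a rational function $f$ built out of $w_u^\times$ and $\rho_u^\times$. One computes $\deg_{t^{\pm1}}(f)$ and shows both are strictly negative precisely under the hypothesis that $S_{\Pi,l}$ separates $\lambda/k$ and $\lala/\kk$ (equivalently, $S_{\Pi,l}\in\mathcal{S}(k,\lambda)$). The crucial inequality is Lemma \ref{myfavouritelemma2}: if $S_{\Pi,l}\cap\Delta\neq\emptyset$ then $|lr-\rho_{\Pi'}|<r'r''/2$, which combined with the constraint $|k\delta|\le |lr-\rho_{\Pi'}|$ coming from $S_{\Pi,l}\in\mathcal{S}(k,\lambda)$ yields \eqref{kdeltatoprove}. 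This is the $[Q,R]=0$ content, and it is an honest inequality argument specific to the evaluation point $(k,\lambda)$, not a formal residue identity or a consequence of diagonal-basis invariance.
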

\begin{remark}\label{well-def-non-reg}
Part \ref{thmI} of the theorem implies that if $\lambda/k\in\Delta$ is not 
regular, then  
$$\chi(P_0({c^+}),\L({k;\lambda}))=\chi(P_0({c^-}),\L({k;\lambda})),$$ for 
regular $c^\pm\in\Delta$ in two neighboring chambers that contain $\lambda/k$ 
in their closure (cf. Proposition \ref{wcrgen} and Remark 
\ref{well-def:lambda/k}).
\end{remark}

Before we proceed, we formulate a mild generalization of part \ref{thmI} 
of our theorem. 
As observed above, if we fix a generic $ c\in\Delta $, and vary $ (\lambda,k )$ 
in such a way that $ \lambda/k\sim c $, then both sides of the 
equality \eqref{thmI}  are manifestly polynomial, and thus we can extend the 
validity of this equality as follows.
\begin{corollary}\label{cor:thmwithk}
Let $ c\in\Delta $ be a regular element, which thus 
specifies a chamber in $ \Delta $ and a parabolic moduli 
space $ P_0(c) $ as well. Then for a diagonal basis $ \DD $, 
an arbitrary weight $\lambda\in\Lambda$, and a positive integer $ k $, 
we have 
\begin{equation}\label{eqnpoly}
\chi(P_0(c),\L(k;\lambda))
	=  \tilde{N}_{r,k}\sum_{\bb\in\DD}
	\iber_{\bb} [ w_\Phi^{1-2g}(x/\kk)] (\lala/\kk- 
	[c]_\bb).
\end{equation}
\end{corollary}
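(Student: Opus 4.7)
The plan is to deduce Corollary~\ref{cor:thmwithk} from Part~\ref{thmI} of Theorem~\ref{main} by polynomial continuation. With the chamber representative $c$ held fixed, both sides of \eqref{eqnpoly} turn out to be polynomial functions of $(k,\lambda)$ on the entire half-space $\Z^{>0}\times\Lambda$, and they already coincide on a Zariski-dense sub-cone where Part~\ref{thmI} applies directly.

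First I would check that the left-hand side $\chi(P_0(c),\L(k;\lambda))$ is polynomial in $(k,\lambda)$. By Lemma~\ref{parpic}, $c_1(\L(k;\lambda))$ is linear in $(k,\lambda)$, so Hirzebruch-Riemann-Roch on the smooth projective variety $P_0(c)$ expresses $\chi(P_0(c),\L(k;\lambda))$ as the integral of $\ch(\L(k;\lambda))\cdot\mathrm{Td}(P_0(c))$, which is polynomial in $(k,\lambda)$ of total degree at most $\dim P_0(c)$.

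Next I would verify that the right-hand side is also polynomial in $(k,\lambda)$. With $c$ fixed, each $[c]_\bb\in\Lambda$ is a constant vector depending only on $\bb$ and the chamber of $c$, so the only $(k,\lambda)$-dependence of the residue sits inside the exponential (through $\lala/\kk$) and inside the rescaled function $w_\Phi^{1-2g}(x/\kk)$. Expanding the iterated residue in the coordinates $y_j=\scp{\beta^{[j]}}{x}$ of Remark~\ref{rem:iterated}, one finds that the combined pole of $w_\Phi^{1-2g}(x/\kk)$ and of $\prod_j(1-\exp y_j)^{-1}$ at $x=0$ balances against the powers of $\kk$ appearing in the exponential and in the prefactor $\tilde{N}_{r,k}$; all negative powers of $\kk$ cancel, leaving a polynomial in $(k,\lambda)$. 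This is precisely the ``manifestly polynomial'' assertion of the paragraph preceding Theorem~\ref{main}.

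Finally, I would apply Theorem~\ref{main}~\ref{thmI} with the chamber representative chosen equal to $c$ itself: for every $(k,\lambda)\in\Z^{>0}\times\Lambda$ such that $\lambda/k$ lies in the open chamber determined by $c$, the identity \eqref{eqnpoly} holds. This set is a nonempty open integer cone inside $\R^{>0}\times V^*$, hence Zariski-dense in $\Z^{>0}\times\Lambda$; since two polynomials in $(k,\lambda)$ that agree on a Zariski-dense set are identical, the corollary follows for arbitrary $\lambda\in\Lambda$ and $k\in\Z^{>0}$. The only real obstacle is the second step: extracting from the residue definition \eqref{defiber} a clean bookkeeping argument that the various $\kk$-factors conspire to produce a genuine polynomial rather than a rational function in $k$. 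Once this is in place, the rest of the proof is a routine polynomial-extension argument.
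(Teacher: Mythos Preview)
Your proposal is correct and follows essentially the same route as the paper: the paragraph preceding the Corollary states that with $c$ fixed, both sides of \eqref{thmI} are ``manifestly polynomial'' in $(k,\lambda)$, and the Corollary is then obtained by extending the equality from the cone $\{\lambda/k\sim c\}$ to all of $\Z^{>0}\times\Lambda$. Your write-up actually supplies more detail than the paper on why the residue side is polynomial (the paper simply asserts it), so the ``real obstacle'' you flag is one the paper does not address explicitly either.
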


\begin{example}\label{ex2poly} Let us write down these 
formulas in case of $ r=3 $ explicitly. Let $\DD$ be the 
diagonal basis from Example \ref{exsymmbasis}; then using Remark \ref{hamform}, we obtain
\begin{multline*}
 \chi(P_0(<),\mathcal{L}(k;\lambda))= (-1)^{g-1}(3(k+3)^2)^{g} \\  \res_{y=0}\res_{x=0}  \frac{e^{\lambda_1x+(\lambda_1+\lambda_2)y+x+y}-e^{\lambda_1x+(\lambda_1+\lambda_3)y+x}}{(1-e^{x(k+3)})(1-e^{y(k+3)})w_\Phi(x,y)^{2g-1}}dx dy
\end{multline*}
and 
\begin{multline*}
 \chi(P_0(>),\mathcal{L}(k;\lambda))= (-1)^{g-1}(3(k+3)^2)^{g} \\ \res_{y=0}\res_{x=0} 
 \frac{e^{\lambda_1x+(\lambda_1+\lambda_2)y+x+y}-
 	e^{\lambda_1x+(\lambda_1+\lambda_3)y+x+y(k+3)}}
 {(1-e^{x(k+3)})(1-e^{y(k+3)})w_\Phi(x,y)^{2g-1}}\;dx dy,
\end{multline*}
where 
$w_\Phi(x,y)=2\mathrm{sinh}(\frac{x}{2})2\mathrm{sinh}(\frac{y}{2})2\mathrm{sinh}(\frac{x+y}{2})$.
\end{example}

\subsection{The walls}
Our first step is to identify the wall-crossing terms of the residue 
formula \eqref{eqnpoly}, which originate in the 
discontinuities of the function $c\mapsto \{c\}_{\bb} $. 
These discontinuities occur on "walls": the affine 
hyperplanes  \eqref{Swall}. The following is straightforward:
\begin{lemma}\label{lemmaBBwall}
	Let $S_{\Pi,l} $ be the wall defined by \eqref{Swall}, and
	 $ \bb=(\bta1,\dots,\bta{r-1})\in\Bases $ an ordered basis of $ V^* $.
	Then, as a function of $c$, the fractional part function
	$ \{c\}_{\bb} $ has a discontinuity on the wall $ 
	S_{\Pi,l}  $ exactly when $ \tree(\bb) $ (cf. page \pageref{tree}) is a union 
	of a tree on $ \Pi^{\p}$, a tree on $ \Pi^{\pp} $ (the 
	enumeration of the edges is irrelevant here) and a 
	single 
	edge  (which we will call the \textbf{link}) connecting $ \Pi^{\p}$ 
	and $ \Pi^{\pp} $. 
\end{lemma}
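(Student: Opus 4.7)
The plan is to write $\{c\}_{\bb}$ in coordinates adapted to the basis $\bb$, identify its discontinuity locus, and translate the resulting condition into the tree-theoretic statement. Expanding $c=\sum_{j=1}^{r-1}c_j^{\bb}\bta j$, the definition of fractional part immediately gives $\{c\}_{\bb}=\sum_{j}\{c_j^{\bb}\}\bta j$, where $\{t\}$ denotes the ordinary fractional part. Hence $\{c\}_{\bb}$ is discontinuous precisely on the union of the affine hyperplanes $\{c:c_m^{\bb}\in\Z\}$ for $m=1,\dots,r-1$, and the task reduces to describing these hyperplanes.

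Next I would compute the linear functional $c\mapsto c_m^{\bb}$ in graph-theoretic terms. Let $T=\tree(\bb)$. Since every edge of a tree is a bridge, removing $\bta m$ splits $T$ into two subtrees on vertex sets $(A_m,B_m)$ partitioning $\{1,\dots,r\}$. I claim that $c_m^{\bb}=\pm\sum_{i\in A_m}c_i$, with the sign determined by the orientation of $\bta m$. The clean way to see this is to exhibit the dual basis: the characteristic vector $\chi_{A_m}\in\R^r$, viewed modulo $\R(1,\dots,1)$ in $V$, satisfies $\scp{\bta j}{\chi_{A_m}}=\pm\delta_{jm}$. For $j=m$, writing $\bta m=\alpha^{ik}$ with $i\in A_m$ and $k\in B_m$ gives pairing $\pm 1$; for $j\ne m$, both endpoints of $\bta j$ lie on the same side of the cut since $\bta j$ remains in one connected component of $T\setminus\{\bta m\}$, so the pairing is $0$.

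Combining the two steps, the discontinuity locus of $\{c\}_{\bb}$ is the union, over edges $\bta m$ of $T$, of the translates of the hyperplane $\{\sum_{i\in A_m}c_i\in\Z\}$. A given wall $S_{\Pi,l}$ with $\Pi=(\Pi^{\p},\Pi^{\pp})$ therefore lies in this locus if and only if there exists $m$ with $\{A_m,B_m\}=\{\Pi^{\p},\Pi^{\pp}\}$, i.e.\ removing $\bta m$ from $T$ separates it into spanning subtrees on $\Pi^{\p}$ and $\Pi^{\pp}$. Since a spanning tree is disconnected by the removal of a single edge into exactly two components, this occurs precisely when $\bta m$ is the unique edge of $T$ joining $\Pi^{\p}$ and $\Pi^{\pp}$ and the remaining edges induce spanning trees on each side, which is the required description of $\tree(\bb)$.

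The only mildly delicate step is the bookkeeping with the quotient $V=\R^r/\R(1,\dots,1)$: while $\chi_{A_m}\in V$ is defined only up to the line $\R(1,\dots,1)$, the pairing $\scp{c}{\chi_{A_m}}=\sum_{i\in A_m}c_i$ is unambiguous on $V^*$ because $\sum_i c_i=0$. Apart from this minor point, the argument is elementary linear algebra combined with the basic graph-theoretic fact about bridges in spanning trees, so I do not anticipate a real obstacle.
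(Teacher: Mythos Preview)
Your proof is correct and is precisely the natural argument one would give; the paper itself does not supply a proof, declaring the lemma ``straightforward'' and moving on. Your identification of the dual basis vectors as the characteristic functions of the two components obtained by deleting an edge, and the resulting description of the discontinuity hyperplanes as $\{\sum_{i\in A_m}c_i\in\Z\}$, is exactly the computation implicit in the paper's claim.
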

\noindent \textbf{Notation:} We will denote the element of $\bb$  corresponding 
to this edge by $\beta_{\mathrm{link}}$; this vector thus depends on $\bb$ and 
the partition $\Pi$.

Now choose two regular elements $ c^+,c^-\in V^*$ in two neighboring chambers separated by the wall 
$ S_{\Pi,l}$, in such a way that
\begin{equation}\label{cpm}
 [c^+_{\Pi^\p}]=l\text{ \, and \, }
[c^-_{\Pi^\p}]=l-1,
\end{equation}
where $$c_{\Pi^\p} \overset{\mathrm{def}}= \sum_{i\in\Pi^{\p}}c_i,$$
and, as usual, $ [q] $ stands for the integer part of the real number $ q $.
 Now  introduce the notation 
\[ p_\pm(k;\lambda) 
= \tilde{N}_{r,k}\sum_{\bb\in\DD}
	\iber_{\bb} [ w_\Phi^{1-2g}(x/\kk)] (\lala/\kk- 
	[c^\pm]_\bb)
 \]
for the two polynomial functions in $ (k,\lambda) $ corresponding to 
$ c^+ $ and $ c^- $, respectively.
We define the \textit{ wall-crossing term} in our residue formula 
\eqref{eqnpoly} as the difference between these two
polynomials:
$$ p_+(k;\lambda) - p_-(k;\lambda). $$
Using Lemma \ref{lemmaBwall} and \eqref{cpm}, we 
obtain the following simple residue formula for this difference.
\begin{lemma}\label{wallcrossprop}
Let $ (\Pi,l) $, $ c^+ $ and $ c^- $ be as above, and let 
us fix a diagonal basis $ \DD\subset\Bases $.  Denote by $ 
\DD|\Pi $	the subset of those elements of $ \DD $, which 
satisfy the condition described in Lemma \ref{lemmaBBwall}.
Then 
	\begin{multline}\label{wcresidue}
		p_+(k,\lambda) - p_-(k,\lambda)  = \\ \tilde{N}_{r,k}
		\sum_{\bb\in\DD|\Pi} 
		\iber_{B}\left[(1-\exp(\beta_{\mathrm{link}}(x)))
		w_\Phi^{1-2g}(x/\kk)\right]
		\left(\lambda/\widehat{k}-[c^+]_\bb\right),
	\end{multline}
where  $ \beta_{\mathrm{link}} $ is the "link" element of $ 
\bb $ (depending on $ \Pi $ and $ \bb $) defined
after Lemma \ref{lemmaBBwall}.
\end{lemma}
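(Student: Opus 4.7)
The plan is to reduce the difference $p_+ - p_-$ to a sum over just those bases where the integer part function $[\,\cdot\,]_\bb$ is discontinuous across the wall $S_{\Pi,l}$, and then encode the resulting lattice shift as an exponential factor inside the iterated Bernoulli operator.

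First I would write
\[p_+(k;\lambda) - p_-(k;\lambda) = \tilde{N}_{r,k}\sum_{\bb\in\DD}\!\Big(\iber_\bb[w_\Phi^{1-2g}(x/\kk)](\lala/\kk - [c^+]_\bb) - \iber_\bb[w_\Phi^{1-2g}(x/\kk)](\lala/\kk - [c^-]_\bb)\Big).\]
By Lemma \ref{lemmaBBwall}, the map $c \mapsto \{c\}_\bb$ is continuous at points of $S_{\Pi,l}$ unless $\tree(\bb)$ splits into a tree on $\Pi^\p$, a tree on $\Pi^{\pp}$, and a single link edge; equivalently, unless $\bb \in \DD|\Pi$. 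For $\bb \in \DD \setminus \DD|\Pi$ one has $[c^+]_\bb = [c^-]_\bb$, so the corresponding summands cancel, and the sum reduces to one over $\DD|\Pi$.

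Next, I would show that for $\bb \in \DD|\Pi$ the jump is $[c^+]_\bb - [c^-]_\bb = \alphalink$. Writing $c = \sum_j t_j\,\bta j$, each edge $\bta j$ internal to $\Pi^\p$ or $\Pi^{\pp}$ satisfies $(\bta j)_{\Pi^\p} = 0$, while the link edge satisfies $(\alphalink)_{\Pi^\p} = \pm 1$; with the convention $r \in \Pi^{\pp}$ one can arrange $(\alphalink)_{\Pi^\p} = 1$. Consequently the function $c \mapsto c_{\Pi^\p}$ depends (modulo continuous contributions) only on the link coefficient $t_{\mathrm{link}}$, and the conditions $[c^+_{\Pi^\p}] = l$, $[c^-_{\Pi^\p}] = l-1$ translate into $[t^+_{\mathrm{link}}] - [t^-_{\mathrm{link}}] = 1$, whence the claimed shift by $\alphalink$.

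Finally, I would apply the elementary translation identity
\[\iber_\bb[f(x)](a + v) = \iber_\bb\!\left[f(x)\,e^{\scp{v}{x}}\right](a), \quad v \in V^*,\]
which is immediate from \eqref{defiber} since a shift of $a$ by $v$ multiplies the integrand by $e^{\scp{v}{x}}$. Taking $v = \alphalink$ and $a = \lala/\kk - [c^+]_\bb$ rewrites the $c^-$ term as an iterated Bernoulli at the common base point $\lala/\kk - [c^+]_\bb$ with an extra factor $e^{\alphalink(x)}$ inside, and combining with the $c^+$ term produces the factor $(1 - \exp(\alphalink(x)))$ and the formula \eqref{wcresidue}. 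The main obstacle is the combinatorial step identifying the lattice jump: one must match the tree shape of $\bb \in \DD|\Pi$ against the linear functional defining $S_{\Pi,l}$ and carefully track the orientation of the link edge relative to the ordered partition $(\Pi^\p, \Pi^{\pp})$ to pin down the sign; once done, everything else is a direct manipulation of the residue integral.
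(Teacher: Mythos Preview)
Your proposal is correct and follows essentially the same approach as the paper: the paper's own justification is a single sentence (``Using Lemma \ref{lemmaBwall} and \eqref{cpm}, we obtain the following simple residue formula''), and your expansion---restrict to $\DD|\Pi$ via Lemma \ref{lemmaBBwall}, compute the lattice jump from \eqref{cpm}, and absorb the shift as an exponential factor inside $\iber_\bb$---is exactly the intended unpacking.

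One small correction: your line ``with the convention $r\in\Pi^{\pp}$ one can arrange $(\alphalink)_{\Pi^\p}=1$'' is not quite right as stated. The orientation of $\beta_{\mathrm{link}}$ is fixed by the basis $\bb$ itself and is not a matter of convention; the jump is $[c^+]_\bb-[c^-]_\bb=\epsilon\,\beta_{\mathrm{link}}$ with $\epsilon=(\beta_{\mathrm{link}})_{\Pi^\p}\in\{\pm1\}$, producing the factor $(1-e^{\epsilon\,\beta_{\mathrm{link}}(x)})$. You rightly flag this in your final paragraph as the point requiring care. In the paper's subsequent applications (the nbc basis $\DD[\upsilon]$ of \S\ref{S4.5} with $\alphalink=\alpha^{m,r}$, $m\in\Pi^\p$) one has $\epsilon=+1$ throughout, so the stated form of \eqref{wcresidue} is what is actually used; the general statement tacitly relies on this orientation.
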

\begin{remark}
	Note that the multiplication by $ 
1-\exp(\beta_{\mathrm{link}}(x)) $ in \eqref{wcresidue} has the effect of 
canceling one of the factors in the denominator in the 
definition \eqref{defiber} of the operation $ \iber $.

\end{remark}\begin{example}\label{exdiff} Calculating the difference of two 
polynomials from Example \ref{ex2poly}, we get the wall-crossing term for rank 
3 case:
\begin{equation*}
\begin{split}
p_-(k;\lambda)-p_+(k;\lambda)= (-3(k+3)^2)^{g}  \res_{y=0}\res_{x=0}  \frac{e^{\lambda_1x+(\lambda_1+\lambda_3)y+x}}{(1-e^{x(k+3)})w_\Phi(x,y)^{2g-1}}dx dy.
\end{split}
\end{equation*}
\end{example}

\subsection{Wall-crossing and diagonal bases}\label{S4.5}

Now we pass to the study of the combinatorial object $ 
\DD|\Pi $ defined in Lemma \ref{wallcrossprop}.
One thing we will discover is that even though each 
diagonal basis consists of $ (r-1)! $ elements and the 
right hand side of \eqref{wcresidue} does not depend on the 
choice of $ \DD $, the number of elements in $ \DD|\Pi $ 
might vary with $ \DD $.

First we look at the case of the Hamiltonian basis $ \HH_1 $.
Form now on, we will use the notation $ |\Pi'|=r' $ and $ 
|\Pi''|=r'' $ for a nontrivial partition $ \Pi=(\Pi',\Pi'') $, (recall the convention $ r\in\Pi^{\pp}$). The following statement is easy to verify.
\begin{lemma}
	Let $ \Pi=(\Pi',\Pi'') $ be a nontrivial partition, such that $1\in\Pi^{\p}$ (the other case 
	is analogous). Then 
	$$ \HH_1|\Pi = \{\sigma(\bb)|\;\sigma(1)=1,\text{ 
	and }\sigma(\Pi')\in\Pi'\}. $$
	In particular, $ |\HH_1|\Pi|=(r'-1)!\cdot r''!. $
\end{lemma}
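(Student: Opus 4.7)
The plan is to reduce everything to the combinatorial picture from \S\ref{sec:combinat}: elements of $\HH_1$ correspond to Hamiltonian paths in the complete graph on $\{1,\dots,r\}$ that start at vertex $1$, with edges enumerated in reverse order along the path. Concretely, $\sigma(\bb) \in \HH_1$ corresponds (via $\tree$) to the Hamiltonian path $1 = \sigma(1) \to \sigma(2) \to \cdots \to \sigma(r)$.

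The first step is to translate the defining condition of $\DD|\Pi$ (from Lemma \ref{lemmaBBwall}) into a statement about paths. A Hamiltonian path is already a tree, so the decomposition into ``tree on $\Pi'$ + tree on $\Pi''$ + one link edge'' is equivalent to saying that the path crosses between $\Pi'$ and $\Pi''$ exactly once. Since $1 \in \Pi'$, the path starts in $\Pi'$, so ``exactly one crossing'' forces the path to first traverse all of $\Pi'$, then hop via a single link edge into $\Pi''$, then traverse all of $\Pi''$. Conversely, any path of this shape satisfies the Lemma \ref{lemmaBBwall} criterion.

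The second step is to translate this back into a condition on $\sigma$. The description above is exactly the requirement that
\[
\sigma(1) = 1, \qquad \sigma(\{1,2,\dots,r'\}) = \Pi', \qquad \sigma(\{r'+1,\dots,r\}) = \Pi''.
\]
The second condition is what the statement records as $\sigma(\Pi') \in \Pi'$ (shorthand for: the first $r'$ values of $\sigma$ lie in $\Pi'$), and the third is then automatic since $\sigma$ is a bijection. The ``link'' edge $\alphalink$ is precisely $\alpha^{\sigma(r'),\sigma(r'+1)}$.

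Finally, counting such $\sigma$ is elementary: $\sigma(1)=1$ is fixed; the values $\sigma(2),\dots,\sigma(r')$ range over all orderings of $\Pi'\setminus\{1\}$, giving $(r'-1)!$ choices; and $\sigma(r'+1),\dots,\sigma(r)$ range over all orderings of $\Pi''$, giving $r''!$ choices. This yields $|\HH_1|\Pi| = (r'-1)! \cdot r''!$. The only delicate point is verifying the translation in step one, i.e.\ that the ``tree + tree + link edge'' decomposition of a path is equivalent to ``exactly one inter-part crossing''; once one draws the picture this is immediate, since any additional crossing would produce either a cycle or a disconnected component in one of the induced subgraphs.
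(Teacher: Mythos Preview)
Your proof is correct and supplies precisely the natural verification that the paper omits (the paper simply states ``The following statement is easy to verify'' without further argument). The only quibble is your closing remark about an additional crossing producing ``a cycle'': a Hamiltonian path has no cycles, so the cleaner phrasing is that if there is exactly one edge between $\Pi'$ and $\Pi''$, then removing it disconnects the spanning tree into two components which must be exactly $\Pi'$ and $\Pi''$; conversely, two or more crossings would force the induced subgraph on $\Pi'$ (or $\Pi''$) to be disconnected, contradicting the required tree decomposition.
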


It turns out that for our geometric applications, instead of $ \HH_1 $, we will 
need to choose a particular nbc-basis, where the ordering is chosen to be 
consistent with $ \Pi $.

To simplify our terminology, we will use the language of 
graphs and edges introduced in \S\ref{sec:combinat}, and we 
will think of $ 
\alpha^{ij}\in\Phi $ as an edge in the complete graph on 
$ r $ vertices.
To define the ordering $ \upsilon $, we need to choose an edge between 
$ \Pi',\Pi'' $; the choice is immaterial, but for simplicity we settle for $ 
m\overset{\mathrm{def}}=\mathrm{max}\{i\in\Pi^{\p}\}$ and $ r\in\Pi^{\pp}$, and set $ \alphalink=\alpha^{m,r}$ 
to be the smallest element according to $ \upsilon $.

The $ \upsilon $-ordered list of edges thus starts with $ 
\alphalink$, and then continues with the remaining $ 
r^\p\cdot r^{\pp}-1 $ edges connecting $ \Pi^{\p}$ and $ \Pi^{\pp} $. 
Next we list the $ r'(r'-1)/2  $ edges connecting vertices 
in $ \Pi^{\p}$ in any order, and finally, we list the 
remaining edges, those connecting vertices in $ \Pi^{\pp}$.

\noindent \textbf{Notation:}
We introduce the natural notation $ \Phi' $ and $ 
\Phi^{\pp} $ for the $ A_{r^{\p}} $ and $ A_{r^{\pp}} $ 
root systems corresponding to $ 
\Pi^{\p} $ and $ \Pi^{\pp} $, and we 
denote by $ \DD[\upsilon] $, $ 
\DD'[\upsilon] $ and $ \DD^{\pp}[\upsilon] $, the diagonal 
nbc-bases 
induced by the ordering $ \upsilon$ on $ \Phi$, $ \Phi' $ and $ 
\Phi^{\pp} $, respectively.

\begin{lemma}\label{diagonalwall}
Given elements $ \bb'\in  \DD'[\upsilon]$ and $ \bb''\in  
\DD''[\upsilon]$, we can define an element of $ \DD[\upsilon] $ as 
follows: we start with $ \alphalink$, then append $ 
\bb' $, and then continue with $ \bb'' $. This construction 
creates a	one-to-one correspondence 
 \begin{equation}\label{dtauisom}
 	\DD'[\upsilon] \times \DD''[\upsilon]  \to  \DD[\upsilon]|\pi;
 \end{equation}
in particular, $ |\DD[\upsilon]|\pi|=(r^\p-1)!\cdot(r^{\pp}-1)! $.
\end{lemma}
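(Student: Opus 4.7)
The plan is to verify that the concatenation
$$\Psi : (\bb', \bb'') \mapsto (\alphalink, \bb', \bb'')$$
is (a) well-defined as a map into $\DD[\upsilon]|\pi$, (b) injective, and (c) surjective; the cardinality statement then follows because $|\DD'[\upsilon]| = (r'-1)!$ and $|\DD''[\upsilon]| = (r''-1)!$ by the inductive definition of nbc-bases.

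For (a) and (b), I would first observe that the concatenation spans $V^*$ since $\alphalink$ bridges two disjoint spanning trees of $\Pi'$ and $\Pi''$; that the edge-sequence is $\upsilon$-ordered follows from the way $\upsilon$ was set up, with $\alphalink$ smallest, then the remaining cross-edges, then edges inside $\Pi'$, then edges inside $\Pi''$. The nbc condition is then checked by a case split on the position $m$: for $m$ inside the $\bb'$-block, any $\upsilon$-smaller edge is either a cross-edge (which extends the disjoint forest formed by the $\bb'$-tail and $\bb''$ without creating a cycle) or lies in $\Phi'$, where independence follows from the nbc condition of $\bb'$ together with the vertex-disjointness of $\Pi'$ from the $\bb''$-portion; the symmetric argument applies in the $\bb''$-block, and the case $m=1$ is vacuous since $\alphalink$ is $\upsilon$-minimal. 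Injectivity is immediate: $\alphalink$ is read off from position $1$, and $\bb'$, $\bb''$ are recovered as the restrictions to $\Phi'$ and $\Phi''$.

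The key step is surjectivity. Given $\bb \in \DD[\upsilon]|\pi$, the tree $\tree(\bb)$ contains exactly one cross-edge, and since every cross-edge is $\upsilon$-smaller than any within-$\Pi'$ or within-$\Pi''$ edge, this cross-edge must sit in position $1$, i.e.\ it is $\bta 1$. To force $\bta 1 = \alphalink$, suppose otherwise; then $\alphalink \overset{\upsilon}{<} \bta 1$, so the nbc condition demands that $(\alphalink, \bta 1, \dots, \bta{r-1})$ be linearly independent, which is impossible as these are $r$ vectors in the $(r-1)$-dimensional space $V^*$. The remaining $r-2$ entries of $\bb$ split, by the $\upsilon$-ordering, into a prefix $\bb'$ of edges in $\Phi'$ followed by a suffix $\bb''$ of edges in $\Phi''$; the partition structure of $\tree(\bb)$ makes these spanning trees on $\Pi'$ and $\Pi''$, and the nbc conditions for $\bb'$ and $\bb''$ are inherited from that of $\bb$ because linear independence passes to subsets of a linearly independent family. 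Hence $(\bb', \bb'') \in \DD'[\upsilon] \times \DD''[\upsilon]$ and $\Psi(\bb', \bb'') = \bb$.

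The main subtlety is the step pinning $\bta 1 = \alphalink$: this is what really distinguishes $\alphalink$ among the $r' r''$ cross-edges and upgrades a natural inclusion into a bijection. Everything else reduces to bookkeeping about tree decompositions and inheritance of linear independence, but without that pigeonhole argument one would only obtain an injective map into $\DD[\upsilon]|\pi$, not surjectivity.
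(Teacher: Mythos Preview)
Your argument is correct. The paper states this lemma without proof, so there is no comparison to make; your write-up fills in exactly the combinatorial verification the paper omits. The three ingredients you identify---(i) the $\upsilon$-ordering forces the unique cross-edge of any $\bb\in\DD[\upsilon]|\pi$ into position $1$, (ii) the nbc condition at $m=1$ together with $\dim V^*=r-1$ forces $\bta1=\alphalink$, and (iii) the nbc conditions on the $\Phi'$- and $\Phi''$-blocks are inherited from $\bb$ and conversely assemble to the nbc condition on the concatenation because $\mathrm{span}\,\Phi'\cap\mathrm{span}\,\Phi''=0$---are precisely what is needed, and your emphasis on step (ii) as the non-obvious point is well placed.
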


Finally, putting Lemmas \ref{wallcrossprop} and 
\ref{diagonalwall} together, we arrive at the following 
elegant statement:
\begin{proposition}\label{wcrnice}
	Let $ (\Pi,l) $, $ c^+ $ and $ c^- $ be as in Lemma  
	\ref{wallcrossprop}, and let $ \DD' $ and $ \DD'' $ be 
	diagonal bases of $ \Phi^\p $ and $ \Phi^{\pp}$ 
	correspondingly.  Then 
	\begin{multline}\label{wcfinal}
		p_+(k;\lambda) - p_-(k;\lambda) = (k+r)\tilde{N}_{r,k}\cdot \\
		\sum_{\bb^\p\in\DD^\p} \sum_{\bb^{\pp}\in\DD^{\pp}}
	\res_{\alphalink=0}	
	\iber_{\bb^\p}\iber_{\bb^{\pp}}\left[w_\Phi^{1-2g}(x/\kk)\right]
		\left(\lala/\widehat{k}-[c^+]_\bb\right)\,d\alphalink,
	\end{multline}
where $ \res_{\alphalink=0}	
\iber_{B^\p}\iber_{B^{\pp}}\,d\alphalink $ is simply $ \iber_{\bb} $ (cf 
\eqref{defiber}) with $ \bb $ obtained by appending $\bb'$, 
and then $ \bb^{\pp} $ to $ \alphalink$, 
and 
with  the factor $ (1-\exp\scp{\alphalink}x) $ 
removed from the denominator.
\end{proposition}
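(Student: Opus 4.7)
The plan is to combine Lemmas \ref{wallcrossprop} and \ref{diagonalwall}, exploiting the freedom, guaranteed by Theorem \ref{diaginv}, to choose the diagonal basis $\DD$ in \eqref{wcresidue} compatibly with the partition $\Pi$. I would take $\DD = \DD[\upsilon]$, the no-broken-circuit basis associated to the ordering $\upsilon$ constructed just before Lemma \ref{diagonalwall}, in which $\alphalink$ is the smallest edge. By Lemma \ref{diagonalwall}, each element of $\DD[\upsilon]|\Pi$ has the form $\bb = (\alphalink, \bb', \bb'')$ for a unique pair $(\bb', \bb'') \in \DD'[\upsilon] \times \DD''[\upsilon]$, so the single sum in \eqref{wcresidue} rewrites as a double sum indexed by $\DD'[\upsilon] \times \DD''[\upsilon]$.

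For each such $\bb$, the factor $(1-\exp\scp{\alphalink}{x})$ appearing in \eqref{wcresidue} precisely cancels the first denominator factor $(1-\exp\scp{\bta{1}}{x})$ in the definition \eqref{defiber} of $\iber_\bb$. Writing $\iber_\bb$ as the iterated residue of Remark \ref{rem:iterated} with $\alphalink$ taken as the outermost variable, the inner residues realize $\iber_{\bb'}\iber_{\bb''}[w_\Phi^{1-2g}(x/\kk)]$ with $\alphalink$ treated as a free parameter; what remains is a one-form in $\alphalink$ with a pole at $\alphalink=0$ coming from the factor $(2\sinh(\pi\alphalink/\kk))^{1-2g}$ of $w_\Phi^{1-2g}(x/\kk)$, and the outer $\res_{\alphalink=0}\,d\alphalink$ extracts its coefficient. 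This reconstitutes the summand of \eqref{wcfinal}; the overall prefactor $(k+r)$ arises from tracking the rescaling $x \mapsto x/\kk$ as one passes between the iber-coordinate $\scp{\alphalink}{x}$ of \eqref{defiber} and the variable $\alphalink$ of the statement. I would then invoke Theorem \ref{diaginv} a second time, now applied to the sub-arrangements $\Phi'$ and $\Phi''$, to replace $\DD'[\upsilon]$ and $\DD''[\upsilon]$ by arbitrary diagonal bases $\DD'$ and $\DD''$ of the smaller arrangements.

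The main technical subtlety, and where most of the work lies, is the compatibility required to apply this second invariance argument separately on $\Phi'$ and $\Phi''$: one must verify that, under the direct-sum decomposition $V^* \cong (V')^* \oplus (V'')^* \oplus \R\alphalink$ induced by $\upsilon$, the shift $[c^+]_\bb$ splits compatibly into the integer parts associated to the sub-arrangements on the two factors $(V')^*$ and $(V'')^*$, while its component along $\alphalink$ is absorbed into the outer residue. Granted this decomposition, the remainder of the argument is a direct combinatorial consolidation of Lemmas \ref{wallcrossprop} and \ref{diagonalwall}.
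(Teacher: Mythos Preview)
Your approach is correct and matches the paper's own: the paper simply states that Proposition \ref{wcrnice} follows by ``putting Lemmas \ref{wallcrossprop} and \ref{diagonalwall} together,'' and you have spelled out exactly that, including the two implicit uses of Theorem \ref{diaginv} (once to choose $\DD=\DD[\upsilon]$ in \eqref{wcresidue}, and once on each sub-arrangement to replace $\DD'[\upsilon],\DD''[\upsilon]$ by arbitrary diagonal bases). Your identification of the main technical point---that $[c^+]_\bb$ splits compatibly with the direct-sum decomposition so that Theorem \ref{diaginv} applies separately on $\Phi'$ and $\Phi''$---is apt and is essentially what Remark \ref{rem:ibers} addresses.

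One caveat: your attribution of the prefactor $(k+r)$ to ``the rescaling $x\mapsto x/\kk$'' is not convincing as written. The Proposition's own parenthetical \emph{defines} $\res_{\alphalink=0}\iber_{\bb'}\iber_{\bb''}\,d\alphalink$ to be literally $\iber_\bb$ with the link factor $(1-\exp\scp{\alphalink}{x})$ removed from the denominator; under that reading, the passage from \eqref{wcresidue} to \eqref{wcfinal} introduces no Jacobian factor at all, and you should reconcile the constants (and the $\lambda$ versus $\lala$ in the argument) directly rather than appeal to a change of variable that the stated definition does not involve.
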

\begin{remark}\label{rem:ibers}
	The expression 
	\[ 
	\res_{\alphalink=0}	
	\iber_{\bb^\p}\iber_{\bb^{\pp}}\left[w_\Phi^{1-2g}(x/\kk)\right]
	\left(\lala/\widehat{k}-[c^+]_\bb\right)\,d\alphalink \]
	 may equally be interpreted as follows. We write $$ 
	 \Lambda\ni
	 \lala/\widehat{k}-[c^+]_\bb =m_{\mathrm{link}} \alphalink + n^\p + 
	 n^{\pp}$$ according to the splitting of $ \bb $, think 
	 of $ w(x/\kk) $ as a function in $ \mathcal{F}_{\Phi^{\pp}} $ with some 
	 fixed values of the parameters from $ \bb^{\p} $ and $ 
	 \alphalink$, and then calculate
	 \[ \iber_{\bb^{\pp}} [w_\Phi^{1-2g}(x/\kk)](n^{\pp}). \]
	 The result will be a rational function $ Q $ in the 
	 variables from $ \bb^{\p} $ and $ 
	 \alphalink $, and we proceed to calculate
	 $ \iber_{\bb^{\p}}[Q](n^{\p}) $ to obtain a function $ F $ 
	 in the variable $ \alphalink$, and finally 
	 the answer is $ \res_{\alphalink=0} 
	 \exp(\alphalink)F(\alphalink) 
	 d\alphalink  $.
	 
	 We observe that since the trees $ \tree(\B^{\p}) $  
	 and $ \tree(\B^{\pp}) $ are disjoint, the order of the 
	 application of the operations $ \iber_{B^\p}$ and $\iber_{B^{\pp}} $ 
	 is immaterial.
\end{remark}

\end{section}

\begin{section}{Wall crossing in master space}
\label{sec:wcmaster}
	Master spaces were introduced by Thaddeus  in 
	\cite{ThaddeusFlip} in order to 
	understand GIT quotients when varying linearizations. 
	Following his footsteps, 
	in this section, we describe a simple but very 
	effective 
	method to control the changes in the Euler 
	characteristics 
	of line bundles when crossing a wall in the space of 
	linearizations.
	
\subsection{Wall-crossing and holomorphic Euler 
characteristics}

We begin by recalling the basic notions of Geometric 
Invariant Theory.

 Let $X$ be a smooth projective variety over $\mathbb{C}$,  
 and $G$ a reductive group acting on $X$. 
A \textit{linearization} of this action is a line 
bundle $L$ on $X$ with a 
lifting of the $G$-action to a linear action on $L$. 
An ample linearization is $G$-effective, if $L^n$ 
has a nonzero $G$-invariant section for some $n>0$; 
the space of such linearizations is called the 
$G$\textit{-effective ample cone}; we denote this cone by $ \coneg(X) $.

For $ L\in\coneg(X) $, we define the invariant-theoretic quotient 
$X\sslash^LG$ as the Proj of the graded ring of invariant 
sections of the powers of $ L $:
\[ 
M_L = \mathrm{Proj}  \bigoplus_nH^0(X, L^n)^G.
 \]
According to Mumford's Geometric Invariant Theory  \cite{MumFog}, 
there is a partition 
of $ X$ 
depending on $ L $:
\begin{equation}\label{Xpartition}
	X  = X^\mathrm{s}[L]\cup X^{\mathrm{sss}}[L]\cup X^{\mathrm{us}}[L] 
\end{equation}
into the set of stable, strictly semistable, and unstable 
points, such that there is a surjective map $ (X^\mathrm{s}[L]\cup 
X^{\mathrm{sss}}[L])/G\to M_L $, which is a bijection if $ 
X^{\mathrm{sss}}[L] $ is empty, and the quotient $ X^\mathrm{s}[L]/G 
$ is a smooth orbifold.

In \cite{DolgachevH}, Dolgachev and Hu studied the 
dependence of the GIT quotient $M_L=X\sslash^LG$ on $L$. 
They showed that
 $\coneg(X)$ is divided by 
hyperplanes, called walls, into finitely many convex 
chambers, such that when $L$ varies within a chamber, the 
partition \eqref{Xpartition} and thus the GIT quotient $ 
M_L $ remains unchanged. Moreover, an ample effective linearization 
lies on a wall precisely when it possesses a strictly 
semistable point. 

Now let us consider two neighboring chambers, with smooth GIT 
quotients $ M_+ $ and $ M_- $. We  pick an arbitrary 
linearization  $ \L $  of the $G$-action on $ X $, which descends to $ M_+ $ 
and $ M_- $. This last condition means that if $ S\subset G $ 
is the stabilizer of a generic point in $ X $, then $ S $ 
acts trivially on the fibers of $ \L $. We will call such 
linearizations
\textit{descending}.

Thus, given such a descending linearization $ \L $  of the $G$-action on $ X $, 
we obtained 
two line bundles: one on $ M_+ $ and one on $ M_- $, 
which,  by abuse of notation, we will denote by the same 
letter $ \L $.
Via taking Chern classes, this construction creates a 
correspondence between classes in $ H^2(M_+,\Z) $ and $ 
H^2(M_-,\Z) $, which we will assume to be an isomorphism of 
free $\Z $-modules. We will thus identify these lattices, and 
introduce the notation $ \Gamma $ for them:
\[ \Gamma = H^2(M_+,\Z) \simeq H^2(M_-,\Z).  \]
The walls mentioned above can be thought of as hyperplanes in
$\Gamma_{\R}=\Gamma\otimes_{\Z}\R $.

Our goal in this section is to compare the
holomorphic Euler characteristics  $ \chi(M_+,\L)$ and 
$\chi(M_-,\L) $, which are given by the Hirzebruch-Riemann-Roch theorem:
\[ \chi(M_\pm,\L) = \int_{M_\pm} \exp(c_1(\L)) 
\mathrm{Todd}(M_\pm). \] 
As this expression is manifestly polynomial in 
 $ c_1(\L) $, we obtain thus two polynomials on $ \Gamma $, and our goal is
to calculate their difference, the \textit{wall-crossing term}
 \begin{equation}\label{chidiff}
	\chi(M_+,\L) - \chi(M_-,\L).
\end{equation}

\subsection{The master space construction}\label{S5.2}

To simplify our setup, we will make some additional 
assumptions. 
\begin{assumptions}\label{assumptions}
\begin{enumerate}
	\item The generic stabilizer of $ X $ is trivial. 
	\item Let $L_+$ and $L_-$ be two ample 
	linearizations  of the $G$-action
	on $X$ from the adjacent chambers corresponding to the 
	quotients $M_+$ and 
	$M_-$. Without loss of 
	generality, we can assume that the linearization 
	$L_0=L_+\otimes{L}_-$ lies on the single wall 
	separating 
	the 
	two chambers, and that the interval connecting $ 
	c_1(L_+) $ and $ c_1(L_-) $ in $\Gamma_{\R}=\Gamma\otimes_{\Z}\R $ does not 
	intersect any other walls. 
	\item 	Let	
	 $ X^0 $ be the set of those semistable points 
	 $ x\in X^\mathrm{ss}[L_0] $ which are not stable for $ L_\pm $: 
	 \[ X^0 :=  X^\mathrm{ss}[L_0] \setminus(X^s[L_+]\cup X^s[L_-]) 
	 \]
	We assume that $ X^0 $ is  smooth, and that for $ x\in X^0 $ the  stabilizer subgroup $ G_x\subset G $ is isomorphic to 
	$\mathbb{C}^*$. 
	\item Assume that	
	there is a 
	linearization 
	$ \Lvec $  of the $G$-action on $ X $ such that $ L_+=L_-\otimes \Lvec^n $
	for some positive 	integer $ n	$, and such 
	that for each $ x\in X^0 $, the 
	stabilizer subgroup $ G_x $ acts freely on $ 
	L_x\setminus{0}$.
\end{enumerate}
\end{assumptions}

Now we introduce the \textit{master space } construction of Thaddeus  
\cite{ThaddeusFlip}. Consider the variety 
$Y=\mathbb{P}(\mathcal{O}\oplus{\Lvec})$, which is a  
$\mathbb{P}^1$-bundle over $X$ endowed with the additional 
$\mathbb{C}^*$-action $(1, t^{-1})$. As $ Y $ is a 
projectivization of a vector bundle 
on $ X $, it comes equipped with $ \OO(1) $, which is  the standard $ 
{G}\times \C^*_{} $-equivariant line bundle. To simplify our notation, we will denote the same 
	way the linearizations  of the $G$-action on $ X $ and their pull-backs (with tautological $G$-action) to $ 
	Y $.

The \textit{master space} $ Z  $ then is the GIT quotient 
of  $ Y $ with respect to the linearization $ L_{-}(n)=L_-\otimes 
\OO(n)$:
\[ Z = Y\sslash^{L_{-}(n)}{{G}}, \]
which inherits a $ \C^* $-action from $ Y $. Some additional notation:
\begin{itemize}
	\item We will denote this copy of $\mathbb{C}^*$ by $ 
	T$,
	\item  the projection $ Y\to X $ by $ \pi $, and the 
	quotient map $ Y^s\to Z  $ by $ \psi$.
	\item Introduce the notation $ Y(0:\cdot) $ and $ Y(\cdot:0) $ for 
	the two copies of $ X $ in $ Y $, corresponding to the 
	two poles of the projective line; then $ Y $ is partitioned into 3 sets:
	\[ Y = Y(0:\cdot) \sqcup Y(\cdot:0) \sqcup \Lvec^\circ, \]
	where $ \Lvec^\circ $ is the line bundle $ \Lvec $ with the zero-section 
	removed. We will write $ \pi_\circ $ for the restriction of $\pi$ to $ 
	\Lvec^\circ $. We can collect our maps on the following diagram.
\begin{center}
	\begin{tikzcd}
		\Lvec^\circ\arrow[rd, "\pi_\circ"] \arrow[r, 
		hook] & Y= \mathbb{P}(\mathcal{O}\oplus\Lvec) \supset Y^{\mathrm{s}} 
		\arrow[d, "\pi"]  \arrow[r, "\psi"]  & Z\\
		& X 
	\end{tikzcd}
\end{center}
\end{itemize}

\begin{proposition}\label{BundleN}
	\begin{enumerate}
		\item 
		There are embeddings 
		\[ \iota_-:M_-\to Z\quad\text{and}\quad 
		\iota_+:M_+\to Z \]
		obtained as the quotients $ Y^{\mathrm{s}}\cap Y(\cdot:0) /G $ 
		and\\
		$ Y^s\cap Y(0:\cdot)/ G $, correspondingly.
		\item The strictly semistable locus of $ Y $ with 
		respect to the 
		linearization $L_-(n) $
		 is empty, and 
		the GIT quotient $ Z=Y^\mathrm{s}/ G $ is smooth. 
		\item There is an embedding  
		$ \iota_0:X^0/G \to Z 
		$, obtained via $ \psi({\pi_\circ}^{-1}(X^0)) $.
		 We denote the image of $\iota_0$ by $Z^0$.
		\item 
		The fixed point locus $ Z^T $ is the disjoint 
		union of $ \iota_+(M_+) $, $ \iota_-(M_-) $, and $ 
		Z^0 $.
	
	\end{enumerate}	
\end{proposition}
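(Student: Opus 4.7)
The strategy is a careful Hilbert--Mumford analysis of $G$-semistability on $Y$ with respect to $L_-(n)$, carried out stratum by stratum on the partition $Y = Y(0:\cdot) \sqcup Y(\cdot:0) \sqcup \Lvec^\circ$. The key starting observation, which sets up the whole argument, is that $L_-(n)$ restricts to $L_-$ on one sectional copy of $X$ and to $L_+ = L_- \otimes \Lvec^n$ on the other (up to a choice of convention on $\mathbb{P}(\mathcal{O} \oplus \Lvec)$); consequently $G$-invariant sections of powers of $L_-(n)$ on $Y$ interpolate between those of $L_-$ and $L_+$.

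For part (1), the subscheme $Y(\cdot:0) \cong X$ is $G$-stable, and by Hilbert--Mumford the intersection $Y^{\mathrm{ss}}[L_-(n)] \cap Y(\cdot:0)$ coincides with $X^{\mathrm{ss}}[L_-]$. Since $L_-$ lies in the interior of its chamber, $X^{\mathrm{ss}}[L_-] = X^\mathrm{s}[L_-]$, and the induced map $X^\mathrm{s}[L_-]/G \to Z$ is the embedding $\iota_-$; the construction of $\iota_+$ from $Y(0:\cdot)$ is symmetric. For part (3), Assumption~\ref{assumptions}(4) forces $G_x \cong \C^*$ to act freely on $\Lvec^\circ_x$ for every $x \in X^0$, so any $y \in \pi_\circ^{-1}(X^0)$ has trivial $G$-stabilizer, and the natural map $\pi_\circ^{-1}(X^0)/G \to X^0/G$ has singleton fibers $\Lvec^\circ_x/G_x$; composing the resulting bijection with $\psi$ yields $\iota_0$.

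Part (2) is the principal obstacle. I would prove that every $L_-(n)$-semistable point $y \in Y$ is stable and that the $G$-action on $Y^\mathrm{s}$ is free. On the two boundary sections this reduces to the chamber regularity of $L_\pm$ as above. For $y \in \Lvec^\circ$ projecting to $\pi(y) \in X^\mathrm{s}[L_+] \cup X^\mathrm{s}[L_-]$, both orbit closedness and triviality of the stabilizer are inherited from $X$ via $\pi$. The essential remaining case is $\pi(y) \in X^0$: here the $\C^*$-stabilizer of $x$ is killed by Assumption~\ref{assumptions}(4), giving $y$ trivial stabilizer, and the extra fiber coordinate obstructs any degeneration of the orbit to a $G$-invariant subvariety of smaller dimension, so the orbit is closed and $y$ is stable. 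Smoothness of $Z = Y^\mathrm{s}/G$ then follows from the freeness of the action on a smooth variety.

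For part (4), a point $z = [y] \in Z$ is $T$-fixed precisely when $T \cdot y \subseteq G \cdot y$. The $T$-fixed locus of $Y$ itself is exactly $Y(0:\cdot) \sqcup Y(\cdot:0)$, so the two sections contribute $\iota_\pm(M_\pm) \subseteq Z^T$ automatically. For $y \in \Lvec^\circ$, the condition forces the $T$-action on the fiber direction to be realized by a $1$-parameter subgroup of $G_x$, which requires $G_x$ to be positive-dimensional; by Assumption~\ref{assumptions}(3) this happens exactly when $x \in X^0$, and the free $\C^*$-action of $G_x$ on $\Lvec^\circ_x$ furnishes a $1$-parameter subgroup whose weight matches that of $T$ (after absorbing a scalar character). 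This gives $Z^0 \subseteq Z^T$, and the three components are disjoint by the disjointness of $Y(0:\cdot)$, $Y(\cdot:0)$, and $\Lvec^\circ$.
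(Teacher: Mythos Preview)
Your approach is correct and matches the paper's. The paper simply cites \cite[4.2, 4.3]{ThaddeusFlip} for parts (1)--(3), and your outline is essentially a sketch of what Thaddeus proves there: the identification of $L_-(n)$ restricted to the two sections with $L_\pm$, the Hilbert--Mumford comparison, and the use of Assumption~\ref{assumptions}(4) to kill the $\C^*$-stabilizer over $X^0$. For part (4), your argument is essentially identical to the one the paper writes out explicitly: $T$-fixedness of $[y]$ for $y\in\Lvec^\circ$ amounts to $T\cdot y\subset G\cdot y$, which forces $G_{\pi(y)}$ to be positive-dimensional and hence $\pi(y)\in X^0$; conversely, for $y\in\pi_\circ^{-1}(X^0)$ one has $T\cdot y=\pi_\circ^{-1}(x)=G_x\cdot y$.

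One point in your part~(2) is too vague to stand on its own: for $y\in\Lvec^\circ$ over $x\in X^0$, saying ``the extra fiber coordinate obstructs any degeneration'' is not a proof of orbit closedness. Trivial stabilizer alone does not imply the orbit is closed in the semistable locus. What makes the argument go through (and what Thaddeus actually checks) is the Hilbert--Mumford computation: for $y\in\Lvec^\circ$, a one-parameter subgroup $\lambda\subset G$ gives $\mu^{L_-(n)}(y,\lambda)=\mu^{L_0}(x,\lambda)$ up to the fiber contribution, and since $x\in X^0\subset X^{\mathrm{ss}}[L_0]$ and the fiber weight is nonzero exactly when $\lambda$ lies in $G_x$, one gets $\mu^{L_-(n)}(y,\lambda)>0$ for every destabilizing direction. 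You should either make that weight computation explicit or, as the paper does, cite Thaddeus.
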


\begin{proof}
(1)-(3) follow from \cite[4.2, 4.3]{ThaddeusFlip}. To prove (4), first note that $Y(\cdot:0)$ and $Y(0:\cdot)$ are fixed by $T$, so we immediately obtain that $M_\pm \subset Z$ are fixed components. 
Also the $G$-action on $Y$ commutes with the $T$-action, so a point $\psi(y)\in{\psi(\pi_\circ^{-1}(X))}$ is fixed by $T$ if and only if the $T$-orbit $T\cdot{y}\subset\pi_\circ^{-1}(X)$ is contained in the $G$-orbit $G\cdot{y}\subset\pi_\circ^{-1}(X)$. Since $T\cdot{y}\subset\pi_\circ^{-1}(x)$ for some $x\in{X}$, we need $y\in\pi_\circ^{-1}(X^0)$. Moreover, for any $y\in\pi_\circ^{-1}(x)\subset{\pi_\circ^{-1}(X^0)}$, $T\cdot{y}=\pi_\circ^{-1}(x)=G_x\cdot{y}$, so a point $\psi(y)\in{\psi(\pi_\circ^{-1}(X))}$ is fixed by $T$ if and only if $\psi(y)\in{\psi(\pi_\circ^{-1}(X^0))=Z^0}$. 
\end{proof}
\noindent \textbf{Construction:} \label{LBdescent} Given a $ G $-equivariant vector 
bundle $E$ on $X$, we can construct 
a $ T $-equivariant vector bundle $ \zeta(E)\to Z $ on $ Z $ by first pulling $ 
E $ back from $ X $ to $ 
Y $, and endowing the resulting bundle $ \pi^*E $ with  the trivial action of $ 
T $, and the  action 
of $ G $ pulled back from $ X $. We then obtain $ \zeta(E)\to Z $ by 
descending $ \pi^*E $ to $ Z $.  Then it is easy to verify the following.
\begin{lemma}\label{trivrestr}
The restriction of the line bundle $\zeta(\Lvec)$ to $Z^0$ is trivial with $ T 
$-weight 1.
\end{lemma}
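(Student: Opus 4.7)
The plan is to exhibit an explicit nowhere vanishing $T$-equivariant section of $\zeta(\Lvec)$ over $Z^0$ of weight $+1$. The natural candidate is the \emph{tautological section}: since $\Lvec^{\circ}\subset Y$ is the total space of $\Lvec$ minus the zero section (embedded as the complement of the two distinguished copies $Y(0:\cdot)$ and $Y(\cdot:0)$), the inclusion $\Lvec^{\circ}\hookrightarrow Y$ provides for every $y\in\Lvec^{\circ}$ a distinguished element $y\in\Lvec_{\pi_\circ(y)}=(\pi^*\Lvec)_y$. This defines a section $s$ of $\pi^*\Lvec$ over $\Lvec^{\circ}$, and we restrict it to $\pi_\circ^{-1}(X^0)=\Lvec^{\circ}|_{X^0}$.

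First I would check the equivariance properties of $s$. For the $G$-action, $G$ acts on $\Lvec^{\circ}$ via the given linearization of $\Lvec$, and on $\pi^*\Lvec$ fiberwise via the same linearization (pulled back to $Y$); hence $s$ is tautologically $G$-invariant. For the $T$-action, the $(1,t^{-1})$-action on $\mathcal{O}\oplus\Lvec$ induces on the affine chart $\Lvec^{\circ}\subset Y$ the scalar action $y\mapsto t^{-1}y$, whereas $\pi^*\Lvec$ carries the \emph{trivial} $T$-action by the construction of $\zeta$. A direct computation $(t\cdot s)(y)=s(t^{-1}\!\cdot y)=s(ty)=ty=t\cdot s(y)$ shows that $s$ transforms with $T$-weight $+1$.

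Next I would argue that $s$ descends to a nowhere vanishing section of $\zeta(\Lvec)|_{Z^0}$. By Assumption \ref{assumptions}(4), the stabilizer $G_x\simeq\mathbb{C}^*$ acts freely on $\Lvec_x\setminus\{0\}$ for every $x\in X^0$, so $G$ acts freely on $\pi_\circ^{-1}(X^0)$, which is therefore a $G$-principal bundle over $Z^0$. A $G$-invariant section of a $G$-equivariant line bundle on a principal bundle descends canonically to a section of the quotient line bundle, and since $s(y)=y\ne 0$ on $\Lvec^{\circ}$, the descended section $\bar s$ is nowhere vanishing. Therefore $\bar s$ trivializes $\zeta(\Lvec)|_{Z^0}$, and its $T$-weight equals that of $s$, namely $+1$.

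The only subtle point, and essentially the sole thing requiring care, is the book-keeping of sign conventions: one must make sure that the $T$-action $(1,t^{-1})$ on $\mathcal{O}\oplus\Lvec$ translates correctly to the fiber coordinate on $\Lvec^{\circ}$ and that the pulled-back bundle genuinely carries the trivial $T$-weight in the $\zeta$-construction, so that the two contributions combine to give weight $+1$ rather than $-1$. Once this is pinned down, the argument is a one-line identification via the tautological section.
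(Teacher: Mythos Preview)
Your argument is correct and is precisely the natural verification the paper has in mind: the tautological section of $\pi^*\Lvec$ over $\Lvec^\circ$ is $G$-invariant and nowhere vanishing, descends to $Z^0$ since $G$ acts freely on $\pi_\circ^{-1}(X^0)$, and the weight computation you give is accurate. The paper itself does not spell out a proof (it simply says ``it is easy to verify the following''), so there is nothing to compare beyond noting that your write-up fills in exactly that omitted check.
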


Before we formulate our wall-crossing formula, we need one 
more ingredient: the identification of the normal bundles of the fixed point 
components of $ Z $.

\begin{lemma}\label{normallemma}
	\begin{enumerate}
	\item The normal bundle on the component $ M_+ $ of $ Z^T $ is $ \zeta(\Lvec^{-1})\big{|}_{M_+} $, 
	and the normal bundle of $ M_- $ is $ \zeta(\Lvec)\big{|}_{M_-}  $.
	\item The normal bundle $ N_{Z^0} $ of $ Z^0=X^0/G \subset Z $ may be 
	described as the descent of the normal bundle $N_{X^0}$ of $ X^0\subset X $. 
	The weights of the action may be computed by fixing $ x\in X^0 $, 
	identifying the stabilizer subgroup $ G_x\subset G$ with $ 
	T $ via its action on the fiber $ \Lvec_x$, and then considering the action 
	of $ G_x $ on $ N_{X^0} $.
	\end{enumerate}
\end{lemma}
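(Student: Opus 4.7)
The plan is to compute both normal bundles upstairs on $Y$ and then descend through the $G$-quotient $\psi:Y^{\mathrm{s}}\to Z$ via the $\zeta$-construction preceding Lemma \ref{trivrestr}. For part (1), the two sections $Y(\cdot{:}0)$ and $Y(0{:}\cdot)$ are $G$-stable and $T$-fixed, so a tubular neighborhood of $M_\pm\subset Z$ is modelled as the $G$-quotient of a tubular neighborhood of the corresponding section inside $Y^{\mathrm{s}}$. Applying the standard formula $N_{\mathbb{P}(L_i)/\mathbb{P}(L_0\oplus L_1)}=L_i^{\vee}\otimes L_{1-i}$ with $L_0=\OO$ and $L_1=\Lvec$, I obtain normal bundles $\Lvec$ over $Y(\cdot{:}0)$ and $\Lvec^{-1}$ over $Y(0{:}\cdot)$. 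These are $G$-equivariant with trivial $T$-action on the base and tautological $T$-weight on the fiber, so after descending through $G$ they become precisely $\zeta(\Lvec)|_{M_-}$ and $\zeta(\Lvec^{-1})|_{M_+}$.

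For part (2), the key subtlety is that $\pi_\circ^{-1}(X^0)\subset\Lvec^\circ\subset Y$ is \emph{not} itself $T$-fixed, yet becomes $T$-fixed after the $G$-quotient because $T$-orbits inside $\pi_\circ^{-1}(X^0)$ coincide with orbits of the stabilizer subgroup $G_x$. For $x\in X^0$ and any $y\in\pi_\circ^{-1}(x)$, I would first record that both $T\cdot y$ and $G_x\cdot y$ fill the fiber $\pi_\circ^{-1}(x)\cong\C^*$: the first equality is the definition of the $T$-action on $\Lvec^\circ$, and the second uses Assumption 4 of \S\ref{S5.2} that $G_x\cong\C^*$ acts freely on $\Lvec_x\setminus\{0\}$. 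Matching these two actions along $\Lvec_x$ produces the canonical identification $T\cong G_x$ asserted in the lemma, and also shows that $G$ acts freely on $\pi_\circ^{-1}(X^0)$, so $Z^0=\pi_\circ^{-1}(X^0)/G$ is smooth. Since $\Lvec^\circ\subset Y$ is open and $\pi_\circ$ is a submersion, the normal bundle of $\pi_\circ^{-1}(X^0)$ inside $Y$ is simply $\pi_\circ^{*}N_{X^0}$; descending freely by $G$ and composing with the identification $Z^0\cong X^0/G$ yields $N_{Z^0}$ as the descent of $N_{X^0}$.

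For the $T$-weights on $N_{Z^0}$ at a fixed class $[y]$: since $T$ and $G$ commute, the $T$-action on $T_yY$ passes to the $T$-action on $T_{[y]}Z$; but along $\pi_\circ^{-1}(X^0)$ this $T$-action is entirely absorbed into $G$ via $T\cong G_x$, so on the transverse directions it reads off as the $G_x$-action on $N_{X^0,x}$ under the same identification. I expect the main obstacle to be the sign and weight bookkeeping: one must verify that the canonical isomorphism $T\cong G_x$ produced by their common action on $\Lvec_x$ is compatible with the convention of Lemma \ref{trivrestr}, where $\zeta(\Lvec)|_{Z^0}$ has $T$-weight $+1$. Once this is pinned down, the rest is a routine verification using commutativity of $T$ and $G$.
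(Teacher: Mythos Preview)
The paper does not supply a proof of this lemma; it is stated and then used directly in the proof of Theorem \ref{ThaddTh}. Your argument is correct and is precisely the natural one: compute the normal bundles upstairs on $Y$ and descend through the free $G$-quotient, using for part (1) the standard formula for the normal bundle of a section of a $\mathbb{P}^1$-bundle, and for part (2) the fact that $\Lvec^\circ\subset Y$ is open so that $N_{\pi_\circ^{-1}(X^0)/Y}=\pi_\circ^*N_{X^0}$, together with the identification $T\cong G_x$ coming from their common free action on $\Lvec_x\setminus\{0\}$.

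One small point worth making explicit in part (1): the $\zeta$-construction endows $\pi^*\Lvec$ with the \emph{trivial} $T$-action, whereas the geometric normal direction to $Y(\cdot{:}0)$ carries $T$-weight $-1$ (from the action $(1,t^{-1})$ on $\mathbb{P}(\OO\oplus\Lvec)$). So strictly speaking the identification $N_{M_-}\simeq\zeta(\Lvec)|_{M_-}$ is as ordinary line bundles, with the $T$-weight supplied separately by the geometry; this is exactly how the paper uses it in the proof of Theorem \ref{ThaddTh}, where the denominator $1-t\exp(-c_1(\Lvec))$ combines the Chern class of $\zeta(\Lvec)$ with the geometric weight. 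Your phrase ``tautological $T$-weight on the fiber'' gestures at this but could be sharpened.
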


\begin{definition}\label{def:AB}
Given a $ T $-vector bundle $ V $ on a manifold on which $ T $ acts trivially,
the $T$-equivariant \textit{K-theoretical Euler class} of $V^*$, which we denote by  $E_t(V)$, may be described as follows: let $ x_1,\dots, x_n $ be 
the Chern roots of $ V $, and $ l_1,\dots l_n\in\Z $ be the corresponding  $ T 
$-weights. Then 
\[  E_t(V) = \prod_{j=1}^{n}  \left(1-t^{-l_j}\exp(-x_j)\right).     
\]

\end{definition}

Now we are ready to write down our wall-crossing  formula 
for \eqref{chidiff}. A key role will be played by the 
following notion: given a rational 
differential 1-form on the Riemann sphere, let us denote 
taking the sum of residues at 0 and at 
infinity by $ \mu\mapsto \res_{t=0,\infty} \mu $:
\[ \res_{t=0,\infty} \overset{\mathrm{def}}= \res_{t=0} + \res_{t=\infty}.
\]
\begin{theorem}\label{ThaddTh}
 Let $\L$ be a linearization of the $G$-action on $ X $, and denote, as above, by $ \zeta(\L) $ 
 the $T$-equivariant line bundle on $Z$ 
obtained by pull-back to $ Y $ and descent to $ Z $. If Assumptions \ref{assumptions} hold, then 
\begin{equation} \label{wallcrosseq}
	\chi(M_+, \L)-\chi(M_-, \L)=
\res_{t=0,\infty}\int_{Z^0}\frac{\mathrm{ch}_t(\zeta(\L)\big{|}_{Z^0})}{E_t(N_{Z^0})}\mathrm{Todd}(Z^0)\frac{dt}{t},
\end{equation}
where $ N_{Z^0} $ is the $ T $-equivariant bundle on $ Z^0 $ described in Lemma 
\ref{normallemma},     $ 
\mathrm{ch}_t$ is the $ T $-equivariant Chern character,  and  
$E_t(N_{Z^0})$ is the K-theoretical Euler class of $N^*_{Z^0}$.
\end{theorem}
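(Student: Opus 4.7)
\smallskip
\noindent\emph{Proof plan.} The strategy is to apply equivariant localization on the smooth projective $T$-variety $Z$ and then exploit a global rigidity: the $T$-equivariant Euler characteristic of the line bundle $\zeta(\L)$ is a Laurent polynomial in the equivariant parameter $t$, so the sum of its residues at $t=0$ and $t=\infty$ must vanish.

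First, by Proposition~\ref{BundleN}, $Z$ is smooth and projective with $Z^T = M_+ \sqcup M_- \sqcup Z^0$, and the Atiyah--Bott--Berline--Vergne equivariant holomorphic Lefschetz formula gives
\begin{equation*}
\chi_T(Z, \zeta(\L)) \;=\; \sum_{F \in \{M_+, M_-, Z^0\}} \int_F \frac{\ch_t(\zeta(\L)|_F)\,\mathrm{Todd}(F)}{E_t(N_F)},
\end{equation*}
where each summand is a rational function of $t$ whose poles are located at the roots of the $E_t(N_F)$. Because $Z$ is projective and the action is algebraic, each $H^i(Z, \zeta(\L))$ is a finite-dimensional $T$-module, so the left-hand side is a Laurent polynomial $P(t)$. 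Consequently $P(t)\,dt/t$ has poles only at $0$ and $\infty$ on $\mathbb{P}^1$, and the residue theorem yields $\res_{t=0,\infty} P(t)\,dt/t = 0$. Splitting the equivariant Riemann--Roch formula accordingly gives
\begin{equation*}
\res_{t=0,\infty}[\text{contrib}_{M_+}]\tfrac{dt}{t} + \res_{t=0,\infty}[\text{contrib}_{M_-}]\tfrac{dt}{t} + \res_{t=0,\infty}[\text{contrib}_{Z^0}]\tfrac{dt}{t} = 0.
\end{equation*}

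The core computation is then to show that the $M_\pm$ residue contributions are, up to sign, the ordinary Hirzebruch--Riemann--Roch integrals. Invoking Lemma~\ref{normallemma}(1), the normal bundles of $M_+$ and $M_-$ in $Z$ are the line bundles $\zeta(\Lvec^{-1})|_{M_+}$ and $\zeta(\Lvec)|_{M_-}$ respectively, carrying opposite $T$-weights (normalized to $\pm 1$) inherited from the $T$-action $(1,t^{-1})$ on $\mathcal O \oplus \Lvec$. Writing $n_+ = c_1(\zeta(\Lvec^{-1})|_{M_+})$, the $M_+$-contribution equals $\int_{M_+} \ch(\L)\,\mathrm{Todd}(M_+)/(1 - t^{-1}e^{-n_+})$; expanding the rational factor as a geometric series in $t^{-1}$ (the expansion valid at $t=\infty$) produces constant term $1$, so its $\res_{t=\infty}$ is $-\chi(M_+,\L)$ by HRR, while the corresponding expansion at $t=0$ has vanishing constant term. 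The argument for $M_-$ is identical with $t$ and $t^{-1}$ interchanged, producing $+\chi(M_-,\L)$. Substituting into the vanishing relation above rearranges to the claimed wall-crossing formula \eqref{wallcrosseq}.

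The main obstacle, in my view, is not the residue manipulation but bookkeeping the normal-bundle data: one must check that the $T$-weights on $N_{M_\pm}$ are indeed $\pm 1$ with the correct signs (forcing the $M_+$ contribution to localize at $\infty$ and the $M_-$ contribution at $0$), and that the descent operation $\zeta$ is compatible, on $Z^0$, with the identification of $T$ with the generic stabilizer $G_x \cong \mathbb{C}^*$ acting through $\Lvec_x$ provided by Assumption~\ref{assumptions}(4). Once these identifications are pinned down, the residue argument is entirely formal and gives the statement.
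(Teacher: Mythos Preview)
Your proposal is correct and follows essentially the same approach as the paper's own proof: apply the Atiyah--Bott holomorphic Lefschetz formula on the smooth master space $Z$, use that $\chi_t(Z,\zeta(\L))$ is a Laurent polynomial so that $\res_{t=0,\infty}\chi_t(Z,\zeta(\L))\,dt/t=0$, and then compute the $M_\pm$ contributions explicitly via the rank-one normal bundles of Lemma~\ref{normallemma} to obtain $\mp\chi(M_\pm,\L)$. The only difference is presentational: the paper records the two residue identities $\res_{t=0,\infty}\frac{A}{1-t^{\mp1}B}\frac{dt}{t}=\mp A$ directly, whereas you obtain them by separate geometric-series expansions at $0$ and $\infty$; the bookkeeping concerns you flag about the $T$-weights on $N_{M_\pm}$ and the compatibility of $\zeta$ on $Z^0$ are precisely what Lemma~\ref{normallemma} and Lemma~\ref{trivrestr} supply.
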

\begin{proof}[Proof of Theorem \ref{ThaddTh}]
The Atiyah-Bott 
fixed-point formula \cite{AtiyahBottfixed} applied to the line bundle $ \L $ on 
our master space $ Z $ yields
\begin{equation}\label{ABeq}
	\chi_t(Z,\L) =
\sum_{F\subset Z^T} \int_F  
\frac{\mathrm{ch}_t(\zeta(\L)\big{|}_F)}{E_t(N_F)}\mathrm{Todd}(F),
\end{equation}
where the sum is taken over the connected components of the fixed 
point locus $ Z^T $.

	In Proposition \ref{BundleN}, we identified these 
	components as $ M_+,M_- $ and $ {Z^0}$. 
	According to Lemma \ref{normallemma}, for $ M_- $, 
	the normal bundle is simply $ \zeta(\Lvec) $, and thus the 
	contribution of $ M_- $ is equal to
	\[ \int_{M_-} \frac{\mathrm{ch}(\L)\mathrm{Todd}(M_-)}
	{1-t\exp(-c_1(\Lvec))}. \]
	A similar calculation gives the contribution of $ M_+ $ 
	as
	\[ \int_{M_+} \frac{\mathrm{ch}(\L)\mathrm{Todd}(M_+)}
	{1-t^{-1}\exp(c_1(\Lvec))}. \]
	
	 We observe that  $ \chi_t(Z,\L) $ is a Laurent 
polynomial in $ t $ since it  is the 
alternating sum of $ T $-characters of finite dimensional 
vector spaces. Thus, as a function 
of $ t $, $ \chi_t(Z,\L) $ has poles only at $ t=0,\infty 
$, 
and by the Residue Theorem, we have 
\[ \res_{t=0,\infty}
\chi_t(Z,\L)\frac{dt}{t}=0.\] 
On the other hand, since 
\[ \res_{t=0,\infty} \frac{A}{1-t^{-1}B} \frac{dt}{t} 
= -A\;\text{ and }\; \res_{t=0,\infty} \frac{A}{1-tB} 
\frac{dt}{t} = A ,\] 
we have 
\begin{multline*}
	\res_{t=0,\infty}\int_{M_-} 
	\frac{\mathrm{ch}(\L)\mathrm{Todd}(M_-)}
	{1-t\exp(-c_1(\Lvec))}\frac{dt}{t} = 
	\chi(M_-,L)\text{ 
		and }\\
	\res_{t=0,\infty}\int_{M_+} 
	\frac{\mathrm{ch}(\L)\mathrm{Todd}(M_+)}
	{1-t^{-1}\exp(c_1(\Lvec))}\frac{dt}{t} = -\chi(M_+,L).
\end{multline*}

Now, applying the functional $ 
\res_{t=0,\infty}  $ to the two sides of 
\eqref{ABeq} multiplied by $ dt/t $
gives us the desired result \eqref{wallcrosseq}.

\end{proof}

\end{section}

\begin{section}{Wallcrossings in parabolic moduli spaces}
	\label{sec:wallcrpara}
In this section we apply Theorem \ref{ThaddTh} to wall crossings in the moduli space of parabolic bundles. 

From now on, we assume that $d=0$, and we write $\Delta$ for the corresponding 
set of admissible parabolic weights $\Delta_0$. Recall from Section \ref{S2.2} that 
for regular $c\in\Delta$, the moduli space of stable parabolic bundles $P_0(c)$ is the 
GIT quotient $XQ\sslash^c PSL(\chi),$ where $XQ$ is the subspace  of the total space of the flag bundle over the Quot scheme. 
Let us fix a partition 
$\Pi=(\Pi^{\p},\Pi^{\pp})$ and an  integer $l$, and introduce the notation $ 
\Delta'_l $ and $ \Delta^{\pp}_{-l}$ for the simplices of parabolic weights of 
$ \Pi^{\p} $ and $\Pi^{\pp} $.
Let $\phi\in \Sigma_r$ be the unique permutation which sends $\{1,...,r^\p\}$ to $\Pi^{\p}$ preserving the order of first $r^\p$ and the last $r^{\pp}$ elements.
We choose $c^0=(c^0_1,...,c^0_r)\in{S}_{\pi,l}$ 
and two regular elements $c^+, c^- \in \Delta$ in two neighboring chambers 
separated by the wall $S_{\Pi,l}$, such that 
$$c^\pm=c^0\pm\epsilon(...,0,1,0,...,0,-1)$$ for some positive 
$\epsilon\in\mathbb{Q}$, where $1$ and $-1$ are on the $\phi(r')^{\text{th}}$ 
and $r^\text{th}$ places, respectively. 
Let $$c^\p=\sum_{i\in \Pi^{\p}}c^0_ix_i\in\Delta^\p_l \text{\,\,  and \,\,} c^{\pp}=\sum_{i\in \Pi^{\pp}}c^0_ix_i\in\Delta^{\pp}_{-l}. $$
For $(k,\lambda)\in\mathbb{Z}\times\Lambda$, consider the 
polynomials $$q_\pm(k,\lambda) = \chi(P_0(c^\pm), 
\mathcal{L}_0(k;\lambda)).$$ 
Our goal is to calculate the difference of these two 
polynomials.

\noindent \textbf{Notation:} To simplify our notation, from now on, we omit the index $t$ from the symbols for equivariant characteristic classes.

\subsection{The master space construction}\label{S6.1}
We construct the master space $Z$ from \S\ref{S5.2} using the 
following data: 
\begin{itemize}
\item a smooth variety $X=XQ$ (cf. \S\ref{S2.2});
\item  linearizations $L^\pm=L(k; \lambda^\pm)$ of the $G$-action on $X$ (cf. \S\ref{S2.2}), such that $\lambda^\pm/k=c^\pm$;
\item the linearization $\Lvec = L(0; x_{\phi(r^\p)}-x_r)$   of the $G$-action on $X$.
\end{itemize}

The following statement is easy to verify.
\begin{lemma}\label{Sigma}(\cite[\S3.2]{BodenH}) The subset $X^0\subset{X}$ is the set of points representing vector bundles $W$ on $C$, such that $W$ splits as a direct sum $W^\p\oplus{W^{\pp}}$, where $W^\p$ and $W^{\pp}$ are, respectively, $c^\p$ and $c^{\pp}$-stable parabolic bundles. Therefore, we have the following description of the locus $Z^0$: 
$$Z^0 = \{W=W^\p\oplus W^{\pp} \, | \,W^\p\in \widetilde{P}_{l}(c^\p); \, W^{\pp}\in\widetilde{P}_{-l}(c^{\pp}); \, \, det(W)\simeq\mathcal{O}\}.$$
\end{lemma}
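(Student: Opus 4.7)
The plan is to unpack the definition of $X^0$ in bundle-theoretic terms and then apply the standard GIT dictionary for strictly semistable points. Recall that, by Assumptions \ref{assumptions}(3),
\[ X^0 = X^{\mathrm{ss}}[L_0]\setminus\bigl(X^s[L_+]\cup X^s[L_-]\bigr), \]
and that the linearizations $L^{\pm}=L(k;\lambda^{\pm})$ and $L_0=L^+\otimes L^-$ correspond to the parabolic weights $c^{\pm}$ and $c^0$. Hence a point $x\in X$, representing a parabolic bundle $(W,F_*)$, lies in $X^0$ if and only if $(W,F_*)$ is $c^0$-semistable but fails to be $c^{+}$- and $c^{-}$-stable. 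The goal is to characterize such $(W,F_*)$.

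The first step is to show that any $(W,F_*)\in X^0$ admits a short exact sequence $0\to W'\to W\to W''\to 0$ of parabolic bundles whose combinatorial type is exactly the partition $\Pi=(\Pi^{\p},\Pi^{\pp})$ with $\deg W'=l$ and $\deg W''=-l$. Indeed, failure of $c^{+}$-stability produces a proper subbundle whose parabolic slope (with respect to $c^{+}$) is $\ge 0$; together with $c^{0}$-semistability and the specific perturbation $c^{+}-c^{0}=\epsilon(\dots,0,1,0,\dots,-1)$, a direct numerical check forces the flag-position set of this subbundle to be precisely $\Pi^{\p}$. The wall equation $\sum_{i\in\Pi^{\p}}c^{0}_{i}=l$ then pins down $\deg W'=l$ via $\textit{pardeg}(W')=0$. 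The symmetric argument applied to $c^{-}$-instability gives the opposite filtration, and combining the two gives a splitting of the short exact sequence, i.e.\ $W\simeq W'\oplus W''$ as parabolic bundles.

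The second step is to argue that $W'$ and $W''$ are themselves stable of weights $c^{\p}$ and $c^{\pp}$. This is automatic: any proper parabolic subbundle of $W'$ of parabolic slope $\ge 0$ with respect to $c^{\p}$ would, via the inclusion $W'\subset W$, yield a subbundle of $W$ violating $c^{0}$-semistability. The condition $\det W\simeq\mathcal O$ is the defining SL-constraint cutting out $P_0(c)\subset\widetilde P_0(c)$, so it transfers directly to $Z^{0}$. Conversely, any $W=W'\oplus W''$ with $W'$ and $W''$ stable of the stated weights is $c^{0}$-strictly-semistable with a $\mathbb{C}^{*}$-stabilizer (the scalar acting on $W'$), and is destabilized by both $c^{+}$ and $c^{-}$; this is the parabolic version of the Kempf--Ness/Luna-slice description and is carried out in \cite[\S3.2]{BodenH}. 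Combining the two inclusions gives the claimed description of $Z^{0}$.

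The main obstacle I expect is the first step: ruling out polystable configurations with more than two simple summands, and rigorously extracting the splitting from the two instabilities. This is where Assumption \ref{assumptions}(3) that the stabilizer is exactly $\mathbb{C}^{*}$ is used—it forces the polystable representative to have precisely two non-isomorphic simple summands, matching $W'\oplus W''$. The remaining verifications (degree computation, induced parabolic weights, determinant condition) are straightforward bookkeeping that can be deferred to the references.
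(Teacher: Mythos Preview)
The paper does not prove this lemma; it simply cites \cite[\S3.2]{BodenH} and calls it ``easy to verify''. Your outline follows the standard argument found there, and the overall shape (extract a destabilizing subbundle, read off its type from the wall equation, obtain the splitting, check stability of the factors) is correct.

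There is, however, a genuine gap in how you establish the \emph{stability} of $W'$ and $W''$. Your step-two claim---that a proper subbundle $V\subset W'$ of parabolic slope $\ge 0$ would violate $c^{0}$-semistability of $W$---only rules out parslope $>0$; a subbundle with parslope exactly $0$ is perfectly compatible with $c^{0}$-\emph{semi}stability. So this yields only semistability of $W'$. Your proposed remedy, invoking Assumption~\ref{assumptions}(3) (stabilizer $\cong\mathbb{C}^{*}$), is circular in the paper's logical order: that assumption is \emph{verified} in \S\ref{S6.1} immediately \emph{after} Lemma~\ref{Sigma}, precisely by using the description of $X^{0}$ the lemma provides.

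The missing ingredient is the genericity of $c^{0}$ on the wall. Because $c^{+}$ and $c^{-}$ are regular and lie in neighbouring chambers separated by the single wall $S_{\Pi,l}$, the point $c^{0}$ lies on $S_{\Pi,l}$ and on no other wall. Consequently, any subbundle $V\subset W$ with $\textit{parslope}(V)=0$ at $c^{0}$ must have flag-position set exactly $\Pi'$ (or $\Pi''$) and degree $l$ (resp.\ $-l$); a proper subbundle of $W'$ would have flag-position set strictly contained in $\Pi'$, which would force $c^{0}$ onto a different wall, a contradiction. This gives stability of $W'$ and $W''$ directly, and at the same time rules out polystable configurations with more than two summands, without any appeal to the stabilizer hypothesis.
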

\begin{remark}\label{CohSplit}
Note that  $Z^0$ is fibered over $Jac^l$ with fibre $P_l(c^\p)\times{P_{-l}(c^{\pp})}$ by the determinant map $\widetilde{P}_l(c^\p)\to Jac^l$ and 
$$H^*(Z^0,\mathbb{Q})\simeq H^*(P_l(c^\p)\times{P_{-l}(c^{\pp})},\mathbb{Q})\otimes H^*(Jac^l,\mathbb{Q}).$$
\end{remark}
\begin{remark}
If the rank of the vector  bundle $W\in \widetilde{P}_l(c)$ is $1$, then $c=l$ and $\widetilde{P}_l(l)$ is isomorphic to  $Jac^l$, while ${P}_l(l)$  is a point.
\end{remark}
Now we need to verify the hypotheses of Theorem \ref{ThaddTh}. First note
that in our present construction $ X $ is not projective, however, it contains 
all semisimple points of the Quot scheme for all possible polarizations, and 
hence the missing points of the Quot scheme have no effect on any of our 
constructions.

Assumptions \ref{assumptions} (1)-(2) are trivially satisfied, so we study the 
action of the stabilizer $G_x\subset{SL_\chi}$ of point $x\in{X}$ on the fiber 
$\Lvec_x\setminus{0}$.               
\begin{itemize}
\item For a general point $x\in X$ the stabilizer of $x$ is  the center $\mathbb{Z}_\chi\subset SL(\chi)$, which acts trivially on the fiber $\Lvec_x\setminus{0}$.
\item For $x\in X^0$, any element of the stabilizer of $x$ induces an automorphism of the corresponding vector bundle $W=W^\p\oplus{W^{\pp}}$, so $G_x\simeq \mathbb{C}^*\times\mathbb{C}^*\subset GL_\chi$. Then  $(t_1,t_2)\in{G}_x$ is in $SL_\chi$ if and only if $t_1^{\chi^\p}t_2^{\chi^{\pp}}=1$, where $\chi^\p=\chi(W^\p)$ and $\chi^{\pp}=\chi(W^{\pp})$. Note that $(t_1,t_2)$ acts on $\Lvec_x$ as $t_1t_2^{-1}$, and we need $t_1=t_2$ (hence $t_1^\chi=1$) for this action to be trivial, so the stabilizer of any  point in $\Lvec_x\setminus{0}$ is the center $\mathbb{Z}_\chi\subset SL_\chi$.
\end{itemize}
Then 
the action of $G={P}SL(\chi)$ is free on $Y\setminus (Y(0:\cdot) \cup Y(\cdot:0))$ and the action of $G_x\subset PSL(\chi)$ on $\Lvec_x\setminus{0}$ induces an isomorphism $G_x\simeq \mathbb{C}^*\simeq T$. 

Now by Theorem \ref{ThaddTh}, the wall-crossing polynomial 
$q_-(k;\lambda)-q_+(k;\lambda)$  is equal to 
\begin{equation}\label{integralt}
\begin{split}
\res_{t=0,\infty}\int_{Z^0} \frac{ch(\mathcal{L}_0(k;{\lambda})\big{|}_{Z^0})}{E(N_{Z^0})}\mathrm{Todd}(Z^0) \, \frac{dt}{t}.
\end{split}
\end{equation}

Note that in our case, the $T$-action on $Z$ is free outside the fixed locus 
$Z^T$, so as a function in $t\in T$, the integral in (\ref{integralt}) may have 
poles only at  $t=0,1,\infty$.
Then, using the Residue Theorem and  substituting $t=e^u$, we conclude that 
(\ref{integralt}) equals 
\begin{equation}\label{integral}
\begin{split}
-\res_{u=0}\int_{Z^0} \frac{ch(\mathcal{L}_0(k;{\lambda})\big{|}_{Z^0})}{E(N_{Z^0})}\mathrm{Todd}(Z^0) \, du,
\end{split}
\end{equation}
and thus our goal is to calculate this integral. 

Our first step is to identify the characteristic classes 
under the integral sign (cf. Proposition \ref{intl} for the 
result). 

We start with the study of the restriction of the line bundle 
$\mathcal{L}_0(k;{\lambda})$ to the fixed locus 
$Z^0\subset Z$.
Let $\mathcal{J}$ be the Poincare bundle over $Jac\times C$, 
such that  $c_1(\mathcal{J})_{(0)}=0$; define $\eta\in H^2(Jac)$ by $(\sum_ic_1(\mathcal{J})_{(e_i)}\otimes e_i)^2=-2\eta\otimes\omega$ (cf. \S\ref{S2.3}), then (cf. \cite{Zagier}) for any $m\in\mathbb{Z}$
\begin{equation}
\int_{Jac}e^{\eta m}=m^g.
\end{equation}
Recall that  for a parabolic weight $c=(c_1,...,c_r)\in \Delta$ we have set $c_{\Pi^\p}= \sum_{i\in\Pi^{\p}}c_i.$

\begin{lemma}\label{restriction}
Let $\lambda=(\lambda_1,...,\lambda_r)\in \Lambda$, $k\in\mathbb{Z}^{>0}$ and let $\Pi=(\Pi^{\p},\Pi^{\pp})$ be a nontrivial partition with $r\in\Pi^{\pp}$. Let 
$$\lambda^\p=\sum_{i\in \Pi^{\p}}\lambda_ix_i \text{\,\,  and \,\,} \lambda^{\pp}=\sum_{i\in \Pi^{\pp}}\lambda_ix_i,$$
and define $\delta$ by $(\lambda/k)_{\Pi^\p}=l+\delta$. Then 
\begin{equation*}
\begin{split}
ch & (\mathcal{L}_0(k;{\lambda})\big{|}_{Z^0})= \, e^{k{\delta}u}exp\left(\frac{\eta k}{r'}+\frac{\eta k}{r^{\pp}}\right)\cdot  \\ & ch(\mathcal{L}_l(k;\lambda_1',...,\lambda'_{r'}-k\delta)\boxtimes\mathcal{L}_{-l}(k;\lambda_1^{\pp},...,\lambda_{r^{\pp}}^{\pp}+k\delta)),
\end{split}
\end{equation*}
where $\boxtimes$ denotes the external tensor product of line bundles on $P_l(c^\p)\times{P_{-l}(c^{\pp})}$.
\end{lemma}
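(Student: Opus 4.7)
The plan is a direct Chern character computation on $Z^0$, organized around the canonical equivariant splitting $U|_{Z^0 \times C} = U' \oplus U''$ provided by Lemma \ref{Sigma}. I would first pin down the $T$-weights $\alpha,\beta$ on $U'$ and $U''$: combining Lemma \ref{trivrestr} (which gives $\Lvec$ the $T$-weight $1$) with the identification $\Lvec = M_{\phi(r')} \otimes M_r^{-1}$ forces $\alpha - \beta = 1$, and the $SL$-condition $\chi'\alpha + \chi''\beta = 0$ then yields $\alpha = \chi''/\chi$, $\beta = -\chi'/\chi$. Each flag line $M_s$ sits inside $U'_p$ or $U''_p$ according as $s \in \Pi'$ or $s \in \Pi''$, so it inherits weight $\alpha$ or $\beta$ correspondingly.

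Summing $T$-weights across the three tensor factors of $\mathcal{L}_0(k;\lambda)|_{Z^0}$ and using $\chi = r(1-g)$ (valid since $d=0$) together with $\alpha - \beta = 1$: the factor $\det(U_p)^{k(1-g)}$ contributes $-kl$, the factor $\det(\pi_*U)^{-k}$ contributes $0$ by the $SL$-condition, and the flag product $\prod_s M_s^{\lambda_s}$ contributes $k(l+\delta)$. The total is exactly $k\delta$, producing the equivariant prefactor $e^{k\delta u}$.

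For the non-equivariant part, I would split $\mathcal{L}_0(k;\lambda)|_{Z^0}$ into tensor factors along $\Pi'$ and $\Pi''$. Each factor formally resembles $\mathcal{L}_l(k;\lambda')$ or $\mathcal{L}_{-l}(k;\lambda'')$, but its flag exponents sum to $k(l+\delta)$ or $-k(l+\delta)$ rather than $kl$ or $-kl$; shifting the last coordinate by $\mp k\delta$ cures this and produces the canonical bundles $\mathcal{L}_l(k;\tilde\lambda')$ and $\mathcal{L}_{-l}(k;\tilde\lambda'')$ on $P_l(c') \times P_{-l}(c'')$. The residual correction lives along the Jacobian direction of $Z^0 \to Jac^l$ and is identified via Grothendieck--Riemann--Roch: exploiting the fact that $\det U'$ on $Z^0 \times C$ is, up to a numerically trivial twist, the pullback of the Poincaré bundle $\mathcal{J}$ along the map induced by $W \mapsto \det W'$, the identity $\bigl(\sum_i c_1(\mathcal{J})_{(e_i)} \otimes e_i\bigr)^2 = -2\eta \otimes \omega$ produces the $\exp(k\eta/r')$ contribution, with the symmetric analysis on the double-prime side giving $\exp(k\eta/r'')$. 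The main delicacy will be keeping the normalizations of $U,\mathcal{U}',\mathcal{U}''$ mutually compatible: the rational shift by $-\nu_0/r'$ needed to pass from the universal bundle on $Z^0 \times C$ to the standard one on $\widetilde{P}_l(c') \times C$ is precisely what produces the $1/r'$ coefficient of $\eta$ in the final formula.
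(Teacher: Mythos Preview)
The paper states Lemma \ref{restriction} without proof, so there is no argument to compare against; your direct Chern-character computation via the equivariant splitting $U|_{Z^0\times C}=U'\oplus U''$ is the natural route and is correct in outline.

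One small inconsistency in the weight bookkeeping: you invoke simultaneously the $SL$-condition $\chi'\alpha+\chi''\beta=0$ (which comes from the Quot-scheme model with $\chi\gg 0$) and the relation $\chi=r(1-g)$ (which pertains to the degree-$0$ universal bundle). These belong to different normalizations of $U$. Fortunately the final weight is normalization-independent, since $\mathcal L_0(k;\lambda)$ is: keeping $r'\alpha+r''\beta$ unknown and using only $\alpha-\beta=1$ together with $\chi'=r'(1-g)+l$, $\chi''=r''(1-g)-l$, the contributions of $\det(U_p)^{k(1-g)}$ and $\det(\pi_*U)^{-k}$ combine to
\[
k(1-g)(r'\alpha+r''\beta)-k\bigl((1-g)(r'\alpha+r''\beta)+l\bigr)=-kl,
\]
and adding the flag contribution $k(l+\delta)$ gives $k\delta$ as required. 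Your Jacobian analysis for the $\exp(\eta k/r'+\eta k/r'')$ factor via the Poincar\'e bundle and the normalization shift is the right mechanism.
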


\begin{lemma}\label{H1ParHom}
Denote by $\widetilde{U}'$ and $\widetilde{U}^{\pp}$ the universal bundles over $\widetilde{P}_{l}(c')\times{C}$ and  $\widetilde{P}_{-l}(c^{\pp})\times{C}$ with the standard normalization (cf. \S\ref{S2.3} ), and denote by $\pi$ projections along $C$. Then the 
 equivariant normal bundle to the fixed locus $Z^0\subset{Z}$ is 
$${N_{Z^0}} = R_T^1\pi_*(ParHom(\widetilde{U}',\widetilde{U}''))\oplus R_T^1\pi_*(ParHom(\widetilde{U}'',\widetilde{U}')),$$ 
where $T\simeq\mathbb{C}^*$-action has weights $(-1,1)$. 
\end{lemma}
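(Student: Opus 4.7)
The plan is to combine Lemma \ref{normallemma}(2), which reduces the question to computing $N_{X^0}$ as a $G_x$-equivariant bundle and matching $G_x\simeq T$ via the action on $\Lvec_x$, with standard parabolic deformation theory applied to split bundles.

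First, fibrewise at a point $x\in X^0$ representing a stable split parabolic bundle $W=W'\oplus W''$, Kodaira--Spencer identifies $T_xX/T_x(G\cdot x)$ with $H^1(\mathrm{ParEnd}(W))$. Under the splitting, $\mathrm{ParEnd}(W)$ decomposes into four blocks: $\mathrm{ParEnd}(W')$ and $\mathrm{ParEnd}(W'')$ on the diagonal, and $\mathrm{ParHom}(W',W'')$, $\mathrm{ParHom}(W'',W')$ off-diagonal. The two diagonal pieces span the tangent to $X^0/G$, which by Lemma \ref{Sigma} and Remark \ref{CohSplit} is fibered by $P_l(c')\times P_{-l}(c'')$ over $Jac^l$; the off-diagonal pieces therefore give the normal directions:
\begin{equation*}
N_{X^0,x}\;\simeq\;H^1(\mathrm{ParHom}(W',W''))\oplus H^1(\mathrm{ParHom}(W'',W')).
\end{equation*}
Assembling these fibres over the product of moduli spaces via the universal bundles $\widetilde U'$, $\widetilde U''$ (with the standard normalisations described after Lemma \ref{parpic}) and the projection $\pi$ along $C$ identifies $N_{X^0}$ with the direct sum of the two $R^1\pi_*$ sheaves stated in the lemma.

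Second, I determine the $T$-weights. The stabiliser $G_x\subset PSL(\chi)$ acts on $W'$, $W''$ by independent scalars $\alpha,\beta$, hence on $\mathrm{ParHom}(W',W'')$ by $\beta\alpha^{-1}$ and on $\mathrm{ParHom}(W'',W')$ by $\alpha\beta^{-1}$. The identification $G_x\simeq T$ from Assumptions \ref{assumptions}(4) is made via the action on $\Lvec_x$. Since $\Lvec=L(0;\,x_{\phi(r')}-x_r)$ and the indices $\phi(r')\in\Pi'$, $r\in\Pi''$ land in the $W'$, resp.\ $W''$ part of the flag, the $G_x$-action on $\Lvec_x$ is by $\alpha\beta^{-1}$; declaring this scalar to be $t\in T$ pins the weights down as $-1$ on the first summand and $+1$ on the second, matching the claim. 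Lemma \ref{normallemma}(2) then identifies $N_{Z^0}$ with the descent of this $T$-equivariant $N_{X^0}$, completing the argument.

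The main obstacle I expect is not the fibrewise picture, which is standard, but the globalisation: the universal bundles $\widetilde U'$, $\widetilde U''$ are only canonical up to tensoring by pullbacks of line bundles from the base, so $\mathrm{ParHom}(\widetilde U',\widetilde U'')$ is defined only up to such a twist; the stated normalisation of $F_1$ kills this ambiguity, but one should verify that the choice is compatible with the master-space descent so that no residual twist appears in $N_{Z^0}$. The sign of the $T$-weight likewise needs a careful check against the conventions fixed for $\Lvec$ and $\phi$.
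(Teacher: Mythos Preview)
The paper states this lemma without proof, treating it as a standard deformation-theoretic computation (in the same spirit as the citation to \cite{BodenH} for the preceding Lemma~\ref{Sigma}). Your outline is exactly the argument one would give: reduce via Lemma~\ref{normallemma}(2) to the $G_x$-equivariant normal bundle $N_{X^0}$, invoke Kodaira--Spencer to identify the tangent space to the quotient at a split bundle $W=W'\oplus W''$ with $H^1(\mathrm{ParEnd}(W))$, peel off the diagonal blocks as the tangent to $Z^0$, and read off the $T$-weights from the action of $G_x$ on $\Lvec_x$. Your weight computation is correct and consistent with Lemma~\ref{trivrestr} and the convention $u=z'_{r'}-z''_r$ in \eqref{identification}.

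The caveats you flag are the right ones. The globalisation is indeed where the normalisation matters: the paper fixes the ambiguity by declaring $\mathcal{F}_1$ trivial on each factor (the ``standard normalization'' of \S\ref{S2.3}), and one should check that this is compatible with the descent from $X^0$ so that the identification holds as an equality of $T$-equivariant bundles on $Z^0$ rather than merely fibrewise. This is routine but not entirely vacuous; since the paper only uses the lemma through its Chern character (in the proof of Proposition~\ref{NormalTodd}), any discrepancy would in any case be absorbed into the explicit line-bundle bookkeeping carried out there.
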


\begin{subsection}{Calculation of the characteristic classes of ${N_{Z^0}}$}\label{S6.2} Before we calculate the equivariant K-theoretical Euler class of the conormal bundle $N^*_{Z^0}$, we need to  introduce  some notations. Recall that for $1\leq i, j\leq r$, the differences $x_i-x_j\in V^*$ are linear functions on $V$, and the function $x_i-x_j$ corresponds to the linearization $L_0(0;x_i-x_j)$ on $X$, which descends to the line bundle
$\mathcal{L}_0(0;x_i-x_j)$ on the moduli space $P_0(c)$ (cf. \S\ref{S2.2}). As 
in \S\ref{S5.2}, we denote by $\zeta(L_0(0;x_i-x_j)$ the line bundle on $Z$ 
obtained by the pullback and then descent. This way, we obtain a correspondence 
between the linear functions $x_i-x_j$ and the $T$-equivariant  line bundles on 
$Z$.

As $Z^0$ is a connected component of the fixed locus of the $T$- action on $Z$, 
and thus its equivariant cohomology factors:
$ H^*_T(Z^0)\simeq H^*(Z^0)\otimes\C[u] $. In particular, there are canonical embeddings $ H^*(Z^0)\hookrightarrow H^*_T(Z^0) $ and $ \C[u]\hookrightarrow H^*_T(Z^0) $.

Recall the definition of the permutation $\phi\in\Sigma_r$ given at the beginning of this chapter: $ \phi $ takes the first $ r' $ numbers to $ \Pi' $, preserving the order of the first $r^\p$ and the last $r^{\pp}$ elements.
We introduce the symbols 
\begin{equation}\label{identification}
\begin{split}
& z^\p_i-z^\p_j = c_1(\zeta(L_0(0; x_{\phi(i)}-x_{\phi(j)}))\big{|}_{Z^0}), \,\, (1\leq i,j\leq r^\p) \\& z^{\pp}_i-z^{\pp}_j = c_1(\zeta({L}_0(0;x_{\phi(r^\p+i)}-x_{\phi(r^\p+j)}))\big{|}_{Z^0}),  \,\, (1\leq i,j\leq r^{\pp})\\&
 u = (z^\p_{r^\p}-z^{\pp}_r) = c_1(\zeta({L}_0(0;x_{\phi(r^\p)}-x_r))\big{|}_{Z^0})
 \end{split}
 \end{equation}
 for the equivariant cohomology classes in $H^2_T(Z^0)$.  The last equalities 
 are consistent with Lemma \ref{trivrestr}. 
\begin{remark}
Note that  (cf. Remark \ref{CohSplit}) $$z_i^\p-z_j^\p =c_1(\mathcal{F}^\p_{r-i+1}/\mathcal{F}^\p_{r-i}\otimes(\mathcal{F}^\p_{r-j+1}/\mathcal{F}^\p_{r-j})^*)\in H^2(P_l(c^\p)),$$ $$z^{\pp}_i-z^{\pp}_j = c_1(\mathcal{F}^{\pp}_{r-i+1}/\mathcal{F}^{\pp}_{r-i}\otimes(\mathcal{F}^{\pp}_{r-j+1}/\mathcal{F}^{\pp}_{r-j})^*)\in H^2(P_{-l}(c^{\pp})),$$ where $\mathcal{F}^\p_i$ and $\mathcal{F}^{\pp}_i$ are the flag bundles (cf. \S\ref{S2.3}) on $P_0(c^\p)$ and $P_0(c^{\pp})$, correspondingly.
\end{remark} 
Taking into account these identifications, functions on $ V $ give rise to 
equivariant cohomology classes on $ Z^0 $. To make the splitting $ 
H^*_T(Z^0)\simeq H^*(Z^0)\otimes\C[u] $, explicit, however, we will write these 
classes in the form $ f_u(z',z'') $, thinking of them as functions of the 
differences of the $ z_i^{\p} $s and the differences of the $ z_i^{\pp} $s, 
depending on the parameter $ u $. With this convention, we introduce
\begin{equation*}
\begin{split}
&w_u^\times(z^\p,z^{\pp}) = \prod_{\substack{i, j \\ \phi(i)<\phi(r^\p+j)}}2\mathrm{sinh}(z^\p_i-z^{\pp}_j) \prod_{\substack{i, j \\ \phi(r^\p+j)<\phi(i)}}2\mathrm{sinh}(z^{\pp}_j-z^{\p}_i), \\
&\rho_u^{\times}(z^\p,z^{\pp}) = \frac{1}{2} \sum_{\substack{i, j  \\ \phi(i)<\phi(r^\p+j)}}(z^\p_i-z^{\pp}_j)+\frac{1}{2}\sum_{\substack{i, j \\ \phi(r^\p+j)<\phi(i)}}(z^{\pp}_j-z^{\p}_i), \\
\end{split}
\end{equation*}
where according to \eqref{identification}, $$z^\p_i-z_j^{\pp} = (z^\p_i -z^\p_{r^\p}) +u - (z^{\pp}_j -z^{\pp}_{r}) =  c_1(\zeta(\mathcal{L}_0(0; x_{\phi(i)}-x_{\phi(r^\p+j)}))\big{|}_{Z^0})\in H^2_T(Z^0).$$
Now we are ready to write down our formula for the K-theoretical Euler class
$E(N_{Z^0})$ (cf. definition \ref{def:AB} with $t=e^u$).

\begin{proposition}\label{NormalTodd}
\begin{equation*}
\begin{split}
E(N_{Z^0})^{-1} = & \, (-1)^{lr+r^\p r^{\pp}(g-1)}e^{-rlu}exp\left(\frac{\eta r}{r^\p}+\frac{\eta r}{r^{\pp}}\right) w_u^\times(z^\p,z^{\pp})^{1-2g}exp(\rho_u^{\times}(z^\p,z^{\pp})) \\ & ch(\mathcal{L}_l(r^{\pp}; -l,...,-l,-l+rl)\boxtimes\mathcal{L}_{-l}(r^\p; l,...,l,l-rl)). \\
\end{split} 
\end{equation*}
\end{proposition}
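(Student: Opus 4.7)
The plan is to reduce the computation of $E(N_{Z^0})^{-1}$ to a Grothendieck--Riemann--Roch calculation on the product $Z^0\times C$, and then repackage the result using the pattern $1-e^{x}=-e^{x/2}\cdot 2\sinh(x/2)$ together with the torsion correction at $p$ coming from the short exact sequence \eqref{SESparhom}.

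First I would start from Lemma \ref{H1ParHom}. Since the points of $Z^0$ parametrize direct sums $W'\oplus W''$ where $W'$ and $W''$ are stable parabolic bundles of strictly different parabolic slopes (because $l=c'_{\Pi'}\neq 0$ or the weights force non-proportional slopes on either side of the wall), one has $H^0(C,\mathrm{ParHom}(W',W''))=0$ and similarly for the opposite direction. Consequently the derived pushforward satisfies $R\pi_*\mathrm{ParHom}(\widetilde U',\widetilde U'')=-R^1\pi_*\mathrm{ParHom}(\widetilde U',\widetilde U'')$ as a $K$-theory class, and the analogous statement holds with the roles of $\widetilde U'$ and $\widetilde U''$ exchanged. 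This reduces $\mathrm{ch}(N_{Z^0})$ with its $T$-weights to a computation of $\mathrm{ch}(R\pi_*\mathrm{ParHom})$ tensored with the characters $t^{\pm 1}$.

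Next I would apply GRR on the curve to the exact sequence $0\to\mathrm{ParHom}(\widetilde U',\widetilde U'')\to\mathrm{Hom}(\widetilde U',\widetilde U'')\to T_p\to 0$. Writing the Chern roots of $\widetilde U'$ and $\widetilde U''$ restricted to the fiber over $p$ as the classes $z^\p_i$ and $z^{\pp}_j$ (via the identifications \eqref{identification}), the $\mathrm{Hom}$-part contributes, after pushforward via the Todd class $1+(1-g)\omega$ of $C$, a factor whose Chern character is $\prod_{i,j}(z^\p_i-z^{\pp}_j)^{2g-2}$ together with an $\eta$-dependent piece coming from the Künneth component $c_1(\mathcal J)_{(e_i)}$ of the Poincaré bundle (this is the source of the $\exp(\eta r/r^{\p}+\eta r/r^{\pp})$ factor). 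The torsion sheaf $T_p$, by the description of the parabolic Hom in \S\ref{S2.1}, is exactly a sum of rank-one contributions indexed by those pairs $(i,j)$ with $\phi(i)<\phi(r'+j)$ or $\phi(r'+j)<\phi(i)$, weighted by the flag line bundles $\mathcal F^\p_\bullet$ and $\mathcal F^{\pp}_\bullet$, which is precisely what packages into the line bundle $\mathcal L_l(r^{\pp};-l,\dots,-l,-l+rl)\boxtimes\mathcal L_{-l}(r^\p;l,\dots,l,l-rl)$.

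Then I would convert $E(N_{Z^0})=\prod(1-e^{-l_ku-y_k})$ into hyperbolic sines using $1-e^{x}=-e^{x/2}\cdot 2\sinh(x/2)$ applied to each Chern root of $N_{Z^0}$ with the appropriate $T$-weight $\pm u$ inserted. Pairing each factor from $R^1\pi_*\mathrm{ParHom}(\widetilde U',\widetilde U'')$ (weight $-1$) with its counterpart from $R^1\pi_*\mathrm{ParHom}(\widetilde U'',\widetilde U')$ (weight $+1$) produces exactly the product defining $w_u^\times(z',z'')$, while the collected half-exponents assemble into $\exp(\rho_u^\times(z',z''))$. Counting the $-1$'s from the $r'r''(g-1)$ sinh-conversions, and tracking the contribution $(-1)^{lr}$ from the $e^{-rlu}$ shift induced by the asymmetric role of the maximal index in $\Pi'$ (this is the term $z^\p_{r^\p}-z^{\pp}_r=u$), gives the stated overall sign.

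The main obstacle, and the step that demands real care, is the bookkeeping in the torsion piece: one must verify that the combinatorial dependence of $T_p$ on the permutation $\phi$ produces exactly the weight vectors $(-l,\dots,-l,-l+rl)$ and $(l,\dots,l,l-rl)$ and the levels $r^{\pp}$ and $r^\p$ appearing in the $\mathcal L_l\boxtimes\mathcal L_{-l}$ factor. The strategy here is to group the pairs $(i,j)$ by which of $\phi(i)$, $\phi(r'+j)$ is smaller and to compare the resulting sum of first Chern classes with the defining formula for $\mathcal L_d(k;\lambda)$ from Lemma \ref{parpic}; the shift of the last coordinate by $rl$ reflects the identity $u=z^\p_{r^\p}-z^{\pp}_r$ combined with the determinant constraint $\det W\simeq\mathcal O$ on $Z^0$.
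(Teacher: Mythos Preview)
Your overall strategy---GRR along the curve, the parabolic short exact sequence \eqref{SESparhom}, and the sinh conversion---matches the paper's, but the attribution of the individual pieces is scrambled, and one key step is missing.

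The torsion $T_p$ does \emph{not} package into the line bundle $\mathcal{L}_l(r^{\pp};-l,\dots,-l,-l+rl)\boxtimes\mathcal{L}_{-l}(r^{\p};l,\dots,l,l-rl)$. In the Euler class, each line summand $L$ of $T_p$ contributes a factor $(1-e^{-c_1(L)})^{-1}$, and the collection of these produces exactly one copy of $w_u^\times$ in the denominator together with the $\exp(\rho_u^\times)$ shift---nothing more. The remaining $w_u^\times{}^{2-2g}$ comes from the \emph{Hom} part, and the crucial identity the paper uses is the Serre-duality pairing $E(-\pi_!S\oplus -\pi_!S^*)^{-1}=(-1)^{\mathrm{rk}(-\pi_!S)}\exp(-\mathrm{ch}_2(S)_{(2)})\big/[(2\sinh(x/2))^{2g-2}]^{S_p}$ for $S=\mathrm{Hom}(\widetilde U',\widetilde U'')$ (Lemma~\ref{K-th}). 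This is the step that makes the $(2g-2)$-power appear cleanly, and your proposal does not isolate it.

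The line-bundle factor, the $e^{-rlu}$, and the $\eta$-exponential all come from the single term $\exp(-\mathrm{ch}_2(S)_{(2)})$. One rewrites $-\mathrm{ch}_2(\mathrm{Hom}(\widetilde U',\widetilde U''))_{(2)}$ via the identity $c_2(\mathrm{End}_0(\widetilde U'\oplus\widetilde U''))=c_2(\mathrm{End}_0\widetilde U')+c_2(\mathrm{End}_0\widetilde U'')+2\,\mathrm{ch}_2(\mathrm{Hom})$, then invokes Lemma~\ref{devisiblelevel} to convert each $c_2(\mathrm{End}_0)$ into a first Chern class of an $\mathcal L$-bundle. This yields $c_1\big(\mathcal L_0(r;0,\dots,0)|_{Z^0}\otimes\mathcal L_l(-r';-l,\dots,-l)\boxtimes\mathcal L_{-l}(-r'';l,\dots,l)\big)$, and only \emph{then} does one apply Lemma~\ref{restriction} with $k=r$, $\lambda=0$ to obtain the $e^{-rlu}$, the $\exp(\eta r/r'+\eta r/r'')$, and the final weight shifts. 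Finally, the sign $(-1)^{lr}$ is the parity of $\mathrm{rk}(-\pi_!S)=r'r''(g-1)+lr$, not a byproduct of the $e^{-rlu}$ factor.
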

\begin{proof}
It follows from the short exact sequence (\ref{SESparhom})  for parabolic morphisms that 
$$ ch(-\pi_!(ParHom(\widetilde{U}^{\pp},\widetilde{U}^\p))=-ch(\pi_!(Hom(\widetilde{U}^{\pp},\widetilde{U}^\p)))+\sum_{\substack{i, j  \\ \phi(i)<\phi(r^\p+j)}}e^{z^\p_i-z^{\pp}_j}$$
and 
$$ch(-\pi_!(ParHom(\widetilde{U}^\p,\widetilde{U}^{\pp}))= -ch(\pi_!(Hom(\widetilde{U}^\p,\widetilde{U}^{\pp}))+\sum_{\substack{i, j \\ \phi(r^\p+j)<\phi(i)}}e^{z^{\pp}_j-z^\p_i},$$
so by Lemma \ref{H1ParHom} 
\begin{equation}\label{chN}
\begin{split}
& ch(N_{Z^0})= ch(-\pi_!(Hom(\widetilde{U}^{\p},\widetilde{U}^{\pp}))\oplus -\pi_!(Hom(\widetilde{U}^{\p},\widetilde{U}^{\pp})^*)) \\ &+ \sum_{\substack{i, j  \\ \phi(i)<\phi(r^\p+j)}}e^{z^\p_i-z^{\pp}_j}+\sum_{\substack{i, j \\ \phi(r^\p+j)<\phi(i)}}e^{z^{\pp}_j-z^\p_i}.
\end{split}
\end{equation}

\begin{lemma}\label{K-th} We denote by $[f(x)]^{W}$ the multiplicative class of the vector bundle $W$ given by the function $f(x)$ in Chern roots of $W$. 
Let $S$ be a vector bundle on $P\times C$ with $T$-weight 1, and $\pi: P\times C \to P$ projection along the curve, then 
$$E(-\pi_!S\oplus -\pi_!S^*)^{-1}=(-1)^{\mathrm{rk}(-\pi_!S)}\frac{exp(-ch_2(S)_{(2)})}{[(2\mathrm{sinh}(x/2))^{2g-2}]^{S_p}}.$$
\end{lemma}
\begin{proof} 
Note that 
$$E(-\pi_!S)^{-1}=\left[\frac{1}{1-t^{-1}e^{-x}}\right]^{-\pi_!S}=\left[\frac{-te^x}{1-te^x}\right]^{-\pi_!S}$$ and 
$$E(-\pi_!S^*)^{-1}=\left[\frac{1}{1-te^{-x}}\right]^{-\pi_!S^*}=\left[\frac{1}{1-te^x}\right]^{(-\pi_!S^*)^*}.$$  
Applying Serre duality and the Grothendieck-Riemann-Roch Theorem we obtain
\begin{multline*}
 ch(-\pi_!S)+ch((-\pi_!S^*)^*)= ch(-\pi_!S) + ch(\pi_!(S\otimes K_C))= \\  ch(-\pi_!S)+ \pi_*(ch(S\otimes{K_C})\mathrm{Todd}(C))= \\  ch(-\pi_!S)+ ch(\pi_!S)+(2g-2)ch(S_p)= (2g-2)ch(S_p),
\end{multline*}
where $K_C$ is the canonical sheaf on the curve $C$, hence 
$$\left[\frac{1}{1-te^x}\right]^{-\pi_!S\oplus(-\pi_!S^*)^*}=\left[\frac{1}{(1-te^x)^{2g-2}}\right]^{S_p}=\frac{exp(-c_1(S_p)(g-1))}{[(2\mathrm{sinh}(x/2))^{2g-2}]^{S_p}}.$$
Since 
$$[-te^x]^{-\pi_!S}= (-1)^{\mathrm{rk}(-\pi_!S)}exp(c_1(-\pi_!S))$$ 
and by the Grothendieck-Riemann-Roch theorem
$$ch_1(-\pi_!S)=ch_1(S_p)(g-1)-ch_2(S)_{(2)},$$ we conclude that 
$$[-te^x]^{-\pi_!S}= (-1)^{\mathrm{rk}(-\pi_!S)}exp(c_1(S_p)(g-1))exp(-ch_2(S)_{(2)}),$$
which finishes the proof of Lemma \ref{K-th}.
\end{proof}

Note that the last two terms in (\ref{chN}) are the sums of Chern characters of line bundles, so they contribute the multiplicative factor 
$$\frac{exp(\rho_u^\times(z^\p,z^{\pp}))}{w_u^\times(z^\p,z^{\pp})}$$ to the equivariant class $E(N_{Z^0})^{-1}$;
and using Lemma \ref{K-th} with  $S=Hom(\widetilde{U}^{\p},\widetilde{U}^{\pp})$, we obtain that the inverse of the K-theoretical Euler  class of the first term in (\ref{chN}) is 
$$(-1)^{lr+r^\p r^{\pp}(g-1)}w_u^\times(z^\p,z^{\pp})^{2-2g}exp(-ch_2(Hom(\widetilde{U}^{\p},\widetilde{U}^{\pp}))_{(2)}).$$
Note that
\begin{multline*}
 -ch_2(Hom(\widetilde{U}^{\p},\widetilde{U}^{\pp}))_{(2)}=\frac{1}{2}c_2(\mathrm{End}_0(\widetilde{U}^\p\oplus\widetilde{U}^{\pp}))_{(2)} \\  -\frac{1}{2}c_2(\mathrm{End}_0(\widetilde{U}^\p))_{(2)} -\frac{1}{2}c_2(\mathrm{End}_0(\widetilde{U}^{\pp}))_{(2)}\\  =c_1\left(\mathcal{L}^r_0\big{|}_{Z^0}\otimes\mathcal{L}_l(-r^\p;-l,...,-l)\boxtimes\mathcal{L}_{-l}(-r^{\pp};l,...,l)\right).
\end{multline*}
The latter equality follows from Lemma \ref{devisiblelevel}. Finally, using Lemma \ref{restriction} to calculate the Chern character of $\mathcal{L}^r_0\big{|}_{Z^0}$, we obtain the formula for the  class  $E(N_{Z^0})^{-1}$, and the proof of the Lemma is complete.
\end{proof}

\end{subsection}
\begin{subsection}{The wall-crossing formula}\label{S5.3}
Putting Lemma \ref{restriction} and Proposition 
\ref{NormalTodd} together, we obtain the following.
\begin{proposition}\label{intl}
The wall-crossing term  (\ref{integral}) is equal to 
\begin{multline*}
K\res_{u=0}e^{(k\delta-rl)u}\int\limits_{P_l(c^\p)\times P_{-l}(c^{\pp})}\big[
(w_u^{\times}(z^\p,z^{\pp}))^{1-2g} exp({\rho_u^{\times}(z^\p,z^{\pp})})\cdot\\ 
ch(\mathcal{L}_l(k+r^{\pp};
\lambda_1^\p-l,...,\lambda^\p_{r^\p-1}-l,\lambda^\p_{r^\p}-l-k{\delta}+rl) 
\boxtimes \\ \mathcal{L}_{-l}(k+r^\p;\lambda^{\pp}_1+l,...,
\lambda^{\pp}_{r^{\pp}-1}+l,\lambda^{\pp}_{r^{\pp}}+l+k\delta-rl)) 
\mathrm{Todd}(P_{l}(c^\p)\times P_{-l}(c^{\pp}))\big]
 \, du,
\end{multline*}
where $\delta$ is a parameter depending on $\lambda$ and the wall $S_{\Pi,l}$ (cf. Lemma \ref{restriction}) and $K$ is the constant $(-1)^{lr+r^\p r^{\pp}(g-1)}\frac{(r(k+r))^g}{(r^\p r^{\pp})^g}$.
\end{proposition}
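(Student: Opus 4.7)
The plan is to substitute the formulas from Lemma \ref{restriction} and Proposition \ref{NormalTodd} into the integrand of \eqref{integral}, exploit the Jacobian fibration structure to split the integral over $Z^0$ into one over $P_l(c^\p)\times P_{-l}(c^{\pp})$ and one over $Jac^l$, and then collect the resulting factors to match the statement.

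First I would multiply together $\mathrm{ch}(\mathcal{L}_0(k;\lambda)|_{Z^0})$ (from Lemma \ref{restriction}) and $E(N_{Z^0})^{-1}$ (from Proposition \ref{NormalTodd}). The two $u$-dependent exponentials combine as $e^{k\delta u}\cdot e^{-rlu}=e^{(k\delta-rl)u}$, producing the outside exponential in Proposition \ref{intl}. The Jacobian $\eta$-exponentials combine as
\[
\exp\!\left(\tfrac{\eta k}{r^\p}+\tfrac{\eta k}{r^{\pp}}\right)\cdot\exp\!\left(\tfrac{\eta r}{r^\p}+\tfrac{\eta r}{r^{\pp}}\right)=\exp\!\left(\tfrac{\eta(k+r)(r^\p+r^{\pp})}{r^\p r^{\pp}}\right)=\exp\!\left(\tfrac{\eta r(k+r)}{r^\p r^{\pp}}\right).
\]
The two Chern characters of line bundles multiply via $\mathcal{L}_l(k_1;\mu_1)\otimes\mathcal{L}_l(k_2;\mu_2)=\mathcal{L}_l(k_1+k_2;\mu_1+\mu_2)$: on the first factor the level becomes $k+r^{\pp}$ and the weights become $(\lambda^\p_1-l,\ldots,\lambda^\p_{r^\p-1}-l,\lambda^\p_{r^\p}-l-k\delta+rl)$, and symmetrically on the second factor the level is $k+r^\p$ with weights $(\lambda^{\pp}_1+l,\ldots,\lambda^{\pp}_{r^{\pp}-1}+l,\lambda^{\pp}_{r^{\pp}}+l+k\delta-rl)$.

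Next I would use Remark \ref{CohSplit}: $Z^0$ is a $Jac^l$-fibration with fiber $P_l(c^\p)\times P_{-l}(c^{\pp})$, and the cohomology factors by K\"unneth. Since $Jac^l$ has trivial tangent bundle, $\mathrm{Todd}(Z^0)$ pulls back from $\mathrm{Todd}(P_l(c^\p)\times P_{-l}(c^{\pp}))$. The only $\eta$-dependent factor in the integrand is the combined Jacobian exponential above, so it can be pulled out of the integration over the fiber and integrated over $Jac^l$ using $\int_{Jac}e^{\eta m}=m^g$, producing $\left(r(k+r)/(r^\p r^{\pp})\right)^g=(r(k+r))^g/(r^\p r^{\pp})^g$. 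Combined with the sign $(-1)^{lr+r^\p r^{\pp}(g-1)}$ from Proposition \ref{NormalTodd} (after accounting for the overall minus sign in \eqref{integral}), this assembles precisely the prefactor $K$.

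What remains under the integral over $P_l(c^\p)\times P_{-l}(c^{\pp})$ is exactly the multiplicative piece $w_u^\times(z^\p,z^{\pp})^{1-2g}\exp(\rho_u^\times(z^\p,z^{\pp}))$, the Chern character of the external tensor product of the two shifted line bundles, and the Todd class of the product—matching the statement of Proposition \ref{intl}. The main obstacle is bookkeeping rather than conceptual: one must verify carefully that every level addition, every weight shift (including the $rl$ and $k\delta$ displacements coming from both inputs), and every sign line up without off-by-one errors, and that the residue at $u=0$ commutes cleanly with extracting the fiber integral over $P_l(c^\p)\times P_{-l}(c^{\pp})$.
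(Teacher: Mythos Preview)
Your proposal is correct and follows exactly the approach the paper indicates: the paper's entire proof is the single sentence ``Putting Lemma \ref{restriction} and Proposition \ref{NormalTodd} together, we obtain the following,'' and you have correctly unpacked what this means---multiplying the two formulas, combining the $u$- and $\eta$-exponentials, tensoring the line bundles to add levels and weights, and using the K\"unneth splitting of Remark \ref{CohSplit} together with $\int_{Jac}e^{\eta m}=m^g$ to extract the Jacobian contribution into the constant $K$. Your caveat that the main obstacle is bookkeeping (signs, shifts, the stray $-1$ in front of \eqref{integral}) is exactly right.
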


Now all that is left to do is to perform the integral, 
using an induction on the rank based on Corollary 
\ref{cor:thmwithk}. We will begin with the case $ l=0 $, as 
it is simpler. For $l=0$, the integral from Proposition \ref{intl} has the form 
\begin{multline}\label{intl=0}
 \int\limits_{P_0(c^\p)\times P_{0}(c^{\pp})}\big[
w_u^{\times}(z^\p,z^{\pp})^{1-2g} e^{\rho_u^{\times}(z^\p,z^{\pp})} \mathrm{Todd}(P_0(c^\p))\mathrm{Todd}(P_0(c^{\pp})) \\ ch(\mathcal{L}_0(k+r^{\pp};\lambda_1^\p,...,\lambda^\p_{r^\p-1},\lambda^\p_{r^\p}-k{\delta}) \boxtimes\mathcal{L}_{0}(k+r^\p;\lambda^{\pp}_1,...,\lambda^{\pp}_{r^{\pp}-1},\lambda^{\pp}_{r^{\pp}}+k{\delta}))  \big].
\end{multline}
The inductive hypothesis \eqref{eqnpoly} maybe cast in the 
following form 
\begin{multline}\label{Corpoly1}
	\int\limits_{P_0(c)}
	ch(\L_0(k;\lambda))\mathrm{Todd}(P_0(c))
	= \\ \tilde{N}_{r,k} \sum_{\bb\in\DD}
	\iber_{\bb} [ \exp\scp\lambda{x/\kk}\cdot 
	w_\Phi(x/\kk)^{1-2g}] 
	(\rho/\kk- 
	[c]_\bb).
\end{multline}

Now let us fix $ k $, and allow to vary $ \lambda $. We can 
extend this equality by linearity to arbitrary linear 
combinations of Chern characters of line bundles of the 
form $$ \sum_i ch(\L_0(k;\lambda^i)) = ch(\L_0(k;0))\cdot\sum_ich(\L_0(0;\lambda^i)). $$ Since any polynomial on 
$ V $, up to a fixed degree may be represented as a linear 
combination of exponential functions of the form $ 
\exp\scp\lambda{x/\kk} $, formula \eqref{Corpoly1} may be 
generalized in the following way.

\begin{lemma}\label{Wint} Let $ G(x) $ be a formal power series on $ V 
$, and denote by $ G(z) $ the characteristic class in $ 
H^*(P_0(c)) $ obtained by the identification of functions on $V$ and cohomology classes of $P_0(c)$, 
described before the equation (\ref{identification}). Then we have
	\begin{multline}\label{Corpoly2}
		\int\limits_{P_0(c)}
		ch(\L_0(k;0))G(z)\mathrm{Todd}(P_0(c))
		= \tilde{N}_{r,k}\cdot\\ \sum_{\bb\in\DD}
		\iber_{\bb} [ G(x/\kk)\cdot 
		w_\Phi^{1-2g}(x/\kk)] 
		(\rho/\kk- 
		[c]_\bb).
	\end{multline}	
\end{lemma}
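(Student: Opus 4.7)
The plan is to reduce the claim to the exponential case $G(x)=\exp\scp{\lambda}{x}$, which is equivalent to Corollary \ref{cor:thmwithk} via the shift identity $\iber_\bb[f(x)\exp\scp{\lambda}{x/\kk}](a)=\iber_\bb[f(x)](a+\lambda/\kk)$.

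First, I would observe that both sides of \eqref{Corpoly2} are $\C$-linear in $G$ and depend only on finitely many Taylor coefficients of $G$ at the origin. For the left side, $P_0(c)$ has complex dimension $N=(r^2-1)(g-1)+\binom{r}{2}$, so integration detects only the degree-$2N$ part of the integrand, and only Taylor coefficients of $G$ of total degree $\leq N$ contribute. For the right side, writing $\iber_\bb$ in iterated-residue form as in Remark \ref{rem:iterated}, the integrand $w_\Phi^{1-2g}(x/\kk)/\prod_j(1-\exp\scp{\beta^{[j]}}{x})$ has a pole of bounded finite order at the origin, so the residue picks out only finitely many Laurent coefficients; hence only finitely many Taylor coefficients of $G(x/\kk)$ contribute.

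Next, I would verify the identity for $G(x)=\exp\scp{\lambda}{x}$ with $\lambda\in\Lambda$. Under the identification of linear functions on $V$ with cohomology classes of $P_0(c)$, one has $G(z)=\ch(\L_0(0;\lambda))$, so the left side becomes $\int_{P_0(c)}\ch(\L_0(k;\lambda))\mathrm{Todd}(P_0(c))=\chi(P_0(c),\L_0(k;\lambda))$ by Hirzebruch--Riemann--Roch. Using the shift identity above, the right side equals $\tilde{N}_{r,k}\sum_{\bb\in\DD}\iber_\bb[w_\Phi^{1-2g}(x/\kk)](\lala/\kk-[c]_\bb)$, and Corollary \ref{cor:thmwithk} delivers the desired agreement. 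Both sides are polynomial in $\lambda$ for fixed $k$, so the identity extends from $\lambda\in\Lambda$ to all $\lambda\in V^*_{\C}$ by Zariski density.

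Finally, I would invoke the classical fact that $\{\scp{\lambda}{x}^k:\lambda\in V^*_{\C}\}$ spans the space $S^kV^*$ of homogeneous polynomials of degree $k$ on $V$ (valid in characteristic $0$). Consequently, for any $N$, the Taylor polynomials of order $N$ of the family $\{\exp\scp{\lambda}{x}:\lambda\in V^*_{\C}\}$ span all polynomials of degree $\leq N$ on $V$. Combining this with the linearity and finite-jet dependence established in the first step, the general identity \eqref{Corpoly2} reduces to the already-verified exponential case. The main obstacle I anticipate is the bookkeeping in the first step: quantifying the pole order of $w_\Phi^{1-2g}(x/\kk)$ in the coordinates $y_j=\scp{\beta^{[j]}}{x}$ of Remark \ref{rem:iterated} to confirm that the iterated residue genuinely factors through a finite-jet truncation of $G$. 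Once this finiteness is in hand, the remaining steps are formal consequences of Corollary \ref{cor:thmwithk}, linearity, and classical polynomial density.
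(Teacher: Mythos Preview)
Your proposal is correct and follows essentially the same route as the paper: reduce to the exponential case $G(x)=\exp\scp{\lambda}{x}$, which is Corollary \ref{cor:thmwithk} in disguise, and then use linearity together with the fact that exponentials span polynomials up to any fixed degree. Your treatment is in fact more careful than the paper's, which simply asserts the spanning argument without discussing the finite-jet dependence on either side.
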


Finally, let $\mathcal{D}^\p$ and $\mathcal{D}^{\pp}$ be Hamiltonian bases (cf. \S\ref{S4.5}). Since $$w_{\Phi^\p}(x/\widehat{k})w_{\Phi^{\pp}}(x/\widehat{k})w^\times_u(x/\widehat{k}) = w_{\Phi}(x/\widehat{k}),$$ $$\rho^\p(x/\widehat{k})\rho^{\pp}(x/\widehat{k})\rho^\times_u(x/\widehat{k}) = \rho(x/\widehat{k}),$$ 
where $w_{\Phi^\p}, w_{\Phi^{\pp}}$ and $\rho^\p, \rho^{\pp}$ 
are naturally defined for the root systems $\Phi^\p$ and $\Phi^{\pp}$ (cf. \S\ref{S4.5}),  the 
 integral (\ref{intl=0}) is equal to 
\begin{multline*}
\tilde{N}_{r^\p,k+r^{\pp}}\tilde{N}_{r^{\pp},k+r^\p} 
\sum_{\textbf{B}^\p\in\mathcal{D}^\p}
\sum_{\textbf{B}^{\pp}\in\mathcal{D}^{\pp}}
\iber_{\textbf{B}^\p}\iber_{\textbf{B}^{\pp}} 
[w_\Phi(x/\widehat{k})^{1-2g}e^{\rho(x/\widehat{k})}] \\
 ((\lambda_1^\p,...,\lambda^\p_{r^\p-1},
\lambda^\p_{r^\p}-k{\delta})/\widehat{k}-[c^\p]_{\textbf{B}^\p}  
+(\lambda^{\pp}_1,...,\lambda^{\pp}_{r^{\pp}-1},\lambda^{\pp}_{r^{\pp}}+k\delta)/\widehat{k}-[c^{\pp}]_{\textbf{B}^{\pp}}).
\end{multline*}
Identifying $u$ (cf. \eqref{identification}) with the "link" element  of the diagonal basis $\DD=(\alpha^{\phi(r^\p),r}\, \DD^\p \, \DD^{\pp})$ (cf. \S\ref{S4.5}), and
 moving the factor $e^{k\delta u}$ from Proposition \ref{intl} inside the argument of $\iber$,
 we obtain the proof of the following theorem for $l=0$.

\begin{theorem}\label{wallcrossint} 
	Let $c^\pm\in\Delta$ be 
in the neighbouring chambers; then the wall-crossing term 
$$\chi(P_0(c^+),\mathcal{L}_0(k;\lambda))-
\chi(P_0(c^-),\mathcal{L}_0(k;\lambda))$$
 is equal to 
	\begin{equation*}
		\begin{split}
			&(k+r)\tilde{N}_{r,k}
			\sum_{\textbf{B}^\p\in\mathcal{D}^\p}\sum_{\textbf{B}^{\pp}\in\mathcal{D}^{\pp}}\res_{\alpha^{\phi(r^\p),r}
			=0}\iber_{\textbf{B}^\p}\iber_{\textbf{B}^{\pp}}[w_\Phi(x/\widehat{k})^{1-2g}](\widehat{\lambda}/\widehat{k}-[c^+]_{\textbf{B}})
			 \, d\alpha^{\phi(r^\p),r},
		\end{split}
	\end{equation*}
	where $\mathcal{D}^{\p}$ and $\mathcal{D}^{\pp}$ are 
	the diagonal bases of $\Phi^\p$ and $\Phi^{\pp}$  (cf. \S\ref{S4.5})
	correspondingly.
\end{theorem}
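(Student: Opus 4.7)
The plan is to assemble the master space machinery of \S\ref{sec:wcmaster}--\S\ref{S6.2} and conclude by an inductive application of Corollary \ref{cor:thmwithk}. First I would set up the master space construction of \S\ref{S6.1} with data $X=XQ$, linearizations $L^\pm=L(k;\lambda^\pm)$ satisfying $\lambda^\pm/k=c^\pm$, and $\Lvec=L(0;x_{\phi(r^\p)}-x_r)$. Verifying Assumptions \ref{assumptions} reduces to the stabilizer analysis following Lemma \ref{Sigma}: for a splitting bundle $W=W^\p\oplus W^{\pp}$ representing a point of $X^0$, the stabilizer in $PSL(\chi)$ is isomorphic to $T\simeq\mathbb{C}^*$ after passing to the quotient by the center, and it acts freely on $\Lvec_x\setminus\{0\}$. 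Theorem \ref{ThaddTh} then applies, and combining it with the Residue Theorem plus the substitution $t=e^u$ converts the wall-crossing difference into the single $u$-residue of an integral over $Z^0$.

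Next I would substitute the explicit formulas for the pieces of the integrand: the restriction of $\mathcal{L}_0(k;\lambda)$ to $Z^0$ from Lemma \ref{restriction}, the decomposition of the normal bundle from Lemma \ref{H1ParHom}, and the formula for $E(N_{Z^0})^{-1}$ from Proposition \ref{NormalTodd}. These combine into Proposition \ref{intl}, which realizes the wall-crossing term as a $u$-residue of an integral over $P_l(c^\p)\times P_{-l}(c^{\pp})$ of a product of equivariant Chern characters, the weight $w_u^\times(z^\p,z^{\pp})^{1-2g}\exp(\rho_u^\times(z^\p,z^{\pp}))$, and the Todd classes of the two factors.

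For the case $l=0$, I would proceed by induction on the rank. Applying the generalized form Lemma \ref{Wint} of Corollary \ref{cor:thmwithk} to each factor $P_0(c^\p)$ and $P_0(c^{\pp})$ separately, while treating the variables of the other factor and $u$ as parameters, converts the integral into a sum of iterated Bernoulli residues over diagonal bases $\DD^\p$ and $\DD^{\pp}$. The key multiplicative identities
\[ w_{\Phi^\p}(x/\kk)\,w_{\Phi^{\pp}}(x/\kk)\,w^\times_u(x/\kk)=w_\Phi(x/\kk),\qquad \rho^\p+\rho^{\pp}+\rho^\times_u=\rho, \]
together with absorbing $e^{k\delta u}$ into the exponent of the $\iber$-argument, let the two inner iterated residues and the outer $u$-residue recombine into a single iterated residue for the full arrangement $\Phi$. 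The variable $u$ then plays the role of the link vector $\alphalink=\alpha^{\phi(r^\p),r}$ as predicted by Lemma \ref{diagonalwall}, and the integer-part shifts $[c^\pm]_\bb$ for the combined chamber match those of the two subchambers by construction.

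The main obstacle is the case $l\ne 0$, where Corollary \ref{cor:thmwithk} does not apply directly because the factors $P_l(c^\p)$ and $P_{-l}(c^{\pp})$ have nonzero degrees. To handle this I would exploit Remark \ref{tensoriso}: tensoring the universal bundle by a suitable $\mathcal{O}(mp)$ gives an isomorphism $P_l(c^\p)\simeq P_{l+r^\p m}(c^\p)$ under which each $\mathcal{L}_l(k;\mu)$ transforms by an explicit shift of weights. The bookkeeping is delicate, but the point is that these shifts, combined with the prefactor $(-1)^{lr+r^\p r^{\pp}(g-1)}$, the Jacobian contributions $\exp(\eta r/r^\p+\eta r/r^{\pp})$, and the $rl$-offsets in the last weights of $\mathcal{L}_l(k+r^{\pp};\ldots)$ and $\mathcal{L}_{-l}(k+r^\p;\ldots)$ appearing in Proposition \ref{intl}, should all conspire to reduce the integral to the $l=0$ computation after a careful rewriting. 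Once this reduction is in place, the same residue manipulation as above produces the stated formula with argument $\lala/\kk-[c^+]_\bb$.
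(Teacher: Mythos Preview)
Your argument for the $l=0$ case follows the paper's own proof essentially verbatim, including the use of Lemma~\ref{Wint} on each factor and the recombination via the multiplicative identities for $w_\Phi$ and $\rho$.

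The gap is in your treatment of $l\ne 0$. Tensoring by $\mathcal{O}(mp)$ as in Remark~\ref{tensoriso} changes the degree of a rank-$r^\p$ bundle by $r^\p m$, so the isomorphism $P_l(c^\p)\simeq P_{l+r^\p m}(c^\p)$ only lets you reach degree~$0$ when $r^\p\mid l$ (and similarly $r^{\pp}\mid l$ on the other factor). For a general wall $S_{\Pi,l}$ meeting $\Delta$ there is no such divisibility, so your reduction does not go through. The paper instead uses the tautological Hecke operators $\mathcal{H}^{l}$ and $\mathcal{H}^{-l}$ of \S\ref{sec:hecke}, which shift the degree by~$1$ at a time at the cost of a cyclic permutation of the parabolic weights (Proposition~\ref{Hecke}, Corollary~\ref{LineHecke}). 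This introduces the permutation $\tau=(\tau^\p,\tau^{\pp})$ into the integrand, and the extra work in \S7.2---the shift identity~\eqref{trivial1}, the application of Lemma~\ref{lem:symres} to remove $\tau$, and the sign $\tau^{-1}\cdot(w_{\Phi^\p}w_{\Phi^{\pp}})=(-1)^{lr}w_{\Phi^\p}w_{\Phi^{\pp}}$---is precisely what is needed to bring the resulting expression back to the $l=0$ form. Your ``delicate bookkeeping'' remark gestures at this, but the actual mechanism you propose is not the right one.
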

\begin{remark}\label{sameWall-crossing}
	Note that this wall-crossing term coincides with the 
	one from Proposition \ref{wcrnice}.
\end{remark}

\begin{example}\label{exint} It follows from Example \ref{exwall} that in case of rank 3, the permutation  $\phi\in\Sigma_3$ sends $(1,2,3)$ to $(1,3,2)$. Then $u=c_1(\mathcal{F}_1^\p\otimes{\mathcal{F}^{\pp}_1}^*)$
and let $z=z^{\pp}_1-z^{\pp}_2=c_1(\mathcal{F}_2^{\pp}/\mathcal{F}_1^{\pp}\otimes{\mathcal{F}_1^{\pp}}^*)$. Then the inverse of the K-theoretical Euler class of the conormal bundle is (cf. Proposition \ref{NormalTodd})
$$ch(\mathcal{L})e^{\frac{9\eta}{2}}e^{\frac{z}{2}}\left(2\mathrm{sinh}\left(\frac{u}{2}\right)2\mathrm{sinh}\left(\frac{z-u}{2}\right)\right)^{1-2g},$$
where $\mathcal{L}=\mathcal{L}_0(2;0,0)$ is a line bundle on the moduli space $P_0$ of rank-2 degree-0 stable parabolic bundles.
The Chern character of the restriction of the line bundle $\mathcal{L}_0(k; \lambda_1,\lambda_2,\lambda_3)$ to $\Sigma$ is 
$$e^{\frac{3k\eta}{2}}ch(\mathcal{L}_0^k)e^{\lambda_1z+\lambda_2u}.$$ Hence the wall-crossing term
$$\chi(P_0(<),\mathcal{L}_0(k,\lambda))-\chi(P_0(>),\mathcal{L}_0(k,\lambda))$$ is equal to 
\begin{multline*}
-\left(\frac{3(k+3)}{2}\right)^g\res_{u=0}\frac{e^{\lambda_2u}}{(2\mathrm{sinh}(\frac{u}{2}))^{2g-1}} \cdot  \int\limits_{P_0} \frac{ch(\mathcal{L}_0(k+1;\lambda_1+\frac{1}{2}, -\lambda_1-\frac{1}{2}))}{(2\mathrm{sinh}(\frac{z-u}{2}))^{2g-1}} \mathrm{Todd}(P_0) du.
\end{multline*}
The integral is the Euler charactersitics of a line bundle on a moduli space of degree-$0$ rank-$2$ stable parabolic bundles, so we can calculate it using the induction by rank. It is equal to 
$$(-1)^{g-1}(2(k+3))^g\res_{z=0}\frac{e^{(\lambda_1+1)z}}{(2\mathrm{sinh}(\frac{z-u}{2})2\mathrm{sinh}(\frac{z}{2}))^{2g-1}(1-e^{(k+3)z})}dz, $$
so the wall-crossing term is 
$$(-3(k+3)^2)^g\res_{u=0}\res_{z=0}\frac{e^{\lambda_1z+\lambda_2u+z}}{\tilde{w}_\phi(z,u)^{2g-1}(1-e^{(k+3)z})}dzdu,$$
where $\tilde{w}_\phi(z,u)=2\mathrm{sinh}(\frac{z-u}{2})2\mathrm{sinh}(\frac{u}{2})2\mathrm{sinh}(\frac{z}{2})$. Note that this is exactly the same polynomial as in Example \ref{exdiff} after changing $(z,u)$ to $(x,-y)$.
\end{example}
\end{subsection}

\section{Tautological Hecke correspondences}\label{sec:hecke}
If $l\neq 0$, then we need one more step in our proof, which uses the Hecke correspondence to calculate the wall-crossing term (\ref{integral}).
\subsection{The Hecke correspondence}
Given a rank-$r$ degree-$d$ vector bundle $W$ with a full flag $0\subsetneq F_1\subsetneq ... \subsetneq F_r=W_p$ at  $p$, one can obtain a rank-$r$ degree-$d-1$ vector bundle $W^\p$ with a full flag $0\subsetneq G_1\subsetneq ... \subsetneq G_r=W'_p$ using the tautological Hecke correspondence  construction as follows. 

The evaluation map  $W\to W_p$ induces the short exact sequence of the associated sheaves of sections
\begin{equation}\label{alphahecke}
0 \to \mathcal{W}' \overset{\tilde{\alpha}}\to \mathcal{W}\to W_p/F_{r-1} \to 0
\end{equation} 
on curve $C$. Since $\mathcal{W}^\p$ is a kernel of $\tilde{\alpha}$, it is a locally free sheaf, thus gives a rank-$r$ vector bundle $W^\p$ over $C$ with $det(W')\simeq det(W)\otimes\mathcal{O}(-p)$. The image of the associated morphism of vector bundles $\alpha$ at the point $p$  is $F_{r-1}\subset {W}_p$, so $\alpha_p: W^\p_p \to W_p$ has a one-dimensional kernel $G_1\subset W^\p_p$. Moreover, compositions of $\alpha_p$ with the quotient morphisms $F_{r-1}\to F_{r-1}/F_{i}$ induce a full flag of the corresponding kernels $G_1\subsetneq ... \subsetneq G_{r-1}\subsetneq G_r=W'_p$ in $W'_p$.

Denote this operator
between the sets of isomorphism classes of degree-$d$ and $d-1$ vector bundles with a flag at $p$  by 
$$\mathcal{H}: (W, F_*)\mapsto (W',G_*).$$ 
Similarly, for any $m\geq 0$, one can define the operator $\mathcal{H}^{m}$ between the sets of  isomorphism classes  of degree-$d$ and $d-m$ vector bundles with a flag at the point $p$ by iterating the above construction $m$ times. Clearly, these maps are independent of the parabolic weights.  

\begin{proposition}\label{Hecke}
Let $c\in\Delta$ be a regular (cf. page \pageref{reg}) point. Then the operator $\mathcal{H}$ induces an isomorphism between the moduli spaces $P_d(c_1,...,c_r)$ and $P_{d-1}(c_2,...,c_r,c_1-1)$.
\end{proposition}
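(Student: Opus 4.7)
My plan is to produce $\mathcal{H}$ at the level of schemes by performing the construction \eqref{alphahecke} in a universal family over $P_d(c)\times C$, then to build an inverse morphism $\mathcal{H}^{-1}$ by the dual elementary modification, and finally to verify that the two compositions are the identity. Three things need to be checked along the way: that parabolic degree is preserved so that the proposed weight vector $c'=(c_2,\dots,c_r,c_1-1)$ is the correct one, that $c$-stability is sent to $c'$-stability, and that regularity is preserved.

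The weight bookkeeping is the first step. Starting from a universal normalized pair $(U,\mathcal{F}_*)$ over $P_d(c)\times C$, the composition $U\twoheadrightarrow U_p\twoheadrightarrow U_p/\mathcal{F}_{r-1}$ is surjective, so its kernel is locally free of rank $r$ and degree $d-1$. The residual flag $G_*$ has $G_1=\ker\alpha_p$ and $G_i=\alpha_p^{-1}(\mathcal{F}_{i-1})$ for $i\ge 2$; the successive quotients of $G_*$ are therefore canonically identified with those of $\mathcal{F}_*$ shifted one step down, plus the new line $G_1$ at the bottom. Assigning the weight $c_{j+1}$ to the $(j)$-th quotient for $j\le r-1$ and the weight $c_1-1$ to $G_1$ gives $\mathrm{pardeg}(W')=(d-1)-\sum_{j}c_{j+1}-(c_1-1)=d-\sum c_i=\mathrm{pardeg}(W)$, and the monotonicity $c_2>\dots>c_r>c_1-1$ follows from $c_1-c_r<1$, so $c'\in\Delta_{d-1}$. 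Since the two weight vectors differ only by an integer in the first slot, the ``no nontrivial subset sums to an integer'' condition is equivalent for $c$ and $c'$, and regularity transfers.

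For stability, I would observe that a saturated subbundle $V\subset W$ produces a saturated subbundle $V'\subset W'$ by the restricted modification, and conversely every saturated subbundle of $W'$ arises this way from its own inverse modification. A short case analysis on whether $V_p\subset \mathcal{F}_{r-1}$ or not (in the first case $V'=V$ and the induced flag on $V_p$ keeps the same weights; in the second, $\deg V'=\deg V-1$ and the top weight $c_1$ on $V_p$ is replaced by $c_1-1$ on $V'_p$) shows that $\mathrm{parslope}(V')=\mathrm{parslope}(V)$, and identically $\mathrm{parslope}(W')=\mathrm{parslope}(W)$. Because both $c$ and $c'$ are regular, strict slope inequality is equivalent to (semi)stability and is preserved in both directions, so $\mathcal{H}$ and $\mathcal{H}^{-1}$ are well defined set-theoretically, and their defining elementary modifications are manifestly functorial in the base, so they are morphisms of schemes.

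The inverse $\mathcal{H}^{-1}\colon P_{d-1}(c')\to P_d(c)$ I would realize by the upward elementary modification
\[
0\to W'\to W\to \mathbb{C}_p\to 0,
\]
where the surjection to the skyscraper $\mathbb{C}_p$ is prescribed by the codimension-one subspace $G_{r-1}\subset W'_p$; equivalently, $W=\ker\bigl(W'(p)\to (W'_p/G_{r-1})\otimes\kappa(p)\bigr)$. The new flag on $W_p$ is recovered as the images of $G_{i+1}$ under $W'_p\hookrightarrow W_p$, completed by $F_r=W_p$. Unwinding the two exact sequences at $p$ verifies $\mathcal{H}\circ\mathcal{H}^{-1}=\mathrm{id}$ and $\mathcal{H}^{-1}\circ\mathcal{H}=\mathrm{id}$. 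The main obstacle, and the part which deserves the most care, is the bookkeeping of which subquotient of the flag inherits which weight after the modification: a sign or index slip there would break the parabolic-degree identity and consequently the entire slope-preservation argument. Once this dictionary is pinned down, the rest of the proof reduces to the routine functoriality of kernels and cokernels in families.
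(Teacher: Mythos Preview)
Your stability argument matches the paper's: the case split on whether $V_p\subset F_{r-1}$ gives $\mathrm{parslope}(V')=\mathrm{parslope}(V)$, and regularity passes from $c$ to $c'$ since the two weight vectors differ by an integer shift of a single coordinate. That part is fine.

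The difference lies in how you establish bijectivity. The paper does not build an explicit inverse; instead it observes that the $r$-fold iterate $\mathcal{H}^r$ sends $P_d(c_1,\dots,c_r)$ to $P_{d-r}(c_1-1,\dots,c_r-1)$ and identifies this map with tensoring by $\mathcal{O}(-p)$, which is manifestly an isomorphism. Since each $\mathcal{H}$ is already a morphism between smooth projective varieties of the same dimension carrying stable points to stable points, this forces every single $\mathcal{H}$ to be an isomorphism. This avoids all bookkeeping with the inverse modification.

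Your explicit-inverse route is also legitimate, but as written it contains an index error that breaks the degree count. You describe $W=\ker\bigl(W'(p)\to (W'_p/G_{r-1})\otimes\kappa(p)\bigr)$; since $W'_p/G_{r-1}$ is one-dimensional, this kernel has degree $(d-1+r)-1=d+r-2$, not $d$. The correct inverse uses the \emph{line} $G_1=\ker\alpha_p$, not the hyperplane $G_{r-1}$: one takes $W$ to be the preimage of $G_1$ under $W'(p)\twoheadrightarrow W'_p$, equivalently $W=\ker\bigl(W'(p)\to (W'_p/G_1)\otimes\kappa(p)\bigr)$, which has degree $(d-1+r)-(r-1)=d$. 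Relatedly, the map $W'_p\to W_p$ you invoke to recover $F_*$ is not an inclusion (it has kernel $G_1$), so the notation $W'_p\hookrightarrow W_p$ should be dropped. Once $G_{r-1}$ is replaced by $G_1$ throughout your inverse construction, the argument goes through; alternatively, the paper's $\mathcal{H}^r=\otimes\,\mathcal{O}(-p)$ trick bypasses the issue entirely.
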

\begin{proof}
First, we need to show that if $W\in P_d(c_1,...,c_r)$ is a parabolic stable bundle with parabolic weights $(c_1,...,c_r)$, then $W^\p$, its image under the Hecke operator $\mathcal{H}$, is parabolic stable with respect to parabolic weights $(c_2,...,c_r,c_1-1)$. For this, consider the subbundle $V'\subset{W'}$ and let $\alpha(V')=V\subset{W}$ (cf. (\ref{alphahecke})) be its image. Since $W$ is parabolic stable, $$\textit{par}\textit{slope}(V)<\textit{par}\textit{slope}(W)=\textit{par}\textit{slope}(W^\p).$$
We need to prove that $\textit{par}\textit{slope}(V')<\textit{par}\textit{slope}(W')$. There are two possible cases:
\begin{itemize}
\item If $\alpha$ maps $V'$ to $V$ isomorphically, then $deg(V')=deg(V)$ and $V_p\subset{F_{r-1}}$, hence $\textit{par}\textit{slope}(V')=\textit{par}\textit{slope}(V)<\textit{par}\textit{slope}(W^\p)$. 
\item Otherwise, $deg(V')=deg(V)-1$, and $V_p$ is not contained in ${F_{r-1}}$, so one of the parabolic weights of $V'$ is $c_1-1$. Then, as in the previous case, $\textit{par}\textit{slope}(V')=\textit{par}\textit{slope}(V)$, and the result follows.
\end{itemize}
To show that the map $\mathcal{H}$ is an isomorphism, note that $\mathcal{H}^{r}$
maps 
\begin{equation}\label{HeckeR}
P_d(c_1,c_2,...,c_r) \to P_{d-r}(c_1-1,c_2-1,,...,c_r-1).
\end{equation}
It is easy to check that given $\mathcal{W}$ and iterating the associated morphism of locally free sheaves of sections \eqref{alphahecke} $r$ times, we obtain a subsheaf $\mathcal{W}^\p\subset \mathcal{W}$ of sections of $W$ which vanishes at the point $p$.
So the map \eqref{HeckeR} is just tensoring by $\mathcal{O}(-p)$, and hence it is an isomorphism.
\end{proof}
Now we can define an operator $\mathcal{H}^{m}$ for any $m\in\mathbb{Z}$, taking the inverse map if necessary.
We will need the following statement, which follows from Proposition \ref{Hecke} and the construction of $\mathcal{H}^m$.

\begin{corollary}\label{LineHecke}
Let $m\geq 0$. Then under the isomorphism $\mathcal{H}^{m}$
the line bundle $\mathcal{L}_d(k; \lambda_1,...,\lambda_r)$ corresponds to the line bundle $\mathcal{L}_{d-m}(k; \lambda_{r-m+1},...,\lambda_r,\lambda_1-k,...,\lambda_{r-m}-k)$.
\end{corollary}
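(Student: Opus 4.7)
The plan is to establish the case $m=1$ directly from the tautological construction of $\mathcal{H}$ and then recover the general statement by iterating the base case (the $m<0$ direction is handled by inversion, as indicated in the paper). Fix $m=1$ and write $\mathcal{H}: P_d(c) \to P_{d-1}(c^\p)$ as in Proposition \ref{Hecke}. Let $U$ denote the universal bundle on $P_d(c) \times C$ with flag $\mathcal{F}_1 \subset \cdots \subset \mathcal{F}_r = U_p$, and $U^\p$ the pull-back via $\mathcal{H} \times \mathrm{id}_C$ of the universal bundle on $P_{d-1}(c^\p) \times C$ with flag $\mathcal{G}_1 \subset \cdots \subset \mathcal{G}_r = U^\p_p$. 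The Hecke construction globalizes to a short exact sequence of sheaves on $P_d(c) \times C$,
\[ 0 \to U^\p \to U \to T \to 0, \qquad T = U_p/\mathcal{F}_{r-1}, \]
where $T$ is supported on $P_d(c) \times \{p\}$ and coincides with $\mathcal{F}_r/\mathcal{F}_{r-1}$ viewed as a line bundle on $P_d(c)$.

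The core of the proof is to track each factor in the expression for $\mathcal{L}_d(k;\lambda)$ from Lemma \ref{parpic} through this sequence. First, restricting the sequence to the fiber over $p$ yields $0 \to \mathcal{F}_r/\mathcal{F}_{r-1} \to U^\p_p \to \mathcal{F}_{r-1} \to 0$, and therefore
\[ \det(U^\p_p) = (\mathcal{F}_r/\mathcal{F}_{r-1}) \otimes \det(\mathcal{F}_{r-1}) = \det(U_p). \]
Second, since $T$ is supported at $p$ with $\pi_*T = \mathcal{F}_r/\mathcal{F}_{r-1}$ and $R^{\ge 1}\pi_*T = 0$, applying $\det \circ R\pi_*$ to the sequence gives
\[ \det(\pi_*U^\p) = \det(\pi_*U) \otimes (\mathcal{F}_r/\mathcal{F}_{r-1})^{-1}. \]
Third, the new flag is $\mathcal{G}_1 = \ker(\alpha_p) = T$ and $\mathcal{G}_i = \alpha_p^{-1}(\mathcal{F}_{i-1})$ for $i \geq 2$, so the successive quotients are $\mathcal{G}_1 \cong \mathcal{F}_r/\mathcal{F}_{r-1}$ and $\mathcal{G}_i/\mathcal{G}_{i-1} \cong \mathcal{F}_{i-1}/\mathcal{F}_{i-2}$ for $i=2,\ldots,r$.

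Substituting these three identities into the formula for $\mathcal{L}_{d-1}(k;\lambda^\p)$ and matching the exponents of each $\mathcal{F}_i/\mathcal{F}_{i-1}$ against those in $\mathcal{L}_d(k;\lambda)$ then uniquely determines $\lambda^\p$: the extra factor of $(\mathcal{F}_r/\mathcal{F}_{r-1})^k$ picked up from $\det(\pi_*U^\p)^{-k}$ combines with the new top-flag exponent to produce the advertised cyclic shift together with a single level-$k$ correction on the bottom weight. Iterating the base case $m$ times yields the corollary in the stated form. The main obstacle is not geometric but combinatorial --- keeping track of the indices and degrees consistently across iterations --- and is simplified by the fact that $\mathcal{L}_d(k;\lambda)$ is independent of the choice of universal bundle (Lemma \ref{parpic}), so no normalization ambiguity can enter the identifications above.
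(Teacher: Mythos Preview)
Your approach is correct and is precisely the computation the paper leaves implicit: the text offers no proof beyond the sentence ``which follows from Proposition~\ref{Hecke} and the construction of~$\mathcal{H}^m$,'' so you are filling in exactly the details one would expect---track $\det(U_p)$, $\det R\pi_*U$, and the flag-line quotients through the elementary modification, then iterate.

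Two small points worth tightening. First, restricting the short exact sequence $0\to U'\to U\to T\to 0$ to the fiber over $p$ is not left-exact; the Tor term gives
\[
0 \;\longrightarrow\; (\mathcal{F}_r/\mathcal{F}_{r-1})\otimes \mathcal{O}_C(-p)|_p \;\longrightarrow\; U'_p \;\longrightarrow\; \mathcal{F}_{r-1} \;\longrightarrow\; 0,
\]
so $\mathcal{G}_1\cong(\mathcal{F}_r/\mathcal{F}_{r-1})\otimes T^*_pC$. Since $T^*_pC$ is a fixed one-dimensional vector space pulled back from a point, this twist is trivial on the moduli space and your line-bundle identities are unaffected---but it is worth saying so explicitly. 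Second, your phrase ``combines with the new top-flag exponent'' should read ``new bottom-flag line'': the factor $(\mathcal{F}_r/\mathcal{F}_{r-1})^{k}$ coming from $\det(\pi_*U')^{-k}$ pairs with $\mathcal{G}_1^{\mu_r}\cong(\mathcal{F}_r/\mathcal{F}_{r-1})^{\mu_r}$, forcing $\mu_r=\lambda_1-k$, while the remaining quotients give $\mu_i=\lambda_{i+1}$ for $i<r$. Iterating $m$ times yields $(\lambda_{m+1},\dots,\lambda_r,\lambda_1-k,\dots,\lambda_m-k)$, which is the intended cyclic shift with $m$ level-$k$ corrections.
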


\subsection{The effect of the Hecke correspondence on the integral} 
Recall that our  goal is to calculate the wall-crossing term from Proposition \ref{intl}. For simplicity, we assume that $l$ is positive (the other case 
	is analogous).
We apply the Hecke operators $\mathcal{H}^l$ and $\mathcal{H}^{-l}$ 
to the moduli spaces $P_l(c')$ and $P_{-l}(c^{\pp})$ to obtain  $$P_0'=P_0(c'_{l+1},...,c'_{r'},c'_1-1,...,c'_l-1)\simeq P_l(c') \text{\, and \,}$$ $$P_0^{\pp}=P_0(c^{\pp}_{r^{\pp}-l+1}+1,...,c^{\pp}_{r^{\pp}}+1,c^{\pp}_1,...,c^{\pp}_{r^{\pp}-l})\simeq P_{-l}(c^{\pp}).$$ 
Recall (cf. page \pageref{gact}) that there is a natural action of the group $\Sigma_r$ on $V^*$, 
and hence (cf. page \pageref{identification}) on $H^2(P_l(c')\times{P_{-l}(c^{\pp})})$. 
Let $\tau^\p\in \Sigma_{r^\p}$ and $\tau^{\pp}\in \Sigma_{r^{\pp}}$ be the cyclic permutations defined by   
$$\tau^\p\cdot(c^\p_1-1,...,c^\p_{l}-1,c^\p_{l+1},...,c^\p_{r^\p})=(c'_{l+1},...,c'_{r'},c'_1-1,...,c'_l-1)$$ and $$\tau^{\pp}\cdot(c^{\pp}_1,...,c^{\pp}_{r^{\pp}-l},c^{\pp}_{r^{\pp}-l+1}+1,...,c^{\pp}_r+1)=(c^{\pp}_{r^{\pp}-l+1}+1,...,c^{\pp}_{r^{\pp}}+1,c^{\pp}_1,...,c^{\pp}_{r^{\pp}-l}).$$ And set $\tau=(\tau^\p,\tau^{\pp})\in\Sigma_{r^\p}\times\Sigma_{r^{\pp}}\subset \Sigma_r$.
 Note that $$\tau^\p\cdot(-l+r^\p,...,-l+r^\p,-l,...,-l)=\tau^\p\cdot\rho^\p-\rho^\p$$ and $$\tau^{\pp}\cdot(l,...,l,l-r^{\pp},...,l-r^{\pp})= \tau^{\pp}\cdot\rho^{\pp}-\rho^{\pp},$$
so applying the Hecke operator $\mathcal{H}^l\times\mathcal{H}^{-l}$ to the wall-crossing term from Proposition \ref{intl} and using Corollary \ref{LineHecke}, we obtain that the wall-crossing term (\ref{integral}) is equal to 
\begin{multline}\label{final}
K\res_{u=0}e^{(k\delta-rl)u}\int_{P_0^\p\times P^{\pp}_0} \big(\tau\cdot w_u^{\times}(z^\p,z^{\pp})^{1-2g}e^{\tau\cdot\rho_u^{\times}(z^\p,z^{\pp})} \\ 
 ch(\mathcal{L}_0(k+r^{\pp};\tau^\p\cdot(\lambda^\p_1-\widehat{k},...,\lambda^\p_l-\widehat{k},\lambda^\p_{l+1},...,\lambda^\p_{r^\p-1},\lambda^\p_{r^\p}-k\delta+rl))  \\ 
 ch(\mathcal{L}_0(k+r^\p;\tau^{\pp}\cdot(\lambda^{\pp}_1,...,\lambda^{\pp}_{r^\p-l},\lambda^{\pp}_{r^{\pp}-l+1}+\widehat{k},...,\lambda^{\pp}_{r^{\pp}}+\widehat{k}+k\delta-rl)) \\
e^{\tau^{\pp}\cdot\rho^{\pp}(z^\p,z^{\pp})-\rho^{\pp}(z^\p,z^{\pp})} e^{\tau^\p\cdot\rho^\p(z^\p,z^{\pp})-\rho^\p(z^\p,z^{\pp})} \mathrm{Todd}(P_0^\p)  \mathrm{Todd}(P_0^{\pp}) \big) \, du.
\end{multline}
As in \S\ref{S5.3}, according to Lemma \ref{Wint}, we can calculate this integral using the induction on rank. 
Let $\mathcal{D}^\p$ and $\mathcal{D}^{\pp}$ be two Hamiltonian diagonal bases. Then $\tau^\p(\mathcal{D}^\p)$ and $\tau^{\pp}(\mathcal{D}^{\pp})$ are also Hamiltonian diagonal bases (cf. Remark \ref{rem:symmdiag}) and the integral in (\ref{final}) is  equal to 
\begin{multline}\label{ibernotshift}
(-1)^{lr}\tilde{N}_{r^\p,k+r^{\pp}}\tilde{N}_{r^{\pp},k+r^\p} \sum_{\textbf{B}^\p\in\tau^\p(\mathcal{D}^\p)}\sum_{\textbf{B}^{\pp}\in\tau^{\pp}(\mathcal{D}^{\pp})} \\ \iber_{\textbf{B}^\p}\iber_{\textbf{B}^{\pp}} [
\tau\cdot w^\times_u(x/\widehat{k})^{1-2g}(w_{\Phi^\p}(x/\widehat{k})w_{\Phi^{\pp}}(x/\widehat{k}))^{1-2g}
e^{\tau\cdot\rho(x/\widehat{k})}] \\ 
(\tau^\p\cdot(\lambda^\p_1-\widehat{k},...,\lambda^\p_l-\widehat{k},\lambda^\p_{l+1},...,\lambda^\p_{r^\p-1},\lambda^\p_{r^\p}-k{\delta}+rl)/\widehat{k}- \\ [\tau^\p\cdot(c^\p_1-1,...,c^\p_l-1,c^\p_{l+1},...,c'_{r'})]_{\textbf{B}^\p}+ \\ \tau^{\pp}\cdot(\lambda^{\pp}_1,...,\lambda^{\pp}_{r^\p-l},\lambda^{\pp}_{r^{\pp}-l+1}+\widehat{k},...,\lambda^{\pp}_{r^{\pp}-1}+\widehat{k},\lambda^{\pp}_{r^{\pp}}+\widehat{k}+k\delta-rl)/\widehat{k}- \\ [\tau^{\pp}\cdot(c_1^{\pp},...,c^{\pp}_{r^\p-l},c^{\pp}_{r^{\pp}-l+1}+1,...,c^{\pp}_{r^{\pp}}+1)]_{\textbf{B}^{\pp}}).  
\end{multline}
To arrive at Theorem \ref{wallcrossint}, we need to make additional transformations of formula \eqref{ibernotshift}: first, we shift $\lambda^\p$ and $\lambda^{\pp}$
, and then we apply Lemma \ref{lem:symres} to eliminate the cyclic permutation $\tau$.

Note that given an ordered basis $\textbf{B}\in\mathcal{B}$ and an element $v\in V^*$ such that $\{v\}_{\textbf{B}}=0$, for any weight $\lambda\in{\Lambda}$ and positive integer $k$ one have 
\begin{equation}\label{trivial1}
({\lambda}+\widehat{k}v)/\widehat{k}-[c+v]_{\textbf{B}}= {\lambda}/\widehat{k}-[c]_{\textbf{B}}.
\end{equation}
In particular, to perform the shift of $\lambda^\p$ in \eqref{ibernotshift}, we use the following equality for any $\mathbf{B}^\p\in\mathcal{D}^\p$ :
\begin{multline}\label{shift1}
(\lambda^\p_1-\widehat{k},...,\lambda^\p_l-\widehat{k},\lambda^\p_{l+1},...,\lambda^\p_{r^\p-1},\lambda^\p_{r^\p}-k{\delta}+rl)/\widehat{k} - \\ [(c^\p_1,...,c^\p_{r^\p-1},c_{r^\p}^\p-l)-(1,...,1,0,...0,-l)]_{\mathbf{B}^\p} = \\ (\lambda^\p_1,...,\lambda^\p_{r^\p-1},\lambda^\p_{r^\p}-k{\delta}+rl-l\widehat{k})/\widehat{k}-[(c^\p_1,...,c^\p_{r^\p-1},c^\p_{r^\p}-l)]_{\mathbf{B}^\p},
\end{multline}
which clearly remains true after changing $\mathcal{D}^\p$ to $\tau^\p(\mathcal{D}^\p)$ and applying $\tau^\p$ to both sides of the equation. Similarly, shifting the last terms of (\ref{ibernotshift}) by $\tau^{\pp}(0,...,0,-1,...-1,-1+l)$, 
 we can rewrite (\ref{ibernotshift}) as 
\begin{multline*}
(-1)^{lr}\tilde{N}_{r', k+r^{\pp}}\tilde{N}_{r^{\pp},k+r^\p}\sum_{\textbf{B}^\p\in\tau^\p(\mathcal{D}^\p)}\sum_{\textbf{B}^{\pp}\in\tau^{\pp}(\mathcal{D}^{\pp})}\iber_{\textbf{B}^\p}\iber_{\textbf{B}^{\pp}} \\ [
\tau\cdot w^\times_u(x/\widehat{k})^{1-2g}(w_{\Phi^\p}(x/\widehat{k})w_{\Phi^{\pp}}(x/\widehat{k}))^{1-2g}
e^{\tau\cdot\rho(x/\widehat{k})}] \\ 
 (\tau^\p\cdot(\lambda^\p_1,...,\lambda^\p_{r^\p-1},\lambda^\p_{r^\p}-k{\delta}+rl-l\widehat{k})/\widehat{k}-[\tau^\p\cdot(c^\p_1,...,c'_{r'-1},c'_{r'}-l)]_{\textbf{B}^\p}+ \\ \tau^{\pp}\cdot(\lambda^{\pp}_1,...,\lambda^{\pp}_{r^{\pp}-1},\lambda^{\pp}_{r^{\pp}}+k\delta-rl+l\widehat{k})/\widehat{k}-[\tau^{\pp}\cdot(c^{\pp}_1,...,c^{\pp}_{r^{\pp}-1},c^{\pp}_{r"}+l)]_{\textbf{B}^{\pp}}). 
\end{multline*}
Finally, identifying $u$ (cf. \eqref{identification}) with the "link" element of the diagonal basis $\tau(\DD)=(\alpha^{\tau\phi(r^\p), \tau(r)}\, \tau^\p(\DD^\p) \, \tau^{\pp}(\DD^{\pp}))$ (cf. \S\ref{S4.5}) and 
\begin{itemize}
\item moving the factor $e^{(k\delta-rl)u}$ from (\ref{final}) inside the argument of $\iber_{\textbf{B}}$, where $\textbf{B}=(\alpha^{\tau\phi(r^\p), \tau(r)}\, {\textbf{B}}^\p \,  {\textbf{B}}^{\pp})$,
\item applying (\ref{trivial1}) with $\textbf{B}=(\alpha^{\tau\phi(r^\p), \tau(r)}\, {\textbf{B}}^\p \,  {\textbf{B}}^{\pp})$ and $v= l\alpha^{\tau\phi(r^\p), \tau(r)}$,
\item applying Lemma \ref{lem:symres},
\item and using the fact that $$\tau^{-1}\cdot (w_{\Phi^\p}(x/\widehat{k})w_{\Phi^{\pp}}(x/\widehat{k}))=(-1)^{lr}w_{\Phi^\p}(x/\widehat{k})w_{\Phi^{\pp}}(x/\widehat{k}),$$
\end{itemize}
 we obtain the formula of Theorem \ref{wallcrossint} for arbitrary $l\in\mathbb{Z}$.
\end{section}

\begin{section}{Affine Weyl symmetry and the proof of  part I of Theorem 
\ref{main}} \label{sec:weylsymm}

In this section, we prove certain symmetry properties of our Hilbert polynomials on the left hand side of \eqref{eq:ver0}, and we finish the proof of part I of Theorem \ref{main}. We start with the basic instance of 
symmetry of Hilbert polynomials: relative Serre duality.

\subsection{Serre duality}
\begin{proposition}\label{Serrebasic}
Let $\mathcal{E}\to{X}$ be a rank 2 vector bundle over a smooth variety $X$, $\pi: Y=\mathbb{P}(\mathcal{E})\to X$ its projectivization and $\omega_{X/Y}$ the relative cotangent line bundle. Then $$\chi(Y,\pi^*\mathcal{L}\otimes\omega_{X/Y}^m)=-\chi(Y,\pi^*\mathcal{L}\otimes\omega_{X/Y}^{-m+1})$$ for any line bundle $\mathcal{L}\in Pic(X)$.
\end{proposition}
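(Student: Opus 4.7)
The plan is to prove this via the Grothendieck--Riemann--Roch theorem applied to $\pi$, reducing the statement to a fiberwise identity on the $\mathbb{P}^1$-bundle $Y=\mathbb{P}(\mathcal{E})$. By the projection formula,
\[ \chi(Y, \pi^*\mathcal{L} \otimes \omega_{X/Y}^m) = \chi(X, \mathcal{L} \otimes R\pi_* \omega_{X/Y}^m), \]
so it suffices to show the identity of Chern characters
\[ ch(R\pi_*\omega_{X/Y}^m) + ch(R\pi_*\omega_{X/Y}^{1-m}) = 0 \]
in $H^*(X,\mathbb{Q})$.

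Set $\xi = c_1(\omega_{X/Y})$. Since $T_{Y/X} = \omega_{X/Y}^{-1}$, one has $\mathrm{Todd}(T_{Y/X}) = -\xi/(1-e^{\xi})$, and GRR gives $ch(R\pi_*\omega_{X/Y}^m) = \pi_*\!\bigl[-\xi\,e^{m\xi}/(1-e^{\xi})\bigr]$. A direct rearrangement (pulling an $e^{\xi/2}$ out of numerator and denominator) yields
\[ \frac{-\xi\bigl(e^{m\xi} + e^{(1-m)\xi}\bigr)}{1-e^{\xi}} = \frac{\xi\,\cosh\!\bigl((m-\tfrac{1}{2})\xi\bigr)}{\sinh(\xi/2)}, \]
which is manifestly a formal power series in $\xi^2$, since both $\xi/\sinh(\xi/2)$ and $\cosh((m-\tfrac12)\xi)$ are even functions of $\xi$.

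The final ingredient is the observation that $\xi^2$ lies in the image of $\pi^*$. From the relative Euler sequence, $\xi = -2h + \pi^*c_1(\mathcal{E})$ with $h = c_1(\mathcal{O}_Y(1))$, and the projective-bundle relation $h^2 = h\,\pi^*c_1(\mathcal{E}) - \pi^*c_2(\mathcal{E})$ gives
\[ \xi^2 = \pi^*\!\bigl(c_1(\mathcal{E})^2 - 4c_2(\mathcal{E})\bigr). \]
Thus every even power $\xi^{2k}$ is a pullback from $X$, and since $\pi_*\circ\pi^* = 0$ on a $\mathbb{P}^1$-bundle (the fiber integral of a constant is zero), $\pi_*$ annihilates the entire expression. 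Combined with the reduction of the first paragraph, this yields the claimed sign-flipped symmetry.

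The main obstacle is purely bookkeeping: isolating the algebraic rearrangement that turns the intuitive Serre-duality symmetry $m \leftrightarrow 1-m$ on the fiber $\mathbb{P}^1$ into a recognizable even function of $\xi$. Once that is visible, the conclusion is forced by the elementary cohomological fact $\xi^2 \in \pi^*H^*(X)$.
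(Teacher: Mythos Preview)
Your proof is correct, but it takes a different route from the paper's. The paper identifies $\omega_{Y/X}=\pi^*(\wedge^2\mathcal{E})\otimes\mathcal{O}(-2)$ from the relative Euler sequence and then invokes relative Serre duality for the family of $\mathbb{P}^1$'s directly: the duality pairing on each fiber gives $\chi(Y,\pi^*\mathcal{A}\otimes\mathcal{O}(n))=-\chi(Y,\pi^*\mathcal{A}\otimes\omega_{Y/X}\otimes\mathcal{O}(-n))$, and unraveling this with $\mathcal{A}=\mathcal{L}\otimes(\wedge^2\mathcal{E})^m$, $n=-2m$ yields the statement in one line. Your argument instead goes through Grothendieck--Riemann--Roch and the projective-bundle relation: you show that $ch(R\pi_*\omega_{Y/X}^m)+ch(R\pi_*\omega_{Y/X}^{1-m})$ is $\pi_*$ of an even power series in $\xi=c_1(\omega_{Y/X})$, then use $\xi^2=\pi^*(c_1^2-4c_2)$ to kill it. The paper's approach is shorter and more conceptual, making the Serre-duality origin of the $m\leftrightarrow 1-m$ symmetry transparent; yours is more hands-on but entirely self-contained (no appeal to relative duality theory, only GRR and the projective-bundle formula), and it in fact establishes the slightly stronger K-theoretic identity $[R\pi_*\omega_{Y/X}^m]+[R\pi_*\omega_{Y/X}^{1-m}]=0$ in $K^0(X)_{\mathbb{Q}}$ before tensoring with $\mathcal{L}$.
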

\begin{proof}
It follows from the short exact sequence 
$$0\to \Omega_{X/Y}\otimes\mathcal{O}_X\to \pi^*\mathcal{E}(-1)\to \mathcal{O}_X\to 0,$$ 
that  $$\omega_{X/Y}=\wedge^2(\pi^*\mathcal{E}(-1))=\pi^*(\wedge^2\mathcal{E})\otimes\mathcal{O}(-2).$$ By Serre duality for families of curves \cite[Ch III, \S7-8]{Hartshorne} we have then
\begin{multline*}
\chi(Y, \pi^*(\mathcal{L}\otimes(\wedge^2\mathcal{E})^m)\otimes\mathcal{O}(-2m))= \\ -\chi(Y, \pi^*(\mathcal{L}\otimes(\wedge^2\mathcal{E})^m)\otimes\pi^*(\wedge^2\mathcal{E})^{-2m+1}\otimes\mathcal{O}(2m-2)).
\end{multline*}
\end{proof}

Now we can iterate this statement to the case of flag bundles.
\begin{proposition}\label{Serre}
Let $\pi: Y =\mathrm{Flag}(\mathcal{E})\to X$ be a rank $r$ flag bundle over $X$. Let $\mathcal{L}$ be a line bundle on $X$, and $\mathcal{F}_1$, $\mathcal{F}_2/\mathcal{F}_1, ..., \mathcal{F}_r/\mathcal{F}_{r-1}$ the standard flag line bundles on $Y$. For $k\in\mathbb{Z}$ and $\lambda=(\lambda_1,...,\lambda_r)\in\Lambda$ denote by 
$$\mathcal{L}{(k;\lambda)}=(\pi^*\mathcal{L})^k\otimes(\mathcal{F}_r/\mathcal{F}_{r-1})^{\lambda_1}
\otimes(\mathcal{F}_{r-1}/\mathcal{F}_{r-2})^{\lambda_{2}}\otimes...\otimes\mathcal{F}_1^{\lambda_r}.$$
Consider the polynomial $$q(k;\lambda_1,\lambda_2, ...,\lambda_r)=\chi(Y, \mathcal{L}{(k;\lambda_1,\lambda_2, ..., \lambda_r)})$$ in $(k,\lambda)\in \mathbb{Z}\times\Lambda$ and extend this definition to $\mathbb{R}\times V^*$. Then $q(k; \lambda-\rho)$ is anti-invariant under the permutations of $\lambda_1, \lambda_2, ...,\lambda_r$.
\end{proposition}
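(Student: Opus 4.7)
Since $\Sigma_r$ is generated by the simple transpositions $s_j=(j,j+1)$ for $j=1,\dots,r-1$, it suffices to prove anti-invariance under each $s_j$ individually. Setting $\mu=\lambda-\rho$, the claim $q(k;s_j\mu) = -q(k;\mu)$ is equivalent to
$$\chi(Y,\mathcal{L}(k;s_j\cdot\mu)) = -\chi(Y,\mathcal{L}(k;\mu)),$$
where $s_j\cdot\mu := s_j(\mu+\rho)-\rho$ denotes the standard Weyl dot-action; explicitly, $(s_j\cdot\mu)_j=\mu_{j+1}-1$, $(s_j\cdot\mu)_{j+1}=\mu_j+1$, other coordinates unchanged. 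My plan is to obtain this identity as a direct application of Proposition \ref{Serrebasic}. To this end I factor the flag bundle $\pi:Y\to X$ through the partial flag bundle $\pi_j:Y\to Y^{(j)}$ obtained by forgetting the step $\mathcal{F}_{r-j}$. The fibers of $\pi_j$ are projective lines: setting $\mathcal{E}_j:=\mathcal{F}_{r-j+1}/\mathcal{F}_{r-j-1}$, a rank-$2$ bundle on $Y^{(j)}$, one has $Y=\mathbb{P}(\mathcal{E}_j)$ as a projective bundle over $Y^{(j)}$, with tautological line subbundle $\mathcal{F}_{r-j}/\mathcal{F}_{r-j-1}=\mathcal{O}(-1)$ and quotient line bundle $\mathcal{F}_{r-j+1}/\mathcal{F}_{r-j}=\pi_j^*\det\mathcal{E}_j\otimes\mathcal{O}(1)$.

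Using these identifications and the fact that all remaining flag subquotients $\mathcal{F}_{r-i+1}/\mathcal{F}_{r-i}$ (for $i\ne j,j+1$) are pulled back from $Y^{(j)}$, I can write
$$\mathcal{L}(k;\mu)=\pi_j^*\mathcal{M}\otimes\mathcal{O}(a),\qquad a=\mu_j-\mu_{j+1},\qquad \mathcal{M}=\mathcal{M}'\otimes(\det\mathcal{E}_j)^{\mu_j},$$
where $\mathcal{M}'$ is a line bundle on $Y^{(j)}$ built from $(\pi^*\mathcal{L})^k$ and the flag factors with indices $i\notin\{j,j+1\}$. Proposition \ref{Serrebasic}, applied to the $\mathbb{P}^1$-bundle $\pi_j$, gives for $a=-2m$
$$\chi(Y,\pi_j^*\mathcal{M}\otimes\mathcal{O}(a))=-\chi(Y,\pi_j^*(\mathcal{M}\otimes(\det\mathcal{E}_j)^{-a-1})\otimes\mathcal{O}(-a-2));$$
since both sides are polynomial in $\mu\in V^*$ by Hirzebruch--Riemann--Roch, the identity extends from the even-$a$ set (Zariski-dense) to all of $V^*$. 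Substituting back and matching $\pi_j^*(\mathcal{M}'\otimes(\det\mathcal{E}_j)^{\mu_{j+1}-1})\otimes\mathcal{O}(\mu_{j+1}-\mu_j-2)=\mathcal{L}(k;\nu)$ with $\nu_j=\mu_{j+1}-1$, $\nu_{j+1}=\mu_j+1$, I read off $\nu=s_j\cdot\mu$ and conclude $\chi(Y,\mathcal{L}(k;\mu))=-\chi(Y,\mathcal{L}(k;s_j\cdot\mu))$, as desired.

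The main obstacle is essentially bookkeeping: correctly tracking how $\det\mathcal{E}_j$ and $\mathcal{O}(1)$ combine when rewriting $\mathcal{L}(k;\mu)$ in the form $\pi_j^*\mathcal{M}\otimes\mathcal{O}(a)$, and verifying that the resulting twist on the Serre-dual side produces precisely the $\rho$-shifted transposition. A minor technical point is that Proposition \ref{Serrebasic} is stated only for line bundles of the form $\pi^*\mathcal{L}\otimes\omega^m$, so only for even $a$; the polynomial extension argument is needed to cover the general case. The rest of the argument is routine: once anti-invariance under the generators $s_j$ is established, anti-invariance under arbitrary $\sigma\in\Sigma_r$ follows by writing $\sigma$ as a product of simple reflections and using the standard Coxeter relations.
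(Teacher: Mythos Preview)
Your proof is correct and follows exactly the route the paper indicates: the paper states Proposition~\ref{Serre} immediately after the sentence ``Now we can iterate this statement to the case of flag bundles'' and gives no further argument, so what you have written is precisely the intended iteration of Proposition~\ref{Serrebasic} via the $\mathbb{P}^1$-fibrations $Y\to Y^{(j)}$. The bookkeeping with $\det\mathcal{E}_j$ and $\mathcal{O}(1)$ is right (in your classical convention $\omega_{Y/Y^{(j)}}=\pi_j^*(\det\mathcal{E}_j)^{-1}\otimes\mathcal{O}(-2)$, which is what makes the exponent come out as $-a-1$), and the polynomiality extension from even $a$ to all $a$ is a legitimate way to handle the parity issue, though you could equally well invoke relative Serre duality on the $\mathbb{P}^1$-bundle directly for arbitrary $\mathcal{O}(a)$. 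One tiny notational slip: where you write ``the claim $q(k;s_j\mu)=-q(k;\mu)$'' you mean $q(k;s_j\cdot\mu)=-q(k;\mu)$ with the dot-action, as you yourself clarify in the next clause.
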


\begin{subsection}{The Weyl anti-symmetry of the functions $ q_1 $ 
and $ q_{-1} $}\label{S7.2}
Armed with this statement, we are ready to take on the symmetries of the 
Hilbert polynomial of our parabolic moduli spaces. We note that the two sets $ 
\Delta_{\pm1} $ of weights for degree-${\pm1}$ stable parabolic 
bundles are simplices with one of their vertices at 
$(\frac{1}{r},...,\frac{1}{r})$ and $(\frac{-1}{r},...,\frac{-1}{r})$, 
correspondingly (cf. \S\ref{S2.2}).  
 
 Denote by $N_{\pm1}$ the moduli spaces of rank-$r$ degree-$\pm1$  stable 
 vector bundles and by $UN$ any universal bundle over $N_{\pm1}\times C$ (cf. 
 e.g \cite{AtiyahBott}). 
\begin{lemma}\label{flagbundle}
Let $ c=(c_1,...,c_r)$ be a parabolic weight from the chamber in $\Delta_1$, 
which has as one of its vertices the (regular) point 
$(\frac{1}{r},...,\frac{1}{r})$. Then the  moduli space $P_{1}(c)$ of rank-$r$ 
degree-$1$ stable parabolic bundles is isomorphic to the flag bundle 
$\mathrm{Flag}({UN}_p)$ over $N_{1}$.
 An analogous statement holds in the case of degree $ -1$ and the point 
 $(\frac{-1}{r},...,\frac{-1}{r})\in\Delta_{-1}$.
\end{lemma}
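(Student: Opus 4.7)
\noindent The plan rests on showing that in the specified chamber, parabolic stability collapses to ordinary Mumford--slope stability of the underlying vector bundle, independently of the flag. For any $c\in\Delta_1$, $\textit{parslope}(W)=(\deg W-\sum_i c_i)/r=0$. For a proper subbundle $W'\subset W$ of rank $r'$ and degree $d'$, the induced parabolic structure carries weights $(c_i)_{i\in\Pi'}$ for some $\Pi'\subset\{1,\dots,r\}$ of cardinality $r'$ determined by the position of $W'_p$ with respect to $F_*$, so
\[
\textit{parslope}(W')=\frac{d'-\sum_{i\in\Pi'}c_i}{r'}.
\]
As $c$ varies within the chamber having $(1/r,\dots,1/r)$ as a vertex, $\sum_{i\in\Pi'}c_i$ stays in a neighbourhood of $r'/r$ which (by regularity of the vertex, using $0<r'<r$) does not cross any integer; hence the sign of $\textit{parslope}(W')$ matches that of $d'/r'-1/r$. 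Since $\gcd(r,1)=1$, the equality $d'/r'=1/r$ is impossible, so $\textit{parslope}(W')<0$ is equivalent to $d'/r'<1/r=\mu(W)$, i.e.\ to ordinary slope-stability of $W$. Crucially, no mention of $F_*$ appears in this condition.

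\noindent Consequently a pair $(W,F_*)$ is $c$-parabolic stable if and only if $W\in N_1$, with $F_*$ an arbitrary full flag of $W_p$. To promote this bijection to an isomorphism of varieties I would use that $\gcd(r,1)=1$ guarantees the existence of a universal bundle $UN\to N_1\times C$ with $\det UN\simeq \mathcal{O}(p)$ pulled back appropriately; the restriction $UN_p$ is well defined up to tensoring by a line bundle on $N_1$, so the associated flag bundle $\pi:\mathrm{Flag}(UN_p)\to N_1$ is canonical (flag bundles depend only on the underlying projective bundle). Applying the universal property of $P_1(c)$ to the parabolic bundle $(\pi^*UN,\text{tautological flag})$ on $\mathrm{Flag}(UN_p)\times C$ produces a morphism $\Psi:\mathrm{Flag}(UN_p)\to P_1(c)$; the stability analysis above shows $\Psi$ is a bijection on closed points. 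Both source and target are smooth projective varieties of the common dimension $(r^2-1)(g-1)+\binom{r}{2}$, so in characteristic zero $\Psi$ is an isomorphism by Zariski's main theorem together with generic smoothness.

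\noindent The main obstacle I anticipate is the bookkeeping required to make the universal construction coherent with the GIT description of $P_1(c)$ from \S\ref{S2.2}, so that the flag line bundles $\mathcal{F}_i$ of \S\ref{S2.3} match the tautological ones on $\mathrm{Flag}(UN_p)$; but this is guaranteed by the functorial definition of both sides. The case of degree $-1$ with vertex $(-1/r,\dots,-1/r)$ is entirely analogous: the chamber condition becomes $d'/r'<-1/r$, which by $\gcd(r,-1)=1$ again reduces to ordinary stability of $W\in N_{-1}$, and the same universal-property argument identifies $P_{-1}(c)$ with $\mathrm{Flag}(UN_p)$ over $N_{-1}$.
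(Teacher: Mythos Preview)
Your proposal is correct and follows essentially the same approach as the paper: both reduce to showing that, in this chamber, $c$-parabolic stability of $(W,F_*)$ is equivalent to ordinary slope stability of $W$, whence the moduli space is the flag bundle over $N_1$. The paper's version is terser---it fixes a representative weight with all $c_i>0$ and checks only the direction ``parabolic stable $\Rightarrow$ stable'' (arguing that $\mu(W')\ge 1/r$ forces $\deg W'\ge 1$, hence $\textit{parslope}(W')>0$), leaving the converse and the moduli identification implicit---whereas you argue uniformly over the chamber via the observation that $\sum_{i\in\Pi'}c_i$ never crosses an integer and then spell out the universal-bundle construction of the isomorphism.
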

\begin{proof} 
A simple calculation shows that the point $(c_1,...,c_r)\in \Delta_1$, such 
that all $c_i>0$, lies inside the chamber in $\Delta_1$ with the vertex 
$(\frac{1}{r},...,\frac{1}{r})$. Hence it is enough to prove the first 
statement for the moduli space $P_{1}(c_1,...,c_r)$ with positive parabolic 
weights.

Moreover, it is sufficient to show that if $(W,F_*)$ is a parabolic stable 
vector 
bundle which represents a point in $P_{1}(c_1,...,c_r)$, then $W$ is stable as 
an ordinary bundle. Assume that $W$ admits a proper subbundle $W^\p$ with 
$\textit{slope}(W^\p)\geq\textit{slope}(W)=\frac{1}{r}$, then 
$\mathrm{deg}(W^\p)\geq1$. Since all parabolic weights of $W$ are positive, 
this implies that $\textit{parslope}(W^\p)>0=\textit{parslope}(W)$, and 
therefore $W$ is parabolic unstable. The proof for degree-($-1$) bundles is 
analogous. 
\end{proof}

Denote the moduli spaces described above by $P_{1}(>)$ and $P_{-1}(<)$, 
correspondingly, and their images 
under the Hecke isomorphisms $\mathcal{H}$ and $\mathcal{H}^{-1}$ by 
$P_{0}(>)$ and $P_0(<)$.

The following statement is straightforward (cf. Lemma \ref{devisiblelevel}).
\begin{lemma}\label{PicN}
The line bundles $\mathcal{L}_{1}(r;1,...,1)$ and 
$\mathcal{L}_{-1}(r;-1,...,-1)$ on $P_{1}(>)$ and $P_{-1}(<)$  defined in Lemma 
\ref{parpic} may be obtained as pullbacks of the ample generators of the Picard 
groups $\mathrm{Pic}(N_{\pm1})$.
\end{lemma}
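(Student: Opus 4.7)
The plan is to verify this in two stages: first check that the specified line bundles descend from the flag bundle to the moduli space of (non-parabolic) stable bundles, then identify the descended bundles with the ample generator of $\mathrm{Pic}(N_{\pm1})$.

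For the descent, I invoke Lemma \ref{flagbundle}, which identifies $P_{1}(>)$ with the flag bundle $\mathrm{Flag}(UN_p) \to N_{1}$; under this identification, a universal bundle $U$ on $P_1(>)\times C$ may be taken to be the pullback of $UN$, and the flag line bundles $\mathcal{F}_1,\mathcal{F}_2/\mathcal{F}_1,\dots,\mathcal{F}_r/\mathcal{F}_{r-1}$ become the tautological quotients of the flag of $UN_p$. Expanding the definition from Lemma \ref{parpic} with $\lambda=(1,\dots,1)$, the full flag contribution collapses by a telescoping argument:
\[
(\mathcal{F}_r/\mathcal{F}_{r-1})\otimes(\mathcal{F}_{r-1}/\mathcal{F}_{r-2})\otimes\cdots\otimes\mathcal{F}_1 \;=\; \det(U_p).
\]
Consequently
\[
\mathcal{L}_{1}(r;1,\dots,1) \;=\; \det(UN_p)^{r(1-g)+1}\otimes\det(\pi_*UN)^{-r},
\]
which is manifestly pulled back from $N_{1}$. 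An identical computation (with signs reversed) produces $\det(UN_p)^{r(1-g)-1}\otimes\det(\pi_*UN)^{-r}$ on $N_{-1}$.

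For the identification with the ample generator, I appeal to the Drezet--Narasimhan theorem: when $\gcd(r,d)=1$ the Picard group $\mathrm{Pic}(N_d)$ is infinite cyclic, and every determinant-type line bundle of the form $\det(UN_p)^{a}\otimes\det(\pi_*UN)^{b}$ is a well-defined power of the ample generator $\Theta_d$ exactly when the transformation $UN\mapsto UN\otimes\pi_{N_d}^*M$ (for $M\in\mathrm{Pic}(N_d)$) leaves the bundle invariant. With $\chi=r(1-g)+d$, this invariance condition is $a\cdot r + b\cdot\chi=0$, and one checks $(a,b)=(r(1-g)+d,-r)$ solves this (both for $d=\pm 1$). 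To pin down the exponent, I compute the integer-valued pairing of this class with a test curve in $N_d$; the standard computation of Drezet--Narasimhan (see e.g.\ the tables in \cite{DolgachevH} or the original \cite{NRHecke}) gives the minimal level $1$, so the bundle equals $\Theta_{\pm1}$ itself.

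The main obstacle is the last step: distinguishing the ample generator from a positive power. Showing the line bundle descends is a formal consequence of the flag-bundle structure, but checking that the exponent is $1$ and not a larger integer requires the classical computation of Drezet--Narasimhan. Once invoked, the statement follows immediately since both $\det(UN_p)^{r(1-g)\pm1}\otimes\det(\pi_*UN)^{-r}$ represent the minimal nontrivial class in the lattice of determinant line bundles under the coprimality $\gcd(r,\pm1)=1$.
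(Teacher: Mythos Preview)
Your argument is correct. The paper itself does not give a detailed proof; it simply declares the statement ``straightforward'' and points to Lemma~\ref{devisiblelevel}, which asserts the Chern class identity $2c_1(\mathcal{L}_d(r;d,\dots,d)) = c_2(\mathrm{End}_0(U_d))_{(2)}$. Since $\mathrm{End}_0(U)$ is insensitive to twisting $U$ by a line bundle, the right-hand side is canonically defined on $N_{\pm1}$, and it is a standard fact that this class equals twice the first Chern class of the ample generator of $\mathrm{Pic}(N_{\pm1})$. That is the paper's shortcut.

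Your route is slightly different in emphasis: you first make the descent explicit by collapsing the flag factors into $\det(U_p)$, obtaining $\det(UN_p)^{r(1-g)\pm1}\otimes\det(\pi_*UN)^{-r}$, and then invoke Drezet--Narasimhan directly to identify this with the generator. The descent step is the same content as the paper's reference to Lemma~\ref{devisiblelevel}, just unwound; the identification step trades the Chern-class computation for the classical determination of $\mathrm{Pic}(N_d)$. Both approaches ultimately rest on the same input (the Drezet--Narasimhan description), so the difference is cosmetic. One small remark: your citations \cite{DolgachevH} and \cite{NRHecke} are not the natural sources for this fact; the relevant reference is Drezet--Narasimhan's paper on the Picard group of moduli of semistable bundles, which is not in the bibliography.
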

\begin{example}\label{exsymm}
In case of rank-3 parabolic bundles the moduli space $P_1(c_1,c_2,c_3)$ with $2c_3>c_1+c_2-1$ is a flag bundle over $N_1$ and it is isomorphic to the moduli space $P_0(>)$ from Example \ref{exwall}, while the moduli space $P_{-1}(c_1,c_2,c_3)$ with $2c_1<c_2+c_3+1$ is a flag bundle over $N_{-1}$ and it is isomorphic to $P_0(<)$. 
\end{example}
Now we establish the Weyl anti-symmetry of the polynomials
$$q_{-1}(k;\lambda_1,..., \lambda_r)= 
\chi(P_{0}(<),\mathcal{L}_0(k;\lambda_1,..., \lambda_r))$$ and 
$$q_1(k;\lambda_1,..., \lambda_r)= \chi(P_{0}(>),\mathcal{L}_0(k;\lambda_1,..., 
\lambda_r))$$ defined on $\mathbb{R}\times \Lambda$, as in Proposition 
\ref{Serre}. Let $\tau\in\Sigma_r$ be the cyclic permutation, such that 
$\tau\cdot(c_1,...,c_r)=(c_2,...,c_r,c_1)$, and consider two points in $V^*$: 
\begin{multline*}
	  \theta_1[k] = 
\frac{k+r}{r}\cdot(1,1,\dots,1)-(k+r)x_r-\rho= \tau\cdot(\frac{k}{r}-k,\frac{k}{r},...,\frac{k}{r}) -\tau\cdot(\rho) = \\
\left(\frac{k}{r}-\frac{r-1}{2}+1, 
\frac{k}{r}-\frac{r-1}{2}+2,...., 
\frac{k}{r}-\frac{r-1}{2}+r-1, 
-k+\frac{k}{r}-\frac{r-1}{2}\right)
\end{multline*}
and
\begin{multline*}
	\theta_{-1}[k] = 
-\frac{k+r}{r}\cdot(1,1,\dots,1)+(k+r)x_1-\rho = \\ \tau^{-1}\cdot(-\frac{k}{r},...,-\frac{k}{r},-\frac{k}{r}+k) -\tau^{-1}\cdot(\rho) = \\
\left(k-\frac{k}{r}+\frac{r-1}{2},-\frac{k}{r}-
\frac{r-1}{2}, 
-\frac{k}{r}-\frac{r-1}{2}+1,...,-\frac{k}{r}-\frac{r-1}{2}+
r-2\right).
\end{multline*} 

\begin{proposition}\label{propsymm}
The polynomials
$q_1(k;\lambda+\theta_1[k])$ and $q_{-1}(k; \lambda+\theta_{-1}[k])$
are anti-invariant under the action of the Weyl group by permutations of $\lambda_1,..., \lambda_r$.
\end{proposition}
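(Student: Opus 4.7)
The plan is to reduce the statement to relative Serre duality (Proposition \ref{Serre}) via the Hecke correspondence of \S\ref{sec:hecke}. I will sketch the argument for $q_1$; the argument for $q_{-1}$ is entirely analogous, working with $\mathcal{H}^{-1}: P_0(<) \to P_{-1}(<)$ and the flag bundle structure $P_{-1}(<) = \mathrm{Flag}(UN_p) \to N_{-1}$ provided by Lemma \ref{flagbundle}.

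First, by Proposition \ref{Hecke}, the Hecke correspondence gives an isomorphism $\mathcal{H}^{-1}:P_0(>) \xrightarrow{\sim} P_1(>)$, and by Lemma \ref{flagbundle}, $P_1(>) = \mathrm{Flag}(UN_p)$ is a full flag bundle $\pi:P_1(>) \to N_1$. Let $\mathcal{A}$ denote the ample generator of $\mathrm{Pic}(N_1)$; by Lemma \ref{PicN}, $\pi^*\mathcal{A} = \mathcal{L}_1(r;1,\dots,1)$. Using Corollary \ref{LineHecke}, I will translate the line bundle $\mathcal{L}_0(k;\lambda)$ on $P_0(>)$ into a line bundle $\mathcal{L}_1(k;\lambda')$ on $P_1(>)$, where $\lambda'$ is obtained from $\lambda$ by a cyclic shift combined with a $(0,\dots,0,k)$-type addition dictated by the Hecke transformation rule.

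Next, on $P_1(>) = \mathrm{Flag}(UN_p)$, I rewrite $\mathcal{L}_1(k;\lambda')$ in the form of Proposition \ref{Serre}. Writing $\lambda' = \mu + (k/r)(1,\dots,1)$ with $\mu \in V^*$, the identity $\mathcal{L}_1(rc;c,\dots,c) = (\pi^*\mathcal{A})^c$ yields
\[
\mathcal{L}_1(k;\lambda') \;=\; (\pi^*\mathcal{A})^{k/r}\otimes (\mathcal{F}_r/\mathcal{F}_{r-1})^{\mu_1}\otimes\cdots\otimes \mathcal{F}_1^{\mu_r},
\]
which is precisely $\mathcal{L}(k/r;\mu)$ in the notation of Proposition \ref{Serre}. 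Hence
\[
q_1(k;\lambda) \;=\; \chi\bigl(\mathrm{Flag}(UN_p),\,\mathcal{L}(k/r;\mu(\lambda))\bigr),
\]
where $\mu(\lambda) = \lambda' - (k/r)(1,\dots,1)$ is affine in $\lambda$. By Proposition \ref{Serre}, the right-hand side is anti-invariant under $\mu \mapsto \sigma \mu - (\sigma\rho-\rho) + \rho - \rho$, i.e., under the shifted action $\mu \mapsto \sigma(\mu-\rho)+\rho$ replaced by plain permutation upon the substitution $\mu = \nu - \rho$.

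Finally, the explicit vector $\theta_1[k]$ is determined by requiring that when we substitute $\lambda = \widetilde\lambda + \theta_1[k]$, the composed affine map $\widetilde\lambda \mapsto \mu(\widetilde\lambda+\theta_1[k]) - \rho$ becomes the \emph{identity} on $\widetilde\lambda$ (up to permutation). Tracking the three contributions --- the cyclic permutation $\tau$ from Hecke (sending $(\lambda_1,\dots,\lambda_r)$ to $(\lambda_r+k,\lambda_1,\dots,\lambda_{r-1})$), the uniform shift by $k/r$ coming from the pullback of $\mathcal{A}$, and the $\rho$-shift from Serre duality --- one checks that the required vector is exactly
\[
\theta_1[k] \;=\; \tau\!\cdot\!\bigl(\tfrac{k}{r}-k,\tfrac{k}{r},\dots,\tfrac{k}{r}\bigr)\;-\;\tau\!\cdot\!\rho,
\]
matching the formula in the statement. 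The permutation anti-invariance of $q_1(k;\widetilde\lambda+\theta_1[k])$ in $\widetilde\lambda$ then follows immediately from Proposition \ref{Serre}; analogously, using $\mathcal{H}: P_0(<) \to P_{-1}(<)$ and the ample generator of $N_{-1}$, one obtains the Weyl anti-invariance of $q_{-1}(k;\widetilde\lambda + \theta_{-1}[k])$.

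The main obstacle is the bookkeeping in the last step: the Hecke correspondence introduces a cyclic permutation of the $\lambda$-coordinates together with a non-trivial $k$-dependent shift in one entry, Proposition \ref{Serre} is stated in terms of $\lambda \in \Lambda$ (traceless) while our line bundles live on weights summing to $k$, and the Serre-dual anti-symmetry is the $\rho$-shifted (dot) action. Verifying that all three of these contributions combine precisely into the explicit expression for $\theta_{\pm1}[k]$ (and that the resulting shifted variable lies in $\Lambda$, which follows from $\sum \rho_i = 0$) is the only computation in the proof, but it is routine.
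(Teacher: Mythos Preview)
Your proposal is correct and follows essentially the same route as the paper: pass from $P_0(>)$ to $P_1(>)$ via the Hecke isomorphism (Corollary \ref{LineHecke}), use Lemma \ref{flagbundle} and Lemma \ref{PicN} to recognise $P_1(>)$ as $\mathrm{Flag}(UN_p)\to N_1$ with $\pi^*\mathcal{A}=\mathcal{L}_1(r;1,\dots,1)$, and then invoke Proposition \ref{Serre}. The paper presents this as a single chain of equalities rather than your more discursive sketch, but the ingredients are identical. One small caution: your explicit Hecke rule ``$(\lambda_1,\dots,\lambda_r)\mapsto(\lambda_r+k,\lambda_1,\dots,\lambda_{r-1})$'' has the cyclic shift in the wrong direction (compare Corollary \ref{LineHecke} for $m=1$ and invert), and the fractional power $(\pi^*\mathcal{A})^{k/r}$ is only formal---the paper sidesteps both issues by tracking $\sigma\cdot\lambda+\theta_1[k]$ directly through the chain and using that everything is polynomial in $(k,\lambda)$.
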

\begin{proof}
Recall that the moduli space $P_0(>)$ is isomorphic to the flag bundle $P_1(>)$ over $N_1$ under the Hecke isomorphism $\mathcal{H}^{-1}$. Then using Corollary \ref{LineHecke},  Proposition \ref{Serre} and Lemma \ref{PicN}, for any permutation $\sigma\in\Sigma_r$ we obtain
\begin{multline*}
q_1(k;\sigma\cdot\lambda+\theta_1[k])\overset{\mathrm{def}}= \chi(P_{0}(>),\mathcal{L}_0(k;\sigma\cdot\lambda+\theta_1[k])) \overset{{\ref{LineHecke}}}= \\  \chi(P_1(>), \mathcal{L}_1(k;\tau^{-1}\cdot\sigma\cdot\lambda+(\frac{k}{r},...,\frac{k}{r})-\rho))\overset{{\ref{Serre}},\ref{PicN}}= \\ (-1)^{\sigma}\chi(P_1(>), \mathcal{L}_1(k;\tau^{-1}\cdot\lambda+(\frac{k}{r},...,\frac{k}{r})-\rho))  \overset{{\ref{LineHecke}}}=  \\ (-1)^{\sigma}\chi(P_{0}(>),\mathcal{L}_0(k;\lambda+\theta_1[k])) \overset{\mathrm{def}}=(-1)^{\sigma}q_1(k;\lambda+\theta_1[k]).
\end{multline*}
The proof for $q_{-1}$ is similar.
\end{proof}

The two group actions in Proposition \ref{propsymm} may be combined in the following manner.
For $ k\ge0 $, we define an action of the \textit{affine 
Weyl group} $ \Sigma\rtimes \Lambda $ on $ \Lambda\times\Z $, 
which acts trivially on the second factor, the level, 
and the action at level $ k $ is given by setting 
$$ \sigma.\lambda=\sigma\cdot(\lambda+\rho)-\rho \text{\,\,\, 
and \,\,\,}  \gamma.\lambda=\lambda+(k+r)\gamma\text{ 
for }
\sigma\in\Sigma,\,\gamma\in\Lambda. $$ We denote the 
resulting group of affine-linear transformations of $ V^* $ 
by $ \affweyl $, and note that the action is defined in such 
a way that 
\begin{equation}\label{sigmadot}
	\sigma.\lambda+\rho=\sigma\cdot(\lambda+\rho)
	\text{ and 
	}(\gamma.\lambda+\rho)/\kk=\gamma+(\lambda+\rho)/\kk
\end{equation}

It is easy to verify that the stabilizer subgroup
\[ 
\Sigma_r^+\overset{\mathrm{def}}=\mathrm{Stab}(\theta_{1}[k],
\affweyl)\subset\affweyl \]
is generated by the   
transpositions $s_{i,i+1},\;1\leq i\leq r-2$ and the reflection $\alpha^{r-1,r}\circ s_{r-1,r};$ 

similarly,
$$ 
\Sigma_r^-\overset{\mathrm{def}}=\mathrm{Stab}(\theta_{-1}[k],
\affweyl)\subset\affweyl $$ is generated by $s_{i,i+1}$, $2\leq 
i\leq r-1$ and the reflection $ 
\alpha^{1,2}\circ s_{1,2} $.

Then Proposition \ref{propsymm} maybe recast in the 
following form: the polynomial $q_1(k;\lambda)$ is 
anti-invariant with respect to the copy $ 
\Sigma_r^+ $ of the symmetric group $ \Sigma_r $, while 
$q_{-1}(k;\lambda)$ is anti-invariant with respect to the 
copy $ \Sigma_r^- $ of the symmetric group $ \Sigma_r $.

The following statement is straightforward:
\begin{lemma}\label{lem:generate}
Both subgroups $ \Sigma_r^\pm$ are isomorphic to $\Sigma_r$ and for $r>2$, the two subgroups generate the affine Weyl 
group $ \affweyl $.
\end{lemma}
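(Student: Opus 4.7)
The plan is to identify $\theta_{\pm 1}[k]$ as (scaled) vertices of a fundamental alcove of $\affweyl$ and to exploit the classical fact that such a vertex stabilizer is a maximal parabolic subgroup of the affine Weyl group of type $\widetilde{A}_{r-1}$, which is necessarily isomorphic to the finite symmetric group $\Sigma_r$. For the second claim, I will extract from the generators of $\Sigma_r^+$ and $\Sigma_r^-$ all the adjacent transpositions of $\Sigma_r$ together with a single pure translation, and then use $\Sigma_r$-conjugation to recover the entire root lattice.

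For the first claim I will focus on $\Sigma_r^+$ (the argument for $\Sigma_r^-$ being entirely analogous). Its $r-1$ generators are the ordinary transpositions $s_{1,2},\dots,s_{r-2,r-1}$ together with the affine involution $t=\alpha^{r-1,r}\circ s_{r-1,r}$. A direct check using \eqref{sigmadot} will show that $t^2=1$ and that $t$ commutes with $s_{i,i+1}$ for $i\le r-3$, since these act on disjoint pairs of coordinates; the only nontrivial relation remaining will be the braid relation $(s_{r-2,r-1}\,t)^3=1$, which I expect to dispatch by a short, explicit finite-order check of its action on $V^*$. These are exactly the Coxeter relations of type $A_{r-1}$, so the assignment of simple reflections of $\Sigma_r$ to the chosen generators produces a well-defined surjection $\Sigma_r\to\Sigma_r^+$. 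Injectivity should follow at once from the observation that any nontrivial element of $\Sigma_r$ acts nontrivially on the linear part of $V^*$, while the generators of $\Sigma_r^+$ differ from their linear parts only by translations in $(k+r)\Lambda$.

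For the second claim, assume $r>2$. Then $\Sigma_r^+$ provides the transpositions $s_{1,2},\dots,s_{r-2,r-1}$ and $\Sigma_r^-$ provides $s_{2,3},\dots,s_{r-1,r}$, so their union covers every adjacent transposition and thereby generates the finite symmetric group $\Sigma_r\subset\affweyl$. Since $s_{r-1,r}$ is now available inside the combined subgroup, the identity
\[
t\cdot s_{r-1,r}=\alpha^{r-1,r}\in\Lambda\subset\affweyl
\]
produces the pure translation by the root $\alpha^{r-1,r}$. Conjugating this translation by $\Sigma_r$, which acts transitively on ordered pairs of indices, yields translations by every $\alpha^{ij}$; as the simple roots $\alpha^{i,i+1}$ already span $\Lambda$, the subgroup $\langle\Sigma_r^+,\Sigma_r^-\rangle$ will contain the semidirect product $\Sigma_r\ltimes\Lambda=\affweyl$, giving the desired equality.

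The main obstacle I anticipate is purely bookkeeping, namely checking the single braid relation $(s_{r-2,r-1}\,t)^3=1$ explicitly; everything else reduces to the general theory of affine Coxeter groups once the hypothesis $r>2$ is invoked to guarantee both that $\theta_1[k]\neq\theta_{-1}[k]$ and that the combined generating set exhausts all simple reflections of $\Sigma_r$.
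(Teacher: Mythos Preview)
Your proposal is correct. The paper itself provides no proof of this lemma, declaring it ``straightforward'' and moving on; your argument supplies exactly the kind of direct verification the authors had in mind. The key steps---checking the Coxeter relations for the $r-1$ affine reflections generating $\Sigma_r^+$ (which is immediate once you observe they all fix the common point $\theta_1[k]$, so that after conjugating by the translation sending $\theta_1[k]$ to the origin they become the \emph{linear} reflections $s_{1,2},\dots,s_{r-2,r-1},s_{r-1,r}$), reading off injectivity from the linear-part projection $\affweyl\to\Sigma_r$, and then for $r>2$ assembling all adjacent transpositions from the two generating sets and extracting the translation $\alpha^{r-1,r}=t\cdot s_{r-1,r}$---are all sound and constitute a clean elementary proof.
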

\end{subsection}
\begin{subsection}{The Weyl anti-symmetry of the polynomials $ p_1 
$ and $ p_{-1} $}\label{S7.3}

Following \eqref{eqnpoly}, we define the two polynomials
\[ p_{\pm1}(k;\lambda)=
\sum_{\bb\in\DD}
\iber_{\bb} [ w_\Phi^{1-2g}(x/\kk)] (\lala/\kk- 
[\theta_{\pm1}]_\bb),
 \]
where $ \theta_1=\frac1r\cdot(1,1,\dots,1)-x_r $, and
$ \theta_{-1}=-\frac1r\cdot(1,1,\dots,1)+x_1 $.

\begin{proposition}\label{propsymmIBer}
The polynomial $ p_1(k;\lambda) $ is anti-invariant with 
respect to $ \Sigma_r^+ $, and $ p_{-1}(k;\lambda) $ is 
anti-invariant with respect to $ \Sigma_r^- $.
\end{proposition}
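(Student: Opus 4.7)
The plan is to reduce the claim to a combination of Lemma \ref{lem:symres} and the elementary periodicity of the ``integer part'' function $[\,\cdot\,]_{\bb}$ under lattice translations.

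First, I would parametrize $\Sigma_r^+$ concretely: every $g\in\affweyl$ takes the form $g.\lambda = \sigma.\lambda + \kk\gamma$ for a unique pair $(\sigma,\gamma)\in\Sigma_r\times\Lambda$, and a brief calculation (using $\theta_1[k]+\rho=\kk\theta_1$) shows that $g$ stabilizes $\theta_1[k]$ if and only if $\sigma(\theta_1)=\theta_1-\gamma$. Since the sign character on $\affweyl$ descends to $g\mapsto(-1)^\sigma$, the goal becomes to prove $p_1(k;g.\lambda)=(-1)^\sigma p_1(k;\lambda)$ under this compatibility.

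Starting from the definition and using $(g.\lambda+\rho)/\kk=\sigma(\lala)/\kk+\gamma$ from \eqref{sigmadot}, I would write
\[
p_1(k;g.\lambda)=\sum_{\bb\in\DD}\iber_{\bb}\bigl[w_\Phi^{1-2g}(x/\kk)\bigr]\bigl(\sigma(\lala)/\kk+\gamma-[\theta_1]_{\bb}\bigr).
\]
Since $\gamma\in\Lambda$, the ``integer part'' is compatible with the shift: $[\theta_1]_{\bb}-\gamma=[\theta_1-\gamma]_{\bb}$. Combining this with the stabilization relation $\theta_1-\gamma=\sigma(\theta_1)$ brings the argument of $\iber_{\bb}$ into the form $\sigma(\lala)/\kk-[\sigma(\theta_1)]_{\bb}$, which is exactly the shape required by Lemma \ref{lem:symres}. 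Applying that lemma yields
\[
p_1(k;g.\lambda)=\sum_{\bb\in\DD}\iber_{\bb}\bigl[\sigma^{-1}\cdot w_\Phi^{1-2g}(x/\kk)\bigr]\bigl(\lala/\kk-[\theta_1]_{\bb}\bigr).
\]

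The last step is the anti-invariance of $w_\Phi$ under the $\Sigma_r$-action of \eqref{eq:sigmaaction}: a transposition flips the sign of exactly one factor $2\mathrm{sinh}(\pi(x_i-x_j))$ and merely permutes the remaining ones, so $\sigma\cdot w_\Phi=(-1)^\sigma w_\Phi$, and since $1-2g$ is odd this gives $\sigma^{-1}\cdot w_\Phi^{1-2g}=(-1)^\sigma w_\Phi^{1-2g}$. Pulling out this scalar yields $p_1(k;g.\lambda)=(-1)^\sigma p_1(k;\lambda)$. The argument for $p_{-1}$ and $\Sigma_r^-$ is identical, with $\theta_{-1}$ in place of $\theta_1$. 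I do not expect a serious obstacle; the only non-formal input is the stabilization identity $\sigma(\theta_{\pm1})=\theta_{\pm1}-\gamma$ for the generators listed before Lemma \ref{lem:generate}, which is immediate (for instance, for $g=\alpha^{r-1,r}\circ s_{r-1,r}$ one has $\sigma=s_{r-1,r}$, $\gamma=\alpha^{r-1,r}$, and the identity $s_{r-1,r}(\theta_1)=\theta_1-\alpha^{r-1,r}$ is a direct verification on coordinates).
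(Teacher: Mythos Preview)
Your proof is correct and uses the same essential ingredients as the paper: Lemma~\ref{lem:symres} together with the Weyl anti-invariance $\sigma\cdot w_\Phi=(-1)^\sigma w_\Phi$. The only difference is in packaging: the paper first restricts to $(k,\lambda)$ with $\lambda/k$ in a small ball $D$ around $\theta_1$, rewrites $p_1$ there via the fractional part $\{\lala/\kk\}_{\bb}$, treats the pure transposition generators and the affine generator $\alpha^{r-1,r}\circ s_{r-1,r}$ separately (invoking periodicity of $\{\cdot\}_{\bb}$ for the latter), and then implicitly extends by polynomiality. Your version works uniformly for every $g\in\Sigma_r^+$ at once by directly exploiting the lattice compatibility $[\theta_1-\gamma]_{\bb}=[\theta_1]_{\bb}-\gamma$ and the stabilizer characterization $\sigma(\theta_1)=\theta_1-\gamma$, so no restriction to a ball and no extension step are needed. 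This is a bit cleaner, but the mathematical content is the same.
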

\begin{proof}
We recall that the points $ \theta_{\pm1}[k] $ are the 
fixed points of the actions of $ \Sigma^{\pm} $, and 
clearly $ \lim_{k\to\infty} \theta_{\pm1}[k]/k = 
\theta_{\pm1} $. This means, that we can fix a small open 
ball $ D \subset V^*$ centered at $ 
\theta_1 $ such that
\begin{equation}\label{eq:smallball}
	\lambda/k\in D\Longrightarrow 
	\forall\sigma\in\Sigma^+:\;
	(\sigma.\lambda+\rho)/\kk\sim\theta_1.
\end{equation}

Then for $ \lambda/k\in D $ we have
\[ p_{1}(k;\lambda)=
\sum_{\bb\in\DD}
\iber_{\bb} [ w_\Phi^{1-2g}(x/\kk)] (\{\lala/\kk\}_\bb).
\]
Now, let us consider a generator of $ \Sigma^+ 
$ of the type   $ \sigma=s_{i,i+1} $, $ 1\leq 
i\leq r-2 $. Using \eqref{sigmadot}, and Lemma
\ref{lem:symres}, and the fact that $\sigma\cdot w_\Phi=-w_\Phi$ we obtain
\begin{multline*}
	 p_{1}(k;\sigma.\lambda)=
\sum_{\bb\in\DD}
\iber_{\bb} [ w_\Phi^{1-2g}(x/\kk)] (\sigma\cdot\{\lala/\kk\}_\bb)=
\\
\sum_{\bb\in\DD}
\iber_{\bb} [(-w_\phi)^{1-2g}(x/\kk)] 
(\{\lala/\kk\}_\bb)= 
-p_{1}(k;\lambda)
\end{multline*}

The case of the last generator $ 
\alpha^{r-1,r}\circ s_{r-1,r} $ is similar, but 
after the substitution, we  need to use the equality
$ \{\alpha^{r-1,r}+\lala/\kk\}_\bb =\{\lala/\kk\}_\bb$ to 
obtain $p_1(k;\kk\alpha^{r-1,r}+s_{r-1,r}.\lambda)=
-p_{1}(k;\lambda) $.
\end{proof}
\end{subsection}

\subsection{Proof of part I. of Theorem \ref{main}}\label{S8.4} 
 Recall that in Lemma \ref{lemmaBwall}, we introduced a chamber structure on $\Delta\subset 
V^*$ created by the walls $S_{\Pi,l}$, where $\Pi=(\Pi^\p,\Pi^{\pp})$ is a
 nontrivial partition, and $ l\in\Z $.  
Before we proceed, we introduce some extra notation. Denote by 
$$\widecheck{\Delta} = \{(k;a)| \, a/k\in\Delta\} \subset \mathbb{R}^{>0}\times V^*$$
 the cone over $\Delta\subset V^*$, and let  
 $$\widecheck{\Delta}^{\mathrm{reg}} = \{(k;a)| \, a/k\in\Delta \, \text{is 
 regular}\}\subset \widecheck{\Delta}$$ be the set of its regular points. 
 Denote by $\widecheck{S}_{\Pi,l}\subset\widecheck{\Delta}$  the cone over the 
 wall $S_{\Pi,l}\subset \Delta$; then $\widecheck{\Delta}^{\mathrm{reg}}$ is 
 the complement of the union of walls $\widecheck{S}_{\Pi,l}$ in 
 $\widecheck{\Delta}$. Finally,  denote by  
 $\widecheck{\Delta}^{\mathrm{reg}}_\Lambda$ the intersection of the lattice 
 $\mathbb{Z}^{>0}\times\Lambda$ with $\widecheck{\Delta}^{\mathrm{reg}}$.

By substituting $\textit{\k c} =\lambda/k$, we can consider the left-hand side and the right-hand side of formula I. of Theorem 
\ref{main} as functions in $(k,\lambda)\in\widecheck{\Delta}^{\mathrm{reg}}_\Lambda$. We denote by $q(k;\lambda)$ and $p(k;\lambda)$  the left-hand side and the right-hand side, correspondingly. 

We showed that $q(k;\lambda)$ and $p(k;\lambda)$  are \textit{polynomials} on the cone over each chamber in $\Delta$ (cf. Theorem \ref{diaginv}, \S\ref{S2.4}). 
We proved that the wall-crossing terms, i.e. the differences between polynomials on neighbouring chambers, for $q(k;\lambda)$ (cf. Theorem \ref{wallcrossint})  and for $p(k;\lambda)$ (cf. Proposition \ref{wcrnice}) coincide, hence there exists a polynomial $\Theta(k;\lambda)$ on $\mathbb{Z}^{>0}\times\Lambda$, such that the restriction of $\Theta(k;\lambda)$ to $\widecheck{\Delta}^{\mathrm{reg}}_\Lambda$ is equal to  the difference $p(k;\lambda)-q(k;\lambda)$. 

Now for $r>2$, we can conclude that 
$$\Theta(k;\lambda)=p_1(k;\lambda)-q_1(k;\lambda)=p_{-1}(k;\lambda)-q_{-1}(k;\lambda),$$
 where $p_{\pm1}(k;\lambda)$ and $q_{\pm1}(k;\lambda)$  are the restrictions of $p(k;\lambda)$ and $q(k;\lambda)$  to two specific chambers defined in  \S\ref{S7.3} and \S\ref{S7.2}. Then, according to Propositions \ref{propsymm} 
and \ref{propsymmIBer}, the polynomial $\Theta(k;\lambda)$ is 
anti-invariant with respect to the action of the subgroups $\Sigma^\pm_r$, and hence by 
Lemma \ref{lem:generate}, it is anti-invariant under the action of the entire 
affine Weyl group $\affweyl$. It is easy to see that any such polynomial 
function has to vanish, and thus $p(k;\lambda)=q(k;\lambda)$, and this 
completes the proof of part I. of Theorem \ref{main} for the case when $\lambda/k\in\Delta$ is regular. 

As in Corollary \ref{cor:thmwithk}, we can extend $p(k;\lambda)$ from the interior of each chamber to its boundary by polynomiality. Clearly, to prove part I. of Theorem \ref{main} for the cases when $ \lambda/k $ is not regular, it is sufficient to show, that these extensions from the chambers containing $ \lambda/k $ in their closure give the same value on $ (k;\lambda) $. It follows from Remark \ref{well-def:lambda/k}, that this is the case, and this completes the proof of part I. of Theorem \ref{main}  (cf. 
Remark \ref{well-def-non-reg}).

\end{section}

\begin{section}{Rank 2, two points}
	\label{sec:2points}

Unfortunately, the argument above does not work for $ r=2 $, because, in this 
case,  $ \theta_1[k]=\theta_{-1}[k] $, the groups $ \Sigma_r^- $ and $ \Sigma_r^+ $ coincide, and thus they do not generate the entire affine Weyl group. The way out is to pass to the 2-punctured case.

\subsection{Wall-crossing}

 We will thus fix two points: $ p,s\in C $, and study the moduli space of rank-2, 
 stable parabolic bundles $W$ 
 with fixed determinant isomorphic to $\mathcal{O}(pd)$, with parabolic 
 structure given by a line $F_1\subset W_p$ with weight $(c, -c)$, and a line
 $G_1\subset W_s$ with weight $(a, -a)$.

Now we need to repeat the analysis of our work so far in this somewhat simpler 
case; some details thus will be omitted.

Set $d=0$; then the space of admissible weights (cf. Figure \ref{square}) is a square
$$\Box = \{(c, a) \, |  \, 1>2c>0, \, 1>2a>0,\, \},$$
which has two adjacent chambers defined by the conditions 
$$c>a \text{ \, and \, } c<a.$$
Denote the corresponding moduli spaces by  $P_0(c>a)$ and $P_0(c<a)$.
\begin{figure}[H]
	\centering
	\begin{tikzpicture}[scale=2]
		\draw [fill=lightgray!30] (0,0) -- (0,1) -- (1,1);
		\draw [fill=gray!30] (0,0) -- (1,0) -- (1,1);
		\draw  [ultra thick, red] (0,0) -- (1,1);
		\draw  [ultra thick] (0,0) -- (0,1);
		\draw  [ultra thick] (0,0) -- (1,0);
		\draw  [ultra thick] (1,0) -- (1,1);
		\draw  [ultra thick] (0,1) -- (1,1);
		\node [above] at (0.35,0.65) {\tiny $P_0(c<a)$};
		\node [below] at (0.65,0.35) {\tiny $P_0(c>a)$};
		\node [below] at (0,0) {\tiny $(0,0)$};
		\node [below] at (1,0) {\tiny $(\frac{1}{2},0)$};
		\node [above] at (0,1) {\tiny $(0,\frac{1}{2})$};
		\node [above] at (1,1) {\tiny $(\frac{1}{2},\frac{1}{2})$};
\end{tikzpicture}
\setlength{\belowcaptionskip}{-8pt}\caption{The space of admissible weights in the case of rank $r=2$, two points.} \label{square}
\end{figure}
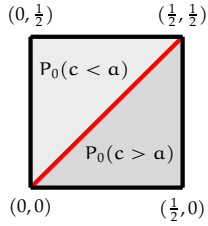
Again, we have universal bundles over $P_0(c>a)\times C$ and $P_0(c<a)\times 
C$, which we will denote by the same symbol 
 $ U $; this bundle is endowed with two flags, $\mathcal{F}_1\subset \mathcal{F}_2=U_p$ and $\mathcal{G}_1\subset \mathcal{G}_2=U_s$.
For $\mu,\lambda\in\mathbb{Z}$, we 
introduce the line bundle 
\begin{equation*}
\begin{split}
\mathcal{L}(k; \lambda, {\mu})= & 
\mathrm{det}({U}_p)^{k(1-g)}\otimes\mathrm{det}(\pi_*(U))^{-k} \\ 
&\otimes(\mathcal{F}_2/\mathcal{F}_{1})^{\lambda}
\otimes(\mathcal{F}_1)^{-\lambda}
\otimes(\mathcal{G}_2/\mathcal{G}_1)^{\mu}\otimes(\mathcal{G}_1)^{-\mu}.
\end{split}
\end{equation*}

We repeat the construction of the master space from Section 5.1, choosing a point $(c^0,c^0)$ on the wall and  two points
$$(c,a)^\pm=(c^0,c^0)\pm \epsilon(1,0)\in\Box, \,\,\, 
\epsilon\in\mathbb{Q}_{>0}$$ from the adjacent chambers. We can identify the 
fixed point set $ Z^0 $ as follows.

\begin{lemma}
The locus $Z^0$ defined in Proposition \ref{BundleN} is  $$Z^0\simeq Jac^0 \simeq \{V=L\oplus{L^{-1}} \, | \, L_p\subset F_1,  L_s^{-1}\subset G_1\}.$$
\end{lemma}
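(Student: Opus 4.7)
The plan is to follow the pattern of Lemma \ref{Sigma} in this simpler two-point, rank-$2$ setting, where each direct summand of a polystable splitting is a line bundle and so has no further moduli. By Proposition \ref{BundleN}, set-theoretically $Z^0 \cong X^0/G$, where $X^0 \subset X$ is the locus of bundles semistable for $L_0$ but not stable for either of $L_\pm$; so my first task is to identify $X^0$ explicitly. By Mehta--Seshadri, any such point is $S$-equivalent to a polystable bundle, which in rank $2$ with trivial determinant must have the form $W = L \oplus L^{-1}$ with $\textit{parslope}(L) = 0$ at $(c, a) = (c^0, c^0)$. The flags $F_1 \subset W_p$ and $G_1 \subset W_s$ are forced to be compatible with the splitting, so each equals one of the two summand fibers. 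Using that the possible weights of $L$ at each marked point are $\pm c^0$ and that $\deg L \in \mathbb{Z}$, the equation $\deg L = w_p(L) + w_s(L)$ immediately forces $\deg L = 0$ together with $\{w_p(L), w_s(L)\} = \{-c^0, c^0\}$. This yields exactly two labelings of the splitting, related by swapping $L \leftrightarrow L^{-1}$.

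The second step is to fix this labeling by means of the master-space construction. The stabilizer $G_x \simeq \mathbb{C}^*$ of the polystable orbit acts on $\Lvec_x$ with a definite character, and this character distinguishes $L$ from $L^{-1}$. A direct check, in the spirit of \S\ref{S6.2}, shows that the linearization $\Lvec$ encoding the direction $(c, a)^+ - (c, a)^- = \epsilon(1, 0)$ picks out precisely the summand $L$ for which $L_p = F_1$ and $L_s \neq G_1$ (equivalently $(L^{-1})_s = G_1$); intrinsically, this is the summand that destabilizes $W$ on the $+$-side of the wall.

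With the convention fixed, the assignment $(W, F_1, G_1) \mapsto L$ descends to a morphism $Z^0 \to \textit{Jac}^0$, and its inverse sends $L \in \textit{Jac}^0$ to the polystable parabolic bundle $W = L \oplus L^{-1}$ with $F_1 = L_p$ and $G_1 = (L^{-1})_s$; bijectivity is then immediate. The main (mild) obstacle is the bookkeeping in the second step: ensuring that the sign and orientation conventions coming from $\Lvec$ produce the specific combination $L_p \subset F_1$ and $L_s^{-1} \subset G_1$ stated in the lemma rather than its swap under $L \leftrightarrow L^{-1}$.
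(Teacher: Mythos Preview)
Your proposal is correct and follows essentially the same approach as the paper. The paper does not give a separate proof of this lemma at all: it simply states the result and writes ``As in \S\ref{S6.1}'', i.e., it defers entirely to the analogue of Lemma~\ref{Sigma} (the Boden--Hu description of $X^0$) specialized to the rank-$2$, two-point setting. You have carried this out explicitly, including the degree/weight bookkeeping that forces $\deg L=0$ and the resolution of the $L\leftrightarrow L^{-1}$ labeling ambiguity via the linearization $\Lvec$, which is more detail than the paper itself provides.
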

As in \S\ref{S6.1}, 
denote by $\mathcal{J}$ the universal bundle over $Jac^0\times C$ normalized in 
such a way that $c_1(\mathcal{J})_{(0)}=0$ (cf. (\ref{Kunneth})). Define 
$$\eta\in H^2(Jac)\; \text{\,\, by\, \, } \;\left(\sum_ic_1(\mathcal{J})_{(e_i)}\otimes 
e_i\right)^2=-2\eta\otimes\omega;$$ we have then  $\int_{Jac}e^{\eta m}=m^g $ for 
$m\in\mathbb{Z}$.

Let  $\pi: Jac^0\times{C} \to Jac^0$ be the projection and ${N_{Z^0}}$ be the 
equivariant normal bundle to  $Z^0$ in $Z$. Then, as in Lemma \ref{H1ParHom}, Proposition 
\ref{NormalTodd} and Lemma \ref{restriction},   we obtain the identifications:
\begin{itemize} 
\item ${N_{Z^0}} = R_T^1\pi_*(ParHom(\mathcal{J},\mathcal{J}^{-1}))\oplus R_T^1\pi_*(ParHom(\mathcal{J}^{-1},\mathcal{J})),$
where $T\simeq\mathbb{C}^*$-action has weights (-1,1);

\item  $E({N_{Z^0}})^{-1}=(-1)^{g}(2\mathrm{sinh}\left(\frac{u}{2}\right))^{-2g}exp({4\eta});$

\item  $ch_T(\mathcal{L}{(k;\lambda,\mu)}\big{|}_{Z^0})=exp(2k\eta)exp(u(\lambda-\mu)).$
\end{itemize}

Now we define the polynomials:
$$h_>(k;\lambda,\mu)\overset{\mathrm{def}}=\chi(P_0(c>a),\mathcal{L}{(k;\lambda,\mu)}),
\;h_<(k;\lambda,\mu)\overset{\mathrm{def}}=\chi(P_0(c<a),\mathcal{L}{(k;\lambda,\mu)}).$$
and, applying Theorem \ref{ThaddTh}, we obtain the following expression for 
their difference.
\begin{lemma}\label{2pointdiff} The wall crossing term equals
\begin{equation*}
h_>(k;\lambda,\mu)-h_<(k;\lambda,\mu) = \
 (-1)^{g}(2k+4)^g\res_{\substack{u=0}}\frac{exp(u(\lambda-\mu))}{(2sinh\left(\frac{u}{2}\right))^{2g}}\,
  du.
\end{equation*}
\end{lemma}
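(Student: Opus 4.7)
The plan is to specialize Theorem \ref{ThaddTh} to the master-space construction just assembled, using the three bullet-point identifications of $N_{Z^0}$, its K-theoretical Euler class, and $\mathrm{ch}_T(\mathcal{L}(k;\lambda,\mu)|_{Z^0})$ as direct inputs. The computation proceeds in parallel with \S\ref{S6.1}; the structural simplification here is that $Z^0\simeq Jac^0$ is an abelian variety, so $\mathrm{Todd}(Z^0)=1$, and the integrand reduces to an almost closed form.

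I would first verify Assumptions \ref{assumptions} for this two-point master space. The strictly semistable locus on the $L_0$-wall consists precisely of the split bundles $L\oplus L^{-1}$ compatible with the flags at $p$ and $s$, parametrized by $Jac^0$; exactly as in \S\ref{S6.1}, the stabilizer of such a point in $PSL(\chi)$ is a single copy of $\C^*$ acting with weight $1$ on the fiber of $\Lvec$, so $X^0$ is smooth and the hypotheses of Theorem \ref{ThaddTh} are met.

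Next I would substitute the three identifications into the wall-crossing formula \eqref{wallcrosseq}. Setting $t=e^u$ and collecting the two $\eta$-contributions ($4\eta$ from $E(N_{Z^0})^{-1}$, $2k\eta$ from $\mathcal{L}$), the integrand becomes
$$\frac{\mathrm{ch}_t(\mathcal{L}(k;\lambda,\mu)|_{Z^0})}{E_t(N_{Z^0})}\,\mathrm{Todd}(Z^0)=(-1)^g\,\frac{\exp\bigl(u(\lambda-\mu)\bigr)\,\exp\bigl((2k+4)\eta\bigr)}{\bigl(2\mathrm{sinh}(u/2)\bigr)^{2g}}.$$
Integration over $Jac^0$ then extracts the single relevant monomial via $\int_{Jac^0}\exp(m\eta)=m^g$ with $m=2k+4$, producing the factor $(2k+4)^g$ in front of the $u$-dependent piece.

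Finally, the passage from $\res_{t=0,\infty}\tfrac{dt}{t}$ to $-\res_{u=0}\,du$ goes through verbatim as in \S\ref{S6.1}: the free $T$-action on $Z\setminus Z^T$ makes $\chi_t(Z,\mathcal{L})$ a Laurent polynomial with no pole at $t=0,\infty$, while the fixed-point contributions of $M_\pm\cong P_0(c\gtrless a)$ evaluate to $\pm\chi(M_\pm,\mathcal{L})$. Because the linearization $L_+$ corresponds to $(c,a)^+$ with $c>a$, i.e.\ to $M_+=P_0(c>a)$, the left-hand side assembles as $h_>-h_<$, and the sign $-\res_{u=0}$ combines with the $(-1)^g$ from the Euler class to yield the claimed identity. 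The only point requiring attention is bookkeeping — matching the $\pm$ convention between $(c,a)^\pm$ and $M_\pm$ and confirming that the split locus is genuinely $Jac^0$ rather than a finite cover — but this is immediate once one observes that the flag conditions $L_p\subset F_1$ and $L_s^{-1}\subset G_1$ pick out a single irreducible component.
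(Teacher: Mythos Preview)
Your proposal is correct and follows exactly the approach the paper intends: substitute the three bullet-point identifications into Theorem~\ref{ThaddTh}, use $\mathrm{Todd}(Jac^0)=1$, evaluate $\int_{Jac^0}e^{(2k+4)\eta}=(2k+4)^g$, and convert $\res_{t=0,\infty}$ to $-\res_{u=0}$ via the Residue Theorem as in \S\ref{S6.1}. One small caution: your closing sentence says ``the sign $-\res_{u=0}$ combines with the $(-1)^g$ from the Euler class to yield the claimed identity,'' but $(-1)\cdot(-1)^g=(-1)^{g+1}$, so as written this is off by a sign; the resolution lies in the $M_\pm\leftrightarrow(c,a)^\pm$ bookkeeping you flagged (compare the $q_--q_+$ convention in \eqref{integralt} with $\chi(M_+)-\chi(M_-)$ in Theorem~\ref{ThaddTh}), and once that is traced through the signs match.
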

\subsection{Symmetry}
Denote by $P_{-1}(c>a)$ the image of the moduli space $P_0(c>a)$ under the 
Hecke isomorphism $\mathcal{H}$ (cf. \S\ref{sec:hecke}) at the point $p$ 
and by  $P_{-1}(c<a)$  the image of the moduli space $P_0(c<a)$ under the Hecke 
isomorphism $\mathcal{H}$ at the point $s$. 

We have the following analogue of Lemma \ref{flagbundle}.
\begin{lemma}\label{2pointp1fibr}
Denote by $N_{-1}$ the moduli space of rank-2 degree-($-$1) stable bundles on 
$C$ and by $UN$ any universal bundle over $N_{-1}\times C$. Then the moduli spaces $P_{-1}(c>a)$ and 
$P_{-1}(c<a)$ are isomorphic to the bundle $\mathbb{P}(UN_p)\times\mathbb{P}(UN_s)$ over 
$N_{-1}$.
\end{lemma}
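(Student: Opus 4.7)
The strategy is to mimic Lemma~\ref{flagbundle}: first transport the parabolic weights through the Hecke transforms of \S\ref{sec:hecke}, and then prove that, for weights in the relevant chambers, parabolic stability is equivalent to ordinary stability of the underlying bundle together with an unrestricted choice of lines $F_1\subset W_p$ and $G_1\subset W_s$. The claimed isomorphism with $\mathbb{P}(UN_p)\times_{N_{-1}}\mathbb{P}(UN_s)$ is then a formal consequence of the universal property of projectivization.

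First I track the weights. The Hecke transform $\mathcal{H}$ at $p$ sends a parabolic bundle in $P_0(c>a)$, carrying weights $(c,-c)_p$ and $(a,-a)_s$, to one in $P_{-1}(c>a)$ with weights $(-c,c-1)_p$ and $(a,-a)_s$; similarly $\mathcal{H}$ at $s$ sends $P_0(c<a)$ to $P_{-1}(c<a)$ with weights $(c,-c)_p$ and $(-a,a-1)_s$. In both cases the parabolic degree is $0$, so $\mathrm{parslope}(W)=0$.

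For the forward implication I will show that any $W$ in either $P_{-1}$--moduli space has ordinary-stable underlying vector bundle. The bound $0<c,a<1/2$ implies that a line subbundle $L$ with $\deg L\ge 1$ automatically satisfies $\mathrm{pardeg}(L)>0$, so the only potentially obstructing subbundles are those with $\deg L=0$. Enumerating the four configurations according to whether $L_p=F_1$ and whether $L_s=G_1$, one finds in every case $\mathrm{pardeg}(L)>0$: the decisive case is $L_p\ne F_1$ and $L_s\ne G_1$, where $\mathrm{pardeg}(L)=c-a$ for $P_{-1}(c>a)$ and $a-c$ for $P_{-1}(c<a)$, positive by the chamber condition, while the other three cases give strictly larger values. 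This contradicts parabolic stability, forcing $\deg L\le -1$ for every line subbundle, hence ordinary stability of $W$.

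Conversely, starting from an ordinary-stable $W\in N_{-1}$ and arbitrary lines $F_1\subset W_p$ and $G_1\subset W_s$, every line subbundle $L\subset W$ satisfies $\deg L\le -1$; the same four-case computation shows $\mathrm{pardeg}(L)\le a-c<0$ (resp.\ $\mathrm{pardeg}(L)\le c-a<0$) in the chamber $c>a$ (resp.\ $c<a$), so the parabolic bundle is stable. Combining the two directions, and using the universal bundle $UN$ on $N_{-1}\times C$ (unique up to twist by a line bundle from $N_{-1}$, which does not affect the projectivizations $\mathbb{P}(UN_p)$ and $\mathbb{P}(UN_s)$), the forgetful map $(W,F_1,G_1)\mapsto W$ realizes the desired isomorphism with the fibered product $\mathbb{P}(UN_p)\times_{N_{-1}}\mathbb{P}(UN_s)$. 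The only step requiring genuine attention is the four-case verification in the chambers; everything else is formal.
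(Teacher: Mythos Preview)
Your proof is correct and is precisely the argument the paper has in mind: the paper states this lemma without proof, merely calling it ``the analogue of Lemma~\ref{flagbundle},'' and your case analysis on $\deg L$ and on the positions of $L_p,L_s$ relative to $F_1,G_1$ is the natural two-puncture elaboration of that proof. Your identification of the decisive configuration $L_p\neq F_1$, $L_s\neq G_1$ and the observation that the chamber inequality $c\gtrless a$ is exactly what makes $\mathrm{pardeg}(L)$ positive there is the heart of the matter.
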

Denote by $\mathcal{T}[p]$ and $\mathcal{T}[s]$ the vertical tangent lines of $\mathbb{P}(UN_p)$ and $\mathbb{P}(UN_s)$, respectively, and by $\L_{-1}$ the pullback of the ample generator of the Picard group of $N_{-1}$ to $\mathbb{P}(UN_p)\times \mathbb{P}(UN_s)$ (cf. Lemma \ref{PicN}). Then a simple calculation shows the following. 

\begin{lemma}
Under the Hecke isomorphism $\mathcal{H}$ at $ p $, the line bundle $\mathcal{L}(2k;\lambda,\mu)$ on $P_0(c>a)$  corresponds to the line bundle $\L_{-1}^{k}\otimes\mathcal{T}[p]^{\lambda-k}\otimes\mathcal{T}[s]^\mu$ on $P_{-1}(c>a)$. \\
Under the Hecke isomorphism $\mathcal{H}$ at the point $s$,  $\mathcal{L}(2k;\lambda,\mu)$ on $P_0(c<a)$  corresponds to  the line bundle $\L_{-1}^{k}\otimes\mathcal{T}[p]^\lambda\otimes\mathcal{T}[s]^{\mu-k}$ on $P_{-1}(c<a)$.
\end{lemma}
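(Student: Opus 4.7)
The strategy is a direct calculation on $P_{-1}(c>a)$, realized via Lemma \ref{2pointp1fibr} as the fiber product $\mathbb{P}(UN_p) \times_{N_{-1}} \mathbb{P}(UN_s)$; the second assertion follows by the same argument with the roles of $p$ and $s$ swapped. First I would apply Corollary \ref{LineHecke} in its two-point form: because Hecke at $p$ alters only the $p$-flag and leaves the structure at $s$ intact, the rank-$2$, $m=1$ instance of the corollary transforms the weights $(\lambda,-\lambda)$ at $p$ into $(-\lambda,\lambda-2k)$ while preserving $(\mu,-\mu)$ at $s$. Thus $\mathcal{L}(2k;\lambda,\mu)$ on $P_0(c>a)$ corresponds, under $\mathcal{H}$, to
\[
\det(U''_p)^{2k(1-g)} \otimes \det(\pi_* U'')^{-2k} \otimes (\mathcal{F}''_2/\mathcal{F}''_1)^{-\lambda} \otimes (\mathcal{F}''_1)^{\lambda-2k} \otimes (\mathcal{G}_2/\mathcal{G}_1)^{\mu} \otimes (\mathcal{G}_1)^{-\mu}
\]
on $P_{-1}(c>a)$, where $U''$ is the universal bundle there.

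Next, I would use the flag-bundle identification from Lemma \ref{2pointp1fibr} to interpret these factors. Under it, $\mathcal{F}''_1 \subset UN_p$ and $\mathcal{G}_1 \subset UN_s$ become the tautological sub-line-bundles on the two projectivized factors, so by the definition of the vertical tangent lines, $\mathcal{T}[p] = (\mathcal{F}''_2/\mathcal{F}''_1) \otimes (\mathcal{F}''_1)^{-1}$ and $\mathcal{T}[s] = (\mathcal{G}_2/\mathcal{G}_1) \otimes (\mathcal{G}_1)^{-1}$. The determinants $\det(U''_p)$ and $\det(\pi_*U'')$ descend from $N_{-1}$, and the two-point analogue of Lemma \ref{PicN} gives $\L_{-1}^k = \det(U''_p)^{k(1-2g)} \otimes \det(\pi_*U'')^{-2k}$ as the pullback of the $k$-th power of the ample generator.

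The final step is pure bookkeeping: using the relation $\det(U''_p) = \mathcal{F}''_1 \otimes (\mathcal{F}''_2/\mathcal{F}''_1)$, pulled back from $N_{-1}$, to trade powers of $\det(U''_p)$ against combinations of the flag line bundles, the expression above reassembles into the asserted product $\L_{-1}^k \otimes \mathcal{T}[p]^{\lambda-k} \otimes \mathcal{T}[s]^\mu$. The second statement is obtained by reading the same argument with $(p,\lambda)$ and $(s,\mu)$ interchanged, applying Hecke at $s$ to $P_0(c<a)$. The only subtle point is tracking the shift $\lambda_1 - k$ produced by Corollary \ref{LineHecke}, whose interaction with $\det(U''_p) = \mathcal{F}''_1 \otimes (\mathcal{F}''_2/\mathcal{F}''_1)$ produces precisely the $k$-dependent correction inside the exponent of $\mathcal{T}[p]$; aside from this, no new geometric input is required.
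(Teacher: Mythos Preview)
Your approach is correct and is precisely the ``simple calculation'' the paper alludes to without writing out: apply the two-point version of Corollary~\ref{LineHecke} at $p$ (leaving the $s$-flag untouched), then use the identification of $P_{-1}(c>a)$ with $\mathbb{P}(UN_p)\times_{N_{-1}}\mathbb{P}(UN_s)$ from Lemma~\ref{2pointp1fibr} to read off $\mathcal{T}[p]$, $\mathcal{T}[s]$, and $\mathcal{L}_{-1}$ in terms of the flag line bundles and the determinant factors, and finish by bookkeeping with $\det(U''_p)=\mathcal{F}''_1\otimes(\mathcal{F}''_2/\mathcal{F}''_1)$. One small caution: in the final reassembly you should also track how $\det(\pi_*U)$ changes under Hecke (it picks up a factor of $(\mathcal{F}_2/\mathcal{F}_1)^{-1}$ from the skyscraper quotient), not just $\det(U_p)$; once this is included the exponents line up with the asserted $\mathcal{T}[p]^{\lambda-k}$ (up to the sign convention implicit in the paper's later use of the lemma in deriving Lemma~\ref{2pointsym}).
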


As in \S\ref{S7.2}, applying Serre duality for families of curves (cf. Proposition \ref{Serre}) to the line bundles on the two $\mathbb{P}^1\times\mathbb{P}^1$ bundles over $N_{-1}$,  
we obtain that the polynomials $h_>(k;\lambda,\mu)$ and $h_<(k;\lambda,\mu)$ are anti-invariant under the action of the Weyl group $\Sigma_2\times\Sigma_2$ with the center at $\theta_1[k]=(\frac{k+1}{2},\frac{-1}{2})$ and $\theta_2[k] = (\frac{-1}{2}, \frac{k+1}{2})$, correspondingly (cf. Figure \ref{fig:squarecen}).
In other words, we obtain the following 4 identities.
\begin{lemma}\label{2pointsym}
$$h_>(k;\lambda,\mu)=-h_>(k;\lambda,-\mu-1)=-h_>(k;-\lambda+k+1,\mu);$$
$$h_<(k;\lambda,\mu)=-h_<(k;-\lambda-1,\mu)=-h_<(k;\lambda,-\mu+k+1).$$
\end{lemma}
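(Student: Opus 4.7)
The proof should follow immediately from the preceding Hecke-correspondence lemma (which identifies $\mathcal{L}(2k;\lambda,\mu)$ on $P_0(c>a)$ and $P_0(c<a)$ with explicit line bundles on $P_{-1}(c\gtrless a)\cong \mathbb{P}(UN_p)\times_{N_{-1}}\mathbb{P}(UN_s)$) combined with two applications of relative Serre duality, Proposition \ref{Serrebasic}. The key structural observation is that the fiber product carries two natural $\mathbb{P}^1$-bundle projections (forgetting either $\mathbb{P}(UN_p)$ or $\mathbb{P}(UN_s)$), and that $\L_{-1}$ is pulled back from $N_{-1}$ while $\mathcal{T}[p]$ (resp.\ $\mathcal{T}[s]$) is vertical for exactly one of the two projections and horizontal for the other. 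By the polynomial (HRR) nature of $h_\gtrless$ in the level variable $k$, the correspondences stated in the lemma just above for the even level $2k$ suffice to determine $h_\gtrless$ for all $k\in\Z$.

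First I would apply Proposition \ref{Serrebasic} to the $\mathbb{P}^1$-bundle projection $\mathbb{P}(UN_p)\times_{N_{-1}}\mathbb{P}(UN_s)\to\mathbb{P}(UN_s)$, taking $\pi^*\L$ to be $\L_{-1}^{k}\otimes\mathcal{T}[s]^{\mu}$ (both horizontal for this projection) and the vertical twist to be the appropriate power of $\mathcal{T}[p]$. For $h_>$, the vertical exponent is $\lambda-k$, so the Serre involution sends it to $-(\lambda-k)+1$; translating back through the Hecke correspondence at $p$ yields the substitution $\lambda\mapsto -\lambda+k+1$ together with a sign. Applying Proposition \ref{Serrebasic} instead to the opposite projection, and exploiting that $\mathcal{T}[s]$ is \emph{not} shifted by the Hecke correspondence at $p$ (its exponent is simply $\mu$), yields the substitution $\mu\mapsto -\mu-1$. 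Together these give the two identities for $h_>$.

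The identities for $h_<$ are obtained by running the identical argument after replacing the Hecke correspondence at $p$ by the one at $s$: the corresponding line bundle on $P_{-1}(c<a)$ is $\L_{-1}^{k}\otimes\mathcal{T}[p]^{\lambda}\otimes\mathcal{T}[s]^{\mu-k}$, so the $k$-shift migrates from the $\mathcal{T}[p]$-exponent to the $\mathcal{T}[s]$-exponent, interchanging the roles of $\lambda$ and $\mu$ in Serre duality; this produces $\lambda\mapsto-\lambda-1$ and $\mu\mapsto-\mu+k+1$. The argument is almost entirely formal, and the only delicate point I expect is purely conventional: one must verify that $\mathcal{T}[p]$ and $\mathcal{T}[s]$, as used in the previous lemma, correspond under Proposition \ref{Serrebasic} to the correct power of $\omega_{X/Y}$ (equivalently, whether the paper's ``vertical tangent line'' is to be read as the relative tangent or the relative cotangent of the $\mathbb{P}^1$-fibration), so that the resulting shifts match $-\mu-1$ and $-\lambda+k+1$ on the nose rather than neighbouring values. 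Once this sign convention is pinned down from the calculation just before the lemma, all four identities are immediate.
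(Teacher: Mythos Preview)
Your proposal is correct and matches the paper's own argument: the paper's proof is the paragraph immediately preceding the lemma, which applies Serre duality (Proposition~\ref{Serre}, iterating Proposition~\ref{Serrebasic}) to the two $\mathbb{P}^1$-fibrations of $\mathbb{P}(UN_p)\times_{N_{-1}}\mathbb{P}(UN_s)$ after the Hecke identification of line bundles. Your remarks about extending from even level $2k$ to all $k$ by polynomiality, and about pinning down the tangent/cotangent convention for $\mathcal{T}[p],\mathcal{T}[s]$, are well taken and slightly more explicit than the paper's treatment.
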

\begin{figure}[H]
	\centering
	\begin{tikzpicture}[scale=2]
		\draw [fill=lightgray!30] (0,0) -- (0,1) -- (1,1);
		\draw [fill=gray!30] (0,0) -- (1,0) -- (1,1);
		\draw  [ultra thick, red] (0,0) -- (1,1);
		\draw  [ultra thick] (0,0) -- (0,1);
		\draw  [ultra thick] (0,0) -- (1,0);
		\draw  [ultra thick] (1,0) -- (1,1);
		\draw  [ultra thick] (0,1) -- (1,1);
		\node [above] at (0.35,0.65) {\tiny $P_0(c<a)$};
		\node [below] at (0.65,0.35) {\tiny $P_0(c>a)$};
		\node [below] at (0,0) {\tiny $(0,0)$};
		\node [below] at (1,0) {\tiny $(2,0)$};
		\node [above] at (0,1) {\tiny $(0,2)$};
		\node [above] at (1,1) {\tiny $(2,2)$};
		\draw  [gray, dashed] (-0.5,-0.25) -- (1.5,-0.25);
	        \draw  [gray, dashed] (-0.25,-0.5) -- (-0.25,1.5);
	        \draw  [gray, dashed] (-0.5,1.25) -- (1.5,1.25);
	        \draw  [gray, dashed] (1.25,-0.5) -- (1.25,1.5);
	        \draw [fill] (-0.25,1.25) circle [radius=0.02];
	         \draw [fill] (1.25,-0.25) circle [radius=0.02];
		\node [above left] at (-0.25,1.25) {\tiny $\theta_2[4]$};
		\node [below right] at (1.25,-0.25) {\tiny $\theta_1[4]$};	        
\end{tikzpicture}
\setlength{\belowcaptionskip}{-8pt}\caption{$k=4$, $r=2$, two points.} \label{fig:squarecen}
\end{figure}
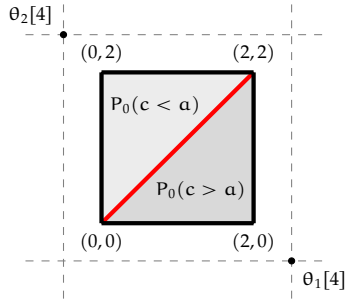
Now, define the polynomials  
$$\widetilde{h}_>(k;\lambda,\mu)=(-1)^{g-1}(2k+4)^g\res_{\substack{u=0}}\frac{exp(u(\lambda+\mu+1))
	-exp(u(\lambda-\mu))}{(2\mathrm{sinh}\left(\frac{u}{2}\right))^{2g}(1-e^{u(k+2)})}du;$$
$$\widetilde{h}_<(k;\lambda,\mu)=(-1)^{g-1}(2k+4)^g\res_{\substack{u=0}}\frac{exp(u(\lambda+\mu+1))
	-exp(u(\lambda-\mu+k+2))}{(2\mathrm{sinh}\left(\frac{u}{2}\right))^{2g}(1-e^{u(k+2)})}du,$$
and from here we can follow the logic of the proof of part I of Theorem 
\ref{main}.

\begin{proposition}\label{2pointthm}
The polynomials introduced above, in fact, coincide:
$$h_>(k;\lambda,\mu)=\widetilde{h}_>(k;\lambda,\mu)\quad\text{and}\quad
h_<(k;\lambda,\mu)=\widetilde{h}_<(k;\lambda,\mu).$$
\end{proposition}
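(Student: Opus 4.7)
The plan is to follow the same three-step template as in \S\ref{S8.4}: (i) check that the wall-crossing of $\widetilde h_>-\widetilde h_<$ reproduces the geometric wall-crossing of Lemma \ref{2pointdiff}; (ii) verify that $\widetilde h_>$ and $\widetilde h_<$ carry exactly the anti-symmetries recorded for $h_>$ and $h_<$ in Lemma \ref{2pointsym}; and (iii) conclude that the polynomial difference $\Theta = h_>-\widetilde h_> = h_<-\widetilde h_<$ is anti-invariant under a group large enough to force its vanishing. Because in rank $2$ with one puncture the two ``groups'' $\Sigma_r^+$ and $\Sigma_r^-$ coincided and failed to generate the affine Weyl group, the point of this setup is precisely that the two punctures supply two independent reflections in each coordinate.

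For step (i), I compute directly from the definitions:
\begin{equation*}
\widetilde h_>(k;\lambda,\mu)-\widetilde h_<(k;\lambda,\mu)
=(-1)^{g-1}(2k+4)^g\res_{u=0}\frac{e^{u(\lambda-\mu+k+2)}-e^{u(\lambda-\mu)}}{(2\sinh(u/2))^{2g}(1-e^{u(k+2)})}\,du,
\end{equation*}
and the factor $e^{u(\lambda-\mu+k+2)}-e^{u(\lambda-\mu)}=-e^{u(\lambda-\mu)}(1-e^{u(k+2)})$ cancels the $(1-e^{u(k+2)})$ in the denominator, yielding exactly the residue in Lemma \ref{2pointdiff}. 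Hence the two wall-crossings agree and $h_>-\widetilde h_>$ and $h_<-\widetilde h_<$ coincide as polynomials on $\Z^{>0}\times\Z\times\Z$; call this common polynomial $\Theta(k;\lambda,\mu)$.

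For step (ii), the reflection $\mu\mapsto-\mu-1$ (resp.\ $\lambda\mapsto\lambda$ fixed) acts on the numerator of $\widetilde h_>$ by swapping the two exponentials, giving an immediate sign change. The non-trivial reflection $\lambda\mapsto k+1-\lambda$ requires the substitution $u\mapsto-u$ in the residue; using $(2\sinh(-u/2))^{2g}=(2\sinh(u/2))^{2g}$ and the identity $1-e^{-u(k+2)}=-e^{-u(k+2)}(1-e^{u(k+2)})$, one finds after absorbing the factor $e^{-u(k+2)}$ into the exponentials that $\widetilde h_>(k;k+1-\lambda,\mu)=-\widetilde h_>(k;\lambda,\mu)$. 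The corresponding calculations for $\widetilde h_<$ under $\lambda\mapsto-\lambda-1$ and $\mu\mapsto k+1-\mu$ are entirely analogous; the $u\mapsto-u$ manipulation will be the one technical step to spell out carefully.

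For step (iii), $\Theta$ is therefore anti-invariant under all four reflections
$\lambda\mapsto k+1-\lambda,\ \lambda\mapsto-\lambda-1,\ \mu\mapsto k+1-\mu,\ \mu\mapsto-\mu-1$.
The composition of the two $\lambda$-reflections is the translation $\lambda\mapsto\lambda+(k+2)$, and similarly in $\mu$; hence for each fixed $k$ the polynomial $\Theta(k;\lambda,\mu)$ is periodic in both $\lambda$ and $\mu$ with period $k+2$. A periodic polynomial must be constant in those variables, and a constant anti-invariant under a reflection must vanish. Therefore $\Theta\equiv0$, which proves $h_>=\widetilde h_>$ and $h_<=\widetilde h_<$. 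The main obstacle in this plan is the careful bookkeeping in the residue manipulation used to establish the second (``non-obvious'') reflection symmetry in step (ii); the rest is essentially a rerun of the argument of \S\ref{S8.4} in the simpler two-dimensional affine Weyl setting.
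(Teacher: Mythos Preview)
Your proposal is correct and follows essentially the same approach as the paper's own proof. The paper declares steps (i) and (ii) a ``simple exercise'' and then, exactly as you do, defines $\Theta = h_>-\widetilde h_> = h_<-\widetilde h_<$, observes that it inherits all four $\Sigma_2$-anti-symmetries of Lemma~\ref{2pointsym}, notes that these generate a double affine Weyl group action in $\lambda$ and $\mu$ separately, and concludes $\Theta\equiv 0$; your write-up simply spells out the exercise in detail.
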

\begin{proof} It is a simple exercise to show that 
$\widetilde{h}_>(k;\lambda,\mu) $ and $\widetilde{h}_<(k;\lambda,\mu) $ satisfy 
the identities appearing in Lemmas \ref{2pointdiff} and \ref{2pointsym}, and 
hence 
the polynomial
	\[  \Theta(k;\lambda,\mu)= h_>(k;\lambda,\mu)-\widetilde{h}_>(k;\lambda,\mu)
	=h_<(k;\lambda,\mu)-\widetilde{h}_<(k;\lambda,\mu)
	 \]
	 satisfies all four $ \Sigma_2 $-symmetries listed in Lemma 
	 \ref{2pointsym}. These groups together generate a double action of the 
	 affine Weyl group $ \widetilde{\Sigma} $ in $ \lambda $ and $ \mu $ 
	 separately, and this implies the vanishing of $ \Theta $.
\end{proof}
As $P_0(c>a)$ is a $\mathbb{P}^1$-bundle over the moduli space of rank-2 
degree-0 stable parabolic bundles $P_0(c,-c)$,  substituting $\mu=0$ in $\widetilde{h}_>$, we obtain the Verlinde formula for rank 2.
\begin{corollary}
$$\chi(P_0(c,-c), 
\mathcal{L}_0{(k;\lambda)})=(-1)^{g-1}(2k+4)^{g}\res_{\substack{u=0}}
\frac{exp(u(\lambda+\frac{1}{2}))}{(2sinh\left(\frac{u}{2}\right))^{2g-1}(1-e^{u(k+2)})}du.$$
\end{corollary}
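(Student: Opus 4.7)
The plan is to invoke Proposition \ref{2pointthm}, which gives $h_>(k;\lambda,\mu)=\widetilde{h}_>(k;\lambda,\mu)$ for all $(\lambda,\mu)$, and then specialize at $\mu=0$. The left hand side $h_>(k;\lambda,0)$ will be identified with $\chi(P_0(c,-c),\mathcal{L}_0(k;\lambda))$ via the $\mathbb{P}^1$-fibration mentioned just before the corollary statement, and the right hand side $\widetilde{h}_>(k;\lambda,0)$ will be simplified to match the claimed residue formula by an elementary algebraic manipulation.

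First, I would make the geometric identification explicit. Forgetting the line $G_1\subset W_s$ in the parabolic data at $s$ defines a morphism $\pi\colon P_0(c>a)\to P_0(c,-c)$, which is a $\mathbb{P}^1$-bundle (this uses $c>a$ and the fact that a generic line in the 2-dimensional fiber gives a stable parabolic structure at the added puncture). Setting $\mu=0$ in the definition of $\mathcal{L}(k;\lambda,\mu)$ kills the factors $(\mathcal{G}_2/\mathcal{G}_1)^{\mu}\otimes(\mathcal{G}_1)^{-\mu}$, so $\mathcal{L}(k;\lambda,0)\cong \pi^*\mathcal{L}_0(k;\lambda)$. The projection formula together with $R\pi_*\mathcal{O}=\mathcal{O}$ then gives
\[
h_>(k;\lambda,0)=\chi(P_0(c>a),\pi^*\mathcal{L}_0(k;\lambda))=\chi(P_0(c,-c),\mathcal{L}_0(k;\lambda)).
\]

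Next, I would compute $\widetilde{h}_>(k;\lambda,0)$ by using the identity
\[
\exp(u(\lambda+1))-\exp(u\lambda)=\exp(u(\lambda+\tfrac12))\bigl(e^{u/2}-e^{-u/2}\bigr)=\exp(u(\lambda+\tfrac12))\cdot 2\sinh(u/2),
\]
so that one factor of $2\sinh(u/2)$ cancels against the denominator, reducing $(2\sinh(u/2))^{2g}$ to $(2\sinh(u/2))^{2g-1}$. Substituting into the definition of $\widetilde{h}_>$ yields exactly
\[
\widetilde{h}_>(k;\lambda,0)=(-1)^{g-1}(2k+4)^g\res_{u=0}\frac{\exp(u(\lambda+\tfrac12))}{(2\sinh(u/2))^{2g-1}(1-e^{u(k+2)})}\,du,
\]
which is the right hand side of the corollary.

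Combining the two identifications via Proposition \ref{2pointthm} completes the proof. There is no hard step here: the geometric content (the $\mathbb{P}^1$-fibration and the fact that $\mathcal{L}(k;\lambda,0)$ is pulled back) is immediate from the definitions, and the algebraic cancellation is a one-line calculation. The real work has already been done in Proposition \ref{2pointthm}, where the two-point auxiliary construction and the $\Sigma_2\times\Sigma_2$-antisymmetries at both $\theta_1[k]$ and $\theta_2[k]$ together generate enough affine Weyl symmetry to force the wall-crossing-adjusted difference polynomial $\Theta(k;\lambda,\mu)$ to vanish. Thus the only thing this corollary does is translate that two-puncture result back to the one-puncture Verlinde statement by reading off the $\mu=0$ slice.
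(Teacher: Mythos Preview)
Your proposal is correct and follows exactly the approach indicated in the paper: the sentence immediately preceding the corollary already says that $P_0(c>a)$ is a $\mathbb{P}^1$-bundle over $P_0(c,-c)$ and that substituting $\mu=0$ in $\widetilde{h}_>$ yields the rank-2 Verlinde formula. You have simply written out the two steps the paper leaves implicit---the projection-formula identification $h_>(k;\lambda,0)=\chi(P_0(c,-c),\mathcal{L}_0(k;\lambda))$ via $R\pi_*\mathcal{O}=\mathcal{O}$, and the one-line cancellation $e^{u(\lambda+1)}-e^{u\lambda}=e^{u(\lambda+1/2)}\cdot 2\sinh(u/2)$---so there is no difference in strategy.
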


\end{section}

\begin{section}{The combinatorics of the $ [Q,R]=0 $ }\label{S10}

In this section, we give a proof of the second part of 
Theorem \ref{main}. 
Let $\lambda/k\in\Delta$, and fix a regular element $\textit{\k c}\in\Delta$ in a chamber containing $\lambda/k$  in its closure, and another regular element $\widehat{\textit{\k c}}\in\Delta$ containing  $\lala/\kk$ in its closure.
Our goal is to prove the 
the equality $p_{\textit{\k c}}(k;\lambda)=
p_{\widehat{\textit{\k c}}}(k;\lambda)$, where we define
\begin{equation}\label{p_c}
p_{c}(k;\lambda)=\tilde{N}_{r,k}\sum_{\bb\in\DD}
\iber_{\bb} [ w_\Phi^{1-2g}(x/\kk)] (\lala/\kk- 
[c]_\bb)
\end{equation} 
for a regular
$c\in\Delta$ and diagonal basis $\mathcal{D}$. This is a subtle 
statement, which is a combinatorial-geometric projection of the idea of 
quantization commutes with reduction (or $ [Q,R]=0 $ for short, cf. 
\cite{MSj,SzV}).

If  
$\lambda/k\sim\widehat{\lambda}/\widehat{k}$, i.e. when $\lambda/k$ 
and $\widehat{\lambda}/\widehat{k}$ are regular elements in the same chamber in 
$\Delta$, then $p_\textit{\k c}(k;\lambda)=
p_{\widehat{\textit{\k c}}}(k;\lambda)$ is a tautology. We assume thus 
that this is not the case, and  
denote by $\mathcal{S}(k,\lambda)$ the set of walls 
separating $\textit{\k c}$ and $\widehat{\textit{\k c}}$, or containing  either $\lambda/k$ or $\widehat{\lambda}/\widehat{k}$ or both. Equivalently, the wall $S_{\Pi,l}$ belongs to $\mathcal{S}(k,\lambda)$ if 
$$(\lambda/k)_{\Pi'}\geq l\geq (\widehat{\lambda}/\widehat{k})_{\Pi'} \text{\, or \, } (\lambda/k)_{\Pi'}\leq l\leq (\widehat{\lambda}/\widehat{k})_{\Pi'},$$ where $c_{\Pi^\p} = \sum_{i\in\Pi^\p}c_i $ for an element 
$c=(c_1,...,c_r)\in V^*$. 
Clearly, there is a path in $\Delta$ connecting $\textit{\k c}$ and $\widehat{\textit{\k c}}$, which intersects only walls from $\mathcal{S}(k,\lambda)$ in a generic points. 
Then to prove  
the equality $p_{\textit{\k c}}(k;\lambda)=
p_{\widehat{\textit{\k c}}}(k;\lambda)$, it is enough to show the following, at first sight 
somewhat surprising fact. 
\begin{proposition}\label{wcrgen} Assume $ g\ge1 $, $\lambda/k\in\Delta$, 
$S_{\Pi,l}\in 
\mathcal{S}(k,\lambda)$ and let
$c^\pm\in\Delta$ be two points in two neighboring chambers 
separated by the wall $S_{\Pi,l}$. Then 
\begin{equation}\label{wallcrosseq}
	p_{c^+}(k;\lambda)=p_{c^-}(k;\lambda).
\end{equation}
\end{proposition}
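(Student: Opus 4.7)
My plan is to combine the wall-crossing formula of Proposition~\ref{wcrnice} with an induction on the rank $r$, using Part~I.\ of Theorem~\ref{main} at smaller ranks. By Proposition~\ref{wcrnice},
\begin{equation*}
p_{c^+}(k;\lambda)-p_{c^-}(k;\lambda)=(k+r)\tilde{N}_{r,k}\sum_{\bb^\p\in\DD^\p,\,\bb^{\pp}\in\DD^{\pp}}\res_{\alphalink=0}\iber_{\bb^\p}\iber_{\bb^{\pp}}\bigl[w_\Phi^{1-2g}(x/\kk)\bigr]\bigl(\lala/\kk-[c^+]_\bb\bigr)\,d\alphalink,
\end{equation*}
so the task reduces to proving the vanishing of this outer $\alphalink$-residue. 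The base of the induction is transparent: for $r=2$ the simplex $\Delta$ contains no interior walls, and the statement is vacuous.

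For the inductive step, I would factor $w_\Phi(x/\kk)=w_{\Phi^\p}(x/\kk)\cdot w_{\Phi^{\pp}}(x/\kk)\cdot w^\times(x/\kk)$ and $\rho=\rho^\p+\rho^{\pp}+\rho^\times$, exactly as in the proof of Theorem~\ref{wallcrossint}, and invoke Corollary~\ref{cor:thmwithk} in ranks $r^\p,r^{\pp}<r$. The two inner $\iber$-sums then recombine into a product of holomorphic Euler characteristics $\chi(P_l(c^\p),\mathcal{L}^\p(\alphalink))$ and $\chi(P_{-l}(c^{\pp}),\mathcal{L}^{\pp}(\alphalink))$ of line bundles on lower-rank parabolic moduli spaces, depending polynomially on the parameter $\alphalink$.

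Let $m_{\mathrm{link}}$ denote the $\alphalink$-coefficient of $\lala/\kk-[c^+]_\bb$; a direct computation expresses it in terms of $\lambda_{\Pi^\p}$, $k$, $l$ and the $\rho$-shifts. The hypothesis $S_{\Pi,l}\in\mathcal{S}(k,\lambda)$ --- that $l$ lies between $(\lambda/k)_{\Pi^\p}$ and $(\lala/\kk)_{\Pi^\p}$ --- pins $m_{\mathrm{link}}$ into a specific range. In this range, for every lattice value of $\alphalink$ contributing to the outer residue, the parameters of the inductively produced line bundle $\mathcal{L}^\p(\alphalink)$ or of $\mathcal{L}^{\pp}(\alphalink)$ fall in the regime where its Euler characteristic vanishes: concretely, the shifted weight entering the corresponding Hilbert polynomial lands outside the closed simplex of admissible parabolic weights at the smaller rank. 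This is the lower-rank incarnation of the $[Q,R]=0$ phenomenon, and it kills the $\alphalink$-integrand pointwise, so its residue vanishes.

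The hard part will be the bookkeeping: tracking the various $\rho$-shifts $\rho^\p,\rho^{\pp},\rho^\times$ and the integer parts $[c^+]_\bb$ through the inductive identifications, and verifying that the condition ``$l$ between $(\lambda/k)_{\Pi^\p}$ and $(\lala/\kk)_{\Pi^\p}$'' matches exactly the vanishing regime for the lower-rank Euler characteristics $\chi(P_l(c^\p),\mathcal{L}^\p(\alphalink))$ and $\chi(P_{-l}(c^{\pp}),\mathcal{L}^{\pp}(\alphalink))$ at the relevant lattice points. Once this combinatorial matching is carried out, the vanishing of the outer residue --- and therefore the equality $p_{c^+}(k;\lambda)=p_{c^-}(k;\lambda)$ --- follows immediately.
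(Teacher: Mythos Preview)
Your strategy diverges from the paper's, and as written it has a genuine gap.

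The paper does not argue inductively via lower-rank Euler characteristics. Instead it returns to the geometric form \eqref{integralt} of the wall-crossing term, performs the substitution $t=e^{u}$ (so that $u=\alphalink$), and observes that the integrand is then a \emph{rational} function of $t$. By the Residue Theorem applied on the $t$-sphere, $\res_{u=0}=-\res_{t=0,\infty}$, and the paper shows both of the latter residues vanish by a pure degree count: the cross-factor $w_u^\times$ contributes $\tfrac{r'r''}{2}$ to $\deg_{t^{\pm1}}$ of the denominator, while the numerator degree is controlled by $|(k\delta-rl)+\rho_{\Pi'}|$. The hypothesis $S_{\Pi,l}\in\mathcal{S}(k,\lambda)$ pins $k\delta-rl+\rho_{\Pi'}$ between $0$ and $\rho_{\Pi'}-rl$, and Lemma~\ref{myfavouritelemma2} shows $|\rho_{\Pi'}-rl|<\tfrac{r'r''}{2}$ whenever $S_{\Pi,l}$ meets $\Delta$. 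Hence $\deg_{t}<0$ and $\deg_{t^{-1}}<0$, and both residues at $t=0,\infty$ vanish.

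The gap in your plan is twofold. First, after performing the inner $\iber_{\bb'}$ and $\iber_{\bb''}$ you do \emph{not} obtain a product of honest holomorphic Euler characteristics depending polynomially on $\alphalink$: the cross-factor $w^\times_u(z',z'')^{1-2g}$ couples the two sides through terms of the form $\sinh\bigl((z'_i-z''_j)/2\bigr)$, each of which is linear in $u=\alphalink$; after integrating out the nilpotent $z',z''$ classes you are left with a function of $u$ having a pole at $u=0$ of order $(2g-1)r'r''$. Second, your phrase ``for every lattice value of $\alphalink$ contributing to the outer residue'' has no meaning here: $\res_{\alphalink=0}$ is a single residue at a single point, not a sum over a lattice, and the Bernoulli kernel $1/(1-e^{\alphalink})$ for the link variable has already been cancelled in Proposition~\ref{wcrnice}. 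There is no mechanism by which a vanishing of lower-rank $\chi$'s at discrete weight values would force the residue of this high-order pole to vanish. The paper's trick of passing to the multiplicative variable $t=e^{u}$ is precisely what converts an intractable high-order residue at $u=0$ into an elementary degree estimate for a rational function.
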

\begin{proof}
The difference of the two sides of \eqref{wallcrosseq} is expressed as a 
residue in \eqref{integralt}.  The integral in \eqref{integralt} is a rational 
expression in the variable $ t $, and our plan is to show by degree count in $ 
t $ and $ t^{-1} $ that 
its residues at zero and at $ \infty $ vanish.
 We define the degree of the quotient of two polynomials $ R=P/Q $ 
of the 
variable $t$ as $ \deg_t(R)=\deg_t(P)-\deg_t(Q) $, and we set $ 
\deg_{t^{-1}}(R)=\deg_t(R(t^{-1})) $. Then, clearly, 
\[ \deg_t(R)<0\Longrightarrow \res_{t=\infty}R\,\frac{dt}{t}=0
\quad\text{and}\quad
\deg_{t^{-1}}(R)<0\Longrightarrow \res_{t=0}R\,\frac{dt}{t}=0.
\]

A convenient expression  for 
\eqref{integralt} will be \eqref{final}, where we change variables via $ 
t=e^{u} $. In what follows, we will  always tacitly assume this substitution, 
and we will write, for example, $ \deg_{t^{\pm1}}(1/(e^u-e^{-u}))=-1 $. We thus 
obtain a 
formula of the form $ \res_{t=0,\infty}  f(t)\, dt/t $, and to show that this 
is zero, it is sufficient to show that $ deg_{t}(f)<0$ and $deg_{t^{-1}}(f)<0 $.

Now we observe that the variable $ u $ occurs only in the first line of 
\eqref{final}, and thus, calculating the degrees in $ t $ and $ t^{-1} $ 
separately, we obtain the following formula: 
\begin{equation}\label{degest}
	deg_{t^{\pm1}}(f)=\pm(k\delta-rl)+(1-2g)
	\mathrm{deg}_{t^{\pm1}}(\tau\cdot w_{u}^\times)+\mathrm{deg}_{t^{\pm1}}
	(\exp(\tau\cdot \rho^\times_{u})). 
\end{equation}
Recall that here $ \delta $ represents the distance of $ \lambda/k $ from the 
wall $ S_{\Pi,l} $, while $w_{u}^\times $ and $ \rho^\times_{u} $, represent 
the parts of the Weyl denominator and the $ \rho $-shift corresponding to roots 
connecting $ \Pi' $ and $ \Pi^{\pp} $, respectively.

We begin the study of this expression with some simple remarks. We recall that 
the permutation $ \tau $ preserves the partition $ \Pi=(\Pi',\Pi^{\pp}) $, and 
thus we have 
$$ 
\mathrm{deg}_{t^{\pm1}}(\tau\cdot w_u^\times)=\mathrm{deg}_{t^{\pm1}}( 
w_u^\times)=\frac{r^\p 
r^{\pp}}{2}.$$ Using, in addition, that $ \rho^\times_{u} $ is linear in $ u $, 
we obtain $$\mathrm{deg}_{t}
	(\exp(\tau\cdot \rho^\times_{u}))=-\mathrm{deg}_{t^{-1}}
	(\exp(\tau\cdot \rho^\times_{u}))=\mathrm{deg}_{t}
	(\exp( \rho^\times_{u})).$$
Combining these equalities, and assuming $ g\ge1 $, we arrive at the following 
conclusion. 
\begin{lemma}\label{wallcrossreduce} The inequality
	\begin{equation}\label{kdeltatoprove}
	 \left|(k\delta-rl) + \mathrm{deg}_{t}
(\exp(\rho^\times_{u}))\right| < \frac{r^\p 
	r^{\pp}}{2} 	
\end{equation}
implies the vanishing of the wall-crossing term: equality \eqref{wallcrosseq}.
\end{lemma}

Before we proceed, we introduce some notation. Denote by $$\mathrm{Inv}(\Pi) =  
\{(i,j) |\, \Pi^{\p}\ni i>j\in\Pi^{\pp} \}$$
the set of "inverted" pairs of elements of the partition $ \Pi $. The number of these pairs 
$ |\mathrm{Inv}(\Pi)| $ coincides 
with the standard notion of length of the shuffle permutation $\phi\in\Sigma_r$ 
introduced in \S\ref{sec:wallcrpara}.

Each pair $ (i,j) $ which is not inverted contributes $ +u/2 $ to $ \rho^\times_{u} $, while each inverted pair contributes $ -u/2 $, and thus we have
\begin{equation}\label{inveq}
  \mathrm{deg}_{t}
	(\exp(\rho^\times_{u}))= \frac{r^\p r^{\pp}}{2}-|\mathrm{Inv}(\Pi)| . 
\end{equation}

Also, recall the notation $c_{\Pi^\p} = \sum_{i\in\Pi^\p}c_i $ for an element 
$c=(c_1,...,c_r)\in V^*$; in particular, we have $(\lambda/k)_{\Pi'}=l+\delta$ and 
\[ \rho_{\Pi'}=\sum_{i\in\Pi'}   \frac{r+1}{2}-i. \]
The following is a simple exercise, whose proof will be omitted:
\begin{equation}\label{myfavourite}
\mathrm{deg}_{t}
	(\exp(\rho^\times_{u}))=\rho_{\Pi^\p}. 
\end{equation}

Now we come to a key point of our argument.
\begin{lemma}\label{myfavouritelemma2} If the intersection 
of the wall $S_{\Pi,l}$ with $\Delta$ is non-empty, then
\begin{equation}\label{rhoineq}
	 -\frac{r^\p r^{\pp}}{2} <lr-\rho_{\Pi^\p}< \frac{r^\p r^{\pp}}{2}. 
\end{equation}
\end{lemma}
\begin{proof}
Pick a point $c=(c_{1},...,c_{r})$ in the intersection $S_{\Pi,l}\cap\Delta$, 
and recall that for any $1\le i< j\le r$, we have   $0<c_{i}-c_{j}<1$, and
\[ \sum_{i\in\Pi'}c_i=-\sum_{i\in\Pi^{\pp}}c_i =l. \]
Then
\[ -|\mathrm{Inv}(\Pi)|<\sum_{(i,j)\in\mathrm{Inv}(\Pi)} (c_i-c_j)
\leq \sum_{i\in\Pi'}\sum_{j\in\Pi^{\pp}} (c_i-c_j), \]
and, similarly,
\[ \sum_{i\in\Pi'}\sum_{j\in\Pi^{\pp}} (c_i-c_j)<r^\p 
r^{\pp}-|\mathrm{Inv}(\Pi)|.
 \]

Now, since
\[  \sum_{i\in\Pi'}\sum_{j\in\Pi^{\pp}} 
(c_i-c_j)=r^{\pp}\sum_{i\in\Pi^{\p}}c_i-r^{\p} \sum_{j\in\Pi^{\pp}} c_j =lr, \]
we can conclude
$$-|\mathrm{Inv}(\Pi)|<lr<r^\p r^{\pp}-|\mathrm{Inv}(\Pi)|.$$
In view of \eqref{inveq} and \eqref{myfavourite}, these inequalities are 
equivalent to \eqref{rhoineq}, and this completes the proof.
\end{proof}

Now we are ready to prove \eqref{wallcrosseq}. The condition 
$S_{\Pi,l}\in\mathcal{S}(k,\lambda)$, i.e. that $S_{\Pi,l}$ separates $ 
\lambda/k$ and $\widehat{\lambda}/\widehat{k}$ or contains $ 
\lambda/k$ or $\widehat{\lambda}/\widehat{k}$, may 
occur in 
two ways.
\begin{itemize}
	\item 
	$(\lambda/k)_{\Pi'}\geq l\geq (\widehat{\lambda}/\widehat{k})_{\Pi'},$ 
	which is equivalent to the two inequalities: $ \delta\geq 0 $ and $ 
	lk+lr\geq \lambda_{\Pi'}+\rho_{\Pi'}$. After canceling $ lk $ and reordering 
	the terms, we can rewrite these as 
	\begin{equation}\label{strict1}
		0\geq k\delta-lr+\rho_{\Pi^\p}\geq \rho_{\Pi^\p}-lr.
		\end{equation}
	Using Lemma \ref{myfavouritelemma2} then we can conclude that 	
			$$0\geq k\delta-lr+\rho_{\Pi^\p}> 	- \frac{r^\p r^{\pp}}{2},$$
	which, in view of the equality \eqref{myfavourite}, implies	the necessary 
	estimate \eqref{kdeltatoprove}.
			
	\item The second case is similar:
	$(\lambda/k)_{\Pi'}\leq l\leq (\widehat{\lambda}/\widehat{k})_{\Pi'}$ is equivalent 
	to $ \delta\leq0 $ and $ lk+lr\leq \lambda_{\Pi'}+\rho_{\Pi'}$. This leads to 
			\begin{equation}\label{strict2}
			0\leq k\delta-lr+\rho_{\Pi^\p}\leq \rho_{\Pi^\p}-lr,
			\end{equation}
which, in turn, implies
			$$0\leq k\delta-lr+\rho_{\Pi^\p}< \frac{r^\p r^{\pp}}{2},$$
and hence \eqref{kdeltatoprove}.
\end{itemize}

This completes the proof of Proposition \ref{wcrgen}: indeed, a simple calculation shows that if $ 
\lambda/k\in\Delta$ then 
$\widehat{\lambda}/\widehat{k}\in\Delta, $ so the conditions of Lemma \ref{myfavouritelemma2} hold. We 
have just shown that this implies \eqref{kdeltatoprove}, and according to Lemma 
\ref{wallcrossreduce}, we can conclude the vanishing of the wall-crossing term
\eqref{wallcrosseq}.
\end{proof}

\begin{remark}\label{well-def:lambda/k} 
Note that if $\lambda/k\in\Delta$ is non-regular, then it belongs to some wall from the set $\mathcal{S}(k,\lambda)$. Hence
proposition \ref{wcrgen} implies that the right-hand side of formula (I.) of Theorem \ref{main} is a well-defined function on the cone over $ \Delta $:
\[ \{(k,\lambda)\in \mathbb{Z}^{>0}\times\Lambda|\,\lambda/k\in\Delta\}. \]

\end{remark}

\end{section}

\vskip 1cm


\begin{thebibliography}{99}



\bibitem[AMW]{AlexeevMW} {\sc Alexeev A., Meinrenken E., Woodward} The Verlinde formulas as fixed point formulas, \textit{J. Symplectic Geom.} \textbf{1} (2001) 1-46.
\bibitem[AB1]{AtiyahBottfixed}{\sc Atiyah M., Bott R.} The Lefschetz fixed point formula for elliptic complexes: II. Applications \textit{Ann. of Math.} \textbf{38} 451-491.
\bibitem[AB2]{AtiyahBott} {\sc Atiyah M., Bott R.} Yang-Mills equations over Riemann surfaces, \textit{Phil. trans. R. Soc. London} \textbf{308} (1982) 523-615.
\bibitem[B]{Bhosle} {\sc Bhosle U.N.} Parabolic vector bundles on curves,  \textit{Arkiv f\"or Math.} \textbf{27} (1989) 15-22.
\bibitem[BL]{BismutLabourie} {\sc Bismut J.-M.,  Labourie F.} Symplectic geometry and the Verlinde formulas.	Surveys in differential geometry: differential geometry inspired by string theory, \textit{Surv. Differ. Geom.}, \textbf{5} (1999) 97-311. 
\bibitem[BH]{BodenH} {\sc Boden H., Hu Y.} Variation of moduli of parabolic bundles, \textit{Matematische Annalen} \textbf{301 (3)} (1995) 539-559.
\bibitem[DH]{DolgachevH}{\sc Dolgachev I.V., Hu Y.} Variation of geometric invariant theory quotients, \textit{Publications Math\'ematiques de l'IH\'ES} \textbf{87} (1998) 5-56.
\bibitem[G]{Grothendieck}{\sc Grothendieck A.} Technique de descente et th\'eor\`emes d'existence en g\'eom\'etrie alg\'ebraique, IV: Les sch\'emas de Hilbert, \textit{S\'em. Bourbaki}
 \textbf{221} (1960-61); reprinted in \textit{Fondements de la g\'eom\'etrie alg\'ebrique} (1962).
\bibitem[H]{Hartshorne}{\sc Hartshorne R.} Algebraic geometry, \textit{Springer, New York} (1977). 
\bibitem[J]{Jeffrey} {\sc Jeffrey L.C.} The Verlinde formula for parabolic bundle, \textit{J. London Math. Soc.} \textbf{63 (3)} (2001) 754-768.
\bibitem[JK]{JeffreyK} {\sc Jeffrey L.C., Kirwan F.C.} Intersection theory on moduli spaces of holomorphic bundles of arbitrary rank on a Riemann surface, \textit{Ann. of Math.} \textbf{148} (1998) 109-196.
\bibitem[LM]{Loizides}{\sc Loizides Y., Meinrenken E.} The decomposition formula for Verlinde sums \textit{Ann. l'Inst. Fourier} to appear.
\bibitem[MS]{MehtaSeshadri} {\sc Mehta V.B., Seshadri C.S.} Moduli of vector bundles on curves with parabolic structures, \textit{Math. Ann.} \textbf{248} (1980) 205-239.
\bibitem[M]{Meinrenken} {\sc Meinrenken E.} Twisted K-Homology and Group-Valued Moment Maps, \textit{International Mathematics Research Notices} \textbf{20} (2012) 4563-4618.
\bibitem[MS]{MSj} {\sc Meinrenken E., Sjamaar R.} Singular reduction and quantization, \textit{Topology} \textbf{38} (1999)  699-762.
\bibitem[MF]{MumFog} {\sc Mumford D., Fogarty J.} Geometric invariant theory, 2nd edition, \textit{Springer-Verlag, Berlin} (1982). 
\bibitem[NR]{NRHecke} {\sc Narasimhan M.S., Ramanan S.} Geometry of Hecke cycles-I, \textit{ In C.P.Ramanujam - a Tribute} \textit{Springer-Verlag, Berlin} (1978) 291-345.
\bibitem[Se]{Seshadri} {\sc Seshadri S.C.} Moduli of vector bundles on curves with parabolic structures, \textit{Bull. Math. Soc.} \textbf{83(1)} (1977) 124-126.
\bibitem[S]{Sorger} {\sc Sorger C.} La formule de Verlinde, \textit{S\'eminaire Bourbaki : volume 1994/95, expos\'es 790-804, Ast\'erisque} \textbf{237} (1996) \textit{Expos\'e no. 794}. 
\bibitem[Sz1]{Szimrn} {\sc Szenes A.}  Iterated residues and multiple Bernoulli polynomials, \textit{IMRN Internat. Math. Res. Notices} \textbf{18} (1998) 937-956.
\bibitem[Sz2]{Szduke} {\sc Szenes A.} Residue theorem for rational trigonometric sums and Verlinde's formula, \textit{Duke Math. J.} \textbf{118 (2)} (2003) 189-227.
\bibitem[SzV]{SzV} {\sc Szenes A., Vergne M.}  $[Q,R]=0$ and Kostant partition functions. \textit{Enseign. Math.} \textbf{63} (2017) 471-516.
\bibitem[TW]{TelemanW} {\sc Teleman C., Woodward C.} The index formula for the moduli of $G$-bundles on a curve, \textit{Ann. of Math.} (2009) 495-527.
\bibitem[Th1]{ThaddeusFlip} {\sc Thaddeus M.} Geometric invariant theory and flips, \textit{J.Amer. Math. Soc.} \textbf{9} (1996) 691-723.
\bibitem[Th2]{ThaddeusVer} {\sc Thaddeus M.} Stable pairs, linear systems and the Verlinde formula, \textit{Inventiones mathematicae} \textbf{117.2} (1994) 317-354.
\bibitem[V]{Vergne} {\sc Vergne, M.}  Multiplicities formula for geometric quantization, I, II., \textit{Duke Math. J.} \textbf{82 (1)} (1996), 143-179, 181-194. 
\bibitem[Ver]{Ver} {\sc Verlinde E.}, Fusion rules and modular transformations in 2d conformal field theory, \textit{Nucl. Phys. B} \textbf{300} (1988) 360-376.
\bibitem[W]{Wittenrevisited} {\sc Witten E.}, Two Dimensional Gauge Theories Revisited, \textit{J.Geom.Phys} \textbf{9}  (1992) 303-368.
\bibitem[Z]{Zagier} {\sc Zagier D.} On the cohomology of moduli spaces of rank two vector bundles over curves, \textit{} \textbf{129} (1995) 15-22.
\end{thebibliography}
\end{document}